\newcommand{\vp}{\varphi}
\newcommand{\rr}{\ensuremath{\mathbb{R}}}
\newcommand{\Rd}{\ensuremath{{\mathbb{R}^d}}}
\newcommand{\ep}{\ensuremath{\varepsilon}}
\newcommand{\bgamma}{\boldsymbol{\gamma}}
\newcommand{\dom}{{\rm dom}}
\newcommand{\norm}[1]{\lVert #1 \rVert}
\newcommand{\RR}{\mathbb{R}}
\newcommand{\PP}{\mathcal P}
\let\div\relax
\DeclareMathOperator{\div}{div}
\DeclareMathOperator{\spt}{\mathop{\rm supp\,}}
\DeclareMathOperator{\tr}{tr}
\newcommand{\loc}{{\rm loc}}
\DeclareSymbolFont{bbold}{U}{bbold}{m}{n}
\DeclareSymbolFontAlphabet{\mathbbold}{bbold}
\newcommand{\id}{{\rm \bf id}}
\def\P{{\mathcal P}}
\newcommand{\R}{{\mathord{\mathbb R}}}
\newcommand{\N}{{\mathord{\mathbb N}}}
\newcommand{\supp}{{\mathop{\rm supp\, }}}
\newcommand{\F}{\mathcal{F}}
\newcommand{\bt}{\mathbf{t}}
\newcommand{\cF}{\mathcal{F}}
\newcommand{\mue}{\mu_{\epsilon}}
\newcommand{\qe}{q_{\epsilon}}
\newcommand{\phie}{\varphi_\ep}
\renewcommand{\:}{\colon}
\def\P{{\mathcal P}}
\def\S{{\mathcal S}}
\def\epsilon{\varepsilon}
\def\F{\mathcal{F}}
\DeclareMathOperator{\interior}{int}
\newcommand{\be}{\begin{equation}}
\newcommand{\ee}{\end{equation}}
\newcommand{\bes}{\begin{equation*}}
\newcommand{\ees}{\end{equation*}}
\newtheorem{thm}{Theorem}[section]
\newtheorem{cor}[thm]{Corollary}
\newtheorem{prop}[thm]{Proposition}
\newtheorem{lem}[thm]{Lemma}
\newtheorem*{as*}{\assumptionnumber}
\providecommand{\assumptionnumber}{}
\newenvironment{as}[2]
 {%
  \renewcommand{\assumptionnumber}{Assumption #1}%
  \begin{as*}%
  \protected@edef\@currentlabel{#1}({#2})%
 }
 {%
  \end{as*}
 }
\theoremstyle{definition}
\newtheorem{defn}[thm]{Definition}
\theoremstyle{remark}
\newtheorem{rem}[thm]{Remark}
\numberwithin{equation}{section}
\title{Nonlocal Approximation of Slow and Fast Diffusion}
\author{Katy Craig, Matt Jacobs, and Olga Turanova}
\date{\today}                                        
\begin{document}

\maketitle

\begin{abstract}
Motivated by recent work on approximation of diffusion equations by deterministic interacting particle systems, we develop a  nonlocal approximation for a range of linear and nonlinear diffusion equations and prove  convergence of the method in the slow, linear, and fast diffusion regimes. A key ingredient of our approach is a novel technique for using the 2-Wasserstein and dual Sobolev gradient flow structures of the diffusion equations to recover the duality relation characterizing the pressure in the nonlocal-to-local limit. Due to the   general class of internal energy densities that our method is able to handle, a byproduct of our result is a novel particle method for sampling a wide range of probability measures, which extends classical approaches based on the Fokker-Planck equation beyond the log-concave setting.
\end{abstract}

%\tableofcontents

 \section{Introduction}
Linear and nonlinear diffusion equations arise throughout the sciences, including in mathematical biology, fluid mechanics, control theory, and sampling. Key examples are the \emph{fast diffusion equation} ($m<1$), \emph{heat equation} ($m=1$), and \emph{porous medium equation} ($m>1$), 
\begin{align*}
\partial_t \rho = \Delta \rho^m , \quad m > 1-\frac{2}{d+2} \ ,
\end{align*}
as well as general nonlinear diffusion equations, induced by a proper, convex, lower semicontinuous \emph{internal energy density} $f: [0,+\infty) \to \R \cup \{+\infty\}$ and velocity $v \in  L^1_\loc([0,+\infty);W^{1,\infty}(\Rd))$,
\begin{align} \label{generalnonlinear}
\partial_t \rho = \Delta f^*(p) + \nabla \cdot (\rho v) , \quad p \in \partial f(\rho).
\end{align}
 See foundational work by  Vazquez \cite{ VazquezPMEbook, VazquezFDEbook} and Otto \cite{otto_pme_geometry}  for  the classical theory of the porous medium and fast diffusion equations, as well as more recent works  \cite{kwon2021degenerate,jacobs20201,jacobs2021darcy,de2016bv} for general nonlinear diffusion equations, including their connection to \emph{sandpile models} and \emph{starvation driven diffusion} \cite{bantay1992avalanche, cho2013starvation}.
One interesting example is the choice $f = \iota_{[0,1]}$, where $ \iota_{[0,1]}$ denotes the convex characteristic function that is zero on $[0,1]$ and $+\infty$ elsewhere. In this case, equation (\ref{generalnonlinear}) is a \emph{height constrained transport equation} \cite{AKY, craig2018aggregation, MaRoSa10, MaRoSaVe11}.

In recent years, motivated by applications in numerical analysis, sampling, and models of two-layer neural networks, there has been significant interest in developing deterministic interacting particle systems that approximate solutions of the above diffusion equations;  see section \ref{literature} below, for a discussion of the existing literature. At the heart of each of these methods is an approximation of the (local) diffusion equation by nonlocal PDEs that have smooth enough velocity fields to admit a deterministic particle method. While great progress has been made by Carrillo, Esposito, and Wu \cite{carrillo2023nonlocal} in the case of the porous medium equation for $m >1$, as well as for general internal energy densities $f$ that possess similar growth properties, the goal of the present work is to devise a nonlocal approximation of general nonlinear diffusion equations that converges in essentially all cases in which the nonlinear diffusion equation is well-posed.  This includes three very difficult cases that have eluded   previous attempts in the literature:
\begin{itemize}
    \item  the heat equation, $f(s) = s\log s - s$,
    \item  fast diffusion equations, $f(s) =\frac{1}{m-1} s^m$ for $m\in \left(1-\frac{2}{d+2},1\right)$,\footnote{In the fast diffusion case, the restriction $m> 1-2/(d+2)$ is necessary for the well-posedness of the PDE, rather than a limitation of our method; see \cite[Example 9.3.6]{ambrosiogiglisavare}.}
    \item  height constrained transport,   $f(s) = \iota_{[0,1]}(s)$.
\end{itemize}

The ability of our approach to approximate such a general class of nonlinear diffusion equations is particularly significant from the perspective of applications in sampling. As we describe in Remark \ref{samplingremark} below,  nonlinear diffusion equations of the form we consider can be chosen to converge exponentially quickly in time to any probability measure of the form
 \begin{align} \label{barrhoeqn}
 \bar{\rho} =  \max \{ h(Z-V),0 \}  , \  h:\R \to \R \text{ strictly increasing, } V: \Rd \to \R \text{ strongly convex,}
 \end{align}
 as long as $h$ satisfies appropriate regularity and growth hypotheses and its primitive satisfies McCann's convexity condition.
 The normalization constant $Z \in \R$ is chosen so that $ \int \bar{\rho} = 1$.  
  Consequently, particle methods  based on our nonlocal approximation are an important step forward to developing provable sampling methods for $\bar{\rho}$ of the form (\ref{barrhoeqn}). This would be significant, as very little has been rigorously proved for sampling measures $\bar{\rho}$ that are not log-concave.

Our approach is able to cover a general class of diffusion equations by leveraging convex duality. This allows us to consider any internal energy density $f: [0,+\infty) \to \R \cup \{+\infty\}$ that is  convex, lower semicontinuous, and induces a $W_2$-coercive energy functional (see Assumption \ref{internalas}(\ref{moment_control})), with $f(0) = 0$ and ${\rm int}(\dom(f)) \neq \emptyset$. Given  a velocity $v \in  L^1_\loc([0,+\infty);W^{1,\infty}(\Rd))$, we consider solutions $\rho \in AC^2_\loc([0,+\infty);\P_2(\Rd))$ of the nonlinear diffusion equation 
\begin{align} \tag{PDE} \label{PDE}
\begin{cases}
\partial_t \rho - \nabla \cdot ( \nabla f^*(p)  + v \rho) = 0,  \qquad \qquad \text{ in duality with $C^\infty_c( (0,+\infty) \times \Rd),$}  \\
 p \in \partial f(\rho), \text{ for a.e. $(t,x)$} \\
\rho(0, \cdot) = \rho^0 ,
\end{cases}
\end{align}
where $f^*(p) \in L^1_\loc([0,+\infty); W^{1,1}(\Rd))$.  (See Assumption \ref{velocityas} for our precise, more general hypotheses on the velocity.)  Since the Fenchel-Young identity ensures 
\[ p \in \partial f(\rho) \iff \rho p = f(\rho) + f^*(p), \]
 we refer to the  condition $p \in \partial f (\rho)$ in (\ref{PDE}) as the \emph{duality relation} characterizing the pressure $p$.

 In the absence of an external velocity field, i.e. $v=0$, (\ref{PDE}) has a formal Wasserstein gradient flow structure with respect to the  \emph{internal energy}. For any finite Borel measure $\rho \in \mathcal{M}(\Rd)$ with finite second moment that is absolutely continuous with respect to Lebesgue measure, $d\rho(x) = \rho(x) dx$, the internal energy is defined by \begin{align} \label{internalenergydef}
\F(\rho) =  \int f(\rho(x)) dx .
\end{align}
More generally, we extend $\F(\rho)$ to be defined for any $\rho \in \mathcal{M}(\Rd)$ with finite second moment via $W_2$-lower semicontinuity; see \cite[equation (9.3.10)]{ambrosiogiglisavare}.

Inspired by previous works on deterministic particle methods for slow diffusion equations, we aim to approximate (\ref{PDE}) by considering a nonlocal regularization of the associated internal energy $\F$. In the absence of a velocity field, this corresponds to approximating gradient flows of (\ref{internalenergydef}) by gradient flows of the following regularized energy:
\begin{align} \label{eq:eps energy}
\F_\ep(\rho) &= \int f_\ep(\varphi_\ep *\rho)  \quad \text{ for } \quad f_\epsilon(a) = \begin{cases}\frac{\delta(\epsilon)}{2} |a|^2 +{}^{\delta(\epsilon)}f(a) - {}^{\delta(\epsilon)}f(0) & \text{ for }a\geq 0,\\
+\infty &\text{ for } a<0. \end{cases}
\end{align}
In this definition, $\epsilon \mapsto \delta(\epsilon)$ is a non-decreasing function of $\epsilon$ with $\lim_{\ep\rightarrow 0}\delta(\ep)=0$,     $\varphi_\epsilon$ is a mollifier, chosen to smooth out potential singularities in the measure $\rho$ (for instance, when $\rho$ is an empirical measure), and ${}^{\delta(\epsilon)}f$ is the Moreau-Yosida regularization of $f$, chosen to smooth out singularities in the internal energy density. (See Assumption \ref{mollifieras} for the precise hypotheses on the mollifier and Section \ref{moreauenvelopesec} for basic properties of the Moreau-Yosida regularization.)  
  This   leads to the following nonlocal approximation of (\ref{PDE}): given  $v_{\epsilon}\in C^{\infty}_c([0,+\infty)\times \RR^d))$ and $\rho_\ep^0\in \P_2(\R^d)\cap C_c^{\infty}(\Rd)$, that approximate $v$ and $\rho^0$ (see Definition \ref{wellprepareddef}), we consider solutions $\rho_\epsilon \in AC^2_\loc([0,+\infty), \P_2(\Rd))$ satisfying
\begin{align} \tag{${\rm PDE}_{\epsilon}$} \label{epsiloneqn}
\begin{cases} \partial_t \rho_\epsilon - \nabla \cdot \left( \rho_\epsilon  \left( \nabla p_\epsilon  +v_\ep\right)\right) = 0   , \qquad \qquad \text{ in duality with $C^\infty_c( (0,+\infty) \times \Rd)$,}  \\
  p_\epsilon = (\varphi_\ep *f_\ep'(\varphi_\ep*\rho_\ep))  , \\
  \rho_\epsilon(\cdot,0) = \rho^0_{\epsilon}
  \end{cases}   
\end{align}

Our main result, stated in Section \ref{mainresultsection} below, is that weak solutions of (\ref{epsiloneqn}) converge to a   solution of (\ref{PDE}) in the $\epsilon\to 0$ limit.    In Section \ref{strategy}, we describe our strategy of proof, which includes several novel components, the most important of which is that we simultaneously leverage the $W_2$ and $\dot{H}^{-1}$ gradient flow structures of (\ref{PDE}) in order to obtain the duality relation $\rho p = f(\rho) + f^*(p) \iff p \in \partial f(\rho)$ in the nonlocal-to-local limit.   In Section \ref{literature}, we place our result in  context of related work on nonlocal approximations of diffusion and related applications in sampling.

\subsection{Main result}  \label{mainresultsection}
In order to present our main result, we begin by  stating our assumptions on the internal energy density $f$, velocity $v$, mollifier $\varphi_\epsilon$, and initial data $\rho^0$.
\begin{as}{E}{Energy density} \label{internalas} \ \\
Assume the internal energy density $f: \R \to \R \cup \{+\infty\}$ satisfies the following properties:
\begin{enumerate}[(i)]
\item $f$ is  convex and lower semicontinuous,  with   $f(0)=0$ and $f(a) = +\infty$ for   $a <0$. \label{convexlctsas}
\item $\interior(\dom(f)) \neq \emptyset$.  %There exists $a_0>0$ so that $f(a_0)\neq +\infty$.
\label{nontrivial_convex}
\item \label{moment_control} There exists an increasing concave function $H:[0,+\infty)\to[0,+\infty)$ with $H(0)=0$ such that 
\begin{align}
-H\left( \int_{\RR^d} (1+|x|^2)d\rho(x)\right)  \leq  \F(\rho)  \;  \label{seconddersing}
\end{align}
for all finite   Borel measures $\rho \in \mathcal{M}(\Rd)$ with finite second moment, where $\F(\rho)$ is as defined in equation (\ref{internalenergydef}). 
\end{enumerate}
\end{as}

\begin{rem} \label{coercivityremark}
Condition (\ref{moment_control}) ensures that the negative part of $f(\rho)$ is integrable  on $\P_2(\Rd)$; see \cite[Remark 9.3.7]{ambrosiogiglisavare}. This type of hypothesis is particularly natural in the present setting, where it can be thought of as a coercivity requirement for the energy functional $\F$. In particular, when $v \equiv 0$ and (\ref{PDE}) has a formal gradient flow structure with respect to the 2-Wasserstein metric,   it is reasonable to require that the negative part of $\F(\rho)$ doesn't decay too fast with respect to $-M_2(\rho) = -\int|x|^2 d \rho(x)$, or else the discrete-in-time JKO formulation of the dynamics could fail to have a solution, i.e. 
\[
\inf_{\rho\in \PP_2(\RR^d)} \F(\rho)+\frac{1}{2t}W_2^2(\rho, \rho^0)=-\infty \ , \quad \forall t >0 .
\]
\end{rem}

\begin{rem}
    In Lemma \ref{lem:example}, we verify that Assumption \ref{internalas} is satisfied by the internal energy densities corresponding to the  heat equation, fast diffusion equations, and height constrained transport.
\end{rem}

\begin{as}{V}{Velocity field} \label{velocityas}
Assume the velocity field $v:[0,+\infty)\times\RR^d\to\RR^d$  satisfies the following hypotheses:
 \begin{enumerate}[(i)]
 \item  $v \in  L^1_\loc([0,+\infty),W^{1,1}_\loc(\Rd))$.
 \item $ \displaystyle \frac{|v(t,x)|^2+(\nabla \cdot v(t,x))_+}{1+|x|^2}\in L^{1}_\loc([0,+\infty);L^{\infty}(\RR^d))$, where $(\nabla \cdot v)_+=\max(\nabla \cdot v,0)$.
 \end{enumerate}
\end{as}

\begin{as}{M}{Mollifier} \label{mollifieras}
Consider a mollifier $\varphi_\ep(x) = \ep^{-d} \varphi(x/\ep)$, $\ep >0$, where $\varphi:\Rd \to \R$ satisfies the following assumptions:
\begin{enumerate}[(i)]
\item $\varphi \in C^2(\Rd)$, $\varphi \geq 0$, $\int \varphi(x) dx =1$, $\varphi(x) = \varphi(-x)$. \label{firstmolas}
\item There exists $C_\varphi>0$ and  $r > \max\{d,2\}$ so that  $ \vp(x) \leq C_\varphi |x|^{-r}$.\label{secondmolas}  
\item $\varphi , \nabla \varphi, D^2 \varphi \in L^1(\Rd) \cap L^\infty(\Rd)$. \label{molLinftybounds}
\end{enumerate}
\end{as}

\begin{as}{I}{Initial data}
\label{as:id}
Suppose the initial data $\rho^0\in \P_2(\Rd)$ satisfies
\[
\mathcal{F}(\rho^0) + \mathcal{S}(\rho^0)<+\infty ,
\]
for the internal energy $\F$   defined as in equation (\ref{internalenergydef}) and the entropy $\S$ given by
\begin{align} \label{entropydef}
\S(\rho) =  \begin{cases} \int  \rho(x) \log(\rho(x)) dx &\text{ if } d \rho(x) = \rho(x) dx , \\ +\infty &\text{otherwise.} \end{cases}
\end{align}
\end{as}
Finally, we will consider the limiting properties of solutions of (\ref{epsiloneqn}) with \emph{well-prepared} initial data and velocity field, in the following sense.

\begin{defn}[Well-prepared] \label{wellprepareddef}
Given an internal energy density $f$,  a velocity $v$, a mollifier $\varphi_\epsilon$, and initial data $\rho^0$ satisfying     Assumptions \ref{internalas}, \ref{velocityas}, \ref{mollifieras}, and \ref{as:id},  we say that a sequence of initial data $\{\rho_\epsilon^0\}_{\epsilon\in (0,1)} \subseteq \P_2(\Rd)\cap C^{\infty}_c(\RR^d)$ and velocity fields $\{v_{\ep}\}_{\epsilon \in (0,1)} \subseteq   C^{\infty}_c([0,+\infty)\times\RR^d)$ is \emph{well-prepared} if the following conditions are met:
\begin{enumerate}[(i)]
\item For any $T>0$, we have the uniform bounds \begin{equation}\label{eq:data_uniform_bounds}
 \sup_{\epsilon>0} \mathcal{F}_{\ep}(\rho^0_{\ep})+\mathcal{S}(\rho^0_{\ep})+M_2(\rho^0_{\ep})+    \int_0^T \left\| \frac{|v_{\ep}(t,\cdot)|^2}{1+|\cdot|^2}+\frac{(\nabla \cdot v_{\ep}(t,\cdot))_+}{1+|\cdot|^2} \right\|_{L^\infty(\Rd)}  <+\infty,
\end{equation}
where $(\nabla \cdot v_{\ep})_+=\max(\nabla \cdot v_{\ep},0)$.
\item $v_{\epsilon} \to v$ in $L^1_{\loc}([0,+\infty)\times\RR^d)$ and  $\rho_{\ep}^0 \to \rho^0$ in $W_2$.
\end{enumerate}

\end{defn}

It can be shown using standard arguments that, for any velocity and initial data satisfying our assumptions,  well-prepared sequences $\{\rho_\epsilon^0\}_{\epsilon\in (0,1)} $ and $\{v_{\ep}\}_{\ep \in(0,1)}$ exist; see Lemma \ref{wellpreparedlemma}. 
Likewise, in  Lemma \ref{wellposedpdeepslem}, we show that for any $\rho_\ep^0 \in \P_2(\Rd)$ and $v_\ep \in C_b([0,+\infty; W^{1,\infty}(\Rd))$, (\ref{epsiloneqn}) is well-posed.

We now state our main theorem, which proves that, for \emph{any} choice of well-prepared data, solutions of (\ref{epsiloneqn}) converge to a solution of (\ref{PDE}). 

\begin{thm} \label{mainthm}
Consider an internal energy density $f$,  a velocity $v$, a mollifier $\varphi_\epsilon$, and initial data $\rho^0$ satisfying     Assumptions \ref{internalas}, \ref{velocityas}, \ref{mollifieras}, and \ref{as:id}. For
any well-prepared sequences $\{\rho_\ep^0\}_{\ep\in(0,1)}$, $\{v_\ep\}_{\ep\in(0,1)}$, let $\rho_\epsilon \in AC^2_\loc([0,+\infty), \P_2(\Rd))$ be the solution of \eqref{epsiloneqn} with initial data $\rho^0_\epsilon$ and velocity $v_\ep$. Let $r$ be the exponent from Assumption \ref{mollifieras}.  Suppose
 $\delta(\epsilon)$ goes to zero sufficiently slowly such that, for some $\theta \in (0,1)$ 
\begin{align} \label{deltaepshyp}
 \lim_{\epsilon \to 0}  \epsilon^{\frac{\theta(r-d)}{r}}\delta(\epsilon)^{-\frac{r-\theta}{r}} = 0 . 
 \end{align}
 Then there exists $\rho \in AC^2_\loc([0,+\infty); \P_2(\Rd)) $ so that, up to a subsequence, $\rho_\epsilon(t) \to \rho(t)$ in weak $L^1_\loc(\Rd)$ and 1-Wasserstein for all $t \geq 0$, and $\rho$ is a solution of \eqref{PDE} with initial data $\rho^0$.
\end{thm}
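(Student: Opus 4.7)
The plan is to extract limits of $\rho_\epsilon$ and $p_\epsilon$, pass to the limit in the continuity equation (\ref{epsiloneqn}), and finally recover the subdifferential identification $p \in \partial f(\rho)$ via convex duality. I would start with a priori estimates driven by the formal Wasserstein gradient flow structure. Computing $\frac{d}{dt}\F_\ep(\rho_\ep)$ along (\ref{epsiloneqn}), absorbing the velocity term using the uniform bounds in Definition \ref{wellprepareddef} and Gr\"onwall, one obtains a uniform bound on $\F_\ep(\rho_\ep(t))$, on the second moment $M_2(\rho_\ep(t))$, and most importantly on the Wasserstein dissipation
\begin{equation*}
\int_0^T \int_{\Rd} \rho_\ep |\nabla p_\ep|^2 \, dx\, dt \leq C_T.
\end{equation*}
A parallel entropy estimate (using Assumption \ref{as:id}) propagates $\S(\rho_\ep)$ and yields equi-integrability of $\{\rho_\ep(t)\}$. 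The continuity equation together with the dissipation bound gives $W_2$-equicontinuity in $t$, so by the Arzel\`a--Ascoli theorem of Ambrosio--Gigli--Savar\'e there is $\rho\in AC^2_\loc([0,+\infty);\P_2(\Rd))$ with $\rho_\ep(t) \to \rho(t)$ in $W_1$, and weak $L^1_\loc$ convergence follows from entropy+tightness.

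Next I would exploit the dual (Sobolev) gradient-flow structure to compactify the pressure. The key is that for convex $f$, the regularized energy $\F_\ep$ is also $\dot H^{-1}$-convex, and its slope yields \emph{unweighted} $L^2$ control of $\nabla p_\ep$ — i.e.\ $\int_0^T\|\nabla p_\ep\|_{L^2}^2$ bounded — complementing the weighted bound $\int \rho_\ep|\nabla p_\ep|^2$ from the $W_2$ side. Combining this with a bound on $\|p_\ep\|_{L^1}$ (or $\|f_\ep^*(p_\ep)\|_{L^1}$), which follows from the Fenchel--Young identity applied to $\varphi_\ep*\rho_\ep$, I extract a weak limit $p \in L^2_\loc(0,T;\dot H^1)$ along a subsequence, with $p_\ep \to p$ strongly in $L^2_\loc$ by Rellich. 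The growth condition (\ref{deltaepshyp}) on $\delta(\ep)$ is what enables one to control the commutator between $\varphi_\ep *$ and $f_\ep'$, so that $p_\ep$ and $f_\ep'(\varphi_\ep*\rho_\ep)$ have the same limit.

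With these compactness facts in hand, passing to the limit in the weak form of (\ref{epsiloneqn}) is standard: the linear terms pass by the weak $L^1_\loc$ convergence of $\rho_\ep$, and the nonlinear flux $\rho_\ep \nabla p_\ep$ passes because weak convergence of one factor meets strong convergence of the other (using the unweighted $L^2$ bound on $\nabla p_\ep$). This yields a distributional solution of the first line of (\ref{PDE}) with $f^*(p) \in L^1_\loc(W^{1,1})$. The initial datum is attained by $W_1$-continuity.

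The main obstacle — and the step where the novel idea of the paper comes in — is recovering the duality relation $p \in \partial f(\rho)$. By the symmetry of $\varphi_\ep$ and the Fenchel--Young \emph{equality},
\begin{equation*}
\int_{\Rd} \rho_\ep\, p_\ep \, dx \;=\; \F_\ep(\rho_\ep) + \int_{\Rd} f_\ep^*\bigl(f_\ep'(\varphi_\ep*\rho_\ep)\bigr)\, dx.
\end{equation*}
I would integrate this in time, then pass $\ep\to 0$: the left-hand side converges (as a product of a weakly and a strongly converging sequence in compatible spaces, using the two complementary compactness results above), while on the right one applies lower semicontinuity of $\F$ under $W_1$ / weak $L^1_\loc$ convergence and lower semicontinuity of $\int f^*(\cdot)$ under weak $L^2_\loc$ convergence of the pressure, together with the fact that Moreau--Yosida regularizations $\Gamma$-converge to $f$. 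This yields
\begin{equation*}
\int\!\!\int \rho\, p \;\geq\; \int\!\!\int f(\rho) + \int\!\!\int f^*(p).
\end{equation*}
Combined with the a.e.\ Fenchel--Young inequality in the reverse direction, equality holds a.e., which is equivalent to $p(t,x)\in \partial f(\rho(t,x))$ for a.e.\ $(t,x)$. The delicate point is arranging the topologies in which $\rho_\ep$ and $p_\ep$ converge so that the product $\rho_\ep p_\ep$ passes cleanly in (PDE$_\ep$) \emph{and} in the Fenchel--Young identity; this is precisely why one needs both the $W_2$ and $\dot H^{-1}$ bounds simultaneously.
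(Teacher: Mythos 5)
Your overall architecture (a priori bounds, Aubin--Lions type compactness in time, mollifier exchange to pass to the limit in the flux, Fenchel--Young plus lower semicontinuity to close the duality relation) matches the paper's, but two of the steps you rely on are not available in this setting, and they are exactly the steps where the paper's $\dot H^{-1}$ detour is doing the real work.

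First, the claim that the dual Sobolev structure gives you an \emph{unweighted} uniform bound on $\|\nabla p_\ep\|_{L^2([0,T]\times\Rd)}$ is false. The nonlocal equation \eqref{epsiloneqn} is \emph{not} the $\dot H^{-1}$ gradient flow of $\F_\ep$ --- the paper says this explicitly ("Due to the nonlocality of \eqref{epsiloneqn}, it is not possible to rewrite this equation in terms of an $\dot H^{-1}$ structure") --- so there is no slope/dissipation estimate of the form you invoke. What \emph{does} come out of the (entropy) dissipation is $\int_0^T\int \nabla\mu_\ep\cdot\nabla q_\ep \le C$, and the best unweighted bound this gives (Lemma \ref{lem:mu_nabla_q}) is $\|\nabla q_\ep\|_{L^2}^2 \lesssim \delta(\ep)^{-1}$, which blows up as $\ep\to 0$. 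So your Rellich/strong $L^2_\loc$ convergence of $p_\ep$, and hence the weak-times-strong passage in the flux and in $\int \rho_\ep p_\ep$, do not go through. The paper instead passes to the limit in $\rho_\ep\nabla p_\ep$ by first exchanging the mollifier (Proposition \ref{lem:mollifier exchange}) to rewrite it as $\mu_\ep\nabla q_\ep = \nabla f_\ep^*(q_\ep)$, an exact gradient, and then integrating by parts against the test function --- no strong convergence of $\nabla p_\ep$ is ever used.

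Second, and more fundamentally: extracting a limit $p$ of $p_\ep$ (or of $q_\ep$) in any ordinary function space is impossible for exactly the cases the theorem is designed to cover. For fast diffusion, $p = \partial f(\rho) = \tfrac{m}{m-1}\rho^{m-1}$ with $m<1$ takes the value $-\infty$ on the (typically positive-measure) set $\{\rho=0\}$, so $p\notin L^1_\loc$ and $q_\ep$ does not admit a weak $L^1$ limit. This is the reason the paper changes variables to $\zeta_\ep := f_\ep^*(q_\ep)$, which is nonnegative and $L^1$-uniformly integrable (Lemma \ref{lem:f_*_integrable}), proves the duality relation in the transformed $\dot H^{-1}$ form $\rho\zeta = e(\rho)+e^*(\zeta)$ (Proposition \ref{identifylimitpoint}, via compensated compactness and a truncation $\zeta_{\ep,m}=\min(\zeta_\ep,m)$ to handle the lack of integrability of the product $\mu_\ep\zeta_\ep$), and only at the very end reconstructs a measurable $p\in\partial f(\rho)$ with $f^*(p)=\zeta$ from the relation $\zeta\in\partial e(\rho)$ (Lemma \ref{existenceofpwow}). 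Your Fenchel--Young plan is the right instinct, but it has to be applied to $(\mu_\ep,\zeta_{\ep,m})$ with the $\dot H^{-1}$ density $e$, not directly to $(\rho_\ep,p_\ep)$ with $f$; applied to the latter, both the extraction of $p$ and the $L^1$ control of the cross term fail.
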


\begin{rem}[Decay of $\delta(\epsilon)$]
For example, equation (\ref{deltaepshyp}) holds if $\delta(\epsilon) =\epsilon^{\beta}$ for some $\beta\in (0, \frac{ r-d}{r-1})$.
\end{rem}

\begin{rem}[Passing to a subsequence]
The fact that Theorem \ref{mainthm} only shows convergence of solutions of (\ref{epsiloneqn}) to (\ref{PDE}) up to a subsequence is due to the generality of our hypotheses. In particular, under these hypotheses, it remains open whether a solution of (\ref{PDE}) with initial data $\rho^0$ is unique, and it is in principle possible that different subsequences of solutions of (\ref{epsiloneqn}) could converge to different solutions of (\ref{PDE}).

On the other hand, in the cases where it is known that the solution $\rho$ of (\ref{PDE}) with initial data $\rho^0$ is unique, Theorem \ref{mainthm} shows that every sequence $\rho_\ep$ has a further subsequence that converges to $\rho$, hence $\rho_\ep$ itself must converge to $\rho$.
\end{rem}

%
%
%\begin{rem}[Existence]
%A consequence of Theorem \ref{mainthm} is that, under our assumptions, there exists a solution of (\ref{PDE}). Due to the generality Assumptions \ref{internalas} and \ref{velocityas}, this covers cases for which existence was not previously known. See, for example, \cite{kwon2021degenerate}, where well-posedness for (\ref{PDE}) is established for $f$ is continuous on $[0,+\infty)$, twice differentiable on $(0,+\infty)\setminus\{1\}$, and  satisfies a coercivity condition at 0 that's more restrictive than ours. 
%\end{rem}

Our regularized equation (\ref{epsiloneqn}) gives rise to a deterministic particle method for (\ref{PDE}) in the following manner. First, due to the fact that the velocity field in (\ref{epsiloneqn}), $\nabla p_\epsilon + v_\epsilon$, is globally Lipschitz for all $\epsilon >0$, if the initial conditions of (\ref{epsiloneqn}) are given by an empirical measure,
then the corresponding solution of (\ref{epsiloneqn}) will remain an empirical measure for all time,
\begin{align} \label{rhoepNdef}  {\rho}^0_{\epsilon} := \frac{1}{N} \sum_{i=1}^N \delta_{x^0_{\epsilon,i}}  \implies \rho_\epsilon(t)  = \frac{1}{N} \sum_{i=1}^N \delta_{x_{\epsilon,i}(t)}  , 
\end{align}
where the trajectories $\{x_{\epsilon,i}(t)\}$ are given by solutions of the ordinary differential equation
\begin{align}\label{epNode}
\begin{cases} \dot{x}_{\epsilon,i}(t) &= - \nabla p_\epsilon(x_{\epsilon,i}(t),t) - v_\epsilon(x_{\epsilon,i}(t),t) , \ \forall  t \geq 0 ,  \\
x_{\epsilon,i}(0) &= x_{\epsilon,i}^0 
\end{cases}  \end{align}
with
\begin{align*}
 p_\epsilon(x,t) = \left( \varphi_\ep *f_\ep' \Big( \frac{1}{N} \sum_{j=1}^N \varphi_\ep(x_{\epsilon,i}(t)-\cdot) \Big)\right)(x) .
\end{align*}
%While our main convergence result, Theorem \ref{mainthm}, only considers initial conditions for (\ref{epsiloneqn}) that have bounded entropy, excluding empirical measure initial data, the following Corollary uses stability of (\ref{epsiloneqn}) to obtain convergence as $\epsilon \to 0$ and the particle initial data approximates   well-prepared initial data sufficiently quickly.
Our main convergence result, Theorem \ref{mainthm}, excludes empirical measure initial data: indeed, initial conditions for (\ref{epsiloneqn}) are assumed to have bounded entropy. However, in the following corollary, we use stability of (\ref{epsiloneqn}) to obtain convergence as $\epsilon \to 0$ for particle initial data that approximates   well-prepared initial data sufficiently quickly.

\begin{cor} \label{particlecorollary}
Consider an internal energy density $f$,  a velocity $v$, a mollifier $\varphi_\epsilon$, and initial data $\rho^0$ satisfying     Assumptions \ref{internalas}, \ref{velocityas}, \ref{mollifieras}, and \ref{as:id}. For
any well-prepared sequences  $\{\rho_\ep^0\}_{\ep\in(0,1)}$, $\{v_\ep\}_{\ep\in(0,1)}$ and $T>0$, let $\{\hat{\rho}_{\ep}^0\}_{\ep\in(0,1)}$ be a sequence of empirical measures satisfying $W_2(\hat{\rho}_{\epsilon}^0, \rho_\ep^0) = o(  C_\epsilon^{-1}   e^{-C_\epsilon T}) $, where
\begin{align*}
 &  C_\epsilon :=2\norm{v_{\epsilon}}_{ L^\infty([0,+\infty);W^{1,\infty}(\RR^d))}+\norm{\nabla \varphi_\ep}_{W^{1,1}(\Rd)} \left(\|f'_\epsilon\|_{\rm Lip([0,+\infty))} \norm{\vp_{\epsilon}}_{L^\infty(\Rd)} +f'_{\epsilon}(0)\right) .
 \end{align*}
Let $\hat{\rho}_{\epsilon} \in AC^2_\loc([0,+\infty), \P_2(\Rd))$ be the solution of \eqref{epsiloneqn} with initial data $\hat{\rho}^0_{\epsilon}$ and velocity $v_\epsilon$, as in equations (\ref{rhoepNdef}-\ref{epNode}).
 Suppose
 $\delta(\epsilon)$ goes to zero sufficiently slowly so that (\ref{deltaepshyp}) holds for some $\theta \in (0,1)$.

  Then there exists $\rho \in AC^2_\loc([0,+\infty); \P_2(\Rd)) $ so that, up to a subsequence, $\hat{\rho}_{\epsilon}(t) \to \rho(t)$ in   1-Wasserstein for all $t \in [0,T]$, and $\rho$ is a solution of \eqref{PDE} with initial data $\rho^0$.
\end{cor}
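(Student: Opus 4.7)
The plan is to deduce the corollary from Theorem \ref{mainthm} by combining it with a quantitative Wasserstein stability estimate for (\ref{epsiloneqn}) with respect to perturbations of the initial data. Since the velocity field $-\nabla p_\ep - v_\ep$ driving (\ref{epsiloneqn}) is smooth and globally Lipschitz in $x$, and depends on the measure argument only through the smooth double convolution $\varphi_\ep * f_\ep'(\varphi_\ep * \rho_\ep)$, it generates a classical Lagrangian flow for any probability-measure-valued $\rho_\ep$, so both $\rho_\ep$ and $\hat\rho_\ep$ are transported along well-defined characteristic flows.

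\textbf{Step 1 (Stability of (\ref{epsiloneqn}) in $W_2$).} I first establish that, for any two solutions $\rho_\ep, \hat\rho_\ep$ of (\ref{epsiloneqn}) driven by the same $v_\ep$ but with initial data $\rho_\ep^0, \hat\rho_\ep^0 \in \P_2(\Rd)$,
\[
W_2(\hat\rho_\ep(t), \rho_\ep(t)) \leq e^{C_\ep t}\, W_2(\hat\rho_\ep^0, \rho_\ep^0) \qquad \text{for all } t \geq 0,
\]
with $C_\ep$ as in the statement. The standard coupling argument applies: take an optimal $W_2$ coupling $\pi_0$ of the initial data, push it forward by the Lagrangian flows $X_t, \hat X_t$ associated with $-\nabla p_\ep - v_\ep$ and $-\nabla \hat p_\ep - v_\ep$, and control $|X_t - \hat X_t|$ via Grönwall. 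The two Lipschitz inputs are: (a) the map $x \mapsto \nabla p_\ep(t,x) + v_\ep(t,x)$ is Lipschitz in $x$ with constant at most $\|v_\ep\|_{W^{1,\infty}} + \|\nabla^2\varphi_\ep\|_{L^1}(\|f_\ep'\|_{\mathrm{Lip}}\|\varphi_\ep\|_{L^\infty} + f_\ep'(0))$, using the trivial bound $\varphi_\ep * \rho \leq \|\varphi_\ep\|_{L^\infty}$ for probability measures $\rho$; (b) the map $\rho \mapsto \nabla p_\ep^\rho$ satisfies $\|\nabla p_\ep^{\rho_1} - \nabla p_\ep^{\rho_2}\|_{L^\infty} \leq \|\nabla\varphi_\ep\|_{L^1} \|f_\ep'\|_{\mathrm{Lip}} \|\nabla\varphi_\ep\|_{L^\infty} W_1(\rho_1, \rho_2)$, by Kantorovich--Rubinstein duality applied to the Lipschitz kernel $\varphi_\ep$. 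Integrating $|X_t - \hat X_t|^2$ against $\pi_0$ and using $W_1 \leq W_2$, Grönwall produces the stated bound; the factor of $2$ in front of $\|v_\ep\|$ in the definition of $C_\ep$ is what allows both Lipschitz contributions to be absorbed into a single rate.

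\textbf{Step 2 (Conclude via Theorem \ref{mainthm}).} Applying Theorem \ref{mainthm} to the well-prepared sequence $\{\rho_\ep^0\}_{\ep \in (0,1)}$ yields $\rho \in AC^2_\loc([0,+\infty);\P_2(\Rd))$ solving (\ref{PDE}) and a subsequence $\ep_k \to 0$ such that $\rho_{\ep_k}(t) \to \rho(t)$ in $W_1$ for every $t \geq 0$. Combining Step 1 with the hypothesis $W_2(\hat\rho_\ep^0, \rho_\ep^0) = o(C_\ep^{-1} e^{-C_\ep T})$,
\[
\sup_{t \in [0,T]} W_2(\hat\rho_\ep(t), \rho_\ep(t)) \leq e^{C_\ep T}\, W_2(\hat\rho_\ep^0, \rho_\ep^0) \longrightarrow 0 \quad \text{as } \ep \to 0.
\]
Since $W_1 \leq W_2$, the triangle inequality yields $W_1(\hat\rho_{\ep_k}(t), \rho(t)) \to 0$ for every $t \in [0,T]$ along the same subsequence, completing the proof.

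\textbf{Main obstacle.} The only technical point is the constant bookkeeping in Step 1: verifying that the measure-sensitivity constant $\|\nabla\varphi_\ep\|_{L^1}\|\nabla\varphi_\ep\|_{L^\infty}\|f_\ep'\|_{\mathrm{Lip}}$ is controlled by the exponent $C_\ep$. Up to constants depending only on $\varphi$, both this quantity and the $x$-Lipschitz constant in (a) scale like $\ep^{-d-2}$ times $\|f_\ep'\|_{\mathrm{Lip}}$, and are therefore both dominated by $\|\nabla\varphi_\ep\|_{W^{1,1}}(\|f_\ep'\|_{\mathrm{Lip}}\|\varphi_\ep\|_{L^\infty} + f_\ep'(0))$, matching the definition of $C_\ep$. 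The prescribed closeness rate for $\hat\rho_\ep^0$ is then precisely strong enough to cancel the worst-case Grönwall blow-up on $[0,T]$.
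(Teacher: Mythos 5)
Your proposal is correct and takes essentially the same route as the paper: derive a Wasserstein stability estimate for \eqref{epsiloneqn} and combine it with Theorem \ref{mainthm} via the triangle inequality. The paper simply invokes the stability estimate it has already established in Lemma \ref{wellposedpdeepslem} (which yields the bound $W_2(\rho_\ep(t),\hat\rho_\ep(t)) \leq W_2(\rho_\ep^0,\hat\rho_\ep^0)\,(1+C_\ep T e^{C_\ep T})$, equivalent up to form to your $e^{C_\ep t}$ bound) rather than re-deriving it. One small inaccuracy in your ``main obstacle'' paragraph: the factor of $2$ in front of $\|v_\ep\|$ in $C_\ep$ does not absorb the measure-sensitivity Lipschitz constant $\|\nabla\varphi_\ep\|_{L^1}\|\nabla\varphi_\ep\|_{L^\infty}\|f'_\ep\|_{\rm Lip}$, which scales like $\ep^{-d-2}\|f'_\ep\|_{\rm Lip}$ and is unrelated to $\|v_\ep\|$ --- though this same loose bookkeeping between $C_\ep$ and the combined rate $C_E + C_w$ of Lemma \ref{lem:wp continuity eqn} is already present in the statement of the paper's Lemma \ref{wellposedpdeepslem}, so it is not a gap particular to your argument.
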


\begin{rem}[Sampling] \label{samplingremark}
In the special case that the internal energy density $f$ is strictly convex, satisfies sufficient regularity and growth hypotheses, and satisfies  McCann's convexity condition that $s \mapsto s^df(s^{-d})$ is convex and nonincreasing on $(0,+\infty)$, then for any velocity of the form  $v(t,x) = \nabla V(x)$, for $V \in W^{1,\infty}(\Rd)$ strongly convex, (\ref{PDE}) is a  2-Wasserstein gradient flow   of  the strongly convex energy $\mathcal{E}(\rho) = \F(\rho)+ \int V \rho$ \cite[Chapter 11]{ambrosiogiglisavare}. Consequently,  we formally expect that, as $t \to +\infty$,  solutions  of (\ref{PDE})  converge exponentially quickly to 
 \begin{align} \label{barrhoeqn2}
 \bar{\rho} =  \max \{ (f^*)'(Z-V),0 \} ,
 \end{align}
 where $Z \in \R$ is a normalization constant chosen so that $\int \bar{\rho} = 1$. Combining this observation with    an argument along the lines of \cite[Corollary 1.3]{craig2022blob}, one expects that discrete particle approximations of (\ref{PDE}) indeed converge   to $\bar{\rho}$ in the long time limit, providing a method for approximating $\bar{\rho}$ by an empirical measure. We leave the question of developing a rigorous, global in time convergence theory, by which our method can be used to sample probability measures $\bar{\rho}$, to future work.
\end{rem}

\subsection{Strategy} \label{strategy}

We now describe the strategy of our proof.
The main challenge   in proving that weak solutions of (\ref{epsiloneqn}) converge to a   solution of (\ref{PDE})   lies in proving the weak convergence of the nonlinear term $\rho_{\epsilon}\nabla p_{\epsilon}$ to $\nabla f^*(p)$ and proving that the nonlocal duality relation $p_{\ep}=\vp_{\ep}*\big(f_{\ep}'(\vp_{\ep}*\rho_{\ep})\big)$ induces the local duality relation $p\in \partial f(\rho)$ in the limit.  This challenge is compounded by the fact that, in contrast to the density and pressure variables for the limiting equation (\ref{PDE}), the nonlocal density and pressure variables $\rho_{\epsilon}, p_{\epsilon}$ are not coupled by a pointwise monotone relation.  As a result, $\rho_{\ep}\nabla p_{\ep}$ is not an exact form, so one cannot move the gradient onto a test function.  More importantly, the breakdown of the monotone relation means that the nonlocal equation (\ref{epsiloneqn}) satisfies far fewer dissipation properties compared to the local equation (\ref{PDE}). Indeed, testing (\ref{PDE}) against any monotone function of $\rho$ or $p$ produces a dissipation relation, whereas for (\ref{epsiloneqn}), to the best of our knowledge, there are only two specific choices that produce useful relations.

First, by testing the equation against $p_{\ep}$, formally one has the standard \emph{energy dissipation relation}
\begin{equation}\label{eq:intro_edr}
\mathcal{F}_{\ep}(\rho_{\ep}(t))+ \int_{[0,t]\times \RR^d} \rho_{\ep}|\nabla p_{\ep}|^2+\rho_{\ep}v_{\ep}\cdot \nabla p_{\ep}=\mathcal{F}_{\ep}(\rho_{\ep}^0).
\end{equation}
 This equation is fundamental to any Wasserstein gradient flow type PDE, as it relates the dissipation of the potential energy $\mathcal{F}_{\ep}(\rho_{\ep})$ to quantities that are approximately measuring the expended kinetic energy.  Crucially this provides some control on the pressure gradients, provided that $\mathcal{F}_{\epsilon}$ can be bounded from below.
 
Second, a less-expected \emph{entropy dissipation relation} was discovered by Lions and Mas-Gallic\cite{lions2001methode} and generalized in the recent paper by Carrillo, Esposito, and Wu \cite{carrillo2023nonlocal}.    Lions and Mas-Gallic  studied nonlocal limits of (\ref{PDE}) in the special case $f(a) = f_m(a):=\frac{1}{m}a^m$ for $m=2$, while Carrillo, Esposito, and Wu studied general convex internal energy densities $f$ that can be bounded above and below by power functions $f_{m_1}, f_{m_2}$ with exponents in the range $m_1, m_2\in [1,\infty)$.    The key insight underlying the entropy dissipation relation is that, as long as the gradient of the pressure $\nabla p_{\ep}$ belongs to the  $W_2$ subdifferential of a functional,   then the divergence of the flux $\nabla \cdot (\rho_{\ep}\nabla p_{\ep})$ will produce a good term when integrated against $\log(\rho_{\ep})$. Note that this requires that the pressure-density coupling  has the structure $p_{\ep}=\vp_{\ep}*f_{\ep}'(\vp_{\ep}*\rho_{\ep})$ rather than, for instance, the choice $p_{\ep}=f'_{\ep}(\vp_{\ep}*\rho_{\ep})$.   Testing equation (\ref{epsiloneqn}) against $\log(\rho_{\ep})$ formally one has 
\begin{equation}\label{eq:intro_entropy_dr}
\mathcal{S}(\rho_{\ep}(t))+\int_{[0,t]\times\RR^d} \nabla \rho_{\ep}\cdot \nabla p_{\ep}-\rho_{\ep}\nabla \cdot v_{\ep}=\mathcal{S}(\rho_{\ep}^0)
\end{equation}
where $\mathcal{S}$ is the entropy functional from equation (\ref{entropydef}).

Since $\rho_{\ep}$ and $p_{\ep}$ have unrelated level sets, it is not immediately clear that $\nabla \rho_{\ep}\cdot \nabla p_{\ep}$ is a good term.  However, if we define
\begin{align} \label{muepdef}
\mu_\ep &:= \varphi_\ep*\rho_\ep  , \\
q_\ep &:= f_\ep'(\mu_\ep),
\label{qepdef}
\end{align}
then we can write $p_{\ep}=\vp_{\ep}*q_{\ep}$ and move the mollifier from $p_{\ep}$ to $\rho_{\ep}$ to see that
\[
\int_{[0,t]\times\RR^d} \nabla \rho_{\ep}\cdot \nabla p_{\ep}=\int_{[0,t]\times\RR^d} \nabla \mu_{\ep}\cdot \nabla q_{\ep}.
\]
Since the level sets of $\mue$ and $\qe$ are monotonically coupled through $f'_{\ep}$, it follows that $\nabla \mue\cdot \nabla \qe\geq 0$ pointwise almost everywhere.  This bound provides some additional spatial regularity for $\mue$ and $\qe$, though note that extracting a specific norm bound for either quantity is not so obvious for general $f$.

Even with the two dissipation relations (\ref{eq:intro_edr}) and (\ref{eq:intro_entropy_dr}) in hand, there is insufficient regularity to deduce the strong convergence of any of the key quantities $\rho_{\ep}, \mue, \qe, p_{\ep}$, let alone strong convergence of their derivatives, at least for general $f$.  Hence, we  need to prove the weak convergence of $\rho_{\ep}\nabla p_{\ep}$ to $\nabla f^*(p)$ and the convergence of the nonlocal density-pressure to the local density-pressure coupling without access to strong convergence. We  accomplish this in two main steps, described in detail in Section \ref{sec:convergence}, which are the heart of our arguments.

First, we  show that the difference $\rho_{\ep}\nabla p_{\ep}-\mue\nabla \qe$ converges weakly to zero.  This allows us to replace the inexact form $\rho_{\ep}\nabla p_{\ep}$ with the exact form $\mue\nabla\qe=\nabla f^*_{\ep}(\qe)$.  To show that the difference vanishes, one must control the error created from attempting to move the mollifier off of $\nabla p_{\ep}=\vp_{\ep}*\nabla \qe$ and onto $\rho_{\ep}$ to produce $\mue=\vp_{\ep}*\rho_{\ep}$.  This has been a key step in to verify convergence of solutions in the previous literature \cite{CarrilloCraigPatacchini, craig2022blob, carrillo2023nonlocal}, and has often been referred to as   the \emph{mollifier exchange} step.    In this paper, we establish the mollifier exchange in a much more general setting, requiring us to develop new arguments. The main difficulty in this step lies in providing uniform-in-$\epsilon$ integrability estimates for the quantity $\mue|\nabla \qe|$.  This is nontrivial, as we only have a priori $L^1$ spacetime control on the quantities $f_{\ep}(\mue), \rho_{\ep}|\nabla p_{\ep}|^2,$ and $\nabla \mue\cdot\nabla \qe$ (from the energy and entropy dissipation inequalities respectively). Note that the control of $\rho_{\ep}|\nabla p_{\ep}|^2$ is not of much
use, since $p_{\ep}$ is smoother than $q_{\ep}$. Instead, we must use the control from $\nabla \mue\cdot\nabla \qe$, and it is here
that our regularization $f_{\ep}$ of the original internal energy $f$ plays a crucial role. 
The key insight is that by replacing $f$ with $\frac{\delta(\epsilon)}{2} |a|^2 +{}^{\delta(\epsilon)}f(a) - {}^{\delta(\epsilon)}f(0)$, we ensure that $f_{\ep}$ is both $W^{2,\infty}$ and strongly convex on $[0,+\infty)$ (or equivalently that $f_{\ep}$ and $f_{\ep}^*$ are strongly convex on the relevant portions of their domains).  This allows us to extract much more useful information from the duality coupling of $\mue$ and $\qe$.  In particular, $\nabla \mue\cdot\nabla \qe$ will now control both $|\nabla \mue|^2$ and $|\nabla \qe|^2$ as 
\[
\nabla \mue\cdot \nabla \qe=f''_{\ep}(\mue)|\nabla \mue|^2=f_{\ep}^{*\prime\prime}(\qe)|\nabla \qe|^2,
\]
where we note that the second derivatives $f''_{\ep}(\mue)$ and $f_{\ep}^{*\prime\prime}(\qe)$ are well defined almost everywhere on the supports of $|\nabla \mue|$ and $|\nabla \qe|$ respectively and uniformly bounded from below.  While for general $f$ this lower bound will degenerate in the $\epsilon\to 0$ limit,  our arguments can handle this degeneration as long as $\delta(\epsilon)$ vanishes sufficiently slowly compared to $\epsilon$ (see Theorem \ref{mainthm} for a precise statement).

Now that we have replaced $\rho_{\ep}\nabla p_{\ep}$ by $\mue\nabla \qe=\nabla f^*_{\ep}(\qe)$, we move on to the second main step, in which we will show that $f^*_{\ep}(\qe)$ weakly converges to $f^*(p)$ for some $p\in \partial f(\rho)$.  Our strategy in this step represents the most novel part of this paper.  Rather than directly prove the weak convergence --- a very challenging task with our limited integrability and regularity information --- we take a detour by reformulating equation (\ref{PDE}) in terms of its $\dot{H}^{-1}$ gradient flow structure instead of the $W_2$ structure. To convert between the two representations of (\ref{PDE}), we leverage the corresponding $\dot{H}^{-1}$ energy density:
\begin{equation}\label{eq:e_intro}
    e(a):=\begin{cases}
    af(a)-2\int_0^a f(\alpha)d\alpha & \textup{if}\; a\in \dom(f)\\
    +\infty &\textup{otherwise.}
\end{cases}
\end{equation}
Properties of the transformation $f\mapsto e$ were previously studied by the second author  \cite{jacobs2021existence}, where the $\dot{H}^{-1}$ structure was used to construct solutions to certain PDE systems containing a $W_2$ gradient flow structure. One can also view this transformation as the reverse direction of Otto's celebrated interpretation of the Porous Media Equation as a $W_2$ gradient flow \cite{otto_pme_geometry}. 

Once we have the transformed energy $e$,  equation (\ref{PDE}) can be rewritten as the following  $\dot{H}^{-1}$ gradient flow-type PDE
\begin{align} \tag{$\dot{H}^{-1}$ PDE} \label{PDEH}
\begin{cases}
\partial_t \rho -\Delta \zeta- \nabla \cdot (\rho v) = 0 ,  \qquad \qquad \text{ in duality with $C^\infty_c( (0,+\infty) \times \Rd),$}  \\
 \zeta \in \partial e(\rho), \text{for a.e. $(t,x)$} \\
\rho(0, \cdot) = \rho^0 .
\end{cases}
\end{align}
In particular, when $v=0$, the above equation is formally the $\dot{H}^{-1}$ gradient flow of the energy $\mathcal{E}(\rho) :=\int e(\rho(x))dx$.  The conversion between these structures comes from the observation that $b\mapsto f^*(b)$ is a monotone map on the set $\{b\in \partial f(a): a\in \dom(\partial f)\}$.  Therefore, the monotone relation between $\rho$ and $p$ (i.e. $p\in \partial f(\rho)$) and the fact that $e$ is defined precisely so that $\partial e(a)=\{f^*(b): b\in \partial f(a)\}$  yield a monotone relation between $\rho$ and $p$ in terms of $e$:   $f^*(p)\in \partial e(\rho)$. 

Due to the nonlocality of  (\ref{epsiloneqn}), it is not possible to rewrite this equation in terms of an $\dot{H}^{-1}$ structure.  Nonetheless, we can still introduce variables that will converge to the correct $\dot{H}^{-1}$ structure in the limit.
In particular,  we consider   the transformed energies $e_{\ep}$, obtained by applying the transformation (\ref{eq:e_intro}) to $f_{\ep}$, and define the approximate $\epsilon$-nonlocal $\dot{H}^{-1}$ dual variables via, 
\begin{equation}
    \label{eq:zeta def}\zeta_{\ep}:=f^*_{\ep}(\qe).    
\end{equation}
We then note that the $W_2$ duality relation $\qe=f'_{\ep}(\mue)$ implies the $\dot{H}^{-1}$ relation $\zeta_{\ep}=e'_{\ep}(\mue)$, where we leverage the fact that the regularized internal energy densities are differentiable.

Now that we have our ducks in a row, we are ready to show how to recover the duality relation in the limit.  Rather than directly prove that $\zeta_{\ep}=f^*_{\ep}(\qe)$ weakly converges to $f^*(p)$ for some $p\in \partial f(\rho)$, we   instead prove that $\zeta_{\ep}$ converges weakly to some $\zeta\in \partial e(\rho)$.  From there, it is not too difficult to show that  $\zeta\in \partial e(\rho)$ implies the existence of some $p\in \partial f(\rho)$ satisfying $\zeta=f^*(p)$.

The key motivation for this $\dot{H}^{-1}$ detour comes from the fact that the $\dot{H}^{-1}$ dual variables  $\zeta_{\ep}, \zeta$ are much better behaved than the $W_2$ dual variables $\qe, p_{\ep}, p$.  Indeed, $\zeta_{\ep}, \zeta$ are always nonnegative, whereas $ q_{\ep}, p_{\ep}, p$ can take on negative values.  In fact, for energies corresponding to fast diffusion equations, $p$ takes on the value $-\infty$ wherever $\rho=0$. (Though note that this doesn't happen at the $\epsilon$ level due to our regularization of $f$).  Furthermore, one typically expects $\dot{H}^{-1}$ dual variables to have better spatial regularity than $W_2$ dual variables.  This is because the energy dissipation relation for $W_2$ cannot control the pressure gradient where the density vanishes --- a difficulty that is not shared by $\dot{H}^{-1}$ flows.  Nonetheless, there are still nontrivial wrinkles that must be ironed out.  Since (\ref{epsiloneqn}) does not have an exact $\dot{H}^{-1}$ gradient flow structure, we do not know that $\int_{\{t\}\times \RR^d} e_{\epsilon}(\mue)$ stays bounded along the flow.  Relatedly, we also do not have enough integrability on any of the variables to know that the product $\mue\zeta_{\ep}$ belongs to $L^1$. This is challenging, as one typically passes to the limit in the term $\zeta_{\ep}=e'_{\ep}(\mue)$ by rewriting it in the equivalent form $\mue\zeta_{\ep}=e_{\ep}(\mue)+e^*_{\ep}(\zeta_{\ep})$.  To get around integrability issues for $\mue\zeta_{\ep}$ and $e_{\ep}(\mue)$, for each $m\in \N$ we introduce the truncated variables $\zeta_{\ep,m}:=\min(\zeta_{\ep}, m)$.  Since $\zeta_{\ep,m}$ is a monotone transformation of $\zeta_{\ep}$, there exists a truncated energy $e_{\epsilon,m}$ such that $\mue\zeta_{\ep,m}=e_{\ep,m}(\mue)+e^*_{\ep,m}(\zeta_{\ep,m})$.  Thanks to the truncation and the nonnegativity of $e_{\ep,m}, e^*_{\ep,m}$, the $L^1$ boundedness of both sides of $\mue\zeta_{\ep,m}=e_{\ep,m}(\mue)+e^*_{\ep,m}(\zeta_{\ep,m})$ becomes trivial.  We are then able to pass to the correct limit (as $\epsilon\to 0)$ in $\mue\zeta_{\ep,m}$ using spacetime compensated compactness and the correct limit in $e_{\ep,m}(\mue)+e^*_{\ep,m}(\zeta_{\ep,m})$ using weak lower semicontinuity and Young's inequality.   Finally, we   send $m\to\infty$ to recover our desired relation $\rho\zeta=e(\rho)+e^*(\zeta)$, completing the argument.

\subsection{Related work} \label{literature}
We now place our result in context with related work. Nonlocal approximation of diffusive PDEs has been an active area of research for more than twenty years. 
As explained above Corollary \ref{particlecorollary} below, this is largely due to the connection between nonlocal approximations and  deterministic particle methods for diffusion. We refer the reader to \cite{CarrilloCraigPatacchini} and \cite{craig2022blob} for comprehensive reviews of the related literature, along with \cite{di2015rigorous, daneri2022deterministic, daneri2023deterministic,di2022optimal} for   related developments in one spatial dimension and { \cite{philipowski2007interacting, figalli2008convergence} for related results in the presence of  Brownian motion. Work by Leclerc, M\'erigot, Santambrogio, and Stra \cite{leclerc2020lagrangian} considered an alternative Lagrangian discretization of nonlinear diffusion equations, which instead applies the Moreau-Yosida regularization directly to the internal energy functional, equation (\ref{internalenergydef}), at the level of the 2-Wasserstein distance. }There are also  several related nonlocal-to-local results for the Cahn-Hilliard equation. In particular, existence and uniqueness of solutions for nonlocal Cahn-Hilliard equations, as well as convergence to the local equation, was established by Davoli, Ranetbauer, Scarpa, and Trussardi  \cite{DRST2020}. In their proof, as in ours, a Moreau-Yosida regularization of the internal energy density is used. Relatedly,   Elbar, Perthame, and Skrzeczkowski establish that the Cahn-Hilliard equation is derived from the Vlasov equation via a nonlocal limit that involves  a double convolution in the diffusion term, similar to (\ref{epsiloneqn}) \cite{elbar2023limit}. Related nonlocal-to-local limits also appear in the study of hydrodynamic limits in models of collective behavior, where the regularization of the velocity is known as the \emph{Favr\'e filtration} \cite{shvydkoy2021dynamics, favre1983turbulence}.

The line of research most closely related to our approach began with the work of    Lions and Mac-Gallic \cite{lions2001methode}, which established that solutions to $\partial_t \rho_\ep-\div(\rho_\ep\nabla (\varphi_\ep*\rho_\ep))=0$ converge to those of the quadratic porous medium equation as $\ep \to 0$, leveraging the entropy dissipation inequality. In \cite{CarrilloCraigPatacchini}, Carillo, Craig, and Pattacchini built on this previous work by introducing a deterministic particle method for PDEs with linear  diffusion $\Delta u$ or diffusion of porous medium type $\Delta(u^m)$ and  establishing convergence for the quadratic porous medium equation ($m=2$). Convergence for a very similar regularization    was established for the inhomogenous quadratic porous medium equation by  Craig, Elamvazhuthi, Haberland, and Turanova \cite{craig2022blob}. More recently, Carillo, Esposito, and Wu succeeded in proving convergence for a general class of porous  medium equations, including $\partial_t \rho = \Delta \rho^m$, $m>1$.  \cite{carrillo2023nonlocal}. Finally, recent work by Burger and Esposito extended these ideas to the setting of cross diffusion equations \cite{BURGER2023113347}.

The connection between nonlocal approximations of diffusion  and sampling (see Remark \ref{samplingremark}) has also inspired several related works. We refer to \cite{craig2022blob} for a detailed review of the literature and the connection to other interacting particle methods for sampling, including Stein Variational Gradient Descent \cite{liu2016stein}. 
Recent works in this direction include work by Maoutsa, Reich, and Opper, who propose a sampling method based on an interacting particle system for linear diffusion  \cite{MaoutsaReichOpper2020}, as well as work by Li, Liu, Korba, Yurochkin, and Solomon \cite{li2022sampling}, who sample via spatially inhomogeneous quadratic porous medium diffusion, similarly to that introduced in  \cite{craig2022blob}. {Chen, Huang, Huang, Reich, and Stuart study the general relationship between gradient flows in the space of probability measures and sampling, as well as the distinguished role of the KL divergence \cite{chen2023sampling}, which corresponds to linear diffusion at the level of (\ref{PDE}).}
Recent work by Lu, Slep\v{c}ev, and Wang \cite{Lu_2023} developes a deterministic sampling method based on a nonlocal approximation of the Fokker-Planck equation with a birth-death term and studies its convergence both on bounded time intervals and asymptotically. 

\subsection{Organization of Paper}
The remainder of our paper is organized by follows. In Section \ref{preliminaries} we collect preliminary information on optimal transport, convex functions, and the Moreau-Yosida regularization. In Section \ref{regintenesec}, we prove several elementary properties of our regularized internal energies $f_\ep$ and their convex conjugates $f_\ep^*$. We also define and establish fundamental properties of the associated $\dot{H}^{-1}$ energy densities $e$ and $e_\ep$,  as well as their truncations $e_m$ and $e_{m,\ep}$. In Section \ref{sec:ep flow}, we develop the theory of the regularized equation (\ref{epsiloneqn}), proving well-posedness, the energy dissipation relation, and the entropy dissipation relation. Finally, in Section \ref{sec:convergence}, we turn to the proof of our main Theorem \ref{mainthm}, that solutions of (\ref{epsiloneqn}) converge to a solution of (\ref{PDE}) as $\epsilon \to 0$.

 \subsection{Acknowledgements}
The work of K. Craig has been supported by NSF DMS grant 2145900. The work of M. Jacobs has been supported by NSF DMS grant 2400641. The work of O. Turanova was supported by NSF DMS grant 2204722. The authors gratefully acknowledge the support from the Simons Center for Theory of Computing, at which part of this work was completed.

\section{Preliminaries} \label{preliminaries}
\subsection{Notation} \label{notationsec}
We denote the  $d$-dimensional Lebesgue measure by $dx$.  Given a Borel probability measure $\mu\in\P(\Rd)$, we write $d\mu(x) \ll dx$ if $\mu$ is absolutely continuous with respect to Lebesgue measure, in which case we will denote both the probability measure $\mu$ and its Lebesgue density by the same symbol, e.g. $d \mu(x) = \mu(x) dx$. For $p \geq 1$, let $M_p(\mu) = \int |x|^p d \mu(x)$ be the $p$-th moment of $\mu$ and let $\P_{p}(\Rd)$ denote the space of probability measures with finite $p$th moment.

We let $L^p(\mu;\Omega)$ denote  the Lebesgue space of functions $f$ on $\Omega$ with $|f|^p$ being $\mu$-integrable, and abbreviate $L^p(\Omega) = L^p(dx;\Omega)$. (We commit a slight abuse of notation by using  the same notation for the Lebesgue spaces of real-valued and $\rr^d$-valued functions.)  
For $p \geq 1$, we abbreviate $\| f \|_p = \| f \|_{L^p(\Rd)} =\left( \int |f|^p dx \right)^{1/p}$ for the $L^p$ 
norm of a function $f$.

For any $\theta \in (0,1)$, we let $\dot{C}^{\theta}(\RR^d)$ denote the space of functions with bounded H\"older seminorm on $\RR^d$, i.e. the space of continuous functions so that the following seminorm is finite:
\begin{align} \label{Cdotdef}
\norm{h}_{\dot{C}^{\theta}(\RR^d)}:=\sup_{x,y\in \R^d} \frac{|h(x)-h(y)|}{|x-y|^{\theta}} .
\end{align}
Likewise, we let $\dot{W}^{-\theta,1}(\RR^d)$ denote the dual space of $\dot{C}^{\theta}(\R^d)$.

\subsection{Optimal transport and the Wasserstein metric} \label{OTsec}
 
 We now describe basic facts about  the Wasserstein metric and Wasserstein gradient flows, which we will use in what follows. For further details, we refer the reader to one of the excellent textbooks on the subject \cite{ambrosiogiglisavare, villani2003topics, santambrogio2015optimal, figalli2021invitation,ambrosio2021lectures}.
 
Given a Borel measurable map $\bt \: \R^n \to \R^m$, we say that \emph{$\bt$ transports $\mu \in \P(\R^n)$ to $\nu \in \P(\R^m)$} if $\nu(A) = \mu(\bt^{-1}(A))$ for all measurable   $A \subseteq \R^m$. We refer to  $\bt$ as a \emph{transport map} and denote $\nu$ by $\bt_\# \mu \in \P(\R^m)$,  called the \emph{push-forward} of $\mu$ through $\bt$.
For $\mu,\nu\in\P(\R^d)$,  the set of \emph{transport plans} from $\mu$ to $\nu$ is given by,
\bes
	\Gamma(\mu,\nu) := \{\bgamma \in\P(\R^d\times\R^d) \mid {\pi^1}_\# \bgamma = \mu,\, {\pi^2}_\# \bgamma = \nu\},
\ees
where $\pi^1,\pi^2\colon \R^d \times \R^d \to \R^d$ are the projections onto the first and second components.
For $p \geq 1$, the \emph{$p$-Wasserstein distance}  between   $\mu,\nu\in\P_p(\R^d)$ is given by,
\be\label{eq:wass-p}
	W_p(\mu,\nu) = \min_{\bgamma \in \Gamma(\mu,\nu)}  \left( \int_{\R^d\times \R^d} |x-y|^p d \bgamma(x,y) \right)^{1/p} .
\ee
A transport plan  $\bgamma$ is \emph{optimal} if it attains the minimum in \eqref{eq:wass-p}, and we denote the set of optimal transport plans by $\Gamma_0(\mu,\nu)$. In the special case $p=1$, the dual formulation of the 1-Wasserstein metric \cite[Theorem 1.1]{villani2003topics} implies the following relationship to the $W^{-1,1}(\Rd)$ norm: for all $\mu, \nu \in \P_1(\Rd)$,
\begin{align} \label{dualsobolevtoW2}
 \norm{\mu-\nu}_{W^{-1,1}(\RR^d)}  = \sup_{\|\phi\|_{W^{1,\infty}} \leq 1} \int \phi ( \mu -  \nu) \leq \sup_{\phi \in W^{1,\infty}(\Rd) : \|\phi\|_{\rm Lip} \leq 1} \int \phi ( \mu -  \nu)  =   W_1(\mu, \nu) . \end{align}

Convergence with respect to the p-Wasserstein metric is stronger than narrow convergence of probability measures \cite[Remark 7.1.11]{ambrosiogiglisavare}. Recall that $\mu_n \to \mu$ narrowly means the probability measures converge in the duality with bounded continuous functions. However, if $ \mu_n \in \P_2(\R^d)$ and $\mu \in \P_2(\R^d)$, then
\begin{align} \label{W2andnarrowconv}
	\mbox{$W_2(\mu_n,\mu) \to 0$ as $n\to +\infty$} \iff \left(\mbox{$\mu_n \to \mu$ narrowly and $M_2(\mu_n) \to M_2(\mu)$ as $n\to +\infty$}\right).
\end{align}

We   require the following notion of regularity in time of curves in the space of probability measures.
\begin{defn}[Absolutely continuous]\label{defi:ac-curve}
We say $\mu:[0,+\infty)\rightarrow \P(\Rd)$ is \emph{locally absolutely continuous} on $[0,T]$, and write $ \mu \in AC^2_\loc([0,+\infty);P_2(\Rd))$, if there exists $f\in L^2_\loc([0,+\infty))$ so that, 
\begin{align} \label{eq:ac-p}
	W_2(\mu(t),\mu(s)) \leq \int_s^t f(r)\,d r \quad \mbox{for all  $0\leq s\leq t<+\infty$.}
\end{align}
\end{defn}

\subsection{Convex functions}
\label{ss:convex fns background}  We begin with 
basic definitions and facts, which may be found in  \cite[Chapters 1, 8, 16]{bauschke2011convex}.

Consider a  function $g:  \R \to \mathbb{R} \cup \{+\infty\}$. The \emph{domain} of $g$ is   $\dom(g) = \{ x \in \R : g(x) < +\infty\}$. We say $g$ is \emph{proper} if $\dom(g)\neq \emptyset$, and   $g$ is \emph{convex} if
 \[
 g(\alpha x+(1-\alpha)y)\leq \alpha g(x) +(1-\alpha)g(y) \text{  for all $x,y\in \dom(g)$ and all $\alpha\in[0,1]$}.
 \]
If $g$ is convex, then so is $\dom(g)$. 

The \emph{subdifferential} of a proper function $g:\R \to \mathbb{R} \cup \{+\infty\}$ is the set-valued operator
 $\partial g:\R \to 2^{\R}$ defined by
 \begin{align} \label{subdiffdef}
 \partial g(x)=\left\{p\in \R \, : \, \text{ for all $y\in \R$, }p(y-x)+g(x)\leq g(y)\right\}.
 \end{align}
 The $\emph{domain}$ of the subdifferential is defined by $\dom (\partial g)  = \{x\in \R\, |\, \partial g\neq \emptyset\}$. If $g$ is proper, convex, and lower semicontinuous, then we have the containment
 \begin{equation}
     \label{eq:dom g dom par g}
{\rm cont}(g)  = \interior \dom(g) \subseteq \dom(\partial g)\subseteq \dom(g) ,
 \end{equation}
 where ${\rm cont}(g)$ denotes the domain of continuity of $g$. Likewise, when  $g:\R\rightarrow\R\cup\{+\infty\}$ is proper, lower semicontinuous, and convex, $\partial g$ is a maximally monotone operator \cite[Definition 20.20, Theorem 20.25]{bauschke2011convex}, which implies that 
 \begin{equation}
     \label{eq:subdiff monotone}
     \text{if $x,y\in \dom(\partial g)$ with $x\leq y$, then $u\leq v$ for all $u\in \partial g(x)$ and all $v\in \partial g(y)$.
}
 \end{equation}
 
Throughout, we will use the following notions of $\lambda$-convexity and concavity for a function $f: \mathbb{R} \to \mathbb{R}$ and $\lambda \in \mathbb{R}$:
 \begin{align*}
& f \text{ is  {$\lambda$-convex/concave}} \iff f(\cdot)- \lambda |\cdot|^2/2 \text{ is convex/concave}.  
 \end{align*}
 Note that, if $f$ is   $\lambda_0$-convex and $\lambda_1$-concave, for $\lambda_0, \lambda_1 \in \mathbb{R}$, then $f \in W^{2, \infty}(\mathbb{R}^d)$ and its distributional second derivative satisfies $\lambda_0 \leq f'' \leq \lambda_1$  \cite[Theorem 18.15]{bauschke2011convex}.

\subsection{Convex conjugate and Moreau-Yosida regularization} \label{moreauenvelopesec}
We continue by collecting some elementary properties of  the convex conjugate and the Moreau-Yosida regularization; see \cite[Chapters 12-14]{bauschke2011convex}.  
Consider a  function $g:  \R \to \mathbb{R} \cup \{+\infty\}$.  
The \emph{convex conjugate} of $g$ is given by
 \[   g^*(b) = \sup_{a \in \mathbb{R}} \left\{ ab - g(a)  \right\}. \]
 If $g$ is proper, convex, and lower semicontinuous, so is the convex conjugate, and $g = (g^*)^*$. 
 Furthermore,  
 \begin{align} \label{convexdualupperlowerhessianbound}
 g \text{ is $\lambda$-convex   if and only if } g^*  \text{ is $\lambda^{-1}$-concave,}
 \end{align}
 and $g$ and $g^*$ satisfy the Fenchel-Young inequality:
 \begin{align} g(a) + g^*(b) \geq ab , \quad  \forall a, b \in \R,  \text{ with equality if and only if } b \in \partial g(a) \iff a \in \partial g^*(b) . \label{caseofequality}
 \end{align}
 
In the following lemma, we recall some standard facts about the convex conjugate. For a proof, see, for example, \cite[Lemma 2.1]{jacobs2021existence}.
 \begin{lem}
 \label{convexconjincreasing}
Suppose $h: \mathbb{R} \to \mathbb{R} \cup \{+\infty\}$ is   convex and lower semicontinuous,  with   $h(0)=0$ and $h(a) = +\infty$ for   $a <0$. Then $h^*$ is nondecreasing, nonnegative, and $\lim_{b \to -\infty} h^*(b) = 0$. 
 \end{lem}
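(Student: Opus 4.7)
The plan is to establish the three properties in turn, using only the definition of the convex conjugate, the restriction $\dom(h) \subseteq [0,+\infty)$ forced by the hypotheses, and the biconjugate identity $h = h^{**}$ (valid since $h$ is proper, convex, and lower semicontinuous).

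First I would handle nonnegativity. Since $h(a) = +\infty$ for $a < 0$, the supremum in $h^*(b) = \sup_{a \in \R}\{ab - h(a)\}$ is effectively a supremum over $a \geq 0$. Taking the admissible value $a = 0$ and using $h(0) = 0$ gives $h^*(b) \geq 0 \cdot b - 0 = 0$ for every $b \in \R$. This step is immediate.

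Next I would prove monotonicity. For any $a \geq 0$, the map $b \mapsto ab - h(a)$ is nondecreasing in $b$ (because $a \geq 0$). Taking the supremum over $a \geq 0$ preserves this pointwise monotonicity, so $h^*$ is nondecreasing. Again straightforward.

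The part that requires a slightly sharper observation is the limit $\lim_{b \to -\infty} h^*(b) = 0$. Because $h^*$ is nondecreasing and bounded below by $0$, the limit exists and equals $\inf_{b \in \R} h^*(b)$, which is a nonnegative number $\ell$. To identify $\ell$ with $0$, I would apply the biconjugate identity $h = (h^*)^*$ at the point $a = 0$:
\[
0 \;=\; h(0) \;=\; (h^*)^*(0) \;=\; \sup_{b \in \R}\{0 \cdot b - h^*(b)\} \;=\; -\inf_{b \in \R} h^*(b).
\]
Thus $\inf_{b \in \R} h^*(b) = 0$, and combining with monotonicity yields $\lim_{b \to -\infty} h^*(b) = 0$. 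The only conceptual point worth flagging is the use of $h = h^{**}$, which is legitimate precisely because $h$ is proper, convex, and lower semicontinuous (all hypotheses of the lemma). No further estimates or compactness arguments are needed.
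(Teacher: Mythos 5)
Your proof is correct and complete. The paper itself does not reproduce a proof of this lemma --- it defers to an external reference (Lemma 2.1 of the cited work of Jacobs) --- so there is no in-paper argument to compare against; that said, the route you take (nonnegativity by testing $a=0$, monotonicity by termwise monotonicity of $b \mapsto ab - h(a)$ for $a \ge 0$, and identification of the limit via the Fenchel--Moreau biconjugate identity $h^{**}(0) = h(0) = 0$) is the standard and natural one, and each step is justified: $h$ is proper because $h(0)=0$, so $h = h^{**}$ applies.
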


For any proper, convex, lower semicontinuous function $g: \R \to \mathbb{R} \cup \{+\infty\}$ with $\dom(g) \subseteq [0,+\infty)$, the \emph{Moreau-Yosida regularization} of $g$ with parameter $\gamma >0$ is given by
\[ {}^\gamma g(a) = \inf_{b \geq 0} \left\{ g(b) + \frac{1}{2 \gamma} |a-b|^2 \right\}. \]
Let  $J_{\gamma}(a)$ denote the value of $b$ that attains the minimum, which is known as the \emph{proximal map}. 
Then ${}^\gamma g \in W^{2,\infty}(\mathbb{R})$ is a convex function with derivative
\begin{align} \label{MorEnDer}
  ({}^\gamma g)'(a) = \frac{a - J_\gamma(a)}{\gamma} \text{ and } \|({}^\gamma g)' \|_{\rm Lip} \leq \frac{1}{\gamma} \ . 
  \end{align}
Moreover, combining the  fact that ${}^\gamma g \in W^{2, \infty}(\mathbb{R})$ with equation (\ref{MorEnDer}) relating its derivative to $J_\gamma$, we see that $J_\gamma \in W^{1,\infty}(\mathbb{R})$. Likewise, one can  show that $\alpha \mapsto J_\gamma(a)$ is non-decreasing  and $|J_\gamma(a)- J_\gamma(b)| \leq |a-b|$ for all $a, b  \geq 0$ \cite[Proposition 12.27]{bauschke2011convex}. Furthermore, one can show that
\begin{align} \label{slopeboundresolvent}
\gamma |  ({}^\gamma g)'(a) | = |a- J_\gamma(a)| \leq \gamma |\partial g|(a) ,  \ \forall a \geq 0 ;
\end{align}
see \cite[Theorem 3.1.6]{ambrosiogiglisavare} where $|\partial g|(a)=\max\left(|\sup \partial g(a)|, |\inf\partial g(a)|\right)$.
Finally, we also have, 
\begin{align} \label{MEconv} {}^\gamma g(a) \uparrow g(a) \text{ as }\gamma \to 0 , \text{ for all }a \in \R . \end{align}

\section{Wasserstein and Dual Sobolev Internal Energies} \label{regintenesec}

\subsection{Regularized internal energy density}

We now collect some elementary properties of the regularized internal energy density $f_\ep$.

\begin{lem}[Properties of the regularized energy density]
\label{reginternalenergydensityprop}
Suppose the internal energy density $f$ satisfies   Assumption \ref{internalas} (\ref{convexlctsas}) and (\ref{nontrivial_convex}) .
Then the regularized internal energy density $f_\ep$, defined in equation (\ref{eq:eps energy}), satisfies the following properties:
\begin{enumerate}[(i)]

\item $f_\ep$ is proper, lower semicontinuous, and $\delta(\epsilon)$-convex,  with $f_\epsilon(0) = 0$ and $\dom(f_\ep)=[0,+\infty)$. \label{fepstronglyconvex}
\item $f_\ep$ is differentiable on $[0,+\infty)$, where $f_\ep'(0)$ denotes the derivative from the right at 0.
\label{item:fep diff}
\item  $f_\epsilon \in  W^{2,\infty}((0,+\infty))$, with distributional second derivative satisfying $\delta(\epsilon) \leq f''_\ep  \leq \delta(\epsilon) + \delta(\epsilon)^{-1} $ on $(0,+\infty)$, and $f_\ep'$ is Lipschitz on $[0,+\infty)$.
\label{fepw2infty}
\item $f_\epsilon$ converges to $f$ pointwise as $\ep \to 0$; hence  $f_\epsilon$ epi-converges to $f$ and $f_{\epsilon}$ converges uniformly to $f$ on any compact subset of $\interior(\dom(f))$.
\label{fep_to_f}
\end{enumerate}
\end{lem}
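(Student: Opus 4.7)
The overall plan is to assemble each of the four items from the standard properties of the Moreau-Yosida regularization collected in Section \ref{moreauenvelopesec}, combined with elementary observations about the explicit decomposition
\[
f_\epsilon(a) = \tfrac{\delta(\epsilon)}{2}|a|^2 + {}^{\delta(\epsilon)}f(a) - {}^{\delta(\epsilon)}f(0) \quad (a\geq 0), \quad f_\epsilon \equiv +\infty \text{ on } (-\infty,0).
\]
None of the items requires a substantively new argument: the only places that warrant care are the boundary behavior at $a = 0$ in (ii) and the epi-convergence statement in (iv), and both turn out to be routine.

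For (i)--(iii), I would start from the observation that on $[0,+\infty)$ each of the three summands above is convex, lower semicontinuous, and finite (indeed, the Moreau envelope is globally $W^{2,\infty}(\R)$ by Section \ref{moreauenvelopesec}), so $\dom(f_\epsilon) = [0,+\infty)$, $f_\epsilon$ is continuous there, and extending by $+\infty$ on $(-\infty,0)$ gives lower semicontinuity on $\R$; the normalization $f_\epsilon(0) = 0$ and the $\delta(\epsilon)$-strong convexity (contributed entirely by the quadratic, since ${}^{\delta(\epsilon)}f$ is convex) are then immediate, yielding (i). For (ii), equation (\ref{MorEnDer}) gives $({}^{\delta(\epsilon)}f)'(a) = (a - J_{\delta(\epsilon)}(a))/\delta(\epsilon)$ with $\delta(\epsilon)^{-1}$-Lipschitz derivative, so ${}^{\delta(\epsilon)}f$ lies in $C^1(\R)$ and hence $f_\epsilon$ is differentiable on $[0,+\infty)$, interpreting $f_\epsilon'(0)$ as the right derivative. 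For (iii), I would combine the a.e.\ second-derivative bounds $0 \leq ({}^{\delta(\epsilon)}f)'' \leq \delta(\epsilon)^{-1}$ on $(0,+\infty)$ --- coming from convexity and from the Lipschitz bound on the derivative, respectively --- with the $\delta(\epsilon)$ contribution of the quadratic to obtain $\delta(\epsilon) \leq f_\epsilon'' \leq \delta(\epsilon) + \delta(\epsilon)^{-1}$; Lipschitz continuity of $f_\epsilon'$ on $[0,+\infty)$ is then just the $L^\infty$ bound on $f_\epsilon''$.

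For (iv), pointwise convergence $f_\epsilon(a) \to f(a)$ on $[0,+\infty)$ follows from the monotone limit ${}^{\delta(\epsilon)}f \uparrow f$ in (\ref{MEconv}), applied at both $a$ and $0$, together with $\frac{\delta(\epsilon)}{2}|a|^2 \to 0$; for $a < 0$ both sides equal $+\infty$. I would then invoke the classical fact that pointwise convergence of a sequence of finite convex functions on an open convex subset of $\R$ upgrades to uniform convergence on compact subsets, which yields uniform convergence on compact subsets $K \subset \interior(\dom(f))$. Epi-convergence finally follows from pointwise convergence of proper lsc convex functions to a proper lsc convex function: the constant recovery sequence $a_n \equiv a$ at any $a \in \dom(f)$ gives the epi-limsup inequality via pointwise convergence, while the epi-liminf inequality follows from lower semicontinuity of $f$ combined with uniform convergence on compacts of $\interior(\dom(f))$, with boundary points handled by lsc of $f$ and the fact that $f_\epsilon = f = +\infty$ on $(-\infty,0)$.
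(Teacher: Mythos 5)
Your proof takes essentially the same route as the paper: items (i)--(iii) are read off the $W^{2,\infty}$ regularity and the bound $0 \leq ({}^{\delta(\epsilon)}f)'' \leq \delta(\epsilon)^{-1}$ of the Moreau--Yosida envelope (Section \ref{moreauenvelopesec}), and item (iv) follows from the pointwise convergence (\ref{MEconv}) together with a convex-analysis upgrade to epi-convergence and locally uniform convergence, which the paper gets in one stroke by citing \cite[Theorem 7.17]{rockafellar2009variational}.

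One caveat on your inline sketch of the epi-liminf in (iv): boundary points of $\dom(f)$ are not ``handled by lsc of $f$'' alone. Pointwise convergence of proper lsc convex functions to a proper lsc convex function does \emph{not} in general imply epi-convergence without the hypothesis $\interior(\dom f)\neq\emptyset$. For instance, take $f_n$ piecewise-linear convex with $f_n(0)=0$, $f_n(1/n)=-1$, slope $n^2$ on $(1/n,\infty)$, and $f_n\equiv+\infty$ on $(-\infty,0)$; then $f_n$ converges pointwise to the lsc proper convex indicator function of $\{0\}$, yet $\liminf_n f_n(1/n)=-1<0$, so epi-liminf fails at the boundary point $0$. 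What rescues the boundary case here is precisely Assumption \ref{internalas}(\ref{nontrivial_convex}), $\interior(\dom f)\neq\emptyset$, which is also a hypothesis in \cite[Theorem 7.17]{rockafellar2009variational}. Since you do ultimately appeal to that general fact, your conclusion stands, but the sketch as phrased understates the role of the interior-domain assumption.
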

\begin{proof}

First, note that standard properties of the Moreau-Yosida regularization,  recalled in Section \ref{moreauenvelopesec}, imply that 
${}^{\delta(\ep)}f\in W^{2,\infty}(\rr)$ with distributional second derivative satisfying $0\leq ({}^{\delta(\ep)}f)''\leq \delta(\ep)^{-1}$. 
Items      (\ref{fepstronglyconvex}), (\ref{item:fep diff}),  and (\ref{fepw2infty})  follow immediately from this and from the definition of $f_\ep$. Next, we show part (\ref{fep_to_f}). By definition of $f_\epsilon$ and the pointwise convergence of the Moreau-Yosida regularization as $\ep \to 0$, as in equation (\ref{MEconv}), we see that $f_\ep(x) \to f(x)$ for all $x \in \mathbb{R}$. Thus, the epi convergence of $f_\epsilon \to f$ and the uniform convergence on compact subsets of  $\interior (\dom(f))$ follows from \cite[Theorem 7.17]{rockafellar2009variational}.

\end{proof}

Now we will  establish a uniform version of the coercivity property, Assumption \ref{internalas}(\ref{moment_control}), for the regularized energies $\F_{\epsilon}$.

\begin{lem}[Lower bound for $\F_\ep$]\label{lem:f_e_lower_bound}
Suppose the internal energy density $f$ and mollifier $\varphi_\epsilon$ satisfy  Assumptions \ref{internalas} and \ref{mollifieras}. 
 {Then there exists an increasing, concave function $\tilde{H}:[0,+\infty)\to[0,+\infty)$ such that $\tilde{H}(0)=0$ and 
\begin{equation}\label{f_e_lower_bound}
 \frac{\delta(\epsilon)}{2}\norm{\vp_{\ep}*\rho}_{L^2(\RR^d)}^2 \leq  \F_{\epsilon}(\rho) + \tilde{H}\left( \int_{\RR^d} (1+|x|^2)\left(\vp_{\ep}*\rho(x)\right)dx\right) \;  
\end{equation}
}
for all $\epsilon >0$ and finite Borel measures $\rho \in \mathcal{M}(\Rd)$ with finite second moment.
\end{lem}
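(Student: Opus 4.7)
The plan is to apply the coercivity bound of Assumption \ref{internalas}(\ref{moment_control}) to the mollified measure $(\vp_\ep*\rho)\,dx$ and then transfer this bound from the original energy density $f$ to its Moreau--Yosida smoothing via careful pointwise comparisons. I would proceed in the following steps.

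\textbf{Step 1 (Reduction.)} Using the definition of $f_\ep$ in \eqref{eq:eps energy},
\begin{equation*}
\F_\ep(\rho) - \tfrac{\delta(\ep)}{2}\|\vp_\ep*\rho\|_{L^2(\R^d)}^2 = \int_{\R^d}\!\bigl[{}^{\delta(\ep)}f(\vp_\ep*\rho) - {}^{\delta(\ep)}f(0)\bigr]dx,
\end{equation*}
so \eqref{f_e_lower_bound} is equivalent to showing that this integral is bounded below by $-\tilde H\bigl(\int(1+|x|^2)(\vp_\ep*\rho)\,dx\bigr)$, uniformly in $\ep$. Note that the integrand $a\mapsto {}^{\delta(\ep)}f(a)-{}^{\delta(\ep)}f(0)$ vanishes at $a=0$, so the integral is well-defined and finite whenever $\vp_\ep*\rho\in L^1(\R^d)$.

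\textbf{Step 2 (Coercivity on the mollified measure.)} Let $\nu_\ep$ be the absolutely continuous, finite Borel measure with density $\vp_\ep*\rho$. If $\int(1+|x|^2)\,d\nu_\ep=+\infty$ the desired inequality is vacuous for any nondecreasing $\tilde H$ with $\lim_{M\to\infty}\tilde H(M)=+\infty$. Otherwise $\nu_\ep$ has finite second moment, and Assumption \ref{internalas}(\ref{moment_control}) yields
\begin{equation*}
\int_{\R^d} f(\vp_\ep*\rho)\,dx = \F(\nu_\ep) \geq -H\!\left(\int(1+|x|^2)(\vp_\ep*\rho)\,dx\right).
\end{equation*}

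\textbf{Step 3 (Comparison of ${}^{\delta(\ep)}f$ with $f$.)} Introduce the defect $D_\ep(a) := f(a) - {}^{\delta(\ep)}f(a)\geq 0$, which satisfies $D_\ep(0) = -\,{}^{\delta(\ep)}f(0)\geq 0$ since $f(0)=0$ and ${}^{\delta(\ep)}f\leq f$. Then algebraically
\begin{equation*}
{}^{\delta(\ep)}f(a) - {}^{\delta(\ep)}f(0) = f(a) - \bigl(D_\ep(a) - D_\ep(0)\bigr),
\end{equation*}
and combining with Step 2 reduces the claim to producing a concave, increasing $\tilde H_1\colon[0,+\infty)\to[0,+\infty)$ with $\tilde H_1(0)=0$, independent of $\ep$, satisfying
\begin{equation*}
\int_{\R^d}\bigl[D_\ep(\vp_\ep*\rho) - D_\ep(0)\bigr]_+\,dx \leq \tilde H_1\!\left(\int(1+|x|^2)(\vp_\ep*\rho)\,dx\right),
\end{equation*}
after which $\tilde H:=H+\tilde H_1$ does the job.

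\textbf{Step 4 (Uniform estimate on the defect.)} The key technical step is this last bound. Using the resolvent identity ${}^{\delta(\ep)}f(a) = f(J_{\delta(\ep)}(a)) + |a-J_{\delta(\ep)}(a)|^2/(2\delta(\ep))$, the first-order identity $({}^{\delta(\ep)}f)'(a) = f'(J_{\delta(\ep)}(a))$, and the 1-Lipschitz monotonicity of $J_{\delta(\ep)}$ from Section \ref{moreauenvelopesec}, one shows that $D_\ep$ behaves nicely away from degenerate regions: for $a$ near $0$ one has $D_\ep(a)\leq D_\ep(0)$ so the positive part vanishes, while for large $a$ one can control $D_\ep(a)-D_\ep(0)$ by a sublinear concave function of $a$ that is integrable against $\vp_\ep*\rho$ and the weight $(1+|x|^2)$. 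Together with the coercivity bound of Step 2 (applied either directly or via a rescaling argument), this yields the required concave $\tilde H_1$.

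\textbf{Main obstacle.} The heart of the proof is the uniform-in-$\ep$ control in Step 4. The difficulty is that the defect $D_\ep$ behaves very differently across the regimes covered by Assumption \ref{internalas}: for heat-type $f$, $D_\ep(a)$ grows at infinity (so one cannot simply use pointwise linear bounds), while for fast-diffusion-type $f$, $f'(0^+)=-\infty$ forces $D_\ep(0)$ to be nonzero and $({}^{\delta(\ep)}f)'(0)$ to blow up as $\ep\to 0$ (so one cannot use a tangent-line bound at $0$). Handling both simultaneously requires exploiting the coercivity hypothesis itself to absorb the growth of $D_\ep$ into the concave majorant $\tilde H_1$, rather than relying on purely pointwise estimates.
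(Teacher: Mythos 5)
Your Steps~1--2 are sound, and the observation underlying Step~3 — that the defect $D_\ep(a)=f(a)-{}^{\delta(\ep)}f(a)$ is non-increasing on the interval where $f$ is decreasing (equivalently, ${}^{\delta(\ep)}f'\geq f'$ there), so that $[D_\ep(a)-D_\ep(0)]_+=0$ for such $a$ — is exactly the paper's key pointwise fact (eq.~(3.3)). However, the reduction you propose is lossy, and the claim in Step~4 is false for admissible $f$.

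The trouble is that in Step~2 you bound $\int f(\mu)\geq -H(\cdot)$ \emph{before} looking at the defect, thereby discarding the positive part of $\int f(\mu)$ — and it is precisely that positive part which absorbs the growth of $D_\ep$ for large $a$. Take $f(a)=\tfrac12 a^2-a$ (which satisfies Assumption~\ref{internalas}: it is convex, $f(0)=0$, $f\geq -a$, so $H(t)=t$ works). A direct computation with $\gamma=\delta(\ep)$ gives ${}^\gamma f(a)=\frac{a^2-2a-\gamma}{2(1+\gamma)}$ and hence
\[
D_\gamma(a)-D_\gamma(0)=\frac{\gamma\, a(a-2)}{2(1+\gamma)},
\]
which grows \emph{quadratically} in $a$ — not sublinearly, not concavely. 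Consequently $\int[D_\gamma(\mu)-D_\gamma(0)]_+ \sim \tfrac{\gamma}{2}\|\mu\|_{L^2}^2$, which is of the same order as the left side of~\eqref{f_e_lower_bound} and cannot be dominated by any $\tilde H_1(\int(1+|x|^2)\mu)$: take $\rho=\delta_0$, so that $\mu=\vp_\ep$, $\int(1+|x|^2)\mu$ stays bounded, yet $\gamma\|\vp_\ep\|_{L^2}^2=\delta(\ep)\ep^{-d}\|\vp\|_{L^2}^2$ diverges whenever $\delta(\ep)$ decays slower than $\ep^d$ (which the lemma must allow, since it is claimed for all $\ep>0$ with no rate condition). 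The caveat in your ``Main obstacle'' paragraph — that one must ``exploit the coercivity hypothesis'' rather than pointwise estimates — cannot rescue this, because coercivity was already spent in Step~2 and cannot be re-used to recover $\int f_+(\mu)$.

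The paper sidesteps this entirely by decomposing the \emph{domain} rather than the \emph{integrand}: write $\mu=\mu_1+\mu_2$ with $\mu_1=\mu\mathbf 1_{\{\mu\leq b_0\}}$, $\mu_2=\mu\mathbf 1_{\{\mu>b_0\}}$, where $b_0$ is the right endpoint of the interval on which $f$ decreases. On $\{\mu\leq b_0\}$ the comparison $({}^{\delta(\ep)}f)'\geq f'$ gives the exact bound $f_\ep(a)\geq\tfrac{\delta(\ep)}{2}a^2+f(a)$, and only \emph{then} is coercivity of $\F$ invoked — applied to $\mu_1$ alone. On $\{\mu>b_0\}$, $f_\ep(\mu_2)$ is lower-bounded \emph{directly} by $\tfrac{\delta(\ep)}{2}\mu_2^2$ plus a linear function of $\mu_2$ (using convexity of ${}^{\delta(\ep)}f$, the bound ${}^{\delta(\ep)}f'(a_0)\geq f'(a_0)$, and the support estimate $|\spt\mu_2|\leq\|\mu_2\|_{L^1}/b_0$), with no reference to $f$ and no defect at all, so no superlinear term escapes.
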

\begin{proof}
If $f$ is everywhere nonnegative, so is the Moreau-Yosida regularization ${}^{\delta(\epsilon)}f$.  Thus, in this case, we can simply choose $\tilde{H}, \tilde{\Psi}=0$ and the conclusion holds.

Otherwise, if $f$ takes negative values, then there must exist some $b_0\in (0,+\infty]$ such that $f$ is strictly decreasing on $[0,b_0)$.  Since $f$ is differentiable at almost every point in its domain, for any $a\in [0, b_0)$ we have 
$f(a)=\int_0^a f'(\theta)\, d\theta.$
Similarly, from the definition of $f_{\epsilon}$, we have 
\[
f_{\epsilon}(a)=\frac{\delta(\epsilon)}{2}a^2+ \int_0^a {}^{\delta(\epsilon)}f'(\theta)\, d\theta.
\]
From inequality (\ref{slopeboundresolvent}), we have $|{}^{\delta(\epsilon)}f'(a)|\leq |f'(a)|$ for any $a$ that is a point of differentiability for $f$. Since 
${}^{\delta(\epsilon)}f'(\theta)\geq \min(0, {}^{\delta(\epsilon)}f'(\theta))$ and $f'(\theta)\leq 0$ for almost every $\theta\in [0, b_0]$ it follows that 
\begin{equation}\label{eq:moreau_deriv_lower_bound}
{}^{\delta(\epsilon)}f'(\theta)\geq f'(\theta)\quad \textup{for almost all} \;  \theta\in [0,b_0].
\end{equation}
Hence, for $a\in [0,b_0]$, we must have
\begin{equation}
    \label{eq:lb int del f'}
\int_0^a {}^{\delta(\epsilon)}f'(\theta)\, d\theta\geq \int_0^a f'(\theta)\, d\theta=f(a).
\end{equation}
Thus,
$f_{\epsilon}(a)\geq \frac{\delta(\epsilon)}{2}a^2+f(a)$
for all $a\in [0,b_0]$.

Now suppose we are given some $\rho \in \mathcal{M}(\Rd)$ and set $\mu=\vp_{\ep}*\rho$. We then split $\mu=\mu_1+\mu_2$ where
\[
\mu_1=\mu1_{\{\mu\leq b_0\}}, \quad \mu_2=\mu(1-1_{\{\mu\leq b_0\}}),
\]
where $1_{\{\mu\leq b_0\}}$ is the characteristic function of the set $\{\mu<b_0\}$.
Using this decomposition, we see that
\begin{equation}\label{eq:f_split_bound}
\mathcal{F}_{\epsilon}(\rho)=\int_{\RR^d} f_{\ep}(\mu_1)+f_{\ep}(\mu_2) \geq \frac{\delta(\epsilon)}{2}\norm{\mu_1}_{L^2(\RR^d)}^2+  \mathcal{F}(\mu_1)+\int_{\RR^d}f_{\ep}(\mu_2)
\end{equation}
where we have used the fact that 
$f_{\epsilon}(a)\geq f(a)$
for all $a\in [0,b_0]$ in the last inequality. 

It remains to handle $\int_{\RR^d} f_{\ep}(\mu_2)$.
Let $a_0\in (0, b_0)$ be a point of differentiability for $f$.  Expanding $\int_{\RR^d} f_{\ep}(\mu_2)$ we may write
\begin{align*}
\int_{\RR^d} f_{\ep}(\mu_2)&=\frac{\delta(\epsilon)}{2}\norm{\mu_2}_{L^2(\RR^d)}^2+ \int_{\RR^d} \int_0^{\mu_2(x)} {}^{\delta(\epsilon)}f'(\theta)\, d\theta \, dx\\
&=\frac{\delta(\epsilon)}{2}\norm{\mu_2}_{L^2(\RR^d)}^2+ \int_{\spt(\mu_2)} \int_0^{a_0} {}^{\delta(\epsilon)}f'(\theta)\, d\theta+\int_{a_0}^{\mu_2(x)} {}^{\delta(\epsilon)}f'(\theta)\, d\theta \, dx,
\end{align*}
where we have used the fact that $\mu_2>b_0\geq a_0$ on $\spt(\mu_2)$. Using (\ref{eq:lb int del f'}), with $a=a_0$, to bound the second term on the right-hand side of the previous line from below yields,
\[
\int_{\RR^d} f_{\ep}(\mu_2)\geq \frac{\delta(\epsilon)}{2}\norm{\mu_2}_{L^2(\RR^d)}^2+ f(a_0)|\spt(\mu_2)|+\int_{\spt(\mu_2)} \int_{a_0}^{\mu_2(x)} {}^{\delta(\epsilon)}f'(\theta)\, d\theta \, dx,
\]
where $|\spt(\mu_2)|$ is the Lebesgue measure of the set $\spt(\mu_2)$.
Exploiting the convexity of ${}^{\delta(\epsilon)}f$, it follows that  
\[
\int_{\RR^d} f_{\ep}(\mu_2)\geq \frac{\delta(\epsilon)}{2}\norm{\mu_2}_{L^2(\RR^d)}^2+ f(a_0)|\spt(\mu_2)|+{}^{\delta(\epsilon)}f'(a_0)\norm{(\mu_2-a_0)_+}_{L^1(\RR^d)}.
\]
Again using the fact that ${}^{\delta(\epsilon)}f'(a)\geq f'(a)$ for all $a\in [0, b_0]$ that are points of differentiability for $f$, we get 
\[
\int_{\RR^d} f_{\ep}(\mu_2)\geq \frac{\delta(\epsilon)}{2}\norm{\mu_2}_{L^2(\RR^d)}^2+ f(a_0)|\spt(\mu_2)|+f'(a_0)\norm{(\mu_2-a_0)_+}_{L^1(\RR^d)}.
\]
Since $\mu_2\geq b_0$ on its support, we have the standard estimate
\[
|\spt(\mu_2)|\leq \frac{1}{b_0}\norm{\mu_2}_{L^1(\RR^d)}.
\]
This finally allows us to conclude that
\begin{equation}\label{eq:F_small_part_bound}
\int_{\RR^d} f_{\ep}(\mu_2)\geq \frac{\delta(\epsilon)}{2}\norm{\mu_2}_{L^2(\RR^d)}^2-\left(\frac{|f(a_0)|}{b_0}+|f'(a_0)|\right)\norm{\mu_2}_{L^1(\RR^d)}.
\end{equation}

Combining our estimates, we see that
\[
\F_{\ep}(\rho)=\int_{\RR^d} f_{\ep}(\mu_1)+f_{\ep}(\mu_2)\geq \frac{\delta(\epsilon)}{2}\norm{\mu_1}_{L^2(\RR^d)}^2+\frac{\delta(\epsilon)}{2}\norm{\mu_2}_{L^2(\RR^d)}^2+\mathcal{F}(\mu_1) -\left(\frac{|f(a_0)|}{b_0}+|f'(a_0)|\right)\norm{\mu_2}_{L^1(\RR^d)}.
\]
This can be simplified to 
\begin{equation}\label{eq:F_1_2_lower_bound}
\F_{\ep}(\rho)\geq \frac{\delta(\epsilon)}{2}\norm{\mu}_{L^2(\RR^d)}^2+\mathcal{F}(\mu_1) -\left(\frac{|f(a_0)|}{b_0}+|f'(a_0)|\right)\norm{\mu_2}_{L^1(\RR^d)},
\end{equation}
where we have exploited the fact that $\mu_1, \mu_2$ have disjoint support to recombine the $L^2$ norms.

Now we have everything we need to conclude. Let $H$ be a choice of function with the properties guaranteed by assumption  \ref{internalas}(\ref{moment_control}) on $\F$. We then define 
\begin{align*}
\tilde{H}(a)=2H \left(\frac{a}{2} \right)+\left(\frac{|f(a_0)|}{b_0}+|f'(a_0)|\right)a,
\end{align*}
from which it is clear that $\tilde{H}$ is nonnegative, nondecreasing,  $\tilde{H}(0)=0,$ and $\tilde{H}$ is concave.
Now we are ready to show that this choice will produce the desired result.  Exploiting the concavity of $\tilde{H}$, we see that
\[
\mathcal{F}_{\epsilon}(\rho)+\tilde{H}\left( \int_{\RR^d} (1+|x|^2)(\vp_{\ep}*\rho)\, dx\right) \geq \mathcal{F}_{\epsilon}(\rho)+\frac{1}{2} \tilde{H}\left(2\int_{\RR^d} (1+|x|^2)\mu_1(x)\, dx\right) +\frac{1}{2}\tilde{H}\left( 2\int_{\RR^d} (1+|x|^2)\mu_2(x)\, dx\right) . 
\]
Using the fact that $\tilde{H}(a)\geq \max(2H(\frac{a}{2}), \left(\frac{|f(a_0)|}{b_0}+|f'(a_0)|\right)a)$, we find that the right-hand side of the previous line is bounded from below by 
\[
\mathcal{F}_{\ep}(\rho)+H\left(\int_{\RR^d} (1+|x|^2)\mu_1(x)\, dx\right) +\int_{\RR^d} \left(\frac{|f(a_0)|}{b_0}+|f'(a_0)|\right)(1+|x|^2)\mu_2(x)\, dx,
\]
which is larger than
\[
\mathcal{F}_{\ep}(\rho)+H\left(\int_{\RR^d} (1+|x|^2)\mu_1(x)\, dx\right) +\left(\frac{|f(a_0)|}{b_0}+|f'(a_0)|\right)\norm{\mu_2}_{L^1(\RR^d)}.
\]
The result now follows from inequality (\ref{eq:F_1_2_lower_bound}) and the guarantee that $\mathcal{F}(\mu_1)+H\left( \int_{\RR^d} (1+|x|^2)d\mu_1(x)\right) \geq 0$ for any nonnegative measure $\mu_1$ with finite second moment.
\end{proof}

\subsection{Duality}

Duality relations and their interplay with convolution will play a fundamental role in our convergence proof.
 In the following lemmas, we collect several elementary properties about the convex conjugate of the regularized energy $f^*_{\epsilon}$, the regularized measure $\mu_\ep = \varphi_\ep * \rho_\ep$, and the potential $q_\ep = f'_\ep(\varphi_\ep *\rho) = f'_\ep(\mu_\ep)$.
 
The following properties of $f^*_{\epsilon}$ are immediate consequences of Lemma \ref{reginternalenergydensityprop} and equation (\ref{convexdualupperlowerhessianbound}).
 \begin{cor}[Properties of the dual of the regularized energy density] \label{conjugatecor}
 Suppose the internal energy density $f$ satisfies Assumption \ref{internalas}. Then the convex conjugate of the regularized internal energy density   enjoys the following properties:
 \begin{enumerate}[(i)]
\item \label{item:fep*} $f_\ep^*$ is proper, lower semicontinuous, and  
convex.
\item $f_\ep^* \in W^{2, \infty}(\mathbb{R})$, with distributional second derivative satisfying $0 \leq (f_{\ep}^*)'' \leq\delta(\epsilon)^{-1} $. 
\label{conjugateW2inf}
\item  $f_\ep^*$ is nondecreasing and nonnegative.
\label{item:fep nondec nonneg}
\end{enumerate}
 \end{cor}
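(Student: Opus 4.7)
The plan is to derive each of the three properties as a direct consequence of the corresponding facts about $f_\ep$ established in Lemma \ref{reginternalenergydensityprop}, together with the general duality facts recalled in Section \ref{moreauenvelopesec}. None of the steps should require any new calculation: this is really an unpacking of the standard involution between strong convexity and Lipschitz gradients under Legendre transform, combined with the duality $h\mapsto h^*$ applied to the specific $h=f_\ep$.

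For item (\ref{item:fep*}), I would simply invoke Lemma \ref{reginternalenergydensityprop}(\ref{fepstronglyconvex}), which asserts $f_\ep$ is proper, lower semicontinuous, and (strongly) convex, and appeal to the fact, recorded in Section \ref{moreauenvelopesec}, that the Legendre transform of a proper, lower semicontinuous, convex function is again proper, lower semicontinuous, and convex (and that the biconjugate recovers $f_\ep$). Properness of $f_\ep^*$ follows because $\dom(f_\ep)=[0,+\infty)\neq\emptyset$ and $f_\ep$ is bounded below on any bounded set by strong convexity, so the sup defining $f_\ep^*(b)$ is finite for $b$ in a nonempty set.

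For item (\ref{conjugateW2inf}), the lower bound $(f_\ep^*)''\geq 0$ is a restatement of convexity from (\ref{item:fep*}). For the upper bound, I would invoke equation (\ref{convexdualupperlowerhessianbound}): since Lemma \ref{reginternalenergydensityprop}(\ref{fepstronglyconvex}) gives that $f_\ep$ is $\delta(\epsilon)$-convex, duality immediately yields that $f_\ep^*$ is $\delta(\epsilon)^{-1}$-concave, i.e.\ $(f_\ep^*)''\leq \delta(\epsilon)^{-1}$ in the distributional sense. Together with the lower bound, this gives $f_\ep^*\in W^{2,\infty}(\mathbb{R})$ by the discussion of $\lambda$-convex/concave functions in Section \ref{ss:convex fns background}.

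For item (\ref{item:fep nondec nonneg}), I would apply Lemma \ref{convexconjincreasing} with $h=f_\ep$. The hypotheses of that lemma are precisely the properties established in Lemma \ref{reginternalenergydensityprop}(\ref{fepstronglyconvex}): $f_\ep$ is convex and lower semicontinuous, with $f_\ep(0)=0$ and $f_\ep(a)=+\infty$ for $a<0$. Hence the conclusion yields that $f_\ep^*$ is nondecreasing and nonnegative. The only (very minor) obstacle to watch for is making sure that the strong convexity of $f_\ep$ on its effective domain $[0,+\infty)$ really does translate into the distributional bound on $(f_\ep^*)''$ on all of $\mathbb{R}$; this is handled automatically because $f_\ep^*$ is finite and $W^{2,\infty}$ globally thanks to the combination of convexity with the $\delta(\epsilon)^{-1}$-concavity bound.
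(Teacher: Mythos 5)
Your proposal matches the paper's proof essentially step for step: item (i) by the standard stability of proper/lsc/convex under conjugation, item (ii) by combining convexity of $f_\ep^*$ with the $\delta(\epsilon)^{-1}$-concavity from equation (\ref{convexdualupperlowerhessianbound}), and item (iii) by Lemma \ref{convexconjincreasing}. The extra remarks you make about properness and the global $W^{2,\infty}$ bound are correct but not needed beyond what the cited facts already give.
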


 \begin{proof}
 Recall from Lemma \ref{reginternalenergydensityprop}, part (\ref{fepstronglyconvex}), that $f_\epsilon$ is proper, $\delta(\epsilon)$-convex, and lower semicontinuous.
  Then, item (\ref{item:fep*}) immediately follows from the definition of convex conjugate. For item  (\ref{conjugateW2inf}), since $f_\epsilon$ is $\delta(\epsilon)$-convex, equation (\ref{convexdualupperlowerhessianbound}) ensures that $f_\epsilon^*$ is $\delta(\epsilon)^{-1}$-concave. Combining this with the fact that $f_\epsilon^*$ is convex ensures $f_\epsilon^* \in W^{2, \infty}(\R)$ and provides the lower and upper bounds on the distributional second derivative (see Section \ref{ss:convex fns background}).  
 Finally, item (\ref{item:fep nondec nonneg}) follows from Lemma \ref{convexconjincreasing}.   
  \end{proof}

 Now we establish properties of the coupled variables $\mue$ and $\qe$.

\begin{lem}[Properties of $\mu_\ep$ and $q_\ep$] \label{muepqeplem}
Suppose the internal energy density $f$ satisfies Assumption \ref{internalas} and the mollifier $\varphi_\ep$ satisfies Assumption \ref{mollifieras}. Fix $\rho_\ep \in \P_2(\Rd)$. Define $\mu_\ep = \varphi_\ep*\rho_\ep  $ and $q_\ep = f_\ep'(\mu_\ep)$. 
Then the following hold, for all $0 < \epsilon < 1$:
\begin{enumerate}[(i)]
\item  \label{FYidentity}
$\mu_\ep q_\ep = f_\ep(\mu_\ep) +f^*_\ep(q_\ep)$.
\item
$\mu_\ep  = (f_\ep^*)'(q_\ep)$.  \label{qtomu}
\item \label{muepdif} $\mu_\ep \in C^1(\Rd)$ with $\|\mu_\ep\|_{L^\infty(\R^d)}<+\infty$, and  $\nabla \mu_\ep = (\nabla \varphi_\ep)*\rho_\ep$.
\item\label{item:qepbdd} $\|q_\ep\|_{L^\infty(\R^d)}<+\infty$.
\item\label{item:pepbdd} Denoting $p_\ep = \varphi_\ep*q_\ep$, we have $\|D^2 p_\ep\|_{L^\infty(\R^d)}+\|\nabla p_\ep\|_{L^\infty(\R^d)}<+\infty$.
\item \label{qeplip} $q_\epsilon$ is globally Lipschitz; $f''_\ep(\mu_\ep(x))$ is well defined almost everywhere on $\{ x : \nabla \mu_\ep(x) \neq 0 \}$;    for a.e. $x \in \mathbb{R}^d$ we have,
\begin{align} \label{nablaqep}
\nabla q_\ep(x) = \begin{cases} f''_\ep(\mu_\ep(x)) \nabla \mu_\ep(x) &\text{ for $\nabla \mu_\ep(x) \neq 0$,} \\
0 &\text{ otherwise;}
\end{cases}
\end{align}
 and $f^*_{\ep}(\qe) \in W^{1,\infty}(\Rd)$ with the equality
$\nabla f^*_{\ep}(\qe)=\mue\nabla \qe$ holding a.e. on $\Rd$. 
\item $(f_\ep^*)''(q_\ep(x))$ is well defined almost everywhere on $\{x: \nabla q_\ep(x) \neq 0 \}$ and for a.e. $x \in \mathbb{R}^d$, \label{nablamuep}
\begin{align}
\nabla \mu_\ep(x) = \begin{cases} (f^*_\ep)''(q_\ep(x)) \nabla q_\ep(x) &\text{ for $\nabla q_\ep(x) \neq 0$,} \\
0 & \text{ otherwise.}
\end{cases}
\end{align}
\item $|\nabla \qe(x)|\leq (\delta(\ep)+ \delta(\ep)^{-1})|\nabla \mue(x)|$ and $|\nabla \mue (x)|\leq \delta(\ep)^{-1} |\nabla \qe(x)|$ for a.e. $x \in \mathbb{R}^d$; \label{nabla_q_nabla_mu}
\item \label{muepM2} $M_2(\mu_\ep) \leq 2 M_2 (\rho_\ep)+ 2    \left( 1+\frac{d}{r-d-2} \omega_d C_\varphi \right)$.
\end{enumerate}
\end{lem}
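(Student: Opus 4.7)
The plan is to derive the listed properties in an order that follows their logical dependencies, starting with pointwise convex-duality identities and convolution regularity, and ending with the quantitative second-moment bound.

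Items (i) and (ii) are immediate from the Fenchel--Young case-of-equality in (\ref{caseofequality}) applied pointwise. Since $f_\ep$ is differentiable on $[0,+\infty)$ by Lemma \ref{reginternalenergydensityprop}(\ref{item:fep diff}), the defining identity $q_\ep(x)=f_\ep'(\mu_\ep(x))$ means $q_\ep(x)\in\partial f_\ep(\mu_\ep(x))$ pointwise, which is equivalent to both $\mu_\ep(x)q_\ep(x)=f_\ep(\mu_\ep(x))+f_\ep^*(q_\ep(x))$ and $\mu_\ep(x)\in\partial f_\ep^*(q_\ep(x))$; the subdifferential on the right is a singleton because $f_\ep^*\in W^{2,\infty}(\R)\subset C^{1,1}(\R)$ by Corollary~\ref{conjugatecor}(\ref{conjugateW2inf}), so $\mu_\ep=(f_\ep^*)'(q_\ep)$. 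For item (iii), the properties of the mollifier in Assumption~\ref{mollifieras}(\ref{firstmolas}),(\ref{molLinftybounds}) together with $\rho_\ep\in\P_2(\Rd)$ give $\mu_\ep\in C^1(\Rd)$ with $\nabla\mu_\ep=(\nabla\varphi_\ep)\ast\rho_\ep$ and $\|\mu_\ep\|_\infty\leq\|\varphi_\ep\|_\infty$. Item (iv) follows because $f_\ep'$ is globally Lipschitz on $[0,+\infty)$ by Lemma~\ref{reginternalenergydensityprop}(\ref{fepw2infty}), so $|q_\ep|\leq|f_\ep'(0)|+\|f_\ep'\|_{\mathrm{Lip}}\|\mu_\ep\|_\infty$. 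Item (v) is then a standard convolution estimate: $\nabla p_\ep=(\nabla\varphi_\ep)\ast q_\ep$ and $D^2 p_\ep=(D^2\varphi_\ep)\ast q_\ep$ are both $L^\infty$ because $\nabla\varphi_\ep,D^2\varphi_\ep\in L^1(\Rd)$ by Assumption~\ref{mollifieras}(\ref{molLinftybounds}) and $q_\ep\in L^\infty(\Rd)$.

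For items (vi) and (vii), the key tool is the chain rule for compositions of Lipschitz functions with $W^{1,\infty}$ maps. Since $f_\ep'$ is Lipschitz (Lemma~\ref{reginternalenergydensityprop}(\ref{fepw2infty})) and $\mu_\ep\in W^{1,\infty}(\Rd)$ by (iii), the composition $q_\ep=f_\ep'\circ\mu_\ep$ is globally Lipschitz with gradient given a.e.~by the Stampacchia/Serrin chain-rule formula (\ref{nablaqep}), interpreting $f_\ep''$ as the a.e.-defined distributional second derivative of $f_\ep$ and using that the inner gradient vanishes on the negligible set where $f_\ep''\circ\mu_\ep$ is undefined. The same machinery applied to $f_\ep^*\in C^{1,1}(\R)$ yields $\nabla f_\ep^*(q_\ep)=(f_\ep^*)'(q_\ep)\nabla q_\ep$, which by item (ii) equals $\mu_\ep\nabla q_\ep$, giving the claimed identity and $L^\infty$ gradient bound. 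Item (vii) is the symmetric statement, derived by applying the chain rule to $\mu_\ep=(f_\ep^*)'(q_\ep)$ from item (ii), using $f_\ep^*\in W^{2,\infty}(\R)$ (Corollary~\ref{conjugatecor}(\ref{conjugateW2inf})).

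Item (viii) is then a direct consequence of combining (\ref{nablaqep}) and its analogue in (vii) with the pointwise bounds $\delta(\ep)\leq f_\ep''\leq\delta(\ep)+\delta(\ep)^{-1}$ (Lemma~\ref{reginternalenergydensityprop}(\ref{fepw2infty})) and $0\leq(f_\ep^*)''\leq\delta(\ep)^{-1}$ (Corollary~\ref{conjugatecor}(\ref{conjugateW2inf})). Finally, (ix) is a second-moment computation: after the change of variables $x\mapsto x-y$ in the defining integral,
\[
M_2(\mu_\ep)=\iint|x+y|^2\varphi_\ep(x)\,d\rho_\ep(y)\,dx\leq 2M_2(\rho_\ep)+2M_2(\varphi_\ep),
\]
and the scaling $M_2(\varphi_\ep)=\ep^2 M_2(\varphi)\leq M_2(\varphi)$ together with Assumption~\ref{mollifieras}(\ref{secondmolas}) yields the stated bound by splitting $\int|x|^2\varphi(x)\,dx$ over $\{|x|\leq 1\}$ (bounded by $1$) and $\{|x|>1\}$ (bounded by $\omega_d C_\varphi\int_1^\infty s^{d+1-r}\,ds=\omega_d C_\varphi\,d/(r-d-2)$ once the tail exponent permits the radial integral to converge).

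I do not expect any genuinely hard step: the main subtlety is in (vi) and (vii), where the chain rule must be applied to compositions whose outer factors are only $W^{2,\infty}$, not $C^2$. This is handled by standard results for compositions of Lipschitz functions with Sobolev maps, which guarantee that the formula $\nabla(g\circ u)=g'(u)\nabla u$ holds a.e.~and that the inner gradient vanishes a.e.~on the negligible set where the outer derivative fails to exist; the convexity of $f_\ep$ and $f_\ep^*$ ensures their derivatives exist off a countable set, so this poses no difficulty.
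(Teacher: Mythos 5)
Your proposal is correct and follows essentially the same route as the paper: Fenchel--Young for (i)--(ii), direct convolution regularity for (iii)--(v), the Serrin/Stampacchia chain rule for Lipschitz compositions for (vi)--(viii), and a second-moment computation with the tail bound from Assumption~\ref{mollifieras}(\ref{secondmolas}) for (ix). The only cosmetic difference is in (ix): the paper uses the evenness of $\varphi$ to get the exact identity $M_2(\mu_\ep)=M_2(\rho_\ep)+M_2(\varphi_\ep)$ (and retains the favorable factor $\ep^{r-d}$ in the tail), whereas you use the crude inequality $|x+y|^2\le 2|x|^2+2|y|^2$; both yield the stated bound with its factors of~$2$, and both, like the paper, implicitly require $r>d+2$ for the tail integral $\int_1^\infty s^{d+1-r}\,ds$ to converge.
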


\begin{proof}
First, we note that our assumptions on the mollifier $\varphi_\ep$ imply $\mu_\ep\in C^1(\rr^d)$ and $\mu_\ep$ non-negative on $\rr^d$. Since, by Lemma \ref{reginternalenergydensityprop} item (\ref{fepw2infty}),  $f_\ep'$ is Lipschitz on $[0,+\infty)$, and the composition of Lipschitz functions is Lipschitz, we find that $q_\ep$ is Lipschitz on $\rr^d$.

Item (\ref{FYidentity}) follows from the case of equality (\ref{caseofequality}) in the Fenchel-Young inequality.  
Item (\ref{qtomu}) follows since, by Corollary \ref{conjugatecor} (\ref{conjugateW2inf}), $f_{\ep}^*$ is continuously differentiable and, by the case of inequality (\ref{caseofequality}) in the Fenchel-Young inequality, $(f_\ep^*)' = (f_\ep')^{-1}$.

To see (\ref{muepdif}), we first use Young's inequality for convolution and  Assumption \ref{mollifieras} (\ref{molLinftybounds}) to obtain  
 $\|\varphi_\ep *\rho_\ep\|_{L^\infty(\rr^d)}\leq \|\varphi_\ep \|_{L^\infty(\rr^d)}\|\rho_\ep\|_{L^1(\rr^d)}<+\infty$. Next,
note that
\begin{align*}
  \frac{ |\mu_\ep(y) - \mu_\ep(x) - ((\nabla \varphi_\ep)*\rho_\ep)(x) \cdot(y-x)|}{|x-y|}  &=   \frac{ \left| \int \left( \varphi_\ep(z-y) - \varphi_\ep(z-x) -  \nabla \varphi_\ep(z-x) \cdot (y-x) \right) d \rho_\ep(z)  \right|}{|x-y|} \\
& \leq \|D^2 \varphi_\ep \|_\infty |x-y| ,
\end{align*}
where the right hand side goes to zero as $y \to x$.

Now, we use $\|\mu_\ep\|_{L^\infty(\rr^d)}<+\infty$ and the fact that $f'_\ep$ is Lipschitz (Lemma \ref{reginternalenergydensityprop} (\ref{fepw2infty})) to deduce item (\ref{item:qepbdd}). For the first estimate of item (\ref{item:pepbdd}), we use the definition of $p_\ep$, item (\ref{item:qepbdd}), and our assumptions on $\varphi$ to obtain
\[
\|D^2 p_\epsilon\|_{L^\infty(\rr^d)} \leq  \|D^2\varphi_\ep\|_{L^1(\rr^d)} \|f_\ep'(\varphi_\ep*\rho_\ep))\|_{L^\infty(\rr^d)}<+\infty.
\]
The other estimate of item (\ref{item:pepbdd}) is obtained similarly.

Next, we show items (\ref{qeplip}-\ref{nablamuep}). Lemma \ref{reginternalenergydensityprop}(\ref{fepw2infty}) ensures $f_\ep'$ is Lipschitz on $[0,+\infty)$ and part (\ref{muepdif}) ensures $\mu_\ep \in C^1(\mathbb{R}^d) \subseteq W^{1,1}_\loc(\mathbb{R}^d)$; therefore the composition $q_\ep=f_\ep'(\mu_\ep)$ is Lipschitz on $\Rd$.  
The expression for the gradient of $q_\ep$ follows from unpublished work of Serrin \cite{Serrin}; see also \cite{leoni2007necessary}.  Next, Corollary \ref{conjugatecor}(\ref{conjugateW2inf}) ensures $(f_\ep^*)'$ is Lipschitz and item (\ref{qeplip}) ensures $q_\ep \in W^{1,\infty}(\mathbb{R}^d) \subseteq  W^{1,1}_\loc(\mathbb{R}^d)$; therefore, the composition $f_\ep^*(q_\ep)$ is also Lipschitz on $R^d$. The  equality $\nabla f_\ep^*(\qe)=\mue\nabla \qe$ follows from the chain rule for Lipschitz functions.  Finally, item (\ref{nablamuep}) follows from the same logic as the previous item.
 Now, we turn to item (\ref{nabla_q_nabla_mu}). This is an immediate consequence of items (\ref{qeplip}-\ref{nablamuep}), combined with  the bound $\|f_\ep'\|_{\rm Lip ((0,\infty))} \leq \delta(\epsilon) + \delta(\epsilon)^{-1}$ from  Lemma \ref{reginternalenergydensityprop}(\ref{fepw2infty}) as well as the bound $\|(f_\ep^{*})'\|_{\rm Lip} \leq \delta(\epsilon)^{-1}$ from Corollary \ref{conjugatecor} (\ref{conjugateW2inf}).

To see (\ref{muepM2}), note that the evenness of $\vp$ implies that
\begin{align*}
 \int |x|^2 d \mu_\ep(x) = \int |x|^2 \varphi_\ep(z-x) d \rho_\ep(z) dx = \int |z-y|^2 \varphi_\ep(y) d \rho_\ep(z) dy =  M_2(\rho_\ep) +  M_2(\varphi_\ep).
 \end{align*}
 By Assumption \ref{mollifieras} (\ref{secondmolas}),
 \begin{align*} M_2(\varphi_\ep) &\leq \int_{|x|\leq 1} |x|^2 \varphi_\ep(x) dx +\int_{|x|\geq 1} |x|^2 \varphi_\ep(x) dx \leq 1 + \epsilon^{r-d} C_\varphi \int_{|x| \geq 1} |x|^2 |x|^{-r} dx  \\
 &= 1 + d \ep^{r-d}    \omega_d  C_\varphi \int_1^\infty r^{2-r+d-1} dr \leq 1+ d \ep^{r-d}    \omega_d C_\varphi (r-d-2)^{-1} ,
 \end{align*}
where $\omega_d$ is the volume of the $d$-dimensional unit ball.
\end{proof}

\subsection{Definition and properties of the $\dot{H}^{-1}$ energy density}
\label{sss:e}

Given an internal energy density $f$, we  now define the $\dot{H}^{-1}$ energy density $e$, as in previous work by the second author \cite[Lemma 2.3]{jacobs2021existence}.   These transformed energy densities,  their truncations  (see Definition \ref{eq:def em} below), and the limiting properties thereof, are all important ingredients in our approach; specifically, they are used extensively in the proofs of Proposition \ref{identifylimitpoint} and Lemma \ref{existenceofpwow}.

This self-contained section is devoted to collecting   and establishing properties of the $\dot{H}^{-1}$ energy densities. The proofs use standard properties of convex functions; we include all the details for the convenience of the reader. 
 
\begin{defn}[$\dot{H}^{-1}$ energy density] \label{hm1energydendef}
    For any  energy density $f$ satisfying Assumption \ref{internalas}(i) and (ii),  
we define the transformed energy density $e$ via
\begin{align} 
\label{eq:e def}
e(a) := \begin{cases} a f(a) - 2 \int_0^a f(s) ds &\text{ if } a \in \dom (f) , \\
    +\infty &\text{ otherwise},  \end{cases} 
    \end{align}
\end{defn}
We now recall several basic properties of this energy, including that it enjoys the properties in Assumption \ref{internalas} (\ref{convexlctsas}), and its subdifferential has an explicit characterization in terms of the subdifferential of $f$.

\begin{lem}[Basic properties of the $\dot{H}^{-1}$ energy density] \label{eepsproperties} 
Consider $f$ satisfying items (\ref{convexlctsas}) and (\ref{nontrivial_convex}) of Assumption \ref{internalas}, and let $e$ be defined by (\ref{eq:e def}). Then, 
\begin{enumerate}[(i)]
\item The function $e: \R \to \R \cup \{+\infty\}$ is convex and lower semicontinuous, with $e(0)=0$ and $e(a)=+\infty$ for $a<0$. Moreover, ${\rm int}(\dom(e)) \neq \emptyset$ and $e$ is non-decreasing on  $[0,+\infty)$.\label{eepsconvex} 
\item For any $a \in \dom (\partial f)$  with $a>0$,    $\{f^*(b): b\in \partial f(a)\}= \partial e(a)$. \label{subdiffe}
\item $\partial e(0)=(-\infty, 0]$.\label{item:e_at_zero}
\item  There exists $b\in \RR$ such that $f(a)=ba$ for all $a\in e^{-1}(0)$.\label{f_e_linear}
\end{enumerate}
\end{lem}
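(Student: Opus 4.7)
The entire proof is driven by a single identity: for any $a \in \interior(\dom(f))$,
\[
e'_+(a) \;=\; a f'_+(a) - f(a) \;=\; f^*\bigl(f'_+(a)\bigr),
\]
where the first equality comes from writing $af(a) = \int_0^a \bigl(f(s) + s f'_+(s)\bigr)\,ds$ (integration by parts, using that $s\mapsto s f(s)$ is absolutely continuous on compact subintervals of $\interior(\dom(f))$) and then subtracting $2\int_0^a f$, and the second equality is the Fenchel-Young equality (\ref{caseofequality}) applied to $f'_+(a)\in\partial f(a)$. The analogous formula holds for left derivatives. This identity is the key to each item below.

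For item (\ref{eepsconvex}), I would first note $\dom(e)=\dom(f)$: for $a\in\dom(f)$, the chord bound $f(s)\leq (s/a)f(a)$ on $[0,a]$ (from $f(0)=0$ and convexity) together with an affine lower bound from any interior subgradient shows $\int_0^a f$ is finite. Convexity of $e$ then follows from the monotonicity of $e'_+ = f^*\circ f'_+$, since $f^*$ is nondecreasing by Lemma \ref{convexconjincreasing} and $f'_+$ is nondecreasing by convexity of $f$. Non-decreasingness of $e$ on $[0,+\infty)$ is immediate from $e'_+ \geq 0$, using the nonnegativity of $f^*$ (again Lemma \ref{convexconjincreasing}). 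Lower semicontinuity is automatic inside $\interior(\dom(f))$; at $a=0$ it reduces to $\lim_{a\to 0^+}e(a)=0$, which follows from $\lim_{a\to 0^+}f(a)=0$ (combine lsc of $f$ with the chord bound above); at any finite right endpoint of $\dom(f)$, lsc of $a\mapsto af(a)$ and continuity of $a\mapsto\int_0^a f$ give lsc of $e$. For item (\ref{subdiffe}), at any $a>0$ in $\dom(\partial f)$ the sets $\partial f(a)$ and $\partial e(a)$ are intervals bounded by the one-sided derivatives of $f$ and $e$. The equality case of Fenchel-Young exhibits $b\mapsto f^*(b) = ab - f(a)$ on $\partial f(a)$ as an affine, strictly increasing function (since $a>0$), so it carries $[f'_-(a),f'_+(a)]$ onto $[e'_-(a),e'_+(a)] = \partial e(a)$, giving the desired set equality.

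For item (\ref{item:e_at_zero}), $\xi\in\partial e(0)$ is equivalent to $e(a)\geq \xi a$ for all $a\in\RR$; since $e=+\infty$ on $(-\infty,0)$, this reduces to $\xi \leq \inf_{a>0} e(a)/a$. Combining $e\geq 0$ (from item (\ref{eepsconvex})) with $\lim_{a\to 0^+} e(a)/a = 0$ (from $\lim_{a\to 0^+} f(a)=0$ and $\int_0^a f/a \to 0$) yields $\inf_{a>0} e(a)/a = 0$, hence $\partial e(0)=(-\infty,0]$. For item (\ref{f_e_linear}), $e^{-1}(0)$ is a closed convex subset of $[0,+\infty)$ containing $0$ (a sublevel set of the lsc convex function $e$), so $e^{-1}(0)=[0,A]$ for some $A\in[0,+\infty]$. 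If $A>0$, then $e\equiv 0$ on $[0,A]$ gives $af'_+(a) = f(a)$ on $(0,A)$; combining this with the subgradient inequality $f(a_2) - f(a_1)\leq f'_-(a_2)(a_2-a_1)$ for $a_1<a_2$ in $(0,A)$ forces $f'_+(a_1)=f'_-(a_2)=f'_+(a_2)$, so $f'_+$ is a single constant $b$ on $(0,A)$, and integrating yields $f(a)=ba$ on $[0,A]$ (using $f(0)=0$ and lsc to extend to the endpoints). The main obstacle throughout is rigorously handling the non-smoothness of $f$: the formal identity $e''(a)=af''(a)$ must be replaced by careful manipulation of one-sided derivatives and subdifferentials, but every step reduces to a disciplined use of standard convex analysis together with Lemma \ref{convexconjincreasing}.
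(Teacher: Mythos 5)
Your proof is correct and, for parts (\ref{eepsconvex}) and (\ref{subdiffe}), takes a genuinely different route from the paper: the paper delegates those two items to an external reference (\cite[Lemma 2.3]{jacobs2021existence}), whereas you give a self-contained argument built around the single identity $e'_{\pm}(a) = af'_{\pm}(a) - f(a) = f^*(f'_{\pm}(a))$ on $\interior(\dom f)$. This identity does a lot of work at once: combined with Lemma~\ref{convexconjincreasing} it immediately yields that $e'_{+}$ is nondecreasing (so $e$ is convex) and nonnegative (so $e$ is nondecreasing on $[0,+\infty)$), and the Fenchel--Young equality exhibits $b \mapsto f^*(b) = ab - f(a)$ as an affine, strictly increasing bijection from $\partial f(a) = [f'_-(a), f'_+(a)]$ onto $[e'_-(a), e'_+(a)] = \partial e(a)$, which is exactly item (\ref{subdiffe}). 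That is cleaner and more informative than a citation. For item (\ref{item:e_at_zero}) your difference-quotient argument via $\inf_{a>0} e(a)/a = 0$ is essentially equivalent to the paper's, which instead bounds $(e(a)-e(0))/a \leq f(a) - 2f(a/2)$ by Jensen's inequality. For item (\ref{f_e_linear}) you use the subgradient inequality to force $f'_+ = f'_-$ constant, while the paper uses the representation $f(a) = \int_0^a f'$ and the monotonicity of $f'$; both are fine, though you could streamline yours by noting that $e \equiv 0$ on an open interval gives both one-sided conditions $af'_{\pm}(a) = f(a)$, hence $(f(a)/a)' = 0$.

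One place to tighten: in item (\ref{eepsconvex}) you need lower semicontinuity of $e$ at a finite right endpoint $A$ of $\dom(f)$ when $f(A) = +\infty$. You cover the case where $\int_0^A f$ converges (so $af(a)\to+\infty$ dominates), but when $\int_0^A (f)_+ = +\infty$ the bound $e(a) \geq 0$ from the chord estimate is not enough to conclude $e(a) \to +\infty$. The gap closes quickly using your own identity: for any $a_0 \in \interior(\dom f)$, $e'(a) = f^*(f'(a)) \geq a_0 f'(a) - f(a_0)$, and since $\int_{a_0}^{A} f' = f(A^-) - f(a_0) = +\infty$ (from lsc of $f$), integrating gives $e(a) \to +\infty$. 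Also, convexity of $e$ on $[0, A]$ from convexity on the interior plus $\lim_{a\to 0^+} e(a) = e(0)$ is standard but worth a sentence. These are routine and do not affect the structure of your argument.
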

\begin{proof}
Parts (\ref{eepsconvex}) and (\ref{subdiffe}) follow from  \cite[Lemma 2.3]{jacobs2021existence}. (Note that the proof of \cite[Lemma 2.3]{jacobs2021existence} applies to $f$  satisfying our hypotheses and does not require $\sup \partial f(0) < f'(a)$ for some $a>0$.)

For item (\ref{item:e_at_zero}), 
 we first check that for any $a>0$
\[
\sup \partial e(0)\leq \frac{e(a)-e(0)}{a}=f(a)-2\frac{1}{a}\int_0^a f(s)\, ds\leq f(a)-2f(a/2),
\]
where the second inequality follows from Jensen's inequality.
Sending $a\to 0$, we see that $\partial e(0)\cap (0,+\infty)=\emptyset$.  It follows that $\partial e(0)=(-\infty,0]$ since $e(0)=0$ and $e(a)=+\infty$ for $a<0$.

The final item is trivial if $e^{-1}(0)=\{0\}$.  Otherwise, $e^{-1}(0)=[0,a_1]$ for some $a_1>0$.  Since $e$ is differentiable on $(0,a_1)$, it follows that $f$ is differentiable on $(0,a_1)$.  We then have the relation
\[
0=e'(a)=af'(a)-f(a)
\]
for all $a\in (0,a_1)$. Since $f'$ is nondecreasing and $f(a)=\int_0^a f'(\theta)d\theta$, we can only have $f(a)=af'(a)$ if $f'(\theta)=f'(a)$ for all $0\theta\leq a<a_1$.  This implies that $f(a)=ba$ on $e^{-1}(0)$ for some $b\in \RR$.

\end{proof}

Next, we consider truncations in the subdifferential of the $\dot{H}^{-1}$ energy density, and the convergence of the truncations to $e$ itself.  These truncations, which extend $e$ linearly whenever the subdifferential of $e$ exceeds a given value $m$, will play a critical role when we identify the limiting duality coupling between the density and the pressure. 
\begin{defn}[Truncated $\dot{H}^{-1}$ energy densities] \label{trunk}
Given a $W_2$ energy density $f$ satisfying items (\ref{convexlctsas}) and (\ref{nontrivial_convex}) of Assumption \ref{internalas}, and its associated $\dot{H}^{-1}$ density $e$, as in Definition \ref{hm1energydendef}, we first  define, for each $m\in \N$,
\begin{align*}
a_{m} := \sup \{ a \in \dom(\partial e) : \sup \{ \partial e(a) \}  \leq m   \}.  \  \end{align*}
Note that, by  Lemma \ref{eepsproperties} (\ref{item:e_at_zero}), we have $a_m\geq 0$ for all $m\in \N$. We then use $a_m$ to define the truncated $\dot{H}^{-1}$ energy densities,
\begin{align}
\label{eq:def em}
e_{m}(a) := \begin{cases} e(a) &\text{ for } a< a_{m} , \\ e(a_{m}) + m(a-a_{m}) & \text{ for } a \geq a_{m} .\end{cases}  
\end{align}
\end{defn}

 \begin{lem}[Truncation of the $\dot{H}^{-1}$ energy] 
\label{lem:truncation} Let $f$ satisfy items (\ref{convexlctsas}) and (\ref{nontrivial_convex}) of Assumption \ref{internalas}, and let $e$ be as in Definition \ref{hm1energydendef}.  Then, the truncated energies $e_m$ from Definition \ref{trunk} satisfy the following properties:
\begin{enumerate}[(i)]
\item \label{item:e_m_convex} For all $m\in \N$,  $e_m$ is proper, lower semicontinuous, and convex, with $e_m(0)=0$.
\item For all $m\in \N$,    $e_m$ is non-decreasing on $[0,+\infty)$ and $\dom(e_{m}) = [0,+\infty)$. \label{domainitem}
\item \label{item:m_nondecreasing}
The sequence $\{a_m\}_{m\in \N}$ is nondecreasing, and for each $a\in [0,+\infty)$, the sequence $\{e_m(a)\}_{m\in \N}$ is nondecreasing.  Furthermore $e_m(a)\leq e(a)$ for all $m\in \N$ and $a\in \RR$.
\label{item:size e em} 
\item   For all $a \in \R$,  $\lim_{m \to +\infty} e_{m} (a) = e(a)$. \label{emtoe}    \label{emtoepointwise}
\item  
\label{partialemepschar}
For  all  $m\in \N$ and for all $a \in \dom ( \partial e)$,  $\partial e_{m}(a) = \left\{\min \{ p, m \}\, :\, p\in \partial e(a) \right\}.$ 

\end{enumerate}
\end{lem}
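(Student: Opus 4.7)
The unifying theme is to exploit monotonicity of $\partial e$ together with one-sided continuity properties of convex functions, breaking the analysis into cases based on whether a point lies below, at, or above $a_m$. The first step is a setup verification that $a_m \in \dom(e)$ with $e(a_m) < +\infty$, so that the affine extension is well defined. Writing $\dom(e) = [0, b]$ or $[0, b)$ with $b \in (0, +\infty]$: if $b = +\infty$ or $b \in \dom(e)$ then trivially $a_m \leq b$ and $e(a_m) < +\infty$; if $\dom(e) = [0, b)$ with $b < +\infty$, then lower semicontinuity forces $e(a) \to +\infty$ as $a \uparrow b$, hence $\sup \partial e(a) \to +\infty$, so $a_m < b$. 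Items (i) and (ii) then follow directly: $\dom(e_m) = [0, +\infty)$ and $e_m(0) = 0$ are immediate (using $a_m \geq 0$ from Lemma \ref{eepsproperties}(iii)); convexity holds because $e_m$ is convex $e$ on $[0, a_m]$ joined to an affine function at $a_m$ whose left derivative $e'_-(a_m) \leq m$ by monotonicity of $\partial e$ and the supremum definition of $a_m$; lower semicontinuity follows from lsc of $e$ together with left-continuity of convex $e$ at $a_m$ (which holds at any boundary point of $\dom(e)$ where $e$ is finite, via convex combinations); and monotonicity on $[0, +\infty)$ is inherited from $e$ on $[0, a_m]$ and from the nonnegative slope $m$ on $[a_m, +\infty)$.

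For (iii), the monotonicity of $\{a_m\}_m$ is immediate from its set-theoretic definition. The bound $e_m \leq e$ reduces, on $[a_m, +\infty)$, to $e(a) - e(a_m) \geq m(a - a_m)$, which follows from $e'_-(s) \geq m$ for a.e.\ $s \in (a_m, +\infty) \cap \dom(e)$ (by monotonicity of $\partial e$ and the supremum definition of $a_m$) and the fundamental theorem of calculus for convex functions. The monotonicity $e_m(a) \leq e_{m+1}(a)$ then follows by a case analysis on the position of $a$ relative to $a_m$ and $a_{m+1}$, using the same integration argument on intervals where $e_m$ is affine. Item (iv) follows from monotone convergence plus three observations: for $a \in \interior(\dom(e))$ we have $\sup \partial e(a) < +\infty$, so $a \leq a_m$ for all $m$ sufficiently large and $e_m(a) = e(a)$; at the right endpoint $b \in \dom(e)$, $a_m \uparrow b$ combined with left-continuity of $e$ gives $e_m(b) \to e(b)$; and for $a > b$, $e(a) = +\infty$ while $m(a - a_m) \to +\infty$ since $a_m \to b < a$.

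For (v), when $a < a_m$, $e_m = e$ in a neighborhood of $a$, so $\partial e_m(a) = \partial e(a)$, and all its elements are at most $m$ by monotonicity and the sup definition of $a_m$, so the truncation is trivial. When $a > a_m$, $e_m$ is affine with slope $m$ near $a$, so $\partial e_m(a) = \{m\}$; monotonicity of $\partial e$ together with the existence of intermediate points in $\dom(\partial e) \cap (a_m, a)$ where $\sup \partial e > m$ forces every $p \in \partial e(a)$ to exceed $m$, so $\min(p, m) = m$ as well. The most delicate case is $a = a_m$, where $\partial e_m(a_m) = [e'_-(a_m), m]$ (left derivative inherited from $e$, right derivative equal to $m$ by the affine branch). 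I will split into two subcases: if $a_m$ attains the sup, then $e'_+(a_m) \leq m$, but right-continuity of $e'_+$ combined with $e'_+(a) > m$ for all $a > a_m$ forces $e'_+(a_m) = m$, so $\partial e(a_m) = [e'_-(a_m), m]$ and the truncation is trivial; if $a_m$ does not attain the sup, then $e'_+(a_m) > m$ while $e'_-(a_m) \leq m$ (by approximating from the left using $e'_+(a_n) \to e'_-(a_m)$ for $a_n \uparrow a_m$), so $m \in \partial e(a_m)$ and truncating $\partial e(a_m) = [e'_-(a_m), e'_+(a_m)]$ at $m$ again yields $[e'_-(a_m), m]$. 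The main obstacle is precisely this corner analysis at $a = a_m$, where the formula must accommodate both subcases uniformly and relies on carefully combining the right-continuity of $e'_+$ and the left-continuity of $e'_-$ with the supremum definition of $a_m$.
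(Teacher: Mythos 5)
Your proof takes essentially the same route as the paper's: exploit monotonicity of $\partial e$ (so that $a<a_m$ forces $\sup\partial e(a)\le m$ while $a>a_m$ forces $\inf\partial e(a)>m$), the fundamental theorem of calculus for convex functions, and a case analysis at the corner $a=a_m$. The decomposition into $a<a_m$, $a>a_m$, $a=a_m$ for item (v), and the use of one-sided derivatives and left-continuity of convex lsc functions at the right boundary, are all sound. Where you diverge stylistically from the paper is in (v): you compute $\partial e_m(a_m)=[e'_-(a_m),m]$ explicitly and split on whether the supremum defining $a_m$ is attained, whereas the paper argues by set containment via tangent lines and derives a contradiction if $\partial e_m(a_m)\not\subset\partial e(a_m)$. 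Both work; your version is more explicit and mechanical, the paper's is slicker and avoids the attained/not-attained dichotomy.

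Two small gaps to patch. First, your setup step asserts $e(a_m)<+\infty$ without disposing of the case $a_m=+\infty$. That case is harmless (there $e_m\equiv e$), but to conclude $\dom(e_m)=[0,+\infty)$ in item (ii) you then need the observation that $a_m=+\infty$ forces $\sup\dom(\partial e)=+\infty$, hence by convexity of $\dom(e)$ with $0\in\dom(e)$ one gets $\dom(e)=[0,+\infty)$; state this rather than calling (ii) immediate. Second, in item (iv) your three cases are $a\in\interior(\dom(e))$, $a=b\in\dom(e)$, and $a>b$; you omit the endpoint case $a=b$ when $\dom(e)=[0,b)$ with $b<+\infty$. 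There $e(b)=+\infty$, $a_m\uparrow b$, and lower semicontinuity gives $e(a_m)\to+\infty$, hence $e_m(b)\ge e(a_m)\to+\infty$; alternatively one can use the paper's squeeze $e(b)\le\lim_{h\to 0^+}\lim_m e_m(b-h)\le\lim_m e_m(b)\le e(b)$, which treats $b\in\dom(e)$ and $b\notin\dom(e)$ uniformly and avoids needing $m(b-a_m)\to 0$.
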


\begin{proof}
Throughout the proof, we will use the fact that $e$ enjoys the properties established in Lemma \ref{eepsproperties}(\ref{eepsconvex}) and therefore the properties outlined in Section \ref{ss:convex fns background}. Part (\ref{item:e_m_convex}) follows  from the definition.

%We begin by showing (\ref{item:e_m_convex}). The definition of $e_m$ and the fact that $e(0)=0$ (Lemma \ref{eepsproperties}(\ref{eepsconvex})) imply $e_m(0)=0$. Next, we see that $e_m$ is proper, lower semicontinuous, and convex on $(-\infty, a_m]$ and $[a_m,\infty)$ when considered on each domain separately.   This immediately implies that $e_m$ is proper and lower semicontinuous on all of $\RR$.  If $a_m=0$, then convexity on all of $\RR$ is also immediate since $e(a)=+\infty$ on $(-\infty, 0)$.  Otherwise, if $a_m>0$, to obtain convexity on all of $\RR$
% it suffices to show that $e_m'$ is monotone almost everywhere.  Since $e$ is differentiable almost everywhere on $\dom(e)$ and $e_m'(a)=m$ for all $a>a_m$, this amounts to showing that $e'(a)\leq m$ whenever $a\in [0,a_m)$ is a point of differentiability for $e$.  Luckily, this follows immediately from the definition of $a_m$.  

 Now we consider item (\ref{domainitem}). Note that if $a_m=+\infty$, then there exist $\tilde{a}_i\in \dom(\partial e)\subset \dom(e)$ with $\tilde{a}_i\rightarrow+\infty$. Since $\dom(e)$ is convex and $0\in \dom(e)$, we have $\dom(e)=[0,+\infty)$ as desired.  It also follows that $e_m$ is nondecreasing on $[0,+\infty)$ since we know that $e$ is nondecreasing on $[0,+\infty)$.
 On the other hand, if $a_m<+\infty$, then for any $a\leq a_m$ we have $e_m(a)=e(a)=\int_0^{a} e'(\alpha)\, d\alpha\leq ma$ by the convexity of $e$ and the definition of $a_m$.  Combining this with the definition of $e_m$ for $a\geq a_m$, it follows that $0\leq e_m(a)\leq ma$ for all $a\geq 0$, so $\dom(e_m)=[0,+\infty)$.  In this case, $e_m$ being nondecreasing on $[0,+\infty)$, follows from the fact that $e$ and $e(a_m)+m(a-a_m)$ are nondecreasing and agree at $a_m$.

For item (\ref{item:m_nondecreasing}), it is clear that $a_m$ is nondecreasing since $\sup \partial e(a)$ is nondecreasing. 
Next, we want to compare $e_m(a)$ and $e_{m+1}(a)$ for some $a\in \RR$.  If $a_m=a_{m+1}$, then it is clear that $e_{m}\leq e_{m+1}$ holds on $\RR$.  Otherwise, if $a_m<a_{m+1}$, then it follows that $[a_m, a_{m+1})\subset \dom(e)$ so $e$ is differentiable almost everywhere on $(0,a_{m+1})$. 
 Using the fact that $e_{m}$ and $e_{m+1}$ agree up to $a_m$, we can write
\[
e_{m+1}(a)-e_m(a)=\int_{a_m}^{\max(a,a_m)} e_{m+1}'(\alpha)-m\, d\alpha=\max(a-a_{m+1},0)+\int_{a_m}^{\max(\min(a_{m+1},a),a_m)} e'(\alpha)-m\, d\alpha.
\]
This is nonnegative from the definition of $a_m$ and the convexity of $e$. It remains to show that $e_m(a)\leq e(a)$ for all $a\in \RR$ and $m\in \N$.  This is trivial when $e(a)=+\infty$ or when $a\leq a_m$, so we only need to consider $a\in \dom(e)\cap (a_m,\infty)$. Fixing some $a\in \dom(e)\cap (a_m,\infty)$, we can write
\[
e(a)=\int_0^a e'(\alpha)d\alpha=e(a_m)+\int_{a_m}^a e'(\alpha)d\alpha\geq e(a_m)+m(a-a_m) = e_m(a) 
\]
where we have used the definition of $a_m$ in the inequality. % Thus, $e_m(a)\leq e(a)$ for all $a\in \RR$ and $m\in \N$.

Now we show item (\ref{emtoe}).  Since $a_m$ is increasing, it follows that $\lim_{m\to\infty} a_m=a^*$ for some  $a^*\in \RR\cup\{+\infty\}$.   Clearly, $\lim_{m\to\infty} e_m(a)=e(a)$ whenever $a< a^*$.  If $a^*\neq +\infty$, then for any $a>a^*$ we have 
\[
\lim_{m\to\infty} e_m(a)=\lim_{m\to\infty} e(a_m)+m(a-a_m)\geq \lim_{m\to\infty} m(a-a_m)=+\infty,
\]
where we have used the fact that $e\geq 0$ everywhere.  Since we know that $e\geq e_m$ for all $m$, we must also have $e(a)=+\infty$.  Thus, the only remaining case is the convergence when $a=a^*\neq +\infty$. Note that we must have $a^*>0$ since $\dom(e)\neq \{0\}$. By the lower semicontinuity of $e$,
\[
e(a^*)\leq\lim_{h\to 0^+} e(a^*-h)=\lim_{h\to 0^+}\lim_{m\to\infty} e_m(a^*-h)\leq \lim_{m\to\infty} e_m(a^*)
\]
where we have used the fact that $e_m$ is increasing on $[0,+\infty)$ in the final inequality.  Since $e(a^*)\geq e_m(a^*)$ for all $m$ we are done.

Finally, we show item (\ref{partialemepschar}). Note that we only need to worry about values of $a\in \dom(\partial e)\cap [a_m,\infty)$.
From the definition of $a_m$, we have $\sup \partial e(a)>m$ for all $a\in \dom(\partial e)\cap (a_m,\infty)$.  Using the general fact about convex functions that $\sup \partial e(a)\leq \inf \partial e(a')$ whenever $a<a'$, we have $\inf \partial e(a)>m$ for all $m\in \dom(\partial e)\cap (a_m,\infty)$. 
Therefore, 
\[
 \left\{\min \{ p, m \}\, :\, p\in \partial e(a) \right\}  =\{ m \}= \{e_m'(a)\},
\]
for all $a\in \dom(\partial e)\cap (a_m,\infty)$.  Finally, we shall show that the desired equality holds at $a_m$. Let $p\in \partial e(a_m)$ be such that $p\leq m$.  The definition of $e_m$ implies that the tangent line to $e$ at $a_m$ with slope $p$ is also a tangent line to $e_m$ at $a_m$, so we find $p\in\partial e_m(a_m)$.  Therefore $\{\min(p,m): p\in \partial e(a_m)\}\subset \partial e_m(a_m)$.  To show the reverse containement, let us suppose we have $b\in \partial e_m(a_m)$ but $b\notin \partial e(a_m)$. Then there exits $a>a_m$ with $e(a_m)+b(a-a_m)>e(a)$; note that this implies $a\in \dom(e)$. We can rewrite this condition as $b> \frac{1}{a-a_m}\int_{a_m}^a e'(\alpha)d\alpha$, which from the definition of $a_m$ implies that $b>m$.  However, this is a contradiction, as it is clear that $e_m$ cannot have a tangent line with slope greater than $m$ at $a_m$.  Therefore, $\{\min(p,m): p\in \partial e(a_m)\}= \partial e_m(a_m)$ and we are done.

\end{proof}

The definition of $f_\ep$, along with 
Lemma  \ref{reginternalenergydensityprop} (\ref{fepstronglyconvex}), imply that, for all $\ep>0$, the energy $f_\ep$ satisfies items (\ref{convexlctsas}) and (\ref{nontrivial_convex}) of Assumption \ref{internalas}. Therefore,  the energies $e_\ep$ and $e_{m,\ep}$ can be defined analogously to $e$ and $e_m$ and enjoy the properties established in the previous two lemmas. Recall, however, that $\dom(f_\ep)=[0,+\infty)$ and $f_\ep$ is differentiable on its domain. This simplifies several aspects of the definitions of $e_\ep$ and $e_{m,\ep}$. We summarize the definitions as:
\begin{equation}
\label{eq:def eepm}
\begin{split}
& e_\epsilon(a) := \begin{cases} a f_\epsilon(a) - 2 \int_0^a f_\epsilon(s) ds &\text{ if } a \in [0,+\infty), \\
    +\infty &\text{ otherwise},  \end{cases}\\
&a_{m, \epsilon} := \sup \{ a \in [0,+\infty):  e_\epsilon'(a)  \leq m   \},\\
&
  e_{m,\epsilon}(a) := \begin{cases} e_\epsilon(a) &\text{ for } a< a_{m, \epsilon}, \\ e_\epsilon(a_{m,\epsilon}) + m(a-a_{m, \epsilon}) & \text{ for } a \geq a_{m, \epsilon} . \end{cases} 
\end{split}
\end{equation}
In addition, since $f_\ep$ is differentiable on $[0,+\infty)$,   so is $e_\ep$. Therefore, Lemma \ref{eepsproperties}(\ref{subdiffe}) and Lemma \ref{lem:truncation} (\ref{partialemepschar}) applied to $e_\ep$ imply,
\begin{equation}
    \label{eq:emep'}
    e_{m,\ep}'(a) = \min\{e_\ep'(a) , m\} =\min\{ f_\epsilon^*(f_{\epsilon}'(a)),m\}  \text{ for all $a\in (0,+\infty)$}.
\end{equation}

Next, we study the limits in $\ep$ of $e_\ep$ and $e_{m,\ep}$.

\begin{lem}[Limits in $\ep$ of the $\dot{H}^{-1}$ energy densities and their truncations] \label{ephm1limits}
Let $f$ satisfy items (\ref{convexlctsas}) and (\ref{nontrivial_convex}) of Assumption \ref{internalas} and let $f_\ep$ be given by equation (\ref{eq:eps energy}). Let $e$, $e_m$, $e_\ep$, and $e_{m,\ep}$ be defined via Definition \ref{hm1energydendef}, Definition \ref{trunk}, and (\ref{eq:def eepm}).  Then we have, 
    \begin{enumerate}[(i)]
    \item If $a\in [0,+\infty)\setminus \dom(e)$ then $\lim_{\epsilon\to 0} e_{\epsilon}'(a)=+\infty$.
    \item    $e_\epsilon$ converges pointwise to $e$.  \label{item:eep to e}
   \item \label{emepstoem} 
For all $m\in \N$, $e_{m,\epsilon}$ converges pointwise to $e_m$ as $\epsilon\to 0$.

\end{enumerate}
\end{lem}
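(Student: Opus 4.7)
The plan is to establish the three items in order, using (i) in the proofs of both (ii) and (iii). A basic observation underlying the approach is the identity $\dom(e) = \dom(f)$: for $a \in \dom(f)$, convexity of $f$ together with finiteness on $\interior(\dom(f))$ ensures $f$ is bounded below by a linear function, so the integral in the definition of $e$ is finite; and for $a \notin \dom(f)$, $e(a) = +\infty$ by convention.

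To prove (i), I fix $a \in [0,+\infty)\setminus \dom(f)$; since $0 \in \dom(f)$, we must have $a > 0$. Lemma \ref{reginternalenergydensityprop}(\ref{fep_to_f}) gives $f_\ep(a) \to f(a) = +\infty$, and the elementary convexity bound $f_\ep'(a) \geq (f_\ep(a)-f_\ep(0))/a = f_\ep(a)/a$ forces $f_\ep'(a) \to +\infty$. Combining the identity $e_\ep'(a) = f_\ep^*(f_\ep'(a))$ from (\ref{eq:emep'}) with the dual lower bound $f_\ep^*(p) \geq s_0 p - f_\ep(s_0)$ at a fixed test point $s_0 \in \interior(\dom(f))$ then gives $e_\ep'(a) \geq s_0 f_\ep'(a) - f_\ep(s_0) \to +\infty$, since $f_\ep(s_0) \to f(s_0) < +\infty$.

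For (ii), I write $e_\ep(a) = af_\ep(a) - 2\int_0^a f_\ep(s)\,ds$. When $a \in \dom(e)$, pointwise convergence $f_\ep(a) \to f(a)$ handles the first term. For the integral, convexity and $f_\ep(0)=0$ give the uniform upper bound $f_\ep(s) \leq (s/a)f_\ep(a)$, which is bounded since $f_\ep(a) \to f(a) < +\infty$; fixing a differentiability point $s_0 \in \interior(\dom(f))$ of $f$, the support line bound $f_\ep(s) \geq f_\ep(s_0) + f_\ep'(s_0)(s-s_0)$ together with uniform boundedness of $f_\ep(s_0)$ and $f_\ep'(s_0)$ (the latter via the slope estimate (\ref{slopeboundresolvent}) applied through the formula $f_\ep' = \delta(\ep)\id + ({}^{\delta(\ep)}f)'$) provides a uniform lower bound. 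Dominated convergence then yields $\int_0^a f_\ep \to \int_0^a f$. When $a \notin \dom(e)$, I split into two subcases: if $a > \sup\dom(e)$, I pick $s_0 \in (\sup\dom(e), a)$ and combine part (i) with monotonicity of $e_\ep'$ (convexity of $e_\ep$) to get $e_\ep(a) \geq (a-s_0)e_\ep'(s_0) \to +\infty$; if $a = \sup\dom(e) \notin \dom(e)$, lower semicontinuity and monotonicity of $e$ on $[0,+\infty)$ force $\lim_{s \to a^-} e(s) = +\infty$, so applying the $\dom(e)$ case of (ii) at points $s < a$ together with monotonicity of $e_\ep$ gives $\liminf_\ep e_\ep(a) \geq \sup_{s<a} e(s) = +\infty$.

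For (iii), I use the representation $e_{m,\ep}'(s) = \min\{e_\ep'(s), m\}$ from (\ref{eq:emep'}), so that $e_{m,\ep}(a) = \int_0^a \min\{e_\ep'(s), m\}\,ds$. At any $s > 0$ where $e$ is differentiable (which holds a.e.), pointwise convergence $e_\ep \to e$ from (ii) together with a standard sandwiching of difference quotients for convex functions yields $e_\ep'(s) \to e'(s)$; hence $\min\{e_\ep'(s), m\} \to \min\{e'(s), m\} = e_m'(s)$, where the last equality uses Lemma \ref{lem:truncation}(\ref{partialemepschar}). At points $s \in [0,+\infty)\setminus \dom(e)$, part (i) gives $e_\ep'(s) \to +\infty$, so again $\min\{e_\ep'(s), m\} \to m = e_m'(s)$. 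Since $e_\ep$ is non-decreasing on $[0,+\infty)$, the integrand is uniformly bounded by $m$, so bounded convergence yields $e_{m,\ep}(a) \to \int_0^a e_m'(s)\,ds = e_m(a)$. The main technical obstacle is the boundary subcase of (ii) where $a = \sup\dom(e) \notin \dom(e)$: here part (i) cannot be applied directly since there is no admissible test point in $(\sup\dom(e), a)$, so one has to transfer divergence from the interior of $\dom(e)$ by combining lower semicontinuity of $e$ with the monotonicity of $e_\ep$.
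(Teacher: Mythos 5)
Your proof is correct and follows the same high-level structure as the paper's (establish (i), then use it in (ii) and (iii)), but it departs in two useful ways. For (i), the paper expands $e_\ep'(a)$ through the Moreau--Yosida derivative and the proximal map $J_{\delta(\ep)}$, asserting that $a-J_{\delta(\ep)}(a)$ stays bounded away from zero because ``$\dom(e)$ is closed.'' You instead deduce $f_\ep'(a)\to+\infty$ directly from $f_\ep(a)\to f(a)=+\infty$ together with the convexity bound $f_\ep'(a)\geq f_\ep(a)/a$, and then apply Fenchel duality at a fixed interior test point $s_0$. This avoids the proximal map entirely and, more importantly, does not rely on closedness of $\dom(e)=\dom(f)$, which is in fact not implied by Assumption \ref{internalas}(\ref{convexlctsas})--(\ref{nontrivial_convex}): take $f(s)=-\log(1-s)$ on $[0,1)$, extended by $+\infty$ elsewhere, so that $\dom(e)=[0,1)$. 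For (ii), with $a\notin\dom(e)$, the paper again invokes that closedness claim to produce $a'<a$ outside $\dom(e)$ and apply Fatou; you split into $a>\sup\dom(e)$ (argued similarly, using monotonicity of $e_\ep'$) and the boundary case $a=\sup\dom(e)\notin\dom(e)$, which the paper's closedness assertion silently rules out, and you handle the latter via lower semicontinuity of $e$, monotonicity of $e_\ep$, and the already-established convergence at points $s<a$. This closes a genuine gap in the paper's argument. Item (iii) is essentially the paper's proof (a.e.\ convergence of the truncated derivatives plus a dominated/bounded convergence step), phrased via the integral representation $e_{m,\ep}(a)=\int_0^a\min\{e_\ep'(s),m\}\,ds$ rather than the paper's bound on $\int_0^a|e_{m,\ep}'-e_m'|$.
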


\begin{proof}
Choose some $a\in [0,+\infty)\setminus \dom(e)$.
 The differentiability of $f_{\epsilon}$ on $[0,+\infty)$ implies that $e_{\epsilon}$ is differentiable on $(0,+\infty)$ and we may write \[
e'_{\epsilon}(a)=af_{\epsilon}'(a)-f_{\epsilon}(a)=\frac{\delta(\epsilon)}{2}a^2+a({}^{\delta(\epsilon)}f'(a))+{}^{\delta(\epsilon)}f(0)-{}^{\delta(\epsilon)}f(a).
\]
Therefore,
\[
\liminf_{\epsilon\to 0} e_{\epsilon}'(a)= \liminf_{\epsilon\to 0} a({}^{\delta(\epsilon)}f'(a))-{}^{\delta(\epsilon)}f(a)=\liminf_{\epsilon\to 0} ({}^{\delta(\epsilon)}f)^*\left({}^{\delta(\epsilon)}f'(a)\right).
\]
Using the fact that there exists $a_0\in (0,\infty)\cap \dom(f)$, it follows that
\[
\liminf_{\epsilon\to 0} ({}^{\delta(\epsilon)}f)^*\left({}^{\delta(\epsilon)}f'(a)\right)\geq \liminf_{\epsilon\to 0} a_0{}^{\delta(\epsilon)}f'(a)-{}^{\delta(\epsilon)}f(a_0).
\]
Since $\dom(e)$ is closed and $J_{\delta(\epsilon)}(a)\in \dom(e)$ for all $a\in [0,+\infty), \epsilon>0$, it follows  that $a-J_{\delta(\epsilon)}(a)>0$ independently of $\epsilon$ when $a\in [0,+\infty)\setminus \dom(e)$.  Therefore, \[
\liminf_{\epsilon\to 0} a_0{}^{\delta(\epsilon)}f'(a)=\liminf_{\epsilon\to 0}a_0\frac{a-J_{\delta(\epsilon)}(a)}{\delta(\epsilon)}=+\infty,
\]
which from our work above implies that 
$\liminf_{\epsilon\to 0} e_{\epsilon}'(a)=+\infty$ for all 
$a\in [0,+\infty)\setminus \dom(e)$.

Now we consider the pointwise convergence of $e_{\ep}$ to $e$.  By item (\ref{fep_to_f}) in Lemma \ref{reginternalenergydensityprop}, we know that $f_{\ep}$ converges pointwise everywhere to $f$, and uniformly on compact subsets of ${\rm int}(\dom(f))$.  From the definition of $e$, it is then clear that $e_{\ep}(a)$ converges to $e(a)$ for all $a\in \dom(e)$.  If $a<0$, then $e_{\epsilon}(a)=+\infty=e(a)$.  Thus, it remains to establish the convergence for $a\in [0,+\infty)\setminus \dom(e)$.  Again using the fact that $\dom(e)$ is closed, given $a\in [0,+\infty)\setminus \dom(e)$ we can find $a'\in [0,+\infty)\setminus \dom(e)$ such that $a'<a$.  We can then write
\[
e_{\epsilon}(a)=e_{\epsilon}(a')+\int_{a'}^a e_{\epsilon}'(\alpha)d\alpha\geq \int_{a'}^a e_{\epsilon}'(\alpha)d\alpha.
\]
Fatou's lemma then implies that
\[
\liminf_{\epsilon\to 0} e_{\epsilon}(a)\geq \int_{a'}^a \liminf_{\epsilon\to 0} e_{\epsilon}'(\alpha)d\alpha=+\infty.
\]
Thus, we have $\lim_{\epsilon\to 0} e_{\epsilon}(a)=+\infty=e(a)$ for all $a\in [0,+\infty)\setminus \dom(e)$ as desired.

Turning to the proof of the final item (\ref{emepstoem}),  we note that the pointwise convergence of $e_{\epsilon}$  to $e$ combined with the differentiability of $e_{\epsilon}$ on $[0,+\infty)$ implies that $e_{\epsilon}'(a)$ converges to $e'(a)$ whenever $a$ is a point of differentiability for $e$ (see for instance \cite[Lemma A.1]{jacobs2021existence}).  Furthermore, almost every point in $\dom(e)$ is a point of differentiability for $e$ and  $a\in \dom(e)$ implies that $[0,a]\subset \dom(e)$.  Therefore, given $a\in \dom(e)$, it follows that
\[
|e_{m,\epsilon}(a)-e_m(a)|\leq \int_0^a|e'_{m,\epsilon}(\alpha)-e_m'(\alpha)|d\alpha \leq \int_0^a \max(2m\, ,\, |e'_{\epsilon}(\alpha)-e'(\alpha)|)d\alpha,
\]
where we have used the fact that $b\mapsto\min(b,m)$ is a contraction on $\RR$.  Hence, the dominated convergence theorem implies that $\lim_{\epsilon\to 0} |e_{m,\epsilon}(a)-e_m(a)|=0$ for all $a\in\dom(e)$. 

 If $a\in [0,+\infty)\setminus \dom(e)$, then we saw from above that $\lim_{\epsilon\to 0} e_{\epsilon}'(a)=+\infty$ so $\lim_{\epsilon\to 0} e_{m,\epsilon}'(a)=m$.  It is also clear from the definition of $a_m$ that $e_m'(a)=m$ for $a\in [0,+\infty)\setminus \dom(e)$.  Thus, if $a^*:=\sup\dom(e)\neq +\infty$, then for all $a> a^*$, we can calculate
\[
|e_{m,\epsilon}(a)-e_m(a)|\leq \int_0^{a^*}|e'_{m,\epsilon}(\alpha)-e_m'(\alpha)|d\alpha+\int_{a^*}^{a} |e_{m,\epsilon}'(\alpha)-m|d\alpha,
\]
and we can use the same argument as above ($b\mapsto \min(b,m)$ is a contraction mapping along with the dominated convergence theorem) to conclude that 
$\lim_{\epsilon\to 0} |e_{m,\epsilon}(a)-e_m(a)|=0$.  Finally if $a<0$ then $e_{m,\epsilon}(a)=+\infty=e_m(a)$ so there is nothing to prove.  Thus, we are done.

\end{proof}

Finally, we record a few properties of the convex conjugates of the $\dot{H}^{-1}$ energy densities. We denote $e_m^*=(e_m)^*$ and $e_{m,\ep}^*=(e_{m,\ep})^*$.

\begin{lem}[Convex conjugates of $\dot{H}^{-1}$ energy densities]
    \label{lem:conjugate em}
Let $f$ satisfy items (\ref{convexlctsas}) and (\ref{nontrivial_convex}) of Assumption \ref{internalas} and let $f_\ep$ be given by equation (\ref{eq:eps energy}). Let $e$, $e_m$, $e_\ep$, and $e_{m,\ep}$ be defined via Definition \ref{hm1energydendef}, Definition \ref{trunk}, and (\ref{eq:def eepm}). We have,
\begin{enumerate}[(i)]
    \item \label{item:e* proper} For all $m\in \N$ and all $\ep>0$,  that $e^*$, $e_m^*$, and $e_{m,\ep}^*$ are proper, lower semicontinuous, and convex. Furthermore, $\interior \dom (e_m^*)\neq \emptyset$.
    \item \label{item:e*(0)}For all $m\in \N$ and all $\ep>0$,   $e^*(0)=0$, and $e_{m,\ep}^*(0)=0$.
    
    \item \label{item:em* decreasing} For all $a\in \R$ and for all $m\in \N$,  $e_m^*(a)\geq e^*(a)$.
\end{enumerate}
\end{lem}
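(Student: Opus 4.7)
The plan is to observe that each of the three items is essentially a direct consequence of convex duality combined with the properties of $e$, $e_m$, and $e_{m,\ep}$ already established in Lemmas \ref{eepsproperties}, \ref{lem:truncation}, and in equation (\ref{eq:def eepm}). The only step requiring more than a one-line argument is the non-empty interior claim in (i), which I handle by explicitly computing the conjugate on the non-positive half-line.

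For item (\ref{item:e* proper}), I first invoke the standard fact from Section \ref{moreauenvelopesec} that the convex conjugate of any proper, lower semicontinuous, convex function enjoys these same three properties. Since Lemma \ref{eepsproperties}(\ref{eepsconvex}), Lemma \ref{lem:truncation}(\ref{item:e_m_convex}), and the construction of $e_{m,\ep}$ in (\ref{eq:def eepm}) provide these properties for $e$, $e_m$, and $e_{m,\ep}$, this immediately yields the first statement. For the interior claim, I use that $e_m$ is non-decreasing on $[0,+\infty)$ with $e_m(0)=0$ (Lemma \ref{lem:truncation}(\ref{domainitem})), hence $e_m \geq 0$ on $[0,+\infty)$, while $e_m(a)=+\infty$ for $a<0$ (as $a_m\geq 0$ by Definition \ref{trunk}). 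Therefore, for any $b\leq 0$,
\begin{equation*}
e_m^*(b)=\sup_{a\geq 0}\{ab-e_m(a)\}=0,
\end{equation*}
the supremum being attained at $a=0$. In particular $(-\infty,0]\subseteq \dom(e_m^*)$, so $(-\infty,0)\subseteq \interior\dom(e_m^*)$, which is non-empty.

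For item (\ref{item:e*(0)}), the same observation applied to $e$ and to $e_{m,\ep}$ (which share the nonnegativity on $[0,+\infty)$, the value $0$ at $0$, and the value $+\infty$ on $(-\infty,0)$, by Lemma \ref{eepsproperties}(\ref{eepsconvex}), Lemma \ref{lem:truncation} applied to $f_\ep$, and (\ref{eq:def eepm})) gives
\begin{equation*}
e^*(0)=\sup_a\{-e(a)\}=-\inf_a e(a)=0, \qquad e_{m,\ep}^*(0)=\sup_a\{-e_{m,\ep}(a)\}=0.
\end{equation*}

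For item (\ref{item:em* decreasing}), I invoke Lemma \ref{lem:truncation}(\ref{item:m_nondecreasing}), which gives the pointwise bound $e_m(b)\leq e(b)$ for all $b\in\R$ and all $m\in\N$. Since the convex conjugate is order-reversing, taking the supremum
\begin{equation*}
e_m^*(a)=\sup_{b\in\R}\{ab-e_m(b)\}\geq \sup_{b\in\R}\{ab-e(b)\}=e^*(a)
\end{equation*}
yields the desired inequality. No serious obstacle is anticipated in any of the three steps; the only mild subtlety is verifying the sign/monotonicity hypotheses needed to compute the conjugates explicitly on $(-\infty,0]$, all of which are recorded in the lemmas cited.
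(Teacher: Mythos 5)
Your proof is correct and follows essentially the same route as the paper: item (i) by combining the standard properties of convex conjugation with the properties of $e$, $e_m$, $e_{m,\ep}$ from Lemmas \ref{eepsproperties} and \ref{lem:truncation}, the interior claim via the nonnegativity and monotonicity of $e_m$ on $[0,+\infty)$; item (ii) by evaluating the conjugate at $0$ using $e(0)=0$ and $e \geq 0$; and item (iii) by order-reversal of conjugation applied to $e_m \leq e$. The only (harmless) cosmetic difference is that you compute $e_m^*(b)=0$ exactly for $b \leq 0$, whereas the paper only records $e_m^*(b) \leq 0$; both suffice for the interior claim.
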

\begin{proof}
    The first claim of item (\ref{item:e* proper}) follows from Lemma \ref{eepsproperties}(\ref{eepsconvex}), Lemma \ref{lem:truncation}(\ref{item:e_m_convex}), and the definition of convex conjugate. Next, we recall from Lemma \ref{lem:truncation}(\ref{domainitem}) that $e_m$ is nondecreasing on its domain. Using this in the definition of $e_m^*$ implies $e^*_m(b) \leq 0$ for all $b \leq 0$, which implies  ${\rm int}(\dom(e_m^*)) \neq \emptyset$. For item (\ref{item:e*(0)}), we recall that, according to Lemma \ref{eepsproperties}(\ref{eepsconvex}), $e(0)=0$ and $e$ is non-decreasing. 
    Thus, $e^*(0) =-\inf_a e(a)=-e(0)=0$. The same argument  shows $e_m^*(0)=0$.  Finally, 
    we recall from Lemma \ref{lem:truncation}(\ref{item:m_nondecreasing})  that $e_m(a)\leq e(a)$ for all $a\in \R$,  which implies, from the definition of convex conjugate, that (\ref{item:em* decreasing}) holds.
 
\end{proof}

\section{Properties of the $\epsilon$ flow}
\label{sec:ep flow}

In this section, we will establish properties of the  $\epsilon$-approximate equation \eqref{epsiloneqn}. {We begin by showing existence, uniqueness, and stability, which hold under relatively weak hypotheses on the initial data and velocity field. This stability result is a crucial ingredient of Corollary \ref{particlecorollary}, in which we extend our main convergence result, that solutions of (\ref{epsiloneqn}) converge to (\ref{PDE}), to the case of particle initial data.

Next, we  prove several dissipation properties of solutions, which for simplicity, we show under the stronger hypotheses that  $\rho_{\ep}^0\in C^{\infty}_c(\RR^d)\cap \PP_2(\R^d)$  and $v_{\epsilon}\in C^{\infty}_c([0,+\infty)\times \RR^d))$. These dissipation properties are then used in Section \ref{sec:convergence} to obtain our main convergence result, under the hypotheses that the velocity field and initial data are well-prepared; see Definition \ref{wellprepareddef}.}

\subsection{Existence, uniqueness, and stability}

We begin by showing that \eqref{epsiloneqn} is well-posed for fixed $\epsilon >0$.

\begin{lem}[Well-posedness of (\ref{epsiloneqn})] \label{wellposedpdeepslem}
Fix $\ep>0$. Suppose $f$ and $\varphi_\ep$ satisfy Assumptions \ref{internalas} and \ref{mollifieras}, and suppose we have initial data 
{$\rho_\epsilon^0\in \P_2(\Rd)$} and velocity field { $v_{\ep}\in C_b([0,+\infty);W^{1,\infty}(\Rd))$}. 
 Then there exists a unique $\rho_\epsilon \in AC^2_{\loc}([0,+\infty), \P_2(\Rd))$ satisfying (\ref{epsiloneqn}) with initial data $\rho^0_\epsilon$ and a unique flow map  $X_\ep \in W^{1,\infty}_{\loc}([0,+\infty);W^{1,\infty}_{\loc}(\RR^d))$
\begin{equation}
    \label{eq:Xep}
X_{\epsilon}(t,x)=x-\left( \int_0^t \nabla p_{\epsilon}(s,X_{\epsilon}(s,x)) + v_{\epsilon}(s,X_{\epsilon}(s,x))\right) \, ds,
\end{equation}
such that $\rho_\ep(t) = X_{\ep}(t)_{ \#}\rho_{\ep}^0$. Furthermore, for any two choices of initial data $\rho_\ep^{0,1}, \rho_\ep^{0,2} \in \P_2(\Rd)$, we have
\begin{align} \label{epsiloneqnstability} W_2(\rho_\ep^1(t), \rho_\ep^2(t)) \leq W_2(\rho^{0,1},\rho^{0,2}) \left( 1+C_\epsilon t e^{C_\epsilon t} \right),\end{align}
for  $C_\epsilon =2\norm{v_{\epsilon}}_{{L^\infty([0,+\infty);W^{1,\infty}}(\RR^d))}+\norm{\nabla \varphi_\ep}_{W^{1,1}(\Rd)} \left(\|f'_\epsilon\|_{\rm Lip([0,+\infty))} \norm{\vp_{\epsilon}}_{L^\infty(\Rd)} +f'_{\epsilon}(0)\right)$. 

Finally, if $\rho^0_\epsilon \in C^\infty_c(\Rd)$, we have $\partial_t X_{\epsilon}\in L^{\infty}_{\loc}([0,+\infty);W^{1,\infty}(\RR^d))$, $\rho_{\ep}\in L^{\infty}_{\loc}([0,+\infty);L^{\infty}(\RR^d))$, and, for every time $t\geq0$, $\rho_{\ep}(t)$ is compactly supported in space. 
\end{lem}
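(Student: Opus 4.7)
The plan is to construct the solution via a Banach fixed-point argument on $C([0,T];(\P_2(\Rd),W_2))$, exploiting the fact that the velocity $b[\rho](t,x) := -\nabla p_\ep[\rho](x) - v_\ep(t,x)$ is globally Lipschitz in $x$ with constant independent of the probability measure $\rho$. Indeed, for any $\rho \in \P(\Rd)$, Young's convolution inequality yields $\|\varphi_\ep*\rho\|_\infty \leq \|\varphi_\ep\|_\infty$, so by the Lipschitz continuity of $f_\ep'$ on $[0,+\infty)$ (Lemma \ref{reginternalenergydensityprop}(iii)),
\[
\|q_\ep[\rho]\|_\infty \leq f_\ep'(0) + \|f_\ep'\|_{\mathrm{Lip}([0,+\infty))}\|\varphi_\ep\|_\infty,
\]
and combining this with Lemma \ref{muepqeplem}(v) gives uniform-in-$\rho$ bounds on $\|\nabla p_\ep[\rho]\|_\infty$ and on the spatial Lipschitz constant $\|D^2 p_\ep[\rho]\|_\infty$. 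Together with $v_\ep \in C_b([0,+\infty);W^{1,\infty}(\Rd))$, this shows that for any $\sigma \in C([0,T];\P_2(\Rd))$ the classical Cauchy--Lipschitz flow $X^\sigma \in W^{1,\infty}_\loc([0,+\infty);W^{1,\infty}_\loc(\Rd))$ of $b[\sigma(t,\cdot)](t,\cdot)$ is well defined, so the map $\Phi:\sigma\mapsto X^\sigma(\cdot)_\#\rho_\ep^0$ sends $C([0,T];(\P_2(\Rd),W_2))$ into itself. Because $\|b\|_\infty$ is bounded uniformly in $t$, $\Phi[\sigma]$ lies in $AC^2_\loc([0,+\infty);\P_2(\Rd))$, and a solution of \eqref{epsiloneqn} with the representation \eqref{eq:Xep} is exactly a fixed point of $\Phi$.

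The analytical heart of the argument --- driving both the contraction of $\Phi$ and the stability estimate \eqref{epsiloneqnstability} --- is the $W_1$-Lipschitz dependence
\[
\|\nabla p_\ep[\rho^1] - \nabla p_\ep[\rho^2]\|_\infty \leq \|\nabla\varphi_\ep\|_{L^1}\,\|f_\ep'\|_{\mathrm{Lip}}\,\|\nabla\varphi_\ep\|_\infty\, W_1(\rho^1,\rho^2),
\]
obtained by writing the difference as $\nabla\varphi_\ep * \bigl(f_\ep'(\varphi_\ep*\rho^1) - f_\ep'(\varphi_\ep*\rho^2)\bigr)$, using the Lipschitz continuity of $f_\ep'$, and bounding $\varphi_\ep*(\rho^1-\rho^2)$ pointwise by $\|\nabla\varphi_\ep\|_\infty W_1(\rho^1,\rho^2)$ via Kantorovich--Rubinstein duality. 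For the stability claim, pick an optimal $W_2$-plan $\gamma^0$ between $\rho_\ep^{0,1}$ and $\rho_\ep^{0,2}$, push it forward by $(X_\ep^1, X_\ep^2)$ to obtain a coupling of $(\rho_\ep^1(t),\rho_\ep^2(t))$, and differentiate $\int |X_\ep^1(t,x)-X_\ep^2(t,y)|^2\,d\gamma^0$ along the two flow-map ODEs. Splitting each velocity difference into the piece $b^2(X^1)-b^2(X^2)$ (controlled by the uniform Lipschitz-in-$x$ bound, which is dominated by $\|D^2 p_\ep\|_\infty + \|\nabla v_\ep\|_\infty$) and the piece $b^1(X^1)-b^2(X^1) = -(\nabla p_\ep^1 - \nabla p_\ep^2)(X^1)$ (controlled by the $W_1$-Lipschitz bound, hence by $W_2$), and arranging the resulting constants, one arrives at the differential inequality
\[
\tfrac{d}{dt} W_2(\rho_\ep^1(t),\rho_\ep^2(t)) \leq C_\ep\, W_2(\rho_\ep^1(t),\rho_\ep^2(t))
\]
with $C_\ep$ the constant in the statement. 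Integrating and using $e^{C_\ep t}\leq 1 + C_\ep t\, e^{C_\ep t}$ yields \eqref{epsiloneqnstability}; taking $\rho_\ep^{0,1}=\rho_\ep^{0,2}$ gives uniqueness; and applying the same estimate to two iterates of $\Phi$ shows the contraction property on any interval with $T$ small enough, which then extends globally since $C_\ep$ is time-independent.

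For the additional regularity when $\rho_\ep^0 \in C^\infty_c(\Rd)$, the uniform-in-$(t,x)$ Lipschitz control of $b$ makes $X_\ep(t,\cdot)$ a bi-Lipschitz homeomorphism: Gronwall applied to $\partial_t\nabla_x X_\ep = (\nabla b)(t,X_\ep)\nabla_x X_\ep$ yields both upper and (via the same argument for $X_\ep^{-1}$) lower bounds on $|\det \nabla_x X_\ep(t,x)|$, so the change-of-variables formula
\[
\rho_\ep(t,x) = \frac{\rho_\ep^0(X_\ep^{-1}(t,x))}{|\det \nabla_x X_\ep(t, X_\ep^{-1}(t,x))|}
\]
produces $\rho_\ep \in L^\infty_\loc([0,+\infty);L^\infty(\Rd))$. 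Boundedness of the velocity gives finite propagation of support, $\mathrm{supp}\,\rho_\ep(t) \subseteq \mathrm{supp}\,\rho_\ep^0 + \overline{B}(0, t(\|\nabla p_\ep\|_\infty + \|v_\ep\|_\infty))$, which is compact. Finally, $\partial_t X_\ep(t,x) = b(t, X_\ep(t,x))$ combined with the $W^{1,\infty}$-in-$x$ bounds on $b$ and on $\nabla_x X_\ep$ delivers $\partial_t X_\ep \in L^\infty_\loc([0,+\infty); W^{1,\infty}(\Rd))$. The main obstacle throughout is extracting the uniform-in-$\rho$ bounds on $\|\nabla p_\ep[\rho]\|_\infty$ and $\|D^2 p_\ep[\rho]\|_\infty$ --- precisely the feature that makes \eqref{epsiloneqn} well-posed where the local equation \eqref{PDE} need not be --- after which the construction reduces to classical Cauchy--Lipschitz theory transported along the push-forward.
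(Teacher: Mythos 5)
Your proposal takes essentially the same route as the paper. The paper factors the argument into a general well-posedness result (Lemma~\ref{lem:wp continuity eqn}) for continuity equations driven by velocity fields $w:\PP_2(\Rd)\to C_b([0,+\infty);W^{1,\infty}(\Rd))$ that are (a) uniformly bounded in $W^{1,\infty}$ and (b) Lipschitz in the measure with respect to $W_2$, and then verifies (a)--(b) for $w(\rho)=\nabla(\vp_\ep*f_\ep'(\vp_\ep*\rho))+v_\ep$ using exactly the bounds you identify (Young's inequality for $\|\vp_\ep*\rho\|_\infty$, Lipschitzness of $f_\ep'$, and the one-mollifier Lipschitz bound $\|\vp_\ep*\rho^1-\vp_\ep*\rho^2\|_\infty\le\|\nabla\vp_\ep\|_\infty W_1(\rho^1,\rho^2)$). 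Lemma~\ref{lem:wp continuity eqn} itself is proved by a Picard iteration on the flow maps plus a Gronwall/transport-plan argument for stability, which is the same contraction-plus-coupling scheme you describe; your phrasing of it as a Banach fixed point on $C([0,T];(\P_2,W_2))$ is a cosmetic reorganization of the same idea, and the bi-Lipschitz push-forward argument for the additional regularity when $\rho^0_\ep\in C^\infty_c(\Rd)$ matches the paper's use of Lemma~\ref{lem:DXep bds}. One small thing to be careful about: to recover exactly the constant $C_\ep$ in the statement, note that the one-mollifier Lipschitz estimate and the $W^{1,\infty}$ bound on $\nabla p_\ep$ both need to be absorbed into a single term of the form $\|\nabla\vp_\ep\|_{W^{1,1}}\bigl(\|f_\ep'\|_{\rm Lip}\|\vp_\ep\|_\infty+f_\ep'(0)\bigr)$, which is why the paper tracks $C_E$ and $C_w$ jointly rather than adding a separate $\|\nabla\vp_\ep\|_{L^1}\|f_\ep'\|_{\rm Lip}\|\nabla\vp_\ep\|_\infty$ contribution; if you keep that separate term your final constant is not literally $C_\ep$, although the estimate is of the same form and scales identically in $\ep$.
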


\begin{proof}
We use Lemma \ref{lem:wp continuity eqn} to establish well-posedness.  Here, for any $\rho\in \PP_2(\RR^d)$, our map $w$ is given by
\[
w(\rho) =\nabla (\vp_{\epsilon}*f'_{\epsilon}(\vp_{\epsilon}*\rho))+v_{\epsilon}.
\]
Since $v_{\ep}\in C_b([0,+\infty);W^{1,\infty}(\Rd))$, we also have $w(\rho) \in C_b([0,+\infty);W^{1,\infty}(\Rd))$, for all $\rho \in \P_2(\Rd)$.

To see that inequality (\ref{continuityinspace}) holds, note that Lemma \ref{reginternalenergydensityprop} (\ref{fepw2infty}) ensures there exists $C_\epsilon '>0$ so that, for all $\mu \in \P_2(\Rd)$ and $t \geq 0$,
\begin{align*}
 \norm{w(\mu)(t,\cdot)}_{{L^\infty([0,+\infty);W^{1,\infty}(\RR^d))}}&\leq \norm{v_{\epsilon}(t, \cdot)}_{{W^{1,\infty}}(\RR^d)}+  \norm{\nabla \varphi_\ep}_{W^{1,1}(\Rd)} \norm{f'_{\epsilon}(\vp_{\epsilon}*\mu)}_{L^{\infty}(\RR^d)}\\&\leq 
\norm{v_{\epsilon}}_{{L^\infty([0,+\infty);W^{1,\infty}(\RR^d))}}+\norm{\nabla \varphi_\ep}_{W^{1,1}(\Rd)} \left(\|f'_\epsilon\|_{\rm Lip([0,+\infty))} \norm{\vp_{\epsilon}*\mu}_{L^{\infty}(\RR^d)}+f'_{\epsilon}(0)\right) 
\\
&\leq 
\norm{v_{\epsilon}}_{{L^\infty([0,+\infty);W^{1,\infty}(\RR^d))}}+\norm{\nabla \varphi_\ep}_{W^{1,1}(\Rd)} \left(\|f'_\epsilon\|_{\rm Lip([0,+\infty))} \norm{\vp_{\epsilon}}_{L^\infty(\Rd)} +f'_{\epsilon}(0)\right)  \\
&\leq C_\epsilon ,
\end{align*}
where we use  $\int_\Rd \mu = 1$.

To see inequality (\ref{continuityofw}), note that, for any $\mu_1, \mu_2\in \P_2(\Rd)$,
\begin{align*}
\norm{w(\mu_1)-w(\mu_2)}_{L^{\infty}([0,+\infty) \times \RR^d)}&=\norm{\nabla \vp_{\epsilon} *\left(f'_{\epsilon}(\vp_{\epsilon}*\mu_1)-f'_{\epsilon}(\vp_{\epsilon}*\mu_2)\right) }_{L^{\infty}([0,+\infty) \times \RR^d)}\\
&\leq \norm{\nabla \varphi_\ep}_{L^1(\Rd)} \norm{f'_\epsilon}_{\rm Lip([0,+\infty))}  \norm{(\vp_{\epsilon}*\mu_1)-(\vp_{\epsilon}*\mu_2) }_{L^{\infty}([0,+\infty) \times \RR^d)}\\
&\leq \norm{\nabla \varphi_\ep}_{L^1(\Rd)} \norm{f'_\epsilon}_{\rm Lip([0,+\infty))}  \norm{\vp_{\epsilon}}_{\rm Lip(\Rd)} W_1(\mu_1,\mu_2)\\
&= C_{\epsilon}''W_2(\mu_1,\mu_2).
\end{align*}
Thus, $w$ satisfies the assumptions from Lemma \ref{lem:wp continuity eqn}, so the equation (\ref{epsiloneqn}) is well-posed and $\rho_\ep(t)=X_\ep(t)\#\rho^0_\ep$ holds. Likewise, we obtain the stability estimate (\ref{epsiloneqnstability}).

The regularity on $X_{\ep}$ follows from equation \eqref{eq:Xep} and standard Lagrangian flow theory; see for instance Lemma \ref{lem:DXep bds}. When $\rho_\ep \in C^\infty_c(\Rd)$, the regularity on $\rho_{\ep}$ follows from the pushforward formula and our assumptions $\rho^0_{\ep}$.  The pushforward formula also implies that $\rho_{\ep}(t)$ has compact support for any fixed time $t\geq 0$,  since $\rho^0_{\ep}$ has compact support and $\partial_t X\in L^{\infty}_{\loc}([0,+\infty);L^{\infty}(\RR^d))$.

\end{proof}

\subsection{Dissipation properties}

\subsubsection{Energy dissipation and second moment bound}

First, we recall the energy dissipation inequality for solutions of (\ref{epsiloneqn}). The proof is standard, and we defer it to the appendix.

\begin{lem}[Energy dissipation relation] \label{energydissipationrelationlem} Fix $\ep>0$. Suppose $f$ and $\varphi_\ep$ satisfy Assumptions \ref{internalas} and \ref{mollifieras}, and suppose we have initial data 
$\rho_\epsilon^0\in \P_2(\Rd)\cap C^{\infty}_c(\RR^d)$   and velocity field {$v_{\ep}\in C_b([0,+\infty);W^{1,\infty}(\Rd))$}. 
If $\rho_\epsilon$ is a solution of (\ref{epsiloneqn}) with initial data $\rho_{\ep}^0\in \PP_2(\RR^d)$, then for any $t\geq 0$ ,
\begin{equation}\label{eq:edr}
\mathcal{F}_{\ep}(\rho_\ep(t,\cdot))+\int_0^{t}\int_{\RR^d} \rho_{\epsilon}(|\nabla p_{\epsilon}|^2+v_{\ep}\cdot\nabla p_{\ep})= \mathcal{F}_{\ep}(\rho^0_{\ep}).
\end{equation}

\end{lem}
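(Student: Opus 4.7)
The plan is to prove the energy dissipation relation by differentiating $t\mapsto \mathcal{F}_\ep(\rho_\ep(t))$ along the flow and using the definition of $p_\ep$ together with the structure of the equation. Since the assumed regularity is quite strong at the $\epsilon$ level, this should be a standard, if somewhat technical, computation; the difficulties are all about justifying the differentiation under the integral sign and the integration by parts, not about any deep structural point.

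First, I would collect the regularity that is genuinely available. From Lemma \ref{wellposedpdeepslem} the flow map $X_\ep$ is in $W^{1,\infty}_\loc\times W^{1,\infty}_\loc$, $\rho_\ep \in L^\infty_\loc([0,+\infty);L^\infty(\Rd))$, and $\rho_\ep(t,\cdot)$ is compactly supported for each $t$. From Lemma \ref{muepqeplem} items (\ref{muepdif})--(\ref{item:pepbdd}) I also have $\mu_\ep, q_\ep \in W^{1,\infty}(\Rd)$ uniformly in $t$ on compact time intervals, with $p_\ep = \varphi_\ep*q_\ep$ enjoying $\|\nabla p_\ep\|_\infty + \|D^2 p_\ep\|_\infty < +\infty$. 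This suffices to make sense of every term in (\ref{eq:edr}): the flux $\rho_\ep(\nabla p_\ep+v_\ep)$ lies in $L^\infty_\loc$ with compact spatial support, and $\rho_\ep|\nabla p_\ep|^2$ is bounded.

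Next I would carry out the formal computation. Using $\mu_\ep = \varphi_\ep*\rho_\ep$ and the chain rule (justified because $f_\ep\in W^{2,\infty}$ on $[0,+\infty)$ by Lemma \ref{reginternalenergydensityprop}(\ref{fepw2infty}) and $\mu_\ep$ is smooth),
\begin{equation*}
\frac{d}{dt}\mathcal{F}_\ep(\rho_\ep(t)) \;=\; \int_{\Rd} f_\ep'(\mu_\ep)\,\partial_t \mu_\ep \;=\; \int_{\Rd} q_\ep\,\varphi_\ep*\partial_t\rho_\ep.
\end{equation*}
By the evenness of $\varphi_\ep$ (Assumption \ref{mollifieras}(\ref{firstmolas})), this equals $\int (\varphi_\ep*q_\ep)\,\partial_t\rho_\ep = \int p_\ep\,\partial_t\rho_\ep$. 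Substituting the PDE and integrating by parts in space yields
\begin{equation*}
\frac{d}{dt}\mathcal{F}_\ep(\rho_\ep(t))
= -\int_{\Rd} \rho_\ep\bigl(|\nabla p_\ep|^2 + v_\ep\cdot\nabla p_\ep\bigr),
\end{equation*}
and integrating on $[0,t]$ gives (\ref{eq:edr}).

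The main obstacle is justifying the two formal steps: interchanging $\partial_t$ with the spatial integral defining $\mathcal{F}_\ep(\rho_\ep(t))$, and the spatial integration by parts on $\int p_\ep \,\partial_t \rho_\ep$. For the former, the standard route is to test the PDE against the time-independent test function $p_\ep(s,\cdot)$ held at a frozen time $s$ (writing the chain-rule identity in integrated form using absolute continuity of $\rho_\ep$ in $W_2$), or equivalently to approximate by smooth compactly supported test functions in $C^\infty_c((0,+\infty)\times\Rd)$ and pass to the limit; this is possible because $p_\ep$ and its derivatives are bounded and $\rho_\ep$ is compactly supported with bounded density. For the latter, one notes that the flux $\rho_\ep(\nabla p_\ep+v_\ep)$ is bounded with compact support, while $p_\ep$ is $C^2$ with bounded derivatives, so the boundary terms vanish and Fubini/integration by parts applies without issue. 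Once these justifications are in place, the computation above yields the stated identity.
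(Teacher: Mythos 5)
Your approach is correct in spirit, and the formal computation is exactly what the paper arrives at; but the route you take to justify it is genuinely different from the paper's, and the paper's choice is worth noting because it sidesteps precisely the two obstacles you flag. You differentiate $\mathcal{F}_\ep(\rho_\ep(t))$ in Eulerian form, writing $\partial_t\mu_\ep=\varphi_\ep*\partial_t\rho_\ep$ and then wanting to test the (distributional) PDE against the non-compactly-supported, time-dependent function $p_\ep$. This forces you either to freeze time in $p_\ep$ and run a chain-rule argument, or to approximate by $C^\infty_c((0,\infty)\times\Rd)$ test functions and pass to the limit; both are doable with the regularity available, but neither is fully carried out in your proposal. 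The paper instead works in Lagrangian variables from the start: using $\rho_\ep(s)=X_\ep(s)_\#\rho^0_\ep$ and the change of variables for pushforwards, it writes $\mu_\ep(s,x)=\int\varphi_\ep(X_\ep(s,y)-x)\rho^0_\ep(y)\,dy$, differentiates pointwise in $s$ via the ODE for $X_\ep$ (so only the classical Cauchy--Lipschitz derivative of $X_\ep$ is needed, never a distributional $\partial_t\rho_\ep$), then bounds the resulting triple integrand in $L^1$ and applies Fubini to swap the order of integration. The identity $f'_\ep(\mu_\ep)*\nabla\varphi_\ep=\nabla p_\ep$ then produces the flux term directly, with no integration by parts against a non-admissible test function ever appearing. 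Your approach is a valid and arguably more standard way to prove such energy identities, but you should be aware that the main technical steps you label as "standard" — the admissibility of $p_\ep$ as a test function for the distributional equation and the exchange of $\partial_t$ with the spatial integral — are exactly what the paper's Lagrangian reformulation is designed to avoid, and if you follow your Eulerian route you would need to fill those in carefully (e.g.\ by first multiplying by a spatial cutoff and using the uniform $L^\infty$ and compact-support bounds on $\rho_\ep(t)$ from Lemma~\ref{wellposedpdeepslem} to control the boundary terms and remainders).
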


Now we use the energy dissipation relation to bound several important quantities. 

\begin{lem}[Energy and moment bounds] \label{energydissipationlem}
Suppose $f$ and $\varphi_\ep$ satisfy Assumptions \ref{internalas} and \ref{mollifieras}, and we have well-prepared initial data $\{\rho_\epsilon^0\}_{\epsilon\in (0,1)} \subseteq \P_2(\Rd)\cap C^{\infty}_c(\RR^d)$ and velocity fields $\{v_{\ep}\}_{\ep>0}\subseteq   C_b([0,+\infty);W^{1,\infty}(\Rd))$. 
For any  $T>0$  there exists a positive constant $C_{T,d}$ independent of $\epsilon$ so that, if $\rho_\epsilon$ is the solution of (\ref{epsiloneqn}) with initial data   $\rho_{\ep}^0$, then for all $t \in [0,T]$,
\begin{align} \label{EDI}
\cF_{\epsilon}(\rho_\ep(t,\cdot)) \leq C_{T,d},
\quad 
M_2(\rho_\epsilon(t,\cdot))\leq C_{T,d} , \quad \int_0^T\int_{\RR^d} \rho_{\epsilon}|\nabla p_{\epsilon}|^2\leq C_{T,d} .
\end{align}

\end{lem}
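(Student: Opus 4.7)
The proof relies on a coupled Grönwall argument that interleaves the energy dissipation relation (Lemma \ref{energydissipationrelationlem}) with a differential inequality for the second moment, using the coercivity bound of Lemma \ref{lem:f_e_lower_bound} to close the loop. Throughout, denote $\alpha(t) := \bigl\|\tfrac{|v_\ep(t,\cdot)|^2}{1+|\cdot|^2}\bigr\|_{L^\infty(\Rd)}$, whose $L^1([0,T])$ norm is uniformly bounded in $\ep$ by the well-prepared hypothesis \eqref{eq:data_uniform_bounds}.

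First, applying Young's inequality to the cross term $\int_0^t\!\int \rho_\ep v_\ep\cdot\nabla p_\ep$ in \eqref{eq:edr} together with the pointwise estimate $\int_{\Rd}\rho_\ep|v_\ep|^2 \leq \alpha(t)(1+M_2(\rho_\ep))$ yields
\begin{equation}\label{planeqn:E}
\F_\ep(\rho_\ep(t)) + \tfrac12\int_0^t\!\!\int_{\Rd}\rho_\ep|\nabla p_\ep|^2 \leq \F_\ep(\rho_\ep^0) + \tfrac12\int_0^t\alpha(s)(1+M_2(\rho_\ep(s)))\,ds.
\end{equation}
Second, testing the continuity equation \eqref{epsiloneqn} against $|x|^2$, integrating by parts, and applying Young's inequality to both $-2\int x\cdot\nabla p_\ep\,\rho_\ep$ and $-2\int x\cdot v_\ep\,\rho_\ep$ produces
\begin{equation}\label{planeqn:M}
M_2(\rho_\ep(t)) \leq M_2(\rho_\ep^0) + \int_0^t\!\!\int_{\Rd}\rho_\ep|\nabla p_\ep|^2\,ds + \int_0^t(2+\alpha(s))(1+M_2(\rho_\ep(s)))\,ds.
\end{equation}

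To close the loop, I invoke Lemma \ref{lem:f_e_lower_bound} together with $\int(1+|x|^2)(\vp_\ep*\rho_\ep)\,dx \leq C\,(1+M_2(\rho_\ep))$, valid uniformly in $\ep\in(0,1)$ by Lemma \ref{muepqeplem}(\ref{muepM2}), to obtain the coercivity bound
\[
-\F_\ep(\rho_\ep(t)) \leq \tilde H\bigl(C(1+M_2(\rho_\ep(t)))\bigr).
\]
Since $\tilde H$ is concave with $\tilde H(0)=0$, for any $\eta$ strictly larger than the asymptotic slope $\lim_{x\to\infty}\tilde H(x)/x$ one has $\tilde H(x)\leq \eta x + C_\eta$. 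Substituting the coercivity bound into \eqref{planeqn:E}, inserting the resulting estimate for $\int_0^t\!\!\int\rho_\ep|\nabla p_\ep|^2$ into \eqref{planeqn:M}, and choosing $\eta$ small enough to absorb the implicit $M_2(\rho_\ep(t))$ term into the left-hand side produces an inequality of the form
\[
M_2(\rho_\ep(t)) \leq K + \int_0^t\beta(s)\bigl(1+M_2(\rho_\ep(s))\bigr)\,ds,
\]
with $K$ and $\|\beta\|_{L^1([0,T])}$ uniform in $\ep$. Grönwall's inequality then gives $M_2(\rho_\ep(t)) \leq C_{T,d}$ on $[0,T]$, and substituting back into \eqref{planeqn:E} delivers the bounds on $\F_\ep(\rho_\ep(t))$ and $\int_0^T\!\!\int\rho_\ep|\nabla p_\ep|^2$.

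The main obstacle is closing the Grönwall loop: the energy dissipation controls $\F_\ep + \tfrac12\int\!\!\int\rho_\ep|\nabla p_\ep|^2$ in terms of $M_2$, the moment inequality controls $M_2$ in terms of $\int\!\!\int\rho_\ep|\nabla p_\ep|^2$, and the lower bound on $\F_\ep$ couples back to $M_2$ through $\tilde H$. The sublinear growth afforded by the concavity of $\tilde H$ is essential so that the implicit $M_2(\rho_\ep(t))$ term arising from the coercivity step can be absorbed into the left-hand side of the Grönwall inequality, rather than contributing a prefactor that destroys the estimate.
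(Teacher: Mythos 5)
The overall architecture (energy dissipation $+$ moment estimate $+$ coercivity $+$ Gr\"onwall) is correct and matches the paper's strategy, but there is a genuine gap in the absorption step that closes the loop.

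You apply Young's inequality with exponent $1$ to $-2\int x\cdot\nabla p_\ep\,\rho_\ep$, producing (\ref{planeqn:M}) with coefficient exactly $1$ on $\int_0^t\!\!\int\rho_\ep|\nabla p_\ep|^2$. When you then insert the energy inequality and the coercivity bound $-\F_\ep(\rho_\ep(t))\leq \tilde H(C(1+M_2(\rho_\ep(t))))$, the implicit $M_2(\rho_\ep(t))$ term that appears carries the coefficient $2C\eta$, where $\eta>\lim_{x\to\infty}\tilde H(x)/x=:\sigma$. You assert one can "choose $\eta$ small enough to absorb" this term, but concavity of $\tilde H$ with $\tilde H(0)=0$ gives \emph{at most} linear growth, not sublinear growth: $\sigma$ can be strictly positive (e.g.\ $\tilde H(x)=x$), and in the construction of $\tilde H$ in Lemma \ref{lem:f_e_lower_bound} the function manifestly contains a linear term with slope $\frac{|f(a_0)|}{b_0}+|f'(a_0)|$, which is not under your control. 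If $2C\sigma\geq 1$, the absorption fails outright, and there is no parameter left in your argument to tune.

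There are two standard fixes. One is to use the parametrized Young inequality $-2x\cdot\nabla p_\ep\leq\lambda|x|^2+\lambda^{-1}|\nabla p_\ep|^2$ in the moment estimate, so that the implicit $M_2(\rho_\ep(t))$ term after substitution carries coefficient $2\lambda^{-1}C\eta$, which can be made $<1$ by taking $\lambda$ large; the cost is a larger Gr\"onwall constant $\lambda$ in the integral term, which is harmless since $\lambda$ depends only on $\tilde H$, $C$, and $d$. The paper instead avoids Young at this stage entirely: it uses Cauchy--Schwarz to get $M'_\ep(t)\leq 2M_\ep(t)^{1/2}\|\rho_\ep(|\nabla p_\ep|^2+|v_\ep|^2)\|_{L^1}^{1/2}$, which is \emph{sublinear} in $M_\ep(t)$. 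Because the moment derivative only scales like $M_\ep^{1/2}$, the at-most-linear growth of $\tilde H$ (i.e.\ $\tilde H(CM_\ep)\lesssim M_\ep$) is automatically tame: after integrating and applying Cauchy--Schwarz again, one reaches $M_\ep(t)\lesssim\|M_\ep\|_{L^1}^{1/2}\bar M_\ep(t)^{1/2}$, hence $\bar M_\ep(r)\lesssim\|M_\ep\|_{L^1([0,r])}$, and Gr\"onwall closes. It is precisely this sublinear structure, not a smallness of the slope of $\tilde H$, that makes the coercivity-driven feedback harmless.
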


\begin{proof}

We begin by estimating the second moment.
Note that since $\rho_{\ep}(t)$ has compact support for all fixed $t\geq 0$, it is valid to integrate $\rho_{\ep}$ against $|x|^2$.
 Let
\[
{ M_{\epsilon}(t):= \int_{\RR^d} \frac{1}{2}(|x|^2+1)\rho_{\epsilon}(t,x)\, dx , \quad  \bar{M}_\epsilon(t) := \sup_{s\in [0,t]} M_\epsilon(s) }
\]
From (\ref{epsiloneqn}) and the Cauchy-Schwartz inequality, it follows that 
\[
M_{\epsilon}'(t)\leq \left( -\int_{\RR^d\times \{t\}} \rho_{\epsilon}(\nabla p_{\epsilon}+v_{\epsilon})\cdot x\right) _+\leq 2M_{\ep}(t)^{1/2}\norm{\rho_{\epsilon}(|\nabla p_{\ep}|^2+|v_{\ep}|^2)}_{L^1(\{t\}\times\RR^d)}^{1/2}.
\]
Integrating in time and applying the Cauchy-Schwartz inequality again gives
\begin{equation}\label{eq:moment_derivative}
M_{\epsilon}(t)\leq M_{\ep}(0)+  2\norm{M_{\ep}}_{L^1([0,t])}^{1/2}\norm{\rho_{\epsilon}(|\nabla p_{\ep}|^2+|v_{\ep}|^2)}_{L^1([0,t]\times\RR^d)}^{1/2}.
\end{equation}

Now we want to use the energy dissipation relation (\ref{eq:edr}) to eliminate $\norm{\rho_{\epsilon}|\nabla p_{\ep}|^2}_{L^1([0,t]\times\RR^d)}$. If we apply Young's inequality to the term $v_{\epsilon}\cdot \nabla p_{\epsilon}$ in (\ref{eq:edr}), we obtain the simpler inequality
\begin{equation}\label{eq:edi_1}
\cF_{\epsilon} (\rho_{\epsilon}(t,\cdot))+\int_0^t\int_{\RR^d} \frac{1}{2}\rho_{\epsilon}|\nabla p_{\epsilon}|^2\leq  \cF_{\epsilon}(\rho_{\ep}^0)+\int_0^t\int_{\RR^d} \frac{1}{2}\rho_{\epsilon}|v_{\epsilon}|^2 .
\end{equation}
Plugging this into (\ref{eq:moment_derivative}), we obtain
\begin{equation}\label{eq:moment_2}
M_{\epsilon}(t)\leq M_{\ep}(0)+  2\norm{M_{\ep}}_{L^1([0,t])}^{1/2}\left( 2\norm{\rho_{\epsilon}|v_{\ep}|^2}_{L^1([0,t]\times\RR^d)}+\cF_{\epsilon}(\rho_{\ep}^0)-\cF_{\epsilon} (\rho_{\epsilon}(t,\cdot))\right) ^{1/2}.
\end{equation}
Next, we eliminate $\norm{\rho_{\epsilon}|v_{\ep}|^2}_{L^1([0,t]\times\RR^d)}$ by noticing that
\begin{equation}\label{eq:v_control}
\norm{\rho_{\epsilon}|v_{\ep}|^2}_{L^1([0,t]\times\RR^d)}=\int_{[0,t]\times\RR^d} \rho_{\ep}(1+|x|^2)\frac{|v_{\ep}|^2}{1+|x|^2}\leq \bar{M}_{\ep}(t) \left\| \frac{|v_{\ep}|^2}{1+|x|^2}\right\|_{L^1([0,t];L^{\infty}(\RR^d))}.
\end{equation}
Thus, we have 
\begin{equation}\label{eq:moment_3}
M_{\epsilon}(t)\leq M_{\ep}(0)+  2\norm{M_{\ep}}_{L^1([0,t])}^{1/2}\left( 2\bar{M}_{\ep}(t)\left\| \frac{|v_{\ep}|^2}{1+|x|^2}\right\|_{L^1([0,t];L^{\infty}(\RR^d))}+\cF_{\epsilon}(\rho_{\ep}^0)-\cF_{\epsilon} (\rho_{\epsilon}(t,\cdot))\right) ^{1/2}.
\end{equation}

To handle $-\F_{\epsilon}(\rho_{\ep}(t,\cdot))$ we use Lemma \ref{lem:f_e_lower_bound} to guarantee the existence of a nonnegative, increasing, concave function $\tilde{H}$ (independent of $\epsilon$ and $\mue$) such that $\tilde{H}(0)=0$ and
\[
-\F_{\epsilon}(\rho_{\ep}(t,\cdot))\leq \tilde{H}\left( \int_{\RR^d} (1+|x|^2)\mue(t,x)\, dx\right) 
\]
Using (\ref{muepM2}) from Lemma \ref{muepqeplem}, there exists a  constant $C>0$ such that
\[
H\left( \int_{\RR^d} (1+|x|^2)\mue(t,x)\, dx\right) \leq H(CM_{\epsilon}(t)).
\]
 
Now we can use the concavity of $H$ and the fact that $M_{\epsilon}(t)\geq 1$ for all $t$, to obtain
\begin{equation}\label{eq:energy_lower_control}
-\F_{\epsilon}(\rho_{\ep}(t,\cdot))\leq H\left(  CM_{\epsilon}(t)\right) \leq H(C)+H'(C)M_{\epsilon}(t).
\end{equation}
Plugging this inequality into (\ref{eq:edi_1}), we see that
\begin{equation*}
M_{\epsilon}(t)\leq M_{\ep}(0)+  2\norm{M_{\ep}}_{L^1([0,t])}^{1/2}\left( \bar{M}_{\ep}(t) 2\left\|\frac{|v_{\ep}|^2}{1+|x|^2}\right\|_{L^1([0,t];L^{\infty}(\RR^d))}+M_\epsilon(t) H'(C) +\cF_{\epsilon}(\rho_{\ep}^0)+H(C)\right) ^{1/2}.
\end{equation*}
From our uniform control on $\cF_{\epsilon}(\rho_{\ep}^0)$, $M_{\ep}(0)$ and $\left\|\frac{|v_{\ep}|^2}{1+|x|^2}\right\|_{L^1([0,t];L^{\infty}(\RR^d))}$, as well as the fact that $M_{\ep}(t)\geq 1$, it follows that   there exists a constant $C_T$ independent of $\epsilon$ such that, for any $r \geq t$,
\begin{equation*}
M_{\epsilon}(t)\leq C_T\norm{M_{\ep}}_{L^1([0,t])}^{1/2}\bar{M}_{\ep}(t)^{1/2}  \leq C_T\norm{M_{\ep}}_{L^1([0,r])}^{1/2}\bar{M}_{\ep}(r)^{1/2}.
\end{equation*}
Thus, 
\[ \bar{M}_\epsilon(r) \leq C_T\norm{M_{\ep}}_{L^1([0,r])}^{1/2}\bar{M}_{\ep}(r)^{1/2} \implies \bar{M}_{\epsilon}(r)\leq C_T^2\norm{M_{\ep}}_{L^1([0,r])}. \]
Finally, we can use Gronwall to conclude that $\bar{M}_{\ep}$ is uniformly bounded with respect to $\epsilon$ on $[0,T]$. The remaining results follow directly from the second moment control and the inequalities (\ref{eq:edi_1}), (\ref{eq:v_control}), and (\ref{eq:energy_lower_control}). 
\end{proof}

\subsubsection{Entropy dissipation and generalized $\dot{H}^1$ bound}

\begin{prop}[Entropy dissipation] \label{genH1bound}
Suppose $f$ and $\varphi_\ep$ satisfy Assumptions \ref{internalas} and \ref{mollifieras}, and we have well-prepared initial data $\{\rho_\epsilon^0\}_{\epsilon\in (0,1)} \subseteq \P_2(\Rd)\cap C^{\infty}_c(\RR^d)$ and velocity fields $\{v_{\ep}\}_{\ep>0}\subseteq C^{\infty}_c([0,+\infty)\times\RR^d)$. 
For any  $T>0$,  there exists a positive constant $C_{d,T}$ independent of $\epsilon$ so that, if $\rho_\epsilon$ is the solution of (\ref{epsiloneqn}) with initial data   $\rho_{\ep}^0$, then for all $t \in [0,T]$,
\begin{align}
\label{entropydissipation}
\int_\Rd \rho_\ep | \log(\rho_\ep)|+\norm{\nabla \mue\cdot \nabla \qe}_{L^1([0,t]\times\RR^d)}\leq C_{d,T}.
\end{align}
\end{prop}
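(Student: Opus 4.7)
The plan is to derive the entropy dissipation identity formally outlined in (\ref{eq:intro_entropy_dr}), extract from it an $L^1$-in-spacetime bound on the nonnegative term $\nabla\mue\cdot\nabla\qe$, and then combine it with the second moment bound of Lemma \ref{energydissipationlem} together with a Gaussian comparison to control $\int\rho_\ep|\log\rho_\ep|$.

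The identity will be derived via the Lagrangian formulation $\rho_\ep(t) = X_\ep(t)_\#\rho_\ep^0$ from Lemma \ref{wellposedpdeepslem}. Since $\rho_\ep^0\in C^\infty_c(\Rd)$ and the Jacobian $J_\ep = \det DX_\ep$ is regular and strictly positive, the change of variables gives $\mathcal{S}(\rho_\ep(t)) = \int \rho_\ep^0\log(\rho_\ep^0/J_\ep(t,\cdot))$. Differentiating in time via $\partial_t\log J_\ep = (\nabla\cdot w_\ep)\circ X_\ep$ with $w_\ep = -\nabla p_\ep - v_\ep$, then pushing forward, yields
\begin{equation*}
\frac{d}{dt}\mathcal{S}(\rho_\ep(t)) = \int_{\Rd}\rho_\ep\Delta p_\ep + \int_{\Rd}\rho_\ep\nabla\cdot v_\ep.
\end{equation*}
Integrating by parts (valid since $\rho_\ep(t)$ is a compactly supported finite measure and $p_\ep\in W^{2,\infty}(\Rd)$ by Lemma \ref{muepqeplem}(\ref{item:pepbdd})) and then performing the mollifier exchange
\begin{equation*}
\int\nabla\rho_\ep\cdot\nabla p_\ep = \int\nabla\rho_\ep\cdot(\varphi_\ep*\nabla\qe) = \int(\varphi_\ep*\nabla\rho_\ep)\cdot\nabla\qe = \int\nabla\mue\cdot\nabla\qe,
\end{equation*}
using the symmetry of $\varphi_\ep$ from Assumption \ref{mollifieras}(\ref{firstmolas}), gives, after integration in time,
\begin{equation*}
\mathcal{S}(\rho_\ep(t)) + \int_0^t\int_{\Rd}\nabla\mue\cdot\nabla\qe = \mathcal{S}(\rho_\ep^0) + \int_0^t\int_{\Rd}\rho_\ep\nabla\cdot v_\ep.
\end{equation*}
By Lemmas \ref{muepqeplem}(\ref{qeplip}) and \ref{reginternalenergydensityprop}(\ref{fepw2infty}), $\nabla\mue\cdot\nabla\qe = f_\ep''(\mue)|\nabla\mue|^2 \geq 0$ a.e.

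Next I would close the estimate. Well-preparedness bounds $\mathcal{S}(\rho_\ep^0)$, and combining Lemma \ref{energydissipationlem}'s uniform second moment bound with the $L^1_tL^\infty_x$ control from (\ref{eq:data_uniform_bounds}) gives
\begin{equation*}
\int_0^t\int_{\Rd}\rho_\ep\nabla\cdot v_\ep \;\leq\; (1+\sup_{\tau\in[0,T]}M_2(\rho_\ep(\tau)))\int_0^T\left\|\frac{(\nabla\cdot v_\ep(\tau,\cdot))_+}{1+|\cdot|^2}\right\|_{L^\infty(\Rd)}d\tau \;\leq\; C_{d,T}.
\end{equation*}
The Gaussian comparison inequality, obtained from the nonnegativity of the relative entropy against the standard Gaussian on $\Rd$, yields $\mathcal{S}(\rho_\ep(t)) \geq -\tfrac12 M_2(\rho_\ep(t)) - \tfrac d2\log(2\pi)$, bounded below uniformly in $\ep$. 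Together these produce the desired bound on the dissipation term, along with the upper bound $\mathcal{S}(\rho_\ep(t))\leq C_{d,T}$. To pass to $\int\rho_\ep|\log\rho_\ep|$, I would use $\int\rho|\log\rho| = \mathcal{S}(\rho) + 2\int\rho(\log\rho)_-$ and bound the negative part by splitting $\{\rho_\ep\leq 1\}$ into the region $\{e^{-|x|^2}\leq\rho_\ep\leq 1\}$, where $-\log\rho_\ep\leq |x|^2$ contributes $\leq M_2(\rho_\ep)$, and the region $\{\rho_\ep<e^{-|x|^2}\}$, where the elementary inequality $-s\log s\leq 2\sqrt s$ for $s\in(0,1)$ contributes $\leq 2\int e^{-|x|^2/2}dx = C_d$.

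The main obstacle is the rigorous derivation of the entropy identity, since $\rho_\ep$ may vanish and $\log\rho_\ep$ is not a priori integrable. The Lagrangian computation above circumvents this by expressing the entropy in terms of the smooth, strictly positive Jacobian $J_\ep$; this is the one place where the strong regularity guaranteed by the well-prepared initial data $\rho_\ep^0\in C^\infty_c(\Rd)$ is used in an essential way.
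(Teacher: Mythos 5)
Your proposal is correct and follows the same strategy as the paper: derive the entropy dissipation identity via the Lagrangian formulation and the time derivative of $\log\det DX_\ep$, use the mollifier exchange $\int\rho_\ep\Delta p_\ep=-\int\nabla\mu_\ep\cdot\nabla q_\ep$, and then combine with uniform moment bounds. The only divergence is in the final elementary estimate on $\int\rho_\ep(\log\rho_\ep)_-$: the paper deploys Lemma \ref{lem:integrability via moments} (a H\"older interpolation against weights $(1+|x|)^{-r}$, giving $\lesssim (1+M_2)^{1-\alpha}$), while you use a Gaussian relative-entropy lower bound on $\S$ together with splitting $\{\rho_\ep\le 1\}$ into $\{e^{-|x|^2}\le\rho_\ep\le 1\}$ and $\{\rho_\ep<e^{-|x|^2}\}$, giving $\lesssim M_2 + C_d$; both are valid, both uniform on $[0,T]$, and your version is slightly more self-contained. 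One minor point of rigor: you write $\int\nabla\rho_\ep\cdot\nabla p_\ep$ as an intermediate step, but $\rho_\ep\in W^{1,1}(\Rd)$ has not been established (only $X_\ep\in W^{1,\infty}_\loc$), so $\nabla\rho_\ep$ is a priori just a distribution; the cleaner route, which the paper uses and which you can substitute without changing anything downstream, is to keep the pairing $\int\rho_\ep\Delta p_\ep$ and compute $\Delta p_\ep = \nabla\varphi_\ep*\nabla q_\ep$ directly (valid since $q_\ep$ is Lipschitz), then move $\nabla\varphi_\ep$ onto $\rho_\ep$ by the convolution adjoint and the evenness of $\varphi$ to land on $-\int\nabla\mu_\ep\cdot\nabla q_\ep$.
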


\begin{proof}
The lower bound of  Lemma \ref{lem:DXep bds}, our assumptions on $\rho^0_\epsilon$, and \cite[Lemma 5.5.3]{ambrosiogiglisavare} imply that, { for a.e. $x \in \Rd$},
\begin{align} \label{pushforwardformula}
\rho_\epsilon(t) = X_{\epsilon}(t)_{\#}\rho^0_\epsilon=\rho^0_\epsilon\circ X_{\epsilon}(t)^{-1}\det(DX_{\epsilon}(t)\circ X_{\epsilon}(t)^{-1})^{-1}.
\end{align} 
 Thus, 
\begin{equation}
\label{eq:rhologrho}
\rho_{\epsilon}(t)\log(\rho_{\epsilon}(t))=   X_{\epsilon}(t)_{\#}\rho^0_\epsilon\log(X_{\epsilon}(t)_{\#}\rho^0_\epsilon)= X_{\epsilon}(t)_{\#}\rho^0_\epsilon\log\left(\frac{\rho^0_\epsilon\circ X_{\epsilon}(t)^{-1}}{\det(DX_{\epsilon}(t))\circ X_{\epsilon}(t)^{-1}}\right)
\end{equation}
holds almost everywhere. 
The equality (\ref{eq:rhologrho}) yields,
\begin{align}
\mathcal{S}(\rho_{\ep}(t))&= \int_{\RR^d} X_{\epsilon}(t,x)_{\#}\rho^0_\epsilon(x) \log\left(\frac{\rho^0_\epsilon \circ X_{\epsilon}^{-1}(t,x)}{\det(DX_{\epsilon}(t,x))\circ X_{\epsilon}^{-1}(t,x)}\right)\, dx \nonumber \\
&=\int_{\RR^d} \rho^0_\epsilon(x)\log\left(\frac{\rho_\epsilon^0(x)}{\det\left(DX_{\epsilon}(t,x)\right)}\right)\nonumber\\
&=\int_{\RR^d} \rho^0_\epsilon(x)\log(\rho^0_\epsilon(x)) -\rho^0_\epsilon(x)\log\left(\det\left(DX_{\epsilon}(t,x)\right)\right).
\label{eq:g 2nd expression}
\end{align}

Next, we would like to take the derivative of $\mathcal{S}$ in time. From the formulas (\ref{eq:g 2nd expression}), (\ref{eq:partial t DX}) and the control in (\ref{eq:DXep bds}) we may write 
\begin{align*}
&\mathcal{S}(\rho_{\ep}(t_1))-\mathcal{S}(\rho_{\ep}(t_0))\\
&\quad =-\int_{\RR^d} \rho^0_\epsilon(x)\log\left(\det\left(DX_{\epsilon}(t_1,x)DX_{\epsilon}(t_0,x)^{-1}\right)\right)\\
&\quad =-\int_{\RR^d} \rho^0_\epsilon(x)\log\left(\det\left(I-\int_{t_0}^{t_1} (D^2 p_\ep(s,X_\epsilon(s,x))+D v_{\ep}(s, X_\epsilon(s,x)))D X(s,x) DX(t_0,x)^{-1}\, ds\right)\right).
\end{align*}
Using the matrix identity $\log(\det(A))=\tr(\log(A))$, %we have
%\begin{align*}
%\frac{\mathcal{S}(\rho_{\ep}(t_1))-\mathcal{S}(\rho_{\ep}(t_0))}{t_1-t_0}&=-\int_{\RR^d} \rho^0_\epsilon(x)\tr\left(\frac{1}{t_1-t_0}\log\left(I-\int_{t_0}^{t_1} \Delta p_\ep(s,X_\epsilon(s,x))+\nabla \cdot v_{\ep}(s, X_\epsilon(s,x))\, ds\right)\right).
%\end{align*}
Lemma \ref{muepqeplem}(\ref{item:pepbdd}), and the smoothness of $\rho^0_{\ep}$ and $v_{\ep}$, allow us to use the dominated convergence theorem to conclude, 
\begin{align*}
\frac{d}{dt} \mathcal{S}(\rho_{\ep}(t))&=\int_{\RR^d} \rho^0_\epsilon(x)\left( \Delta p_\ep(t,X_\epsilon(t,x))+\nabla \cdot v_{\ep}(t, X_\epsilon(t,x))\right)\\
 &=\int_{\RR^d} \rho_{\ep}(t,x)\left( \Delta p_\ep(t,x)+\nabla \cdot v_{\ep}(t, x)\right).
\end{align*}
 
Next, recalling the definitions of $p_\ep$, $\mue = \varphi_\epsilon * \rho_\epsilon$ and $\qe = f_\epsilon'(\mue) $  and  applying Lemma \ref{muepqeplem}~(\ref{qeplip}),   
\begin{align*}
 \int_{\RR^d\times \{t\}} \rho_{\epsilon} \Delta p_{\epsilon}&=     \int_{\RR^d\times \{t\}} \rho_\ep \Delta \left(\varphi_\ep *f'_\ep(\varphi_\ep *\rho_\ep)\right) =  \int_{\RR^d\times \{t\}}  \rho_\ep \left(\nabla \varphi_\ep * \nabla f'_\ep (\varphi_\ep *\rho_\ep)\right) \\
 &=  -\int_{\RR^d\times \{t\}} (\rho_\ep*\nabla\varphi_\ep) \left(\nabla f'_\ep(\varphi_\ep*\rho_\ep)\right)=  -\int_{\RR^d\times \{t\}} \nabla \mue \cdot \nabla \qe \\
 &= -\int_{\RR^d\times \{t\}} |\nabla \mue \cdot \nabla \qe|. 
\end{align*}
We can also estimate 
\begin{align*}
 \int_{\RR^d\times \{t\}} \rho_{\epsilon} \nabla \cdot v_{\ep}\leq \norm{\frac{(\nabla \cdot v_{\ep})_+}{1+|x|^2}}_{L^{1}(\{t\}L^{\infty}(\RR^d))}(1+M_2(\rho_{\ep}(t))).  
 \end{align*}
Thus, we have shown that
\begin{equation}\label{eq:almost_entropy_dissipation}
    \mathcal{S}(\rho_{\ep}(t))+\int_{[0,t]\times\RR^d} |\nabla \mue \cdot \nabla \qe|\leq \mathcal{S}(\rho_{\ep}^0)+\norm{\frac{(\nabla \cdot v_{\ep})_+}{1+|x|^2}}_{L^{1}([0,t];L^{\infty}(\RR^d))}(1+ \sup_{s \in [0,t]}M_2(\rho_{\ep}(s))).
\end{equation}
%To complete the proof, we need to show that we can control the negative part of $\log(\rho_{\ep})$. To this end, we use the fact that for any $\alpha>0$, 
%\[
%\sup_{a\in (0,1)}-a^{\alpha}\log(a)\leq \frac{1}{\alpha e} \implies \sup_{a\in (0,1)}-a\log(a)\leq \frac{a^{1-\alpha}}{\alpha e} .
%\]
%Thus, for any $\alpha\in (0,1)$
%\[
 % \int_\Rd  \rho_\epsilon(t) (\log (\rho_\epsilon(t)))_- \leq \frac{1}{\alpha e} \int_{ \RR^d} \rho_{\epsilon}^{1-\alpha}(t).
%\]
%If  $\alpha  <\frac{2}{d+2}$, then it follows from  Lemma \ref{lem:moments_give_integrability} and the fact that second moments control $p^{th}$ moments for $p\in (0,2)$
%that there exists a constant $C_d$ such that
Estimate (\ref{lem:moments_give_integrability}) from Lemma \ref{lem:integrability via moments} applied with, say, $\alpha=1/(d+2)$, implies that there exists a constant $C_d$ such that
\[
 \int_\Rd  \rho_\epsilon(t) (\log (\rho_\epsilon(t)))_- \leq C_d \left(M_{2}(\rho_{\epsilon}(t,\cdot))+1\right)^{1-\alpha}.
\]
Combining this with (\ref{eq:almost_entropy_dissipation}) we have 
\begin{align*}
    &\int_{\{t\}\times\RR^d} \rho_{\ep}|\log(\rho_{\ep})|+\int_{[0,t]\times\RR^d} |\nabla \mue \cdot \nabla \qe|\leq \\  &\mathcal{S}(\rho_{\ep}^0)+\norm{\frac{(\nabla \cdot v_{\ep})_+}{1+|x|^2}}_{L^{1}([0,t];L^{\infty}(\RR^d))}(1+\sup_{s \in [0,T]}M_2(\rho_{\ep}(s))+2 C_d \left(M_{2}(\rho_{\epsilon}(t,\cdot))+1\right)^{1-\alpha}.
\end{align*}
The result now follows since $\rho^0_\ep$ and $v_\ep$ are well prepared, as well as from the bound on $M_2(\rho_\ep(t))$ of Lemma \ref{energydissipationlem}.
\end{proof}

Next, we record the following useful consequence of Lemma \ref{energydissipationlem} and Proposition \ref{genH1bound}. 

\begin{lem}[Estimate on $\nabla q_\ep$] \label{lem:mu_nabla_q}
Suppose $f$ and $\varphi_\ep$ satisfy Assumptions \ref{internalas} and \ref{mollifieras} and that the  initial data $\{\rho_\epsilon^0\}_{\epsilon\in (0,1)} \subseteq \P_2(\Rd)\cap C^{\infty}_c(\RR^d)$ and velocity fields $\{v_{\ep}\}_{\ep>0}\subset C^{\infty}_c([0,+\infty)\times\RR^d)$ are well-prepared. Let $\rho_\epsilon$ be the corresponding solution of (\ref{epsiloneqn}) and let $\mu_\ep = \varphi_\ep*\rho_\epsilon$ and $q_\ep = f_\ep'(\mu_\ep)$. Then there exists a constant $C_{T,d}>0$  independent of $\ep$ so that
\begin{align} \label{lem:mu_nabla_qeq1}
    \norm{\nabla \qe}_{L^2([0,T]\times\RR^d)}^2\leq C_{T,d} (\delta(\epsilon)^{-1}+\delta(\epsilon)).
\end{align}
\end{lem}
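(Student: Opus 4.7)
The plan is to chain the pointwise identities for $\nabla q_\ep$ from Lemma~\ref{muepqeplem} with the upper bound on $f_\ep''$ from Lemma~\ref{reginternalenergydensityprop}~(\ref{fepw2infty}), and then invoke the entropy dissipation bound from Proposition~\ref{genH1bound}.

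First, I will work pointwise a.e.\ on the set $\{\nabla \mu_\ep \neq 0\}$. By Lemma~\ref{muepqeplem}~(\ref{qeplip}), on this set we have $\nabla q_\ep = f_\ep''(\mu_\ep)\nabla \mu_\ep$, and in particular
\[
|\nabla q_\ep|^2 \;=\; f_\ep''(\mu_\ep)^2\,|\nabla \mu_\ep|^2 \;=\; f_\ep''(\mu_\ep)\,\bigl(f_\ep''(\mu_\ep)\,|\nabla \mu_\ep|^2\bigr) \;=\; f_\ep''(\mu_\ep)\,\nabla \mu_\ep\cdot \nabla q_\ep.
\]
On the complementary set $\{\nabla \mu_\ep = 0\}$, the same lemma gives $\nabla q_\ep = 0$ a.e., so the identity holds trivially. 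Using now the upper bound $f_\ep''(\mu_\ep) \leq \delta(\ep) + \delta(\ep)^{-1}$ from Lemma~\ref{reginternalenergydensityprop}~(\ref{fepw2infty}), we obtain the pointwise a.e.\ estimate
\[
|\nabla q_\ep(t,x)|^2 \;\leq\; \bigl(\delta(\ep) + \delta(\ep)^{-1}\bigr)\,\bigl|\nabla \mu_\ep(t,x)\cdot \nabla q_\ep(t,x)\bigr|.
\]

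Next, I integrate over $[0,T]\times \R^d$ and apply the entropy dissipation bound from Proposition~\ref{genH1bound}, which yields
\[
\|\nabla q_\ep\|_{L^2([0,T]\times\R^d)}^{2} \;\leq\; \bigl(\delta(\ep) + \delta(\ep)^{-1}\bigr)\, \|\nabla \mu_\ep\cdot \nabla q_\ep\|_{L^1([0,T]\times\R^d)} \;\leq\; C_{T,d}\,\bigl(\delta(\ep) + \delta(\ep)^{-1}\bigr),
\]
which is exactly the claimed bound.

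There is no serious obstacle here: the argument is essentially a one-line computation once the duality identities of Lemma~\ref{muepqeplem} and the entropy dissipation bound of Proposition~\ref{genH1bound} are in hand. The only point requiring a little care is to observe that the rewriting $|\nabla q_\ep|^2 = f_\ep''(\mu_\ep)\,\nabla \mu_\ep\cdot \nabla q_\ep$ is valid a.e.\ on all of $[0,T]\times\R^d$ (not just where $\nabla \mu_\ep \neq 0$), which follows because $\nabla q_\ep$ vanishes a.e.\ on $\{\nabla \mu_\ep = 0\}$.
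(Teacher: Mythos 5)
Your proof is correct and is essentially the same as the paper's: both derive the pointwise estimate $|\nabla q_\ep|^2 \leq (\delta(\ep)^{-1}+\delta(\ep))\,|\nabla \mu_\ep \cdot \nabla q_\ep|$ and then invoke Proposition \ref{genH1bound}. The paper obtains it via the factorization $|\nabla q_\ep| = |\nabla q_\ep|^{1/2}\,|\nabla q_\ep\cdot\nabla\mu_\ep|^{1/2}/|\nabla\mu_\ep|^{1/2}$ combined with Lemma \ref{muepqeplem}(\ref{nabla_q_nabla_mu}), whereas you use the chain rule identity $|\nabla q_\ep|^2 = f_\ep''(\mu_\ep)\,\nabla\mu_\ep\cdot\nabla q_\ep$ together with the bound on $f_\ep''$ from Lemma \ref{reginternalenergydensityprop}(\ref{fepw2infty}); these are two equivalent bookkeepings of the same computation.
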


\begin{proof}
By Lemma \ref{muepqeplem}(\ref{qeplip}), $|\nabla q_\ep \cdot \nabla \mu_\ep| = |\nabla q_\ep ||\nabla \mu_\ep|$. This equality, followed by Lemma \ref{muepqeplem}(\ref{nabla_q_nabla_mu}), yields
\begin{equation}
\label{eq:bd nabla qe}
|\nabla \qe|= |\nabla \qe|^{1/2} \frac{|\nabla q_\ep \cdot \nabla \mu_\ep|^{1/2}}{|\nabla \mue|^{1/2}}\leq (\delta(\epsilon)^{-1}+\delta(\epsilon))^{1/2}|\nabla \mue\cdot\nabla \qe|^{1/2}
\end{equation}
    almost everywhere.  
 Therefore, 
 \[
 \norm{\nabla \qe}_{L^2([0,T]\times\RR^d)}^2\leq (\delta(\epsilon)^{-1}+\delta(\epsilon))\norm{\nabla \qe\cdot\nabla \mue}_{L^1([0,T]\times\RR^d)}.
 \]
 The result now follows from applying Proposition \ref{genH1bound} to control $\norm{\nabla \qe\cdot \nabla \mue}_{L^1([0,T]\times\RR^d)}$. %Here, we also use the uniform bound on the second moment from Lemma \ref{energydissipationlem} and the fact that the entropy can be bounded from below in terms of the second moment  (estimate (\ref{lem:moments_give_integrability}) from Lemma \ref{lem:integrability via moments}).

\end{proof}

\section{Convergence Proof}\label{sec:convergence}

\subsection{Mollifier exchange}

As in previous work on the porous medium case\cite{CarrilloCraigPatacchini, craig2022blob, carrillo2023nonlocal}, a key step in the convergence proof is that we can exchange the mollifier between the velocity field $\nabla p _\epsilon = \nabla \varphi_\epsilon * (f'_\ep(\varphi_\epsilon * \rho_\epsilon)) $ and the density $\rho_\epsilon$. See also equation (\ref{Cdotdef}),  where we recall the notation $\dot{C}^{\theta}(\Rd)$.

\begin{prop}[Mollifier exchange]
\label{lem:mollifier exchange}
Suppose $f$ and $\varphi_\ep$ satisfy Assumptions \ref{internalas} and \ref{mollifieras} and that the  initial data $\{\rho_\epsilon^0\}_{\epsilon\in (0,1)} \subseteq \P_2(\Rd)\cap C^{\infty}_c(\RR^d)$ and velocity fields $\{v_{\ep}\}_{\ep>0}\subseteq C^{\infty}_c([0,+\infty)\times\RR^d)$ are well-prepared. Let $\rho_\epsilon$ be the corresponding solution of (\ref{epsiloneqn}) with pressure $p_\epsilon$, and let $\mu_\ep = \varphi_\ep*\rho_\epsilon$ and $q_\ep = f_\ep'(\mu_\ep)$.

Then there exists $C_{d, T} >0$ so that, for all  $\theta\in (0,1)$  and $g \in L^2([0,T] ; \dot{C}^{\theta}( \Rd))$,  we have {for $\epsilon>0$ sufficiently small}
\begin{align} \label{eq:mollifierexchange}
\left| \int_0^T\int_{\RR^d}     g  \left[ \rho_\ep  \nabla p_\ep-\mu_\ep \nabla q_\ep \right] \, dx\, dt\right| \leq C_{\varphi,d, T} \epsilon^{\frac{\theta(r-d)}{r}}\delta(\epsilon)^{-\frac{r-\theta}{r}} \norm{g}_{L^2([0,T];\dot{C}^{\theta}(\RR^d))}.
\end{align}
\end{prop}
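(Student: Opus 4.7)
My starting point is to rewrite the integrand in \eqref{eq:mollifierexchange} as a commutator. Using $\nabla p_\ep=\varphi_\ep*\nabla q_\ep$, unfolding the convolutions as double integrals, and exploiting the evenness of $\varphi$, the left-hand side equals
\begin{equation*}
\int_0^T\int_{\RR^d}\int_{\RR^d}\bigl(g(t,x)-g(t,y)\bigr)\,\rho_\ep(t,x)\,\varphi_\ep(x-y)\,\nabla q_\ep(t,y)\,dx\,dy\,dt.
\end{equation*}
Inserting the H\"older bound $|g(t,x)-g(t,y)|\le\|g(t)\|_{\dot{C}^{\theta}}|x-y|^\theta$ introduces the singular kernel $\psi_\ep(z):=|z|^\theta\varphi_\ep(z)$, so that the inner double integral collapses to the convolution $(\psi_\ep*\rho_\ep)(y)$. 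After pulling out $\|g(t)\|_{\dot{C}^{\theta}}$ and applying Cauchy--Schwarz in $t$, it suffices to bound $\|J\|_{L^2([0,T])}$, where $J(t):=\int_{\RR^d}(\psi_\ep*\rho_\ep)(t,y)\,|\nabla q_\ep(t,y)|\,dy$.

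\textbf{Kernel $L^p$ estimate.} The polynomial decay $\varphi(z)\le C_\varphi|z|^{-r}$ from Assumption \ref{mollifieras}(\ref{secondmolas}) is the source of the $\ep$-exponent on the right-hand side of \eqref{eq:mollifierexchange}. Splitting $\RR^d$ at $|z|=\ep$, rescaling by $z=\ep w$, and using that $\int_{|w|>1}|w|^{p(\theta-r)+d-1}\,dw$ converges precisely when $p(r-\theta)>d$, one obtains
\begin{equation*}
\|\psi_\ep\|_{L^p(\RR^d)}\le C_{\varphi,d,r,\theta,p}\,\ep^{\theta-d/p'}\qquad\text{whenever }p(r-\theta)>d.
\end{equation*}
The critical choice $p=r/(r-\theta)$ satisfies $p(r-\theta)=r>d$ and yields exactly $\|\psi_\ep\|_{L^p}\le C\,\ep^{\theta(r-d)/r}$, the desired scaling. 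Combined with Young's inequality $\|\psi_\ep*\rho_\ep\|_{L^p}\le\|\psi_\ep\|_{L^p}$ (valid since $\|\rho_\ep\|_{L^1}=1$) and H\"older's inequality in the spatial variable with dual exponent $p'=r/\theta$, this reduces $J(t)$ to $\|\psi_\ep\|_{L^p}\,\|\nabla q_\ep(t,\cdot)\|_{L^{p'}_x}$.

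\textbf{Control of $\nabla q_\ep$ and main obstacle.} The heart of the argument is then to establish a bound of the form $\|\nabla q_\ep\|_{L^2_tL^{r/\theta}_x}\le C\delta(\ep)^{-(r-\theta)/r}$. My plan is to combine four ingredients: the pointwise monotonicity bound $|\nabla q_\ep|^2\le C\delta(\ep)^{-1}(\nabla q_\ep\cdot\nabla\mu_\ep)$ from Lemma \ref{muepqeplem}(\ref{nabla_q_nabla_mu}); the spacetime $L^1$ control $\|\nabla q_\ep\cdot\nabla\mu_\ep\|_{L^1_{t,x}}\le C$ from Proposition \ref{genH1bound}; the $L^2_{t,x}$ control $\|\nabla q_\ep\|_{L^2}^2\le C\delta(\ep)^{-1}$ from Lemma \ref{lem:mu_nabla_q}; and the $L^\infty_tL^2_x$ estimate $\|\mu_\ep\|_{L^2_x}\le C\delta(\ep)^{-1/2}$ from Lemma \ref{lem:f_e_lower_bound}. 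These must be distributed between the density factor $\psi_\ep*\rho_\ep$ and the gradient factor $|\nabla q_\ep|$ through H\"older duality matched to the critical exponent pair $(p,p')=(r/(r-\theta),r/\theta)$. The main technical difficulty is precisely here: a naive interpolation through the pointwise $L^\infty_x$ bound $\|\nabla q_\ep\|_\infty\le C\delta(\ep)^{-1}\ep^{-(d+1)}$ would recover the correct $\delta$-scaling but introduce spurious factors of $\ep^{-(d+1)(r-2\theta)/r}$, destroying the $\ep$-exponent. The correct interpolation must therefore carry the pointwise dissipation relation through the H\"older duality without ever invoking $L^\infty_x$ bounds on $\nabla q_\ep$, so that the mollifier-decay exponent $r$ simultaneously governs both the $\ep$ and the $\delta$ scalings. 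A final Cauchy--Schwarz in $t$ then converts the spatial control into the $L^2_t$ estimate of $J$, yielding the target bound.
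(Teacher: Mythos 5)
Your opening moves—the commutator rewriting using evenness of $\varphi$, the H\"older bound introducing the kernel $|z|^\theta\varphi_\ep(z)$, and the $L^p$ estimate $\|\psi_\ep\|_{L^p}\lesssim\ep^{\theta-d/p'}$ valid for $p(r-\theta)>d$—are all correct and match the start of the paper's argument. But the plan then forks off into a route that you yourself cannot complete, and indeed it cannot be completed with the a priori estimates available.

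The gap is at the reduction $J(t)\le\|\psi_\ep*\rho_\ep\|_{L^p}\|\nabla q_\ep\|_{L^{p'}}$ with $(p,p')=(r/(r-\theta),r/\theta)$. By running $\rho_\ep$ through Young's inequality in $L^1$, you throw away the only structural handle the argument has—that $\rho_\ep$ and $\mu_\ep$ are linked by $\varphi_\ep$—and you are left needing $\|\nabla q_\ep\|_{L^2_t L^{r/\theta}_x}\lesssim\delta(\ep)^{-(r-\theta)/r}$. No such bound is available: from the entropy dissipation (Lemma \ref{lem:mu_nabla_q}) one only controls $\|\nabla q_\ep\|_{L^2_{t,x}}\lesssim\delta(\ep)^{-1/2}$, and the only higher $L^p_x$ control on $\nabla q_\ep$ comes from the $\ep$-dependent Lipschitz bound, which, as you correctly note, ruins the $\ep$-exponent. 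You call this the ``main technical difficulty'' and state that ``the correct interpolation must carry the pointwise dissipation relation through the H\"older duality,'' but you do not produce that interpolation, and the phrase as written does not describe a step. As it stands the argument is not a proof.

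What the paper does instead avoids the $L^{r/\theta}$ trap entirely. Rather than an $L^p$ bound on the kernel plus H\"older duality against $\nabla q_\ep$, the paper splits the $z$-integral at a \emph{density-dependent} radius $b$: for $|z|\le b$ one bounds by $b^\theta\mu_\ep(x)$, for $|z|>b$ by $\ep^{-d}C_\varphi b^{\theta-r}$ using the decay of $\varphi$, and optimizing over $b$ gives the pointwise estimate
\begin{align*}
\int_{\RR^d}\rho_\ep(x+\ep z)\varphi(z)|z|^\theta\,dz\;\lesssim\;\ep^{-d\theta/r}\,\mu_\ep(x)^{\frac{r-\theta}{r}}.
\end{align*}
The whole point is that $\mu_\ep^{(r-\theta)/r}$, not $\nabla q_\ep$, absorbs the awkward exponent. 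One then applies Cauchy--Schwarz with $\nabla q_\ep$ in $L^2_x$ and controls $\|\mu_\ep^{(r-\theta)/r}\|_{L^2_x}$ by Jensen from $\|\mu_\ep\|_{L^2_x}\lesssim\delta(\ep)^{-1/2}$ (Lemma \ref{lem:f_e_lower_bound}). After integrating in time, the $L^2_{t,x}$ bound on $\nabla q_\ep$ and the $L^\infty_t L^2_x$ bound on $\mu_\ep$ combine to give exactly $\delta(\ep)^{-(r-\theta)/r}$ with no $L^{p'}$ bound on $\nabla q_\ep$ beyond $p'=2$. So the missing idea in your proposal is to perform the far-field/near-field split at a radius depending on $\mu_\ep(x)$ itself, so the resulting estimate is pointwise in $\mu_\ep$ rather than an $L^p$ bound that forgets the density.
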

\begin{proof}

In order to prove (\ref{eq:mollifierexchange}), let us begin by considering a single time slice.  To simplify notation,  we will completely suppress the time dependence of the variables until the very end of the proof.
Expanding the convolutions, we  estimate as follows,
\begin{align}
\left|  \int_{\RR^d} g  \left[ \rho_\ep  \nabla p_\ep-\mu_\ep \nabla q_\ep \right]    \right|  
& = \left| \iint_{\Rd \times \Rd} \varphi (z) g(x) \left[ \rho_\ep(x) \nabla q_\ep(x+\ep z) - \rho_\ep(x+\ep z) \nabla q_\ep(x) \right] dz dx \right| \nonumber \\
& = \left| \iint_{\Rd \times \Rd} \rho_\ep(x+\ep z) \nabla q_\ep(x) \varphi (z) \left[ g(x+\ep z) - g(x) \right] dx dz \right|  \nonumber \\
&\leq \label{eq:mollifier_exchange_1}
\epsilon^{\theta} \norm{g}_{\dot{C}^{\theta}(\RR^d)}\iint_{\Rd \times \Rd} \rho_{\epsilon}(x+\ep z)|\nabla \qe(x)|\vp(z)|z|^{\theta}\, dz\, dx,
\end{align}
where the second equality follows from the change of variables $z \mapsto -z$ and $x \mapsto x+\epsilon z$ in the first term, using the fact that $\vp$ is even, and the inequality holds for any $\theta\in(0,1)$.

We will now focus on estimating the inner $z$ integral in (\ref{eq:mollifier_exchange_1}).
To do so, we will fix some $b>0$ and split the domain of integration into $|z|\leq b$ and $|z|>b$.  For $|z|\leq b$ we have the estimate
\begin{equation*} 
\int_{|z|\leq b} \rho_{\epsilon}(x+\ep z)\vp(z)|z|^{\theta}\, dz\leq b^{\theta}\int_{\RR^d} \rho_{\epsilon}(x+\ep z)\vp(z)\, dz=b^{\theta}\mue(x).
\end{equation*}
On the other hand,  for $|z|>b$ we have the estimate
\begin{align*}  
\int_{|z|> b} \rho_{\epsilon}(x+\ep z)\vp(z)|z|^{\theta}\, dz\leq \left(\sup_{|z|>b}|z|^{\theta}\vp(z)\right)\int_{\RR^d} \rho_{\epsilon}(x+\ep z)\, dz = \left(\sup_{|z|>b}|z|^{\theta}\vp(z)\right)\epsilon^{-d}\int_{\RR^d} \rho_{\epsilon}(y) dy
\end{align*}
where we change variables by setting $y=x+\epsilon z$.
 
Combining the estimates for $|z|\leq b$ and $|z|>b$ and using $\int \rho_\ep =1$, it follows that for any $b>0$,
\begin{equation}\label{eq:z_integral_estimate_total}
    \int_{\RR^d} \rho_{\epsilon}(x+\ep z)\vp(z)|z|^{\theta}\, dz\leq b^{\theta}\mue(x)+ \epsilon^{-d}\left(\sup_{|z|>b}|z|^{\theta}\vp(z)\right) \leq b^\theta \mu_\ep(x) + \ep^{-d} C_\varphi b^{\theta -r} \ , 
\end{equation}
where, in the second inequality, we use Assumption \ref{mollifieras}(\ref{secondmolas}) for   $r>\max\{d,2\}$.

Now, we  optimize over the choice of $b$ in inequality (\ref{eq:z_integral_estimate_total}). Letting $b=(C_\varphi \epsilon^{-d}\mue(x)^{-1})^{1/r}$
gives
\begin{equation}\label{eq:z_integral_estimate}
\int_{\RR^d} \rho_{\epsilon}(x+\ep z)\vp(z)|z|^{\theta}\, dz\leq 2C_\varphi^{\theta/r}\epsilon^{-d\theta/r}\mue(x)^{\frac{r-\theta}{r}}.
\end{equation}

Combining  estimate (\ref{eq:z_integral_estimate}) with inequality (\ref{eq:mollifier_exchange_1}) and rearranging terms, we see that 
\begin{align}
\left|  \int_{\RR^d} g  \left[ \rho_\ep  \nabla p_\ep-\mu_\ep \nabla q_\ep \right]    \right| &\leq 2 C_\varphi \epsilon^{\frac{\theta(r-d)}{r}}\norm{g}_{\dot{C}^{\theta}(\RR^d)}\int_{\RR^d}\mue(x)^{\frac{r-\theta}{r}} |\nabla \qe(x)|\, dx  \nonumber \\
&\leq 2 C_\varphi \epsilon^{\frac{\theta(r-d)}{r}}\norm{g}_{\dot{C}^{\theta}(\RR^d)}\norm{\nabla \qe}_{L^2(\RR^d)}  \| \mue^{\frac{2(r-\theta)}{r}} \|_{L^1(\RR^d)}^{1/2} \nonumber \\
&\leq 2 C_\varphi \epsilon^{\frac{\theta(r-d)}{r}}\norm{g}_{\dot{C}^{\theta}(\RR^d)}\norm{\nabla \qe}_{L^2(\RR^d)}\norm{\mue}_{L^2(\RR^d)}^{\frac{r-2\theta}{r}}, \label{eq:mollifier_exchange_4}
\end{align}
where the last inequality follows by Jensen's inequality: since $r>\max(d,2)\geq 2\theta$, we have
\[ 0<\frac{2(r-\theta)}{r}-1<1 \implies \norm{\mue^{\frac{2(r-\theta)}{r}}}_{L^1(\RR^d)}^{1/2}\leq \norm{\mue}_{L^2(\RR^d)}^{\frac{r-2\theta}{r}} . \]

At last, we are ready to integrate over the time slices and reintroduce the time variable. Note that, by  Lemmas \ref{lem:f_e_lower_bound} and \ref{energydissipationlem},   $\forall \ t\in [0,T]$,
\[
\norm{\mue}_{L^2(\{t\}\times \RR^d)}^2\lesssim_{d,T}  \delta(\epsilon)^{-1}.
\] 
Thus, there exists a constant $C_{\varphi,d,T}$ so that, integrating (\ref{eq:mollifier_exchange_4}) in time, we obtain
\begin{align*}
\int_0^T\Big|\int_{\RR^d}     g  \left[ \rho_\ep  \nabla p_\ep-\mu_\ep \nabla q_\ep \right]   \big|  &\leq 2 C_\varphi \epsilon^{\frac{\theta(r-d)}{r}} \int_0^T \norm{g}_{\dot{C}^{\theta}(\{t\}\times \RR^d)}\norm{\nabla \qe}_{L^2(\{t\}\times\RR^d)}\norm{\mue}_{L^2(\{t\}\times\RR^d)}^{\frac{r-2\theta}{r}}\, dt \\
 &\leq  C_{\varphi,d,T} \epsilon^{\frac{\theta(r-d)}{r}}\delta(\epsilon)^{-\frac{r-2\theta}{2r}}\int_0^T \norm{g}_{\dot{C}^{0,\theta}(\{t\}\times \RR^d)}\norm{\nabla \qe}_{L^2(\{t\}\times\RR^d)} dt \\
 &\leq C_{\varphi,d,T} \epsilon^{\frac{\theta(r-d)}{r}}\delta(\epsilon)^{-\frac{r-2\theta}{2r}} \norm{g}_{L^2([0,T];\dot{C}^{\theta}(\RR^d))}\norm{\nabla \qe}_{L^2([0,T]\times\RR^d)}.
\end{align*}
The result then follows from  Lemma \ref{lem:mu_nabla_q}.

\end{proof}

\subsection{Compactness properties}

First, we prove a lemma regarding the uniform regularity in time of solutions to (\ref{epsiloneqn}) with well-prepared initial data and velocities. 
\begin{lem}[Time regularity of (\ref{epsiloneqn})] \label{rhoe_time}
Suppose $f$ and $\varphi_\ep$ satisfy Assumptions \ref{internalas} and \ref{mollifieras} and that the  initial data $\{\rho_\epsilon^0\}_{\epsilon\in (0,1)} \subseteq \P_2(\Rd)\cap C^{\infty}_c(\RR^d)$ and velocity fields $\{v_{\ep}\}_{\ep>0}\subset C^{\infty}_c([0,+\infty)\times\RR^d)$ are well-prepared. Let $\rho_\epsilon$ be the corresponding solution of (\ref{epsiloneqn}).

Then, for all $T>0$, 
\begin{equation}\label{eq:rho_time}
\sup_{\epsilon>0} \,\norm{\rho_{\ep}}_{C^{1/2}([0,T];\P_1(\RR^d))}+\norm{\rho_{\ep}}_{H^1([0,T];W^{-1,1}(\RR^d))}<+\infty.
\end{equation}
\end{lem}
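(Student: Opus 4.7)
My plan is to extract both bounds in (\ref{eq:rho_time}) directly from the continuity equation structure of (\ref{epsiloneqn}), where the key quantitative inputs will be the energy dissipation estimate $\int_0^T \int \rho_\ep |\nabla p_\ep|^2 \leq C_{T,d}$ from Lemma \ref{energydissipationlem} together with the well-preparedness control on $\bigl\lVert |v_\ep|^2/(1+|\cdot|^2)\bigr\rVert_{L^1([0,T];L^\infty)}$ paired with the uniform second-moment bound, also from Lemma \ref{energydissipationlem}. Let me write $w_\ep := -(\nabla p_\ep + v_\ep)$ for the transport velocity, so that (\ref{epsiloneqn}) is $\partial_t \rho_\ep + \nabla \cdot (\rho_\ep w_\ep) = 0$. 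The scheme will be: first bound $\int_0^T \int \rho_\ep |w_\ep|^2$ uniformly in $\ep$, then read off both the Hölder-in-$W_1$ control and the $L^2$-in-$W^{-1,1}$ control of $\partial_t \rho_\ep$.

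The crucial kinetic energy bound is obtained by expanding $|w_\ep|^2 \leq 2|\nabla p_\ep|^2 + 2|v_\ep|^2$, so that
\begin{equation*}
\int_0^T\int_{\RR^d} \rho_\ep |w_\ep|^2 \leq 2\int_0^T\int_{\RR^d} \rho_\ep |\nabla p_\ep|^2 + 2\int_0^T\int_{\RR^d} \rho_\ep(1+|x|^2)\,\frac{|v_\ep|^2}{1+|x|^2}.
\end{equation*}
The first term is bounded by Lemma \ref{energydissipationlem}, and the second by the same estimate that produced (\ref{eq:v_control}), namely by $(1+\sup_{t\in[0,T]} M_2(\rho_\ep(t)))\cdot\|{|v_\ep|^2}/(1+|\cdot|^2)\|_{L^1([0,T];L^\infty(\RR^d))}$, which is uniformly bounded in $\ep$ by Lemma \ref{energydissipationlem} and well-preparedness.

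For the Hölder bound, since $W_1 \leq W_2$ by (\ref{dualsobolevtoW2}) it suffices to control $W_2$. Standard absolute continuity theory for the continuity equation (see, e.g., \cite[Theorem 8.3.1]{ambrosiogiglisavare}) gives, for any $0\leq s\leq t\leq T$,
\begin{equation*}
W_1(\rho_\ep(t),\rho_\ep(s))^2 \leq W_2(\rho_\ep(t),\rho_\ep(s))^2 \leq (t-s)\int_s^t\int_{\RR^d}\rho_\ep|w_\ep|^2\,dx\,d\tau,
\end{equation*}
and combining with the kinetic energy bound above yields $W_1(\rho_\ep(t),\rho_\ep(s)) \leq C_{d,T}(t-s)^{1/2}$ uniformly in $\ep$, which is exactly the $C^{1/2}([0,T];\PP_1(\RR^d))$ estimate.

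For the $H^1$ bound, note first that $\|\rho_\ep(t)\|_{W^{-1,1}(\RR^d)} \leq \|\rho_\ep(t)\|_{L^1} = 1$ (since pairing with $\phi$ of $W^{1,\infty}$-norm at most one and $\|\phi\|_\infty\leq 1$ yields this at once), so $\rho_\ep \in L^\infty([0,T];W^{-1,1})$ uniformly. For the time derivative, testing (\ref{epsiloneqn}) against $\phi \in W^{1,\infty}(\RR^d)$ with $\|\phi\|_{W^{1,\infty}}\leq 1$ and applying Cauchy–Schwarz gives
\begin{equation*}
|\langle \partial_t \rho_\ep(t),\phi\rangle| = \Bigl|\int_{\RR^d} \nabla\phi\cdot\rho_\ep w_\ep\Bigr| \leq \int_{\RR^d}\rho_\ep|w_\ep| \leq \Bigl(\int_{\RR^d}\rho_\ep|w_\ep|^2\Bigr)^{1/2},
\end{equation*}
where the last step uses $\int\rho_\ep = 1$. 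Squaring and integrating over $[0,T]$ produces $\|\partial_t\rho_\ep\|_{L^2([0,T];W^{-1,1})}^2 \leq \int_0^T\int\rho_\ep|w_\ep|^2$, which is uniformly bounded by the kinetic energy estimate; this completes the $H^1$ control.

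There is no serious obstacle here: the only mild care needed is to ensure the velocity-field term is handled correctly, but the multiplicative decomposition by $(1+|x|^2)$ that already appeared in the proof of Lemma \ref{energydissipationlem} (see (\ref{eq:v_control})) is exactly the right device, and together with the second-moment bound already established it produces a clean $L^1_t$ estimate on $\int \rho_\ep|v_\ep|^2$, which is all one needs for both conclusions.
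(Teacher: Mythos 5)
Your proof is correct, and it reaches the same final estimates as the paper, but via a genuinely different route for the Hölder part. The paper works with the Lagrangian flow map $X_\ep$ from Lemma~\ref{wellposedpdeepslem}: it bounds $W_1(\rho_\ep(t),\rho_\ep(s))$ by the $L^1(\rho^0_\ep)$-displacement of the flow, pushes the time integral through, and then applies Cauchy--Schwarz; for the $H^1$ part it establishes the $AC^2$ property in $W^{-1,1}$ and then invokes \cite[Remark 1.1.3]{ambrosiogiglisavare} to extract the derivative and its bound. You instead use the Eulerian characterization of absolutely continuous curves (\cite[Theorem 8.3.1]{ambrosiogiglisavare}) to pass directly from the continuity equation to the $W_2$-modulus estimate, and for the $H^1$ part test the equation against $W^{1,\infty}$-normalized test functions. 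Both approaches reduce everything to the same uniform kinetic-energy bound $\int_0^T\int\rho_\ep|w_\ep|^2 \lesssim 1$ derived from Lemma~\ref{energydissipationlem} and well-preparedness, so the content is identical; the Lagrangian route is slightly more self-contained, whereas yours is a touch shorter by outsourcing to the AC-curve theory.

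Two small points. First, you cite (\ref{dualsobolevtoW2}) for $W_1\leq W_2$, but that display actually says $\|\mu-\nu\|_{W^{-1,1}}\leq W_1(\mu,\nu)$; the inequality $W_1\leq W_2$ is a separate (standard) Jensen-type fact. Second, in the $H^1$ step you bound the pairing $\langle\partial_t\rho_\ep,\phi\rangle$ pointwise in time without first verifying that $\partial_t\rho_\ep$ exists as an $L^2([0,T];W^{-1,1})$-valued map; the paper handles this by first establishing the $AC^2$ property in $W^{-1,1}$ and then citing the metric-derivative theorem. Your computation is correct since $\rho_\ep$ is smooth for fixed $\ep$, but if one wants a uniformly justified statement the paper's extra step is the cleaner way to phrase it.
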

\begin{proof}
Letting $X_\ep$ be as in  Lemma \ref{wellposedpdeepslem}, and applying  Lemma \ref{wellposedpdeepslem}, we find, for all $0 \leq t \leq s \leq T$ and $\epsilon>0$,
\begin{align}
 W_1( \rho_\epsilon(t),\rho_\epsilon(s)) &\leq  \int_\Rd |X_\epsilon(t,x) - X_\epsilon(s,x)|  \rho_\epsilon^0(x) dx  \nonumber \\
&\leq  \int_t^{s} \int_\Rd |\nabla p_\epsilon(r, X_\epsilon(r,x)) + v_\epsilon(r, X_\epsilon(r,x))|  \rho_\epsilon^0(x) dx dr \nonumber \\
&=  \int_t^{s} \int_\Rd |\nabla p_\epsilon(r,x) + v_\epsilon(r,x) |  \rho_\epsilon(r,x) dx dr.   \label{timeregeqn}
\end{align}

Therefore,
\begin{align*}
 W_1( \rho_\epsilon(t),\rho_\epsilon(s)) &\leq  \left( \int_t^{s} \int_\Rd |\nabla p_\epsilon + v_\epsilon|^2  \rho_\epsilon \right)^{1/2} \left( \int_t^{s} \int_\Rd1  \rho_\epsilon \right)^{1/2} \\
&\leq  (s-t)^{1/2}\left( \int_t^{s} \int_\Rd 2(|\nabla p_\epsilon|^2 + |v_\epsilon|^2)  \rho_\epsilon \right)^{1/2} .
\end{align*} 
By the energy dissipation and second moment bounds from Lemma \ref{energydissipationlem} and our uniform control on $v_\epsilon$, from Assumption \ref{wellprepareddef}, we have
\begin{align} \label{mepsbound}  \sup_{\epsilon>0}  \int_0^{T} \int_\Rd 2(|\nabla p_\epsilon|^2 + |v_\epsilon|^2)  \rho_\epsilon  <+\infty
\end{align}
Thus, $\sup_{\epsilon>0} \,\norm{\rho_{\ep}}_{C^{1/2}([0,T];\P_1(\RR^d))} <+\infty$.

 Next, note that the fact that the $W^{-1,1}(\Rd)$ norm is bounded above by the 1-Wasserstein distance (see inequality (\ref{dualsobolevtoW2})), combined with inequality (\ref{timeregeqn}), ensures, for all $0 \leq t \leq s \leq T$,
 \begin{align*}
 \| \rho_\epsilon(t) - \rho_\epsilon(s)\|_{W^{-1,1}(\Rd)} \leq \int_t^s m_\epsilon(r) dr \quad  \text{ for } \quad  m_\epsilon(r) :=\int_\Rd |\nabla p_\epsilon(r,x) + v_\epsilon(r,x) |  \rho_\epsilon(r,x) dx .
 \end{align*}
 Furthermore, inequality (\ref{mepsbound}) and Jensen's inequality ensure that 
 \[ \sup_{\epsilon>0} \|m_\epsilon\|_{L^2([0,T])} <+\infty.\]
 Thus, $\rho_\epsilon \in AC^2([0,T]; W^{-1,1}(\Rd))$. Then, by  \cite[Remark 1.1.3]{ambrosiogiglisavare}, for almost every $t \in [0,T]$, $\rho_\epsilon(t)$ is differentiable with respect to $W^{-1,1}(\Rd)$, $\dot{\rho}_\epsilon \in L^2([0,T]; W^{-1,1}(\Rd))$, and $\|\dot{\rho}_\epsilon(t)\|_{W^{-1,1}(\Rd)} \leq |m_\epsilon(t)|$ for a.e. $t \in [0,T]$. Therefore,
 \[
 \sup_{\epsilon >0} \|\dot{\rho}_\epsilon\|_{L^2([0,T]; W^{-1,1}(\Rd))} \leq \sup_{\epsilon >0}  \|m_\epsilon\|_{L^2([0,T])} <+\infty.
 \]
 Combining this with the fact that
 \[ \| \rho_\epsilon\|_{W^{-1,1}(\Rd)} = \sup_{\|\psi\|_{W^{1,\infty}(\Rd)} \leq 1} \int \psi \rho_\epsilon \leq \sup_{\|\psi\|_{L^{\infty}(\Rd)} \leq 1} \int \psi \rho_\epsilon = 1 ,\]
 we have
  \[
  \sup_{\epsilon >0} \|{\rho}_\epsilon\|_{L^2([0,T]; W^{-1,1}(\R^d))} \leq T^{1/2} .
  \]
 Therefore, $\sup_{\epsilon>0} \norm{\rho_{\ep}}_{H^1([0,T];W^{-1,1}(\RR^d))} <+\infty$.
  \end{proof}

 Next, we establish some compactness properties of the densities.
 \begin{lem}[Compactness for the densities] \label{rhoepscpt}
Suppose $f$ and $\varphi_\ep$ satisfy Assumptions \ref{internalas} and \ref{mollifieras} and that the  initial data $\{\rho_\epsilon^0\}_{\epsilon\in (0,1)} \subseteq \P_2(\Rd)\cap C^{\infty}_c(\RR^d)$ and velocity fields $\{v_{\ep}\}_{\ep>0}\subset C^{\infty}_c([0,+\infty)\times\RR^d)$ are well-prepared. Let $\rho_\epsilon$ be the corresponding solution of (\ref{epsiloneqn}), and let $\mu_\epsilon = \varphi_\epsilon * \rho_\epsilon$.
\begin{enumerate}[(i)]

\item For all $t \geq 0$, $ d\rho_\epsilon(t)   = \rho_\epsilon(t,x) dx  $ for  $\rho_\epsilon(t, \cdot) \in L^1(\Rd)$  and $ \{\rho_\epsilon(t)\}_{\epsilon>0}, \{\mu_\epsilon(t)\}_{\epsilon>0} \subseteq L^1( \Rd)$   are uniformly integrable.   \label{rhoepunifint}

    \item There exists $\rho \in C_{\loc}^{1/2}([0,+\infty);\P_1(\Rd))\cap L^{\infty}_{\loc}([0,+\infty);\P_2(\RR^d)\cap L^1(\R^d))$ with $d\rho(t)=\rho(t,x)\, dx$ so that, up to a subsequence, $\rho_\epsilon(t) \to \rho(t)$ and $\mu_{\epsilon}(t)\to \rho(t)$ in 1-Wasserstein and weakly in $L^1(\R^d)$,  for all $t \geq 0$. \label{aalimitrho}

\item   $\sup_{t \in [0,T]} \mathcal{S}(\rho(t))  < + \infty$ and $\sup_{t \in [0,T]}  \mathcal{F}(\rho(t))   <+\infty$.\label{lem_item:entropy_energy_control}
\end{enumerate}
 \end{lem}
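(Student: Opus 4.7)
The plan is to establish the three items in sequence, using the a priori bounds from Section~\ref{sec:ep flow}. For item~(i), absolute continuity $d\rho_\ep(t)=\rho_\ep(t,x)\,dx$ with $\rho_\ep(t,\cdot)\in L^1(\Rd)$ is immediate from Lemma~\ref{wellposedpdeepslem} together with the smoothness of the well-prepared initial data $\rho_\ep^0\in C^\infty_c(\Rd)$. The uniform integrability of $\{\rho_\ep(t)\}$ will follow by the de la Vall\'ee Poussin criterion: Proposition~\ref{genH1bound} supplies the superlinear bound $\sup_\ep\int \rho_\ep|\log\rho_\ep|\leq C_{d,T}$, while the uniform second moment control of Lemma~\ref{energydissipationlem} supplies tightness. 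To transfer uniform integrability to $\mue=\vp_\ep\ast\rho_\ep$, I will apply Jensen's inequality to the convex function $s\mapsto s\log s$: for each $x$, $y\mapsto \vp_\ep(x-y)$ is a probability density, hence
\[
\int_{\Rd} \mue\log\mue \,\leq\, \int_{\Rd}\!\!\int_{\Rd} \vp_\ep(x-y)\,\rho_\ep(y)\log\rho_\ep(y)\,dy\,dx \,=\, \int_{\Rd} \rho_\ep\log\rho_\ep,
\]
and combining this with the $M_2$ bound on $\mue$ from Lemma~\ref{muepqeplem}(\ref{muepM2}) controls both the positive and negative parts of $\mue\log\mue$, so that de la Vall\'ee Poussin again yields uniform integrability of $\{\mue(t)\}$.

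For item~(ii), I will apply an Arzel\`a--Ascoli argument to $\{t\mapsto\rho_\ep(t)\}\subseteq C([0,T];\P_1(\Rd))$: Lemma~\ref{rhoe_time} provides uniform $C^{1/2}$-H\"older control in $W_1$, while the uniform second moment bound of Lemma~\ref{energydissipationlem} combined with unit mass gives pointwise precompactness in $\P_1(\Rd)$. A diagonal extraction and exhaustion as $T\to\infty$ produce $\rho\in C^{1/2}_{\loc}([0,+\infty);\P_1(\Rd))$ with $\rho_\ep(t)\to\rho(t)$ in $W_1$ for all $t\geq 0$. Combining narrow convergence with the uniform integrability from (i) via Dunford--Pettis upgrades this to weak $L^1$ convergence, so $d\rho(t)=\rho(t,x)\,dx$ with $\rho(t,\cdot)\in L^1(\Rd)$; the second moment bound is preserved by narrow lower semicontinuity of $M_2$. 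For $\mue(t)\to\rho(t)$, I will first establish narrow convergence by writing $\int\psi\,d\mue = \int(\vp_\ep\ast\psi)\,d\rho_\ep$ for $\psi\in C_b(\Rd)$ (using evenness of $\vp$), then exploiting that $\vp_\ep\ast\psi\to\psi$ locally uniformly together with the tightness of $\{\rho_\ep\}$. Combined with the uniform $M_2$ control on $\mue$, this yields $W_1$ convergence, and the uniform integrability of $\mue$ from (i) then gives weak $L^1$ convergence.

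For item~(iii), both bounds follow from lower semicontinuity. The entropy bound is immediate from weak $L^1$ lower semicontinuity of $\mathcal{S}$ applied to $\rho_\ep(t)\rightharpoonup\rho(t)$ combined with the uniform bound from Proposition~\ref{genH1bound}. For the energy bound, note that $\mathcal{F}_\ep(\rho_\ep)=\int f_\ep(\mue)$ is a functional in the convolved variable. Combining the pointwise convergence $f_\ep\to f$ and uniform convergence on compact subsets of $\interior\dom(f)$ from Lemma~\ref{reginternalenergydensityprop}(\ref{fep_to_f}) with the weak $L^1$ convergence $\mue(t)\to\rho(t)$, the standard lower semicontinuity of convex integral functionals under joint epi/weak convergence gives
\[
\mathcal{F}(\rho(t))\,\leq\, \liminf_{\ep\to 0}\int f_\ep(\mue(t))\,=\,\liminf_{\ep\to 0}\mathcal{F}_\ep(\rho_\ep(t)),
\]
which is uniformly bounded in $t\in[0,T]$ by Lemma~\ref{energydissipationlem}; the negative part of $f(\rho(t))$ remains integrable thanks to Assumption~\ref{internalas}(\ref{moment_control}).

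The most delicate step is the $W_1$ convergence $\mue(t)\to\rho(t)$ in item~(ii). The direct estimate $W_1(\mue,\rho_\ep)\leq \int|y|\vp_\ep(y)\,dy$ coming from the natural transport coupling requires finiteness of the first moment of $\vp$, which under the bound $\vp(x)\leq C_\vp|x|^{-r}$ with only $r>\max\{d,2\}$ is not guaranteed for $d\geq 2$. My workaround avoids any moment estimate on $\vp$ itself and instead deduces $W_1$ convergence from narrow convergence together with the uniform second moment control on $\mue$, via the characterization of $W_1$ convergence as narrow convergence plus first moment convergence.
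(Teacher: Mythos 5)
Your overall strategy matches the paper's for items (i) and (ii): the de la Vall\'ee Poussin / Dunford--Pettis argument with entropy and second moment control, Arzel\`a--Ascoli for the time-continuous limit, transfer to $\mue$ via Jensen's inequality and narrow convergence, and the upgrade from narrow to $W_1$ via uniform $M_2$ control. Your observation about the first moment of $\vp$ is sensible, although the paper's own calculation in Lemma \ref{muepqeplem}(\ref{muepM2}) silently requires $r>d+2$ anyway, under which $\int|y|\vp(y)\,dy<\infty$ holds; either route is fine. The entropy half of item (iii) is also essentially correct, though you should be aware the paper invokes lower semicontinuity of $\mathcal{S}$ with respect to narrow convergence (\cite[Remark 9.3.8]{ambrosiogiglisavare}), which already packages the moment control needed to handle the negative part of $\rho\log\rho$; ``weak $L^1$ lower semicontinuity'' alone is not quite a correct citation on $\Rd$ without that.

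The genuine gap is the energy bound in item (iii). You write that ``the standard lower semicontinuity of convex integral functionals under joint epi/weak convergence'' gives $\F(\rho(t))\leq\liminf_{\ep\to 0}\F_\ep(\rho_\ep(t))$, but no such standard result applies here as stated, for two compounding reasons. First, the measure space is $(\Rd,dx)$ with infinite total mass, and the paper's lower semicontinuity lemma (Lemma \ref{lem:varying_lower_semicontinuity}) is stated and proved only on sets of finite measure: the proof needs $\int_E g^*_n(\eta)\,d\nu\to\int_E g^*(\eta)\,d\nu$ from uniform convergence on compacts, which fails if $\nu(E)=\infty$ (e.g., take $\eta$ a constant $c$ with $g^*(c)>0$). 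Second, the integrands $f_\ep$ and $f$ are not bounded below in the cases of interest (heat equation, fast diffusion), so there is no Fatou-type safety net; the negative contribution at spatial infinity can leak, and your closing remark that ``the negative part of $f(\rho(t))$ remains integrable'' only addresses well-posedness of $\F(\rho)$, not the lower semicontinuity of the map. The paper resolves exactly this by restricting to a ball $B_R$ (finite measure, so Lemma \ref{lem:varying_lower_semicontinuity} applies), bounding the tail contribution $\int_{B_R^c}f_\ep(\mue)$ from below uniformly via Lemma \ref{lem:f_e_lower_bound} and the $M_2$ control, and then passing $R\to+\infty$ with a separate monotone-convergence argument splitting $f(\rho)$ into its positive and negative parts. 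The outcome is the weaker (but sufficient) inequality $\F(\rho(t))\leq\liminf_\ep\F_\ep(\rho_\ep(t))+C_T$ rather than the cleaner bound you assert. Without this ball-cutoff-and-tail-control step, the energy bound is not proved.
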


 \begin{proof}
First, we show (\ref{rhoepunifint}). By the   moment bounds, Lemma \ref{energydissipationlem}, and the entropy dissipation inequality, Proposition \ref{genH1bound}, we have for any $T>0$
 \begin{align} \label{pointwisetimesecondmomentropybound}
 \sup_{\epsilon>0}\sup_{t\in [0,T]} M_2(\rho_{\ep}(t))+\S(\rho_\epsilon(t))<+\infty .
 \end{align}
 Thus,
 \[ d\rho_\epsilon(t)(x) dt = \rho_\epsilon(t,x) dxdt \text{ for }\rho_\epsilon \in L^\infty([0,T] ; L^1( \Rd)). \]
  We will use $\rho_\epsilon$ to both denote the element of $AC^2([0,T];\P_2(\Rd))$, on the left hand side, and the element of $L^\infty([0,T] ; L^1( \Rd))$, on the right hand side.   By the bounds in inequality (\ref{pointwisetimesecondmomentropybound}), we have that $\{ \rho_\epsilon(t)\}_{\epsilon >0}$ is precompact in $\P_1(\Rd)$ and $L^1(\R^d)$; see, for example,   \cite[Proposition 7.1.5, equation (5.2.20)]{ambrosiogiglisavare} and Lemma \ref{dunfordpettislemma}.   Since we also have 
  \begin{align}  \label{M2muepbound}
 \sup_{\epsilon>0}\sup_{t\in [0,T]} M_2(\mu_{\ep}(t))+\S(\mu_\epsilon(t))<+\infty
 \end{align}
 by Lemma \ref{muepqeplem}(\ref{muepM2}) and Jensen's inequality for $s \mapsto s \log s$,
   the same precompactness holds true for $\{\mue(t)\}_{\ep>0}$.

Now we  will show (\ref{aalimitrho}) by applying the Arzel\'a-Ascoli theorem \cite[Proposition 3.3.1]{ambrosiogiglisavare}.
On any time interval $[0,T]$, Lemma \ref{rhoe_time} implies that $\rho_\epsilon$ is uniformly H\"older continuous with respect to 1-Wasserstein, hence equicontinuous. As already shown above, $\{\rho_\ep(t)\}_{\ep >0}$ is also precompact, for all $t \geq 0$.  Therefore, by   Arzel\'a-Ascoli, there exists $\rho \in C_{\loc}([0,+\infty); \P_1(\Rd))$ so that, up to a subsequence, $\rho_\epsilon(t) \to \rho(t)$ in 1-Wasserstein for all $t \geq 0$. Since $\mu_\epsilon = \varphi_\epsilon *\rho_\epsilon$, we likewise have $\mu_\epsilon(t) \to \rho(t)$ narrowly for all $t \geq 0$ \cite[Lemma 2.3]{CarrilloCraigPatacchini}. As shown above, since $\{\mue(t)\}_{\ep>0}$ is precompact in $\mathcal{P}_1(\Rd)$, we conclude that $\mu_\ep(t) \to \rho(t)$ in 1-Wasserstein for all $t \geq 0$.

Lower semicontinuity of the 1-Wasserstein metric then implies that $\rho \in C^{1/2}_{\loc}([0,+\infty); \P_1(\Rd))$.   Finally, note that, by lower semicontinuity of $\mu \mapsto M_2(\mu)$ with respect to 1-Wasserstein convergence \cite[Proposition 5.1.7]{ambrosiogiglisavare} we have $M_2(\rho(t)) \leq \liminf_{\ep \to 0} M_2(\rho_\ep(t)) \leq \sup_{\ep >0, t \in[0,T]} M_2(\rho_\ep(t)) < +\infty$, for all $t \in [0,T].$ Thus $\rho \in L^{\infty}_{\loc}([0,+\infty);\P_2(\RR^d))$.

Before proving the rest of (\ref{aalimitrho}), we turn to the first part of (\ref{lem_item:entropy_energy_control}). This follows immediately from the fact that   $\mathcal{S}$ is  lower semicontinuous with respect to 1-Wasserstein convergence \cite[Remark 9.3.8]{ambrosiogiglisavare}, so $ \sup_{t \in [0,T]} \mathcal{S}(\rho(t))\leq \sup_{t \in [0,T]} \liminf_{\epsilon\to 0} \mathcal{S}(\rho_{\ep}(t))<\infty$.  
Returning back to (\ref{aalimitrho}),  we see that the preceeding bound on the entropy implies $\rho(t)$ is absolutely continuous with respect to Lebesgue measure for all $t \geq 0$, and we we may write $d\rho(t)=\rho(t,x)\, dx$.  Since $\rho(t)$ has mass one for all $t \geq 0$, this shows $\rho \in L^{\infty}_{\loc}([0,+\infty);L^1(\RR^d))$.

To complete our proof of (\ref{aalimitrho}), it suffices to show the convergence of $\rho_\ep(t)$ and $\mu_\ep(t)$ in weak $L^1(\Rd)$ for any $t \geq 0$. As argued above, $\{ \rho_\ep(t) \}_{\ep >0}, \{ \mu_\ep(t) \}_{\ep >0}$ are precompact weakly in $L^1(\Rd)$. Since their 1-Wasserstein convergence to $\rho(t)$ implies convergence in distribution, by uniqueness of limits, we likewise have convergence weakly in $L^1(\Rd)$.

It remains to show the bound $\sup_{t\in[0,T]}\mathcal{F}(\rho(t)) <+\infty$ from part (\ref{lem_item:entropy_energy_control}).
By Lemma \ref{energydissipationlem}, it suffices to show there exists $C_T>0$ so that $\F(\rho(t)) \leq \liminf_{\epsilon \to 0} \F_\epsilon(\rho_\epsilon(t)) +C_T $ for all $t \in [0,T]$. Let $B_R$ be the ball of radius $R>0$ centered at the origin. 
 
By Lemma \ref{lem:f_e_lower_bound}, there exists a concave, increasing function $\tilde{H}:[0,+\infty)\to [0,+\infty)$ such that 
\[
\int_{ B_R^c} f_{\epsilon}(\mue(t))\geq -\tilde{H}\left(\int_{B_R^c} (1+|x|^2)\mue(t,x)\, dx\right)  .
\]
Therefore,
\[
\mathcal{F}_{\epsilon}(\rho_{\epsilon}(t))\geq  \int_{  B_R} f_{\epsilon}(\mue(t))-\tilde{H}\left(\int_{B_R^c} (1+|x|^2)\mue(t,x)\, dx\right)\geq \int_{  B_R} f_{\epsilon}(\mue(t))-\tilde{H}\left(1+M_2(\mue(t))\right) .
\]

By Lemma \ref{reginternalenergydensityprop}(\ref{fep_to_f}) $f_{\epsilon}$ epi-converges  to $f$. Furthermore, Assumption \ref{internalas}(\ref{convexlctsas}), Lemma \ref{reginternalenergydensityprop}(\ref{fepstronglyconvex}), and Lemma 2.2 ensure  the hypotheses of Lemma \ref{lem:varying_lower_semicontinuity} are satisfied. Thus, for all $t \geq 0$,
\[ 
\liminf_{\epsilon\to 0}\int_{ B_R} f_{\epsilon}(\mue(t))\geq \int_{ B_R} f(\rho(t)).
\]
Hence, 
\begin{align} \label{almostdonewithFrho}
\liminf_{\epsilon\to 0}\mathcal{F}_{\epsilon}(\rho_{\epsilon}(t))\geq \int_{  B_R} f(\rho(t))- \sup_{\ep >0, t \in[0,T]}\tilde{H}\left(1+ M_2(\mu_\ep(t))\right).
\end{align}
Finally, by applying Assumption \ref{internalas}(\ref{moment_control}) to $\tilde{\rho} = \rho \mathbf{1}_{\{x: f(\rho(x))<0\}}$, which satisfies $M_2(\tilde{\rho}) \leq M_2(\rho) <+\infty$, we see that
\[ -\int (f(\rho))_-  = \int f(\tilde{\rho}) \geq -H \left( \int_\Rd (1+|x|^2) d \tilde{\rho}(x) \right) >-\infty . \]
Thus, by the monotone convergence theorem,
\[ \int f(\rho) = \int (f(\rho))_+ - \int (f(\rho))_- = \lim_{R \to +\infty} \int_{B_R} (f(\rho))_+ - \int (f(\rho))_- \leq \limsup_{R \to +\infty} \int_{B_R}f(\rho) .\]
Combining this with inequality (\ref{almostdonewithFrho}), we obtain for $C_T:= \sup_{\ep >0, t \in[0,T]}\tilde{H}\left( 1+M_2(\rho_\ep(t))\right)>0$, where $C_T<+\infty$  by  (\ref{pointwisetimesecondmomentropybound}),
\begin{align*}
    \liminf_{\epsilon\to 0}\mathcal{F}_{\epsilon}(\rho_{\epsilon}(t))\geq \limsup_{R \to +\infty}\int_{  B_R} f(\rho(t))-C_T \geq \int f(\rho(t)) -C_T = \F(\rho(t)) -C_T,
\end{align*}
completing the proof.

 \end{proof}

Our final compactness result concerns the $\dot{H}^{-1}$ dual variable $f_\ep^*(q_\ep)$, which we denoted in the introduction by $\zeta_\ep$; see equation (\ref{eq:zeta def}).
\begin{lem}[Compactness for the $\zeta_\ep$]\label{lem:f_*_integrable}
Suppose $f$ and $\varphi_\ep$ satisfy Assumptions \ref{internalas} and \ref{mollifieras} and that the  initial data $\{\rho_\epsilon^0\}_{\epsilon\in (0,1)} \subseteq \P_2(\Rd)\cap C^{\infty}_c(\RR^d)$ and velocity fields $\{v_{\ep}\}_{\ep>0}\subseteq C^{\infty}_c([0,+\infty)\times\RR^d)$ are well-prepared. Let $\rho_\epsilon$ be the corresponding solution of (\ref{epsiloneqn}), and let $\mu_\epsilon = \varphi_\epsilon * \rho_\epsilon$ and $q_\ep = f_\ep'(\mu_\ep)$.

 For $r$ as in Assumption \ref{mollifieras}, assume there is some $\theta \in (0,1)$ so that
\begin{align} \label{epsilondeltarelationbounded2} \lim_{\ep \to 0 } \epsilon^{\frac{\theta(r-d)}{r}}\delta(\epsilon)^{-\frac{r-\theta}{r}} <+\infty. 
\end{align}
Then, for any $T>0$ and $r^*\in (1, \frac{d}{d-1+\theta})$
\begin{align} \label{Lrbound} \sup_{\epsilon >0 }\norm{\nabla f^*_{\ep}(\qe)}_{L^2([0,T];\dot{W}^{-\theta, 1}(\RR^d))}+\norm{f^*_{\epsilon}(\qe)}_{L^{1}([0,T]\times \RR^d)}+\norm{f^*_{\epsilon}(\qe)}_{L^{2}([0,T];L^{r^*}(\RR^d))}< +\infty . 
\end{align}
 Furthermore, $\{f^*_{\epsilon}(\qe)\}_{\ep>0}$ is weakly precompact in $L^1_{\loc}([0,+\infty)\times\RR^d)$. 
\end{lem}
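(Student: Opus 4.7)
The plan is to derive the three uniform bounds in (\ref{Lrbound}) in sequence and then conclude weak $L^1_\loc$ precompactness via de la Vall\'ee Poussin and Dunford-Pettis. For the gradient bound $\nabla f_\ep^*(q_\ep)\in L^2_t\dot{W}^{-\theta,1}_x$, I would start from the identity $\nabla f_\ep^*(q_\ep)=\mu_\ep\nabla q_\ep$ (Lemma \ref{muepqeplem}(\ref{qeplip})) and split
\begin{equation*}
    \mu_\ep\nabla q_\ep = \rho_\ep\nabla p_\ep+(\mu_\ep\nabla q_\ep-\rho_\ep\nabla p_\ep).
\end{equation*}
The second summand, tested against $g\in L^2([0,T];\dot{C}^\theta(\Rd))$, is bounded directly by Proposition \ref{lem:mollifier exchange}, with the prefactor $\ep^{\theta(r-d)/r}\delta(\ep)^{-(r-\theta)/r}$ kept uniformly bounded by hypothesis (\ref{epsilondeltarelationbounded2}). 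For the first summand I would exploit that $\int_\Rd\rho_\ep\nabla p_\ep\,dx=0$: applying Proposition \ref{lem:mollifier exchange} with $g\equiv 1$ (seminorm zero) identifies $\int\rho_\ep\nabla p_\ep=\int\mu_\ep\nabla q_\ep=\int\nabla f_\ep^*(q_\ep)$, and this integral vanishes because Assumption \ref{mollifieras}(ii) and the compact support of $\rho_\ep$ (Lemma \ref{wellposedpdeepslem}) give $\mu_\ep(x)\lesssim|x|^{-r}$ at infinity, so the quadratic vanishing of $f_\ep^*$ near its minimizer $f_\ep'(0)$ combined with the Lipschitz bound on $f_\ep'$ forces $f_\ep^*(q_\ep)\lesssim|x|^{-2r}$; integration by parts on an expanding ball then yields $\int\nabla f_\ep^*(q_\ep)=0$. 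This zero-mean identity lets me replace $g$ by $g-g(0)$ in the pairing, and Cauchy-Schwarz yields
\begin{equation*}
    \left|\int g\cdot\rho_\ep\nabla p_\ep\,dx\right|\leq \|g\|_{\dot{C}^\theta}\Bigl(\int|x|^{2\theta}\rho_\ep\Bigr)^{1/2}\Bigl(\int\rho_\ep|\nabla p_\ep|^2\Bigr)^{1/2},
\end{equation*}
which is uniformly controlled via Jensen ($\int|x|^{2\theta}\rho_\ep\leq M_2(\rho_\ep)^\theta$), Lemma \ref{energydissipationlem}, and Cauchy-Schwarz in time.

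The $L^{r^*}$ bound then follows from a dual Sobolev embedding: the $\dot{W}^{-\theta,1}$ estimate on $\nabla f_\ep^*(q_\ep)$ places $f_\ep^*(q_\ep)$, modulo additive constants, in a homogeneous fractional Sobolev space whose critical Sobolev embedding is $L^{d/(d-1+\theta)}(\Rd)$; the additive constant is pinned to zero because $f_\ep^*(q_\ep)\geq 0$ and decays to zero at infinity. This gives the $L^2_tL^{r^*}_x$ estimate for any $r^*$ strictly below $d/(d-1+\theta)$.

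The $L^1([0,T]\times\Rd)$ bound is the main obstacle; it cannot be read off from the $L^{r^*}$ estimate since $L^{r^*}\not\hookrightarrow L^1$ on $\Rd$. I would test \eqref{epsiloneqn} against $|x|^2/2$ and integrate by parts twice, using $\int\nabla f_\ep^*(q_\ep)\cdot x\,dx=-d\int f_\ep^*(q_\ep)\,dx$, to obtain
\begin{equation*}
    d\int_0^T\!\int f_\ep^*(q_\ep) = \tfrac{1}{2}\bigl[M_2(\rho_\ep(T))-M_2(\rho_\ep^0)\bigr]+\int_0^T\!\int(\rho_\ep\nabla p_\ep-\mu_\ep\nabla q_\ep)\cdot x+\int_0^T\!\int\rho_\ep v_\ep\cdot x.
\end{equation*}
The moment and velocity terms are controlled by Lemma \ref{energydissipationlem} and the well-prepared hypothesis on $v_\ep$. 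For the middle term, Proposition \ref{lem:mollifier exchange} does not apply directly since $|x|\notin\dot{C}^\theta(\Rd)$; instead I would use the explicit symmetrized representation
\begin{equation*}
    \int(\rho_\ep\nabla p_\ep-\mu_\ep\nabla q_\ep)\cdot x = -\ep\iint\varphi(z)\,z\cdot\nabla q_\ep(x+\ep z)\,\rho_\ep(x)\,dz\,dx
\end{equation*}
derived in the proof of Proposition \ref{lem:mollifier exchange}, combined with Cauchy-Schwarz, Lemma \ref{lem:mu_nabla_q}'s bound $\|\nabla q_\ep\|_{L^2}^2\lesssim\delta(\ep)^{-1}+\delta(\ep)$, the second-moment estimate, and hypothesis (\ref{epsilondeltarelationbounded2}) to ensure the $\ep$-prefactor dominates the $\delta(\ep)^{-1/2}$ growth. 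Finally, the uniform $L^2_tL^{r^*}_x$ bound with $r^*>1$ yields uniform integrability on every compact subset by de la Vall\'ee Poussin, and weak precompactness in $L^1_\loc([0,+\infty)\times\Rd)$ follows from the Dunford-Pettis criterion (Lemma \ref{dunfordpettislemma}).
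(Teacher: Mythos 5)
The gradient bound $\nabla f_\ep^*(q_\ep)\in L^2_t\dot W^{-\theta,1}_x$ follows the same route as the paper (mollifier exchange plus the zero-mean identity for $\nabla f_\ep^*(q_\ep)$), though the paper gets $\int\nabla f_\ep^*(q_\ep)=0$ directly from $f_\ep^*(q_\ep)\in W^{1,1}(\Rd)$ and the divergence theorem, which is cleaner than your pointwise decay argument. The substantive trouble is in the $L^{r^*}$ and $L^1$ bounds.

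Your claim that the $L^{r^*}$ bound follows from the gradient bound alone, via a ``dual Sobolev embedding'' with the additive constant pinned to zero, cannot hold: by scaling, an inequality of the form $\|u\|_{L^{r^*}(\Rd)}\lesssim\|\nabla u\|_{\dot W^{-\theta,1}(\Rd)}$ is only dimensionally consistent at the single exponent $r^*=d/(d-1+\theta)$, which is precisely the one that is excluded. Knowing that $u$ lies in the critical Lebesgue space does not give $L^{r^*}$ for smaller $r^*$ on an unbounded domain, and knowing $u\geq 0$ decaying to zero adds no scaling-breaking quantity. To descend to subcritical $r^*$ you must interpolate against the $L^1$ norm, which is exactly what Lemma \ref{lem:f_equi} of the paper does: it gives $\|h\|_{L^{r^*}}\lesssim\|h\|_{L^1}^{1-\gamma}\|\nabla h\|_{\dot W^{-\theta,1}}^\gamma$. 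So as written your argument is circular: the $L^{r^*}$ step requires the $L^1$ bound, but you prove the $L^1$ bound only afterward. The paper resolves the entanglement by proving both simultaneously, splitting $\|f_\ep^*(q_\ep)\|_{L^1}$ over the level set $Q_B=\{f_\ep^*(q_\ep)\leq B\}$, bounding the low part using the Fenchel--Young relation $f_\ep^*(q_\ep)=\mu_\ep q_\ep-f_\ep(\mu_\ep)$ together with the energy bounds, and then closing a nonlinear inequality with the interpolation estimate.

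Your moment-testing route to the $L^1$ bound is a genuinely different idea from the paper's level-set argument and is worth noting: after justifying the use of $|x|^2/2$ as a test function (via truncation, using the compact support of $\rho_\ep(t)$), the identity $\int\nabla f_\ep^*(q_\ep)\cdot x=-d\int f_\ep^*(q_\ep)$ isolates the $L^1$ norm, and the mollifier-exchange error with $g(x)=x$ (i.e.\ $\theta=1$) is controlled by Lemma \ref{lem:mu_nabla_q}, the $L^2$ bound on $\mu_\ep$, and the fact that hypothesis (\ref{epsilondeltarelationbounded2}) for some $\theta\in(0,1)$ also forces it at $\theta=1$ by monotonicity. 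If you carry this through rigorously and \emph{then} invoke Lemma \ref{lem:f_equi} to get the $L^{r^*}$ bound from the $L^1$ and $\dot W^{-\theta,1}$ controls, you would have a correct alternative argument; but you should replace the incorrect Sobolev-embedding step with the interpolation inequality, and you should reverse the logical order so that the $L^1$ bound comes first. Finally, your appeal to Lemma \ref{dunfordpettislemma} for the weak $L^1_{\loc}$ precompactness is misplaced: that lemma is stated for probability densities with bounded entropy and second moment, not for $f_\ep^*(q_\ep)$; the conclusion simply follows from the uniform $L^\beta$ bound for some $\beta>1$, as in the paper.
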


\begin{proof}
We begin by proving the first bound in  (\ref{Lrbound}).   Note that, combining the fact that $\rho_\ep$ is a probability measure with the uniform    bounds from Lemma \ref{energydissipationlem}, we have 
\begin{align*}
\sup_{\epsilon >0} \int_0^T \norm{(1+|\cdot|)^2\rho_{\ep}}_{L^1(\{t\}\times\RR^d)}  \norm{\rho_{\ep}|\nabla p_{\ep}|^2}_{L^1(\{t\}\times\RR^d)}  \, dt  <+\infty  . 
\end{align*}
Furthermore, for any $h \in \dot{C}^{\theta}(\RR^d)$ with $h(0) = 0$, we have $|h(x)| \leq \norm{h}_{\dot{C}^{\theta}(\RR^d)} (1+|x|)$. Therefore, for any $g \in L^{2}([0,T]; \dot{C}^{\theta}(\RR^d))$,
\begin{align}
&\int_0^T \int_\Rd (g(t,x) - g(t,0)) \rho_\ep(t,x) \nabla p_\ep(t,x) dx dt \leq
\int_0^T \|g(t)\|_{\dot{C}^{\theta}(\Rd)}  \int_\Rd (1+|x|)\rho_{\ep}(t,x) |\nabla p_{\ep}(t,x)| dx  dt \nonumber \\
&\quad \leq \norm{g}_{L^{2}([0,T]; \dot{C}^{\theta}(\RR^d))} \left( \int_0^T\norm{(1+|\cdot|)^2\rho_{\ep}}_{L^1(\{t\}\times\RR^d)}  \norm{\rho_{\ep}|\nabla p_{\ep}|^2}_{L^1(\{t\}\times\RR^d)} dt \right)^{1/2} ,
 \end{align}
where the right hand side is bounded uniformly in $\epsilon$.
  Combining this   with the mollifier exchange bound, Lemma  \ref{lem:mollifier exchange}, we conclude that 
\begin{align} \label{withconstantterm}
\sup_{\ep >0}  \ \sup_{\norm{g}_{L^{2}([0,T]; \dot{C}^{\theta}(\RR^d))}\leq 1} 
\int_0^T\int_{\RR^d}     (g(t,x)-g(t,0))  \mu_\ep(t,x) \nabla q_\ep(t,x)  <+\infty .
\end{align}
By Lemma \ref{muepqeplem}(\ref{qeplip}), $\mue\nabla \qe = \nabla f_{\ep}^*(\qe)$ almost everywhere, where $f_\ep^*(q_\ep) \in W^{1,\infty}(\Rd)$. Furthermore, recalling the duality relation
\begin{align} 
\label{recallduality} 
f^*_{\ep}(\qe)=\mue \qe-f_{\ep}(\mue)
\end{align} 
from Lemma \ref{muepqeplem} (\ref{FYidentity}), and the facts that $\mu_\ep \in L^1(\Rd)$, $\qe \in L^\infty(\Rd)$, and $f_{\ep}(\mue) \in L^1(\Rd)$, we see that  $f_\ep^*(\qe) \in L^1(\Rd)$. Likewise, since  $ \nabla \qe \in L^\infty(\Rd)$, we also have $\nabla f_{\ep}^*(\qe) \in L^1(\Rd)$, so  $f_{\ep}^*(\qe) \in W^{1,1}(\Rd)$.  Together with the divergence theorem, this ensures that $\int_\Rd \nabla f_{\ep}^*(\qe) = 0$ for almost every $t \in [0,T]$. Thus, using this to cancel out the spatially constant term in inequality (\ref{withconstantterm}), we obtain
\begin{align*}
\sup_{\ep >0}  \ \sup_{\norm{g}_{L^{2}([0,T]; \dot{C}^{\theta}(\RR^d))}\leq 1} 
\int_0^T\int_{\RR^d}     g(t,x)  \nabla f_{\ep}^*(\qe)     <+\infty .
\end{align*}
This shows  $\nabla f_{\ep}^*(\qe)\in L^{2}([0,T];\dot{W}^{-\theta,1}(\RR^d))$ uniformly in $\epsilon >0$.

Now, we prove the remaining two estimates in  inequality (\ref{Lrbound}).
Choose some $r^*\in (1, \frac{d}{d-1+\theta})$.
Note that, for any $B>0$,
\begin{align}\label{letsestimatel1}
\norm{f^*_{\epsilon}(\qe)}_{L^{1}([0,T]\times \RR^d)}\leq B^{1-r^*}\norm{f^*_{\epsilon}(\qe)}_{L^{1}([0,T];L^{r^*}(\RR^d))}^{r^*}+\norm{f^*_{\epsilon}(\qe)}_{L^{1}\left(Q_B\right)}
\end{align}
where $Q_B=\{(t,x)\in [0,T]\times\RR^d: f^*_{\ep}(\qe)\leq B\}$.
By Assumption \ref{internalas}(\ref{nontrivial_convex}), there exists $a_0>0$ such that $f(a_0)\neq +\infty$. Thus, for all $\epsilon >0$ we have $f^*_{\ep}(b)\geq a_0b-f_{\epsilon}(a_0)\geq \frac{a_0b}{2}$  once $b\geq 0$ is sufficiently large.  Thus, there exists some $B_0\geq 0$ such that $Q_B\subset \{(t,x)\in [0,T]\times\RR^d: \qe \leq \frac{2\max(B,B_0)}{a_0}\}$.  Combining this with the duality relation (\ref{recallduality}), it follows that there exists $C_{d,T}>0$ so that
\begin{align*}
\norm{f^*_{\epsilon}(\qe)}_{L^{1}\left(Q_B\right)}&\leq \frac{2\max(B,B_0)}{a_0}\norm{\mue}_{L^1([0,T]\times\RR^d)}-\int_0^T \mathcal{F}_{\ep}(\rho_{\ep}) \leq \frac{2\max(B,B_0)T}{a_0}+C_{d,T},
\end{align*}
where we have used Lemma \ref{f_e_lower_bound} and our uniform bounds on the second moments from Lemma \ref{energydissipationlem} and Lemma \ref{muepqeplem} (\ref{muepM2}) to control the internal energy. 
Combining these estimates with inequality (\ref{letsestimatel1}), we obtain
\[
\norm{f^*_{\epsilon}(\qe)}_{L^{1}([0,T]\times \RR^d)}\leq B^{1-r^*}\norm{f^*_{\epsilon}(\qe)}_{L^{1}([0,T];L^{r^*}(\RR^d))}^{r^*}+\frac{2 (B+B_0)T}{a_0}+C_{d,T}.
\]
Taking $B = \|f_\ep^*(q_\ep)\|_{L^{1}([0,T];L^{r^*}(\RR^d))} $ then gives
\[
\norm{f^*_{\epsilon}(\qe)}_{L^{1}([0,T]\times \RR^d)}\leq (1+2T/a_0)\norm{f^*_{\epsilon}(\qe)}_{L^{1}([0,T];L^{r^*}(\RR^d))} +\frac{2TB_0}{a_0}+C_{d,T} .
\]

Next, by fractional Sobolev interpolation inequalities (see, for instance, Lemma \ref{lem:f_equi}) {and H\"older's inequality in time}, there exists $C_{ \theta}>0$ so that  
\begin{equation}\label{eq:fractional_gagliardo_f_star}
\norm{f^*_{\epsilon}(\qe)}_{L^{1}([0,T];L^{r^*}(\RR^d))}\leq  C_{\theta}\norm{f^*_{\epsilon}(\qe)}_{L^{1}([0,T];L^1(\RR^d))}^{1-\frac{d(r^*-1)}{r^*(1-\theta)}}\norm{\nabla f^*_{\ep}(\qe)}_{L^1([0,T];\dot{W}^{-\theta,1}(\RR^d))}^{\frac{d(r^*-1)}{r^*(1-\theta)}} .
\end{equation}
Thus, there exists $C_{d,T,\theta}>0$ so that
\begin{align*}
\norm{f^*_{\epsilon}(\qe)}_{L^{1}([0,T]\times \RR^d)}\leq C_{d,T,\theta} \norm{f^*_{\epsilon}(\qe)}_{L^{1}([0,T]\times \RR^d)}^{1-\frac{d(r^*-1)}{r^*(1-\theta)}}\norm{\nabla f^*_{\ep}(\qe)}_{L^1([0,T];\dot{W}^{-\theta,1}(\RR^d))}^{\frac{d(r^*-1)}{(1-\theta)}}  +C_{d,T,\theta}.
\end{align*}
Since $0<1-\frac{d(r^*-1)}{r^*(1-\theta)}<1$, we can bound 
$\norm{f^*_{\epsilon}(\qe)}_{L^{1}([0,T] \times \RR^d)}$ in terms of $\norm{\nabla f^*_{\ep}(\qe)}_{L^2([0,T];\dot{W}^{-\theta,1}(\RR^d))}$. This then  implies via (\ref{eq:fractional_gagliardo_f_star}) that  $\norm{f^*_{\epsilon}(\qe)}_{L^{1}([0,T];L^{r^*}(\RR^d))}$ can be bounded in terms of $\norm{\nabla f^*_{\ep}(\qe)}_{L^2([0,T];\dot{W}^{-\theta,1}(\RR^d))}$, which completes the proof of  inequality (\ref{Lrbound}).

Finally, the uniform bound on $\norm{f^*_{\epsilon}(\qe)}_{L^{1}([0,T]\times \RR^d)}+\norm{f^*_{\epsilon}(\qe)}_{L^{2}([0,T];L^{r^*}\RR^d))}$ for any $r^*\in (1, \frac{d}{d-1+\theta})$ implies that $f^*_{\ep}(\qe)$ is weakly precompact in $L^{\beta}([0,T]\times\RR^d)$ for any $\beta\in (1, \min(2, \frac{d}{d-1+\theta}))$.  In particular,  $f_{\ep}^*(\qe)$ is weakly precompact in $L^1_{\loc}([0,+\infty)\times\RR^d)$. 
\end{proof}

\subsection{Identifying the limit}

Now that we have shown that we can extract subsequential limits of the  $\rho_\ep$ and $\zeta_\ep=f_\ep^*(q_\ep)$, we will establish that the limiting quantities  $\rho$ and $\zeta$ satisfy the desired duality relations (Proposition \ref{identifylimitpoint} and Lemma \ref{existenceofpwow}.) Then, we will conclude the section with the proof of Theorem \ref{mainthm}. 

First, we show that  $\rho\in \partial e(\zeta)$ holds almost everywhere.

\begin{prop}[Duality relation between $\rho$ and $\zeta$] \label{identifylimitpoint} 
Under the hypotheses of Lemma \ref{lem:f_*_integrable}, suppose $\rho_\epsilon$ is the solution of (\ref{epsiloneqn}), and let $\mu_\epsilon = \varphi_\epsilon * \rho_\epsilon$ and $q_\ep = f_\ep'(\mu_\ep)$.
Then, if $(\rho, \zeta)$ is a weak $L^1_\loc([0,+\infty) ; L^1(\RR^d))\times L^1_\loc([0,+\infty)\times \RR^d)$ limit point of $(\mue, f^*_{\ep}(\qe))$, we have 
\begin{align} \label{dualityrelationwoohoo}
 \rho\zeta=e(\rho)+e^*(\zeta) \text{ almost everywhere on } [0,+\infty) \times \Rd \ , 
 \end{align}
 where $e: \R \to \R \cup \{+\infty\}$ is the $\dot{H}^{-1}$ energy density from Definition \ref{hm1energydendef}.
\end{prop}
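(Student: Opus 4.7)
The plan is to exploit the $\dot H^{-1}$ duality structure via the truncated energies $e_{m,\epsilon}, e_{m,\epsilon}^*$ from Section \ref{sss:e}, following the outline in Section \ref{strategy}. At the nonlocal level, $q_\epsilon = f_\epsilon'(\mu_\epsilon)$ combined with Lemma \ref{eepsproperties}(\ref{subdiffe}) applied to $f_\epsilon, e_\epsilon$ gives $\zeta_\epsilon = f_\epsilon^*(q_\epsilon) \in \partial e_\epsilon(\mu_\epsilon)$ wherever $\mu_\epsilon > 0$, so the case of equality in (\ref{caseofequality}) yields $\mu_\epsilon \zeta_\epsilon = e_\epsilon(\mu_\epsilon) + e_\epsilon^*(\zeta_\epsilon)$ a.e. Since we lack uniform $L^1_{\rm loc}$ control on these quantities, fix $m \in \mathbb{N}$ and introduce $\zeta_{\epsilon,m} := \min(\zeta_\epsilon, m)$. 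By Lemma \ref{lem:truncation}(\ref{partialemepschar}) applied to $e_\epsilon$, $\zeta_{\epsilon,m} \in \partial e_{m,\epsilon}(\mu_\epsilon)$ a.e., and hence
\begin{equation}\label{eq:plan FY em}
\mu_\epsilon\zeta_{\epsilon,m} \;=\; e_{m,\epsilon}(\mu_\epsilon) + e_{m,\epsilon}^*(\zeta_{\epsilon,m}) \qquad \text{a.e.}
\end{equation}
All three quantities in (\ref{eq:plan FY em}) are nonnegative and dominated by $m\mu_\epsilon$ (using $(e_{m,\epsilon})'\leq m$ and $e_{m,\epsilon}^*(\zeta_{\epsilon,m})\leq \mu_\epsilon\zeta_{\epsilon,m}$), so the $L^\infty_t L^1_x$ bound on $\mu_\epsilon$ from Lemma \ref{rhoepscpt} makes them uniformly bounded in $L^1_{\rm loc}([0,T]\times\Rd)$.

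For fixed $m$, pass to $\epsilon \to 0$ along a subsequence with $\zeta_{\epsilon,m}\wsto \tilde\zeta_m$ weakly-$\ast$ in $L^\infty([0,T]\times\Rd)$. For the left-hand side of (\ref{eq:plan FY em}), the key point is the distributional convergence $\mu_\epsilon\zeta_{\epsilon,m} \to \rho\tilde\zeta_m$, which I expect to follow from a compensated-compactness argument pairing the time regularity of $\mu_\epsilon = \varphi_\epsilon*\rho_\epsilon$ (inherited from $\rho_\epsilon \in H^1_t W^{-1,1}_x$, Lemma \ref{rhoe_time}) with the spatial regularity $\nabla\zeta_\epsilon \in L^2_t\dot W^{-\theta,1}_x$ from Lemma \ref{lem:f_*_integrable}, in the spirit of an Aubin--Lions/div-curl argument. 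For the right-hand side, Lemma \ref{ephm1limits}(\ref{emepstoem}) gives $e_{m,\epsilon}\to e_m$ pointwise, from which $e_{m,\epsilon}^*\to e_m^*$ pointwise follows by standard convex-analytic stability; combined with the convexity of the integrands and the weak convergences $\mu_\epsilon\wto \rho$, $\zeta_{\epsilon,m}\wsto\tilde\zeta_m$, weak lower semicontinuity for varying integrands (Ioffe-type) yields
\[ \int \psi\,\rho\,\tilde\zeta_m \;\geq\; \int \psi\,\bigl[\,e_m(\rho) + e_m^*(\tilde\zeta_m)\,\bigr] \]
for every nonnegative $\psi \in C_c([0,T]\times\Rd)$. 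The reverse inequality is Young's applied pointwise, so $\rho\tilde\zeta_m = e_m(\rho) + e_m^*(\tilde\zeta_m)$ a.e.

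To send $m \to \infty$, use the uniform $L^1_t L^{r^*}_x$ bound on $\zeta_\epsilon$ with $r^*>1$ from Lemma \ref{lem:f_*_integrable} to obtain the tail estimate $\|\zeta_\epsilon-\zeta_{\epsilon,m}\|_{L^1(A)} = O(m^{1-r^*})$ uniformly in $\epsilon$ on compact $A$. Together with the monotonicity $\zeta_{\epsilon,m_1}\leq \zeta_{\epsilon,m_2}\leq \zeta_\epsilon$ for $m_1\leq m_2$ and testing against nonnegative $\psi\in C_c$, this forces $\tilde\zeta_m \leq \tilde\zeta_{m+1}\leq \zeta$ with $\tilde\zeta_m \to \zeta$ in $L^1_{\rm loc}$, hence a.e.\ along a further subsequence. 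By Lemma \ref{lem:truncation}(\ref{emtoepointwise}), $e_m(\rho)\nearrow e(\rho)$ pointwise; for $e_m^*(\tilde\zeta_m)$, note that $e_m^*$ is nonincreasing in $m$ with pointwise limit $e^*$ (by Lemma \ref{lem:truncation}(\ref{item:m_nondecreasing}) and pointwise convergence of conjugates), and then sandwich as follows: Young's gives $\rho\zeta \leq e(\rho) + e^*(\zeta)$, while lower semicontinuity of $e^*$ combined with $\tilde\zeta_m \to \zeta$ a.e.\ gives $\liminf_m e^*(\tilde\zeta_m) \geq e^*(\zeta)$. Passing $m\to\infty$ in $\rho\tilde\zeta_m = e_m(\rho) + e_m^*(\tilde\zeta_m)$ (LHS by monotone convergence) then pins $\lim_m e_m^*(\tilde\zeta_m) = e^*(\zeta)$ and yields (\ref{dualityrelationwoohoo}). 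The principal obstacle is the first passage to the limit in the product $\mu_\epsilon\zeta_{\epsilon,m}$: neither factor converges strongly in spacetime, and the $L^\infty$ bound on $\zeta_{\epsilon,m}$ inherited from the truncation is precisely what makes the compensated-compactness argument tractable.
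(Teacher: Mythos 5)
Your proposal follows essentially the same route as the paper's proof: truncate at level $m$, apply the compensated-compactness lemma (Lemma~\ref{lem:compensated_compactness}) to the product $\mu_\epsilon\zeta_{\epsilon,m}$, apply Ioffe-type lower semicontinuity (Lemma~\ref{lem:varying_lower_semicontinuity}) to the pair $(e_{m,\epsilon}, e_{m,\epsilon}^*)$ in the duality identity, and send $m\to\infty$ via the $L^{r^*}$ tail bound, the inequality $e_m^*\geq e^*$, and Young's inequality. The minor over-claims --- asserting \emph{pointwise} rather than epi-convergence of $e_{m,\epsilon}^*\to e_m^*$ (Lemma~\ref{lem:varying_lower_semicontinuity} only needs epi-convergence, which follows from pointwise convergence of $e_{m,\epsilon}$), and asserting full distributional convergence of $\mu_\epsilon\zeta_{\epsilon,m}$ rather than the one-sided inequality that Lemma~\ref{lem:compensated_compactness} delivers and that suffices --- do not affect the correctness of the argument.
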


\begin{proof}
Fix a compact set $E\subseteq [0,+\infty)\times\RR^d$, and let $\epsilon_k$  be a subsequence such that $(\mu_{\ep_k}, f^*_{\ep_k}(q_{\ep_k}))$ converges weakly in $L^1(E)\times L^1(E)$ to $(\rho, \zeta)$.  Recall from Lemma \ref{convexconjincreasing} and the properties of $f_\epsilon$ shown in Lemma \ref{reginternalenergydensityprop} (\ref{fepstronglyconvex})  that $f_\epsilon^*$ is nonnegative and nondecreasing for all $\epsilon >0$. For any $m\in \mathbb{N}$, define 
\[ \zeta_{\epsilon,m}:=\min(f^*_{\epsilon}(q_{\epsilon}), m). \]
 Since $0 \leq \zeta_{\epsilon,m} \leq f_\epsilon^*(q_\epsilon)$, by the Dunford-Pettis theorem, the weak $L^1_{\loc}([0,+\infty)\times\RR^d)$ compactness of $ f_\epsilon^*(q_\epsilon)$ proved in Lemma \ref{lem:f_*_integrable}   ensures the weak $L^1_{\loc}([0,+\infty)\times\RR^d)$ compactness of $\zeta_{\epsilon, m}$, for each $m \in \mathbb{N}$; see for example,  \cite[Theorem 4.30]{BrezisFunctionalAnalysis}.

Define $\epsilon_k^0:=\epsilon_k$ and, for each $m\in  \mathbb{N}$, we choose $\epsilon^m_k$ to be a subsequence of $\epsilon_k^{m-1}$ such that $\zeta_{\epsilon_k^m,m}$ converges weakly in $L^1(E)$ to some limit point $\zeta_m$ as $k\to+\infty$. For simplicity of notation, let $\zeta_{k,m}:= \zeta_{\epsilon_k^m,m}=\min(f^*_{\epsilon_k^m}(q_{\epsilon_k^m}), m)$.  By construction, $\zeta_m\leq \zeta_{m+1}\leq\zeta$ almost everywhere and 
   \begin{align*}
&   \lim_{m\to\infty} \norm{\zeta-\zeta_m}_{L^1(E)}=\lim_{m\to\infty}\int_{E} \zeta-\zeta_m=\lim_{m\to\infty}\lim_{k\to\infty}\int_{E} \max(f^*_{\ep_k^m}(q_{\ep_k^m})-m,0) .
\end{align*}
We may bound   the preceding equation from above by %the following $L^p([0,T] \times \Rd)$ norms, with $p = r^*$ and its H\"older conjugate $(r^*)'$,
%\begin{align*}
% &   \lim_{m \to \infty} \lim_{k \to  \infty} \left\|\mathbf{1}_{\{f^*_{\epsilon^m_k}(q_{\epsilon^m_k}) \geq m \}} \right\|_{ (r^*)'} \left\| \max(f^*_{\ep_k^m}(q_{\ep_k^m})-m,0) \right\|_{r^*}   \\ 
% &\quad \leq \lim_{m \to \infty} \lim_{k \to  \infty} |\{f^*_{\epsilon^m_k}(q_{\epsilon^m_k}) \geq m \}|^{1/(r^*)'} \left\| f^*_{\ep_k^m}(q_{\ep_k^m} )\right\|_{r^*}\\
%   &\quad \leq
%   \lim_{m\to\infty} \sup_{\epsilon>0} \frac{\norm{f^*_{\epsilon}(q_{\epsilon})}_{r^*}^{r^*/(r^*)'}}{m^{r_*/(r^*)'}} \left\| f^*_{\ep_k^m}(q_{\ep_k^m} )\right\|_{r^*} \\
 %  &\quad \leq
\begin{align*}
   \lim_{m\to\infty} \sup_{\epsilon>0}  \norm{f^*_{\epsilon}(q_{\epsilon})}_{r^*}^{r^*}m^{1-r_*} =0 ,
   \end{align*}
where we use the facts that $r^*<2$ and, by Lemma \ref{lem:f_*_integrable}, $\sup_{\ep > 0} \norm{f^*_{\epsilon}(\qe)}_{L^{2}([0,T];L^{r^*}(\RR^d))}< +\infty$. Thus, we conclude that 
\begin{align}\label{strongconvergencezetam}   \lim_{m\to\infty} \norm{\zeta-\zeta_m}_{L^1(E)} = 0 . 
\end{align}
  
Now we fix $m$ and consider the product $\mu_{\ep_k^m}\zeta_{k,m}$. We seek to apply  Lemma \ref{lem:compensated_compactness}.
By our definition of $\zeta_{k,m}$, 
\[ \zeta_{k,m}  :=\min(f^*_{\epsilon_k^m}(q_{\epsilon_k^m}), m) \]
and the integrability of $f_\ep^*(q_\ep)$ shown in Lemma \ref{lem:f_*_integrable}, we see that $\zeta_{k,m}$ satisfies the integrability and boundedness hypotheses of $h_k$ in Lemma \ref{lem:compensated_compactness}.
To obtain the required Besov norm bound,  we first note that for any $u\in B_{1,\infty}^{\frac{1-\theta}{2}}(\R^d)$ and any Lipschitz function $\ell:\RR\to\RR$ such that $\ell(0)=0$, we have $\norm{\ell\circ h}_{B_{1,\infty}^{\frac{1-\theta}{2}}(\R^d)}\leq \norm{\nabla \ell}_{L^{\infty}(\R^d)}\norm{h}_{B_{1,\infty}^{\frac{1-\theta}{2}}(\R^d)}$.  Therefore,
\begin{align*}
\sup_{k\in \N}\norm{\zeta_{k,m}}_{L^1([0,T];B_{1,\infty}^{\frac{1-\theta}{2}}(\R^d))}&\leq \sup_{k\in \N}\norm{f^*_{\epsilon_k^m}(q_{\epsilon_k^m})}_{L^1([0,T];B_{1,\infty}^{\frac{1-\theta}{2}}(\R^d))}\\
&\leq \sup_{\epsilon>0} \norm{f^*_{\epsilon}(q_{\epsilon})}_{L^1([0,T];B_{1,\infty}^{\frac{1-\theta}{2}}(\R^d))}\\
&\leq  \sup_{\epsilon>0} C_{d,\theta}\norm{\nabla f^*_{\epsilon}(q_{\epsilon})}_{L^1([0,T];\dot{W}^{-\theta,1}(\RR^d))}^{1/2}\norm{f^*_{\epsilon}(q_{\epsilon})}_{L^{ 1}([0,T]\times \RR^d)}^{1/2} +\norm{f^*_{\epsilon}(q_{\epsilon})}_{L^{ 1}([0,T]\times \RR^d)}\\
&\leq  \sup_{\epsilon>0} C_{d,\theta,T}\norm{\nabla f^*_{\epsilon}(q_{\epsilon})}_{L^2([0,T];\dot{W}^{-\theta,1}(\RR^d))}^{1/4}\norm{f^*_{\epsilon}(q_{\epsilon})}_{L^{ 1}([0,T]\times \RR^d)}^{1/2} +\norm{f^*_{\epsilon}(q_{\epsilon})}_{L^{ 1}([0,T]\times \RR^d)} ,
\end{align*}
where we use Lemma \ref{lem:f_equi} { and H\"older's inequality in time} in the second to last inequality.  Hence, using the $L^2([0,T];\dot{W}^{-1,\theta}(\RR^d))$ bound from Lemma \ref{lem:f_*_integrable}, we see that $\zeta_{k,m}$ satisfies all of the hypotheses of $h_k$ in Lemma \ref{lem:compensated_compactness}.
Likewise, $\mu_{\epsilon_k}^m$ satisfies the integrability hypotheses of $g_k$ and, from Lemma \ref{rhoe_time}, we have
\[
\sup_{\epsilon>0}\norm{\mue}_{H^1([0,T];W^{-1,1}(\RR^d))}\leq \sup_{\epsilon>0}\norm{\rho_{\ep}}_{H^1([0,T];W^{-1,1}(\RR^d))}<+\infty .
\]
Thus,  Lemma \ref{lem:compensated_compactness} ensures
\begin{equation}\label{eq:product_limit_above}
 \liminf_{k\to\infty} \int_{E} \mu_{\ep_k^m}\zeta_{k,m}\leq \int_{E} \rho \zeta_m .
\end{equation}
 
Now, recall the construction of the $\dot{H}^{-1}$ energy densities $e$,  $e_\epsilon$ and their truncations $e_m$, $e_{m,\epsilon}$ from Section \ref{sss:e}. In what follows, we shall define $e_{m,k}:=e_{m, \epsilon_k^m}$ for notational convenience. By definition of $\qe = f_\ep'(\mu_\ep)$ and equation (\ref{eq:emep'}),  we have 
\[ e_{m,k}'(\mu_{\ep_k^m}) = \min\{ f_{\ep_k}^*(f'_{\ep_k}(\mu_{\ep,k})), m \} =  \min\{ f_{\ep_k}^*(q_{\ep_k}  ), m \} =  \zeta_{k,m} \quad  \text{for all $(t,x)$ so that $\mu_{\ep_k}(t,x)>0$,} \]
or, equivalently,  
\[ \mu_{\ep_k^m}\zeta_{k,m}=e_{m,k}(\mu_{\ep_k^m})+e^*_{m,k}(\zeta_{k,m})  \quad  \text{for all $(t,x)$ so that $\mu_{\ep_k}(t,x)>0$.} \]
On the other hand, recall that since the right derivative of $f_\epsilon$ at $0$ satisfies $f_\epsilon'(0) \in \partial f_\epsilon(0)$,  by the Fenchel-Young identity,
\[ 0 =  f_\epsilon'(0) \cdot 0  = f_\epsilon(0) + f_\epsilon^*(f_\epsilon'(0)) = f_\epsilon^*(f_\epsilon'(0)) , \]
where we use the result from Lemma \ref{reginternalenergydensityprop}(\ref{fepstronglyconvex}) that $f_\ep(0) = 0$.
Thus, if $\mu_{\ep_k^m}(t,x) = 0$, then $\zeta_{k,m}(t,x) = \min( f^*_{\ep_k^m}(q_{\ep^m_k}(t,x)), m) = \min( f^*_{\ep_k^m}(f_{\ep_k^m}'(\mu_{\ep_k^m}(t,x))), m) = 0 $. We recall $e_{m,k}(0) = 0$ and $e_{m,k}^*(0)=0$ (the former follows from Lemma \ref{lem:truncation}(\ref{item:e_m_convex}) , applied to $e_{\ep_m^k}$ instead of $e$, and the latter from Lemma \ref{lem:conjugate em}(\ref{item:e*(0)}).)  Therefore, we have
\[ \mu_{\ep_k^m}\zeta_{k,m}=e_{m,k}(\mu_{\ep_k^m})+e^*_{m,k}(\zeta_{k,m})  \quad  \text{for all }(t,x) \in [0,+\infty) \times \Rd . \]
%where we use that $e_{m,k}(0) = 0$ Lemma \ref{eepsproperties}(\ref{eepsconvex}) and Lemma \ref{lem:conjugate em}(\ref{item:e*(0)}) 
%that $e_{m,k}(0) = 0$, since $e_k(0)=0$ and $0\leq e_{m,k}(a)\leq e_k(a)$ for all $a$; see Lemma \ref{eepsproperties} and the definition of $e_{m,k}$ in equation (\ref{eq:def eepm}).

Combining this with  (\ref{eq:product_limit_above}), we obtain
\[
 \liminf_{k\to\infty} \int_{E}  \left(e_{m,k}(\mu_{\ep_k^m})+e^*_{m,k}(\zeta_{k,m})\right) =  \liminf_{k\to\infty} \int_{E} \mu_{\ep_k^m}\zeta_{k,m}  \leq \int_{E} \rho \zeta_m .
\]
 
We now seek to apply  Lemma \ref{lem:varying_lower_semicontinuity} to each of $e_{m,k}$ and $e_{m,k}^*$ to pass to the limit as $k\to +\infty$ on the left hand side. By Lemma \ref{lem:truncation} (\ref{item:e_m_convex}) and Lemma \ref{lem:conjugate em}(\ref{item:e* proper}), $e_{m,k}$, $e_{m,k}^*$,  $e_m$, and $e_m^*$ are proper, lower semicontinuous, and convex. 
Lemma  \ref{lem:truncation} (\ref{domainitem}) %ensures ${\rm int} (\dom(e_m)) \neq \emptyset$.
 and Lemma \ref{lem:conjugate em}(\ref{item:e* proper}) ensure the domains of $e_m$ and $e_m^*$ have nonempty interior. %$\interior(\dom(e_m^*)) \neq \emptyset$. 
By Lemma \ref{ephm1limits} (\ref{emepstoem}),  $e_{m,k}$ converges pointwise to $e_m$. Thus $e_{m,k}$ epi-converges to $e_m$ \cite[Theorem 7.17]{rockafellar2009variational}, hence $e_{m,k}^*$ epi-converges to $e_m^*$ \cite[Theorem 11.34]{rockafellar2009variational}. Finally, since by Lemma \ref{rhoepscpt} (\ref{aalimitrho}), $\mu_{\ep_k^m}(t) \to \rho(t)$ weakly in $L^1(\Rd)$ for all $t \geq 0$, the dominated convergence theorem ensures $\mu_{\ep_k^m} \to \rho$ weakly in  $L^1(E)$. Combining this with the weakly $L^1(E)$ convergence of $\zeta_{k,m}$ to $\zeta_m$, we  obtain, by Lemma \ref{lem:varying_lower_semicontinuity},
\[
\int_{E} \rho \zeta_m\geq \liminf_{k\to\infty} \int_{E}  \left(e_{m,k}(\mu_{\ep_k^m})+e^*_{m,k}(\zeta_{k,m})\right)\geq \int_{E} e_m(\rho)+e_m^*(\zeta_m) .
\]
 Combining this with Young's inequality, it follows that $\rho\zeta_m= e_m(\rho)+e_m^*(\zeta_m)$ almost everywhere on $E$.
Since $E$ was arbitrary, we  conclude that, almost everywhere on $[0,+\infty)\times \RR^d$,
\[
\rho\zeta_m= e_m(\rho)+e_m^*(\zeta_m).
\]

It remains to show that $\rho\zeta=e(\rho)+e^*(\zeta)$ almost everywhere. Using Lemma \ref{lem:conjugate em}(\ref{item:em* decreasing}) to bound the right-hand side of the previous line from below yields, 
%By definition, $e_m$ is increasing with respect to $m$ and $e_m \leq e$, so $e_m^*$ is decreasing with respect to $m$ and $e_m^* \geq e^*$. 
almost everywhere on $[0,+\infty)\times \RR^d$,
\[
\rho\zeta_m%=e_m(\rho)+e^*_m(\zeta_m)
\geq e_m(\rho)+e^*(\zeta_m).
\]
As shown in equation  (\ref{strongconvergencezetam}), $\zeta_m$ converges strongly to  $\zeta$ in $L^1(E)$, for $E \subseteq [0,+\infty)\times \RR^d$ arbitrary, so up to a subsequence, $\zeta_m \to \zeta$ almost everywhere. By Lemma \ref{lem:truncation} (\ref{emtoe}), $\lim_{m\to\infty} e_m(a)= e(a)$ for any $a\in \RR$, so by the lower semicontinuity of $e^*$, we have $\rho\zeta\geq e(\rho)+e^*(\zeta)$ almost everywhere.  Combining this with Young's inequality we deduce that $\rho\zeta=e(\rho)+e^*(\zeta)$ almost everywhere. 
 
\end{proof}

Now that we have recovered the duality relation between $\rho$ and the $\dot{H}^{-1}$ dual variable $\zeta$ in Proposition \ref{identifylimitpoint}, we can do the same for the $W_2$ dual variable $p$.

\begin{lem}[Duality relation for $\rho$ and $p$] \label{existenceofpwow}
Under the hypotheses of Lemma \ref{lem:f_*_integrable}, suppose $\rho_\epsilon$ is the solution of (\ref{epsiloneqn}), and let $\mu_\epsilon = \varphi_\epsilon * \rho_\epsilon$ and $q_\ep = f_\ep'(\mu_\ep)$.
Then, if $(\rho, \zeta)$ is a weak $L^1_\loc([0,+\infty) ; L^1(\RR^d))\times L^1_\loc([0,+\infty)\times \RR^d)$ limit point of $(\mue, f^*_{\ep}(\qe))$,  there exists a Lebesgue measurable function $p:[0,+\infty)\times\RR^d\to [-\infty, \infty)$ so that 
\begin{enumerate}[(i)]
\item $p\in L^1_{\loc}([0,+\infty);L^1(\rho))$, \label{pL1rho}
\item $ f^*(p)= \zeta$ almost everywhere, \label{secondpartyay}
\item $\rho p=f(\rho)+f^*(p)$ almost everywhere, \label{thirdpartyay}
\end{enumerate}
where we define $f^*(-\infty)=\lim_{b\to-\infty} f^*(b) = 0$ and take the convention $a \cdot 0=0$ for all $a\in [-\infty, \infty]$. 
\end{lem}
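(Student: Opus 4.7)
The plan is to exploit the duality relation from Proposition \ref{identifylimitpoint}---namely $\rho\zeta=e(\rho)+e^*(\zeta)$ a.e., equivalent to $\zeta\in \partial e(\rho)$---together with the explicit link $\partial e(a)=\{f^*(b):b\in \partial f(a)\}$ from Lemma \ref{eepsproperties}(\ref{subdiffe}), in order to read $p$ off of the pair $(\rho,\zeta)$. The argument splits according to whether $\rho=0$ or $\rho>0$.

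First, I would observe that $\zeta\geq 0$ a.e.: each $f^*_{\ep}(\qe)$ is nonnegative by Corollary \ref{conjugatecor}(\ref{item:fep nondec nonneg}), and this inequality passes to any weak $L^1_{\loc}$ limit. On the set $\{\rho=0\}$, the equality in Proposition \ref{identifylimitpoint} reduces to $e^*(\zeta)=0$, so $\zeta\in\partial e(0)$; by Lemma \ref{eepsproperties}(\ref{item:e_at_zero}) we have $\partial e(0)=(-\infty,0]$, forcing $\zeta\leq 0$ there. Combined with $\zeta\geq 0$, this gives $\zeta=0$ on $\{\rho=0\}$. I would therefore declare $p:=-\infty$ on $\{\rho=0\}$, so that under the stated conventions $f^*(p)=0=\zeta$ and $\rho p=0=f(\rho)+f^*(p)$, verifying \eqref{secondpartyay} and \eqref{thirdpartyay} on this set.

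Second, on $\{\rho>0\}$ I would argue by uniqueness. For each fixed $a>0$ with $a\in \dom(\partial f)$, the Fenchel-Young equality gives $f^*(b)=ba-f(a)$ for every $b\in \partial f(a)$, so $b\mapsto f^*(b)$ is strictly increasing on $\partial f(a)$ with slope $a$. Combined with Lemma \ref{eepsproperties}(\ref{subdiffe}), this shows that for each $\zeta\in \partial e(a)$ there is a \emph{unique} $p\in \partial f(a)$ with $f^*(p)=\zeta$, and the Fenchel-Young identity pins it down as $p=(f(a)+\zeta)/a$. Noting that $\zeta\in \partial e(\rho)$ forces $\rho\in \dom(\partial e)$ a.e., and that $\dom(\partial e)\cap(0,+\infty)=\dom(\partial f)\cap(0,+\infty)$---a short computation using $e(a)=af(a)-2\int_0^a f(s)\,ds$ shows the two domains agree at any finite upper endpoint of $\dom(f)$, while both automatically contain $\interior\dom(f)\cap(0,+\infty)$---I would then define
\[
p(t,x):=\frac{f(\rho(t,x))+\zeta(t,x)}{\rho(t,x)}\qquad \text{a.e.\ on }\{\rho>0\}.
\]
The formula is manifestly measurable and, by the uniqueness argument, equals the element of $\partial f(\rho)$ whose $f^*$-image is $\zeta$; hence $f^*(p)=\zeta$ and the Fenchel-Young equality $\rho p=f(\rho)+f^*(p)$ hold a.e.\ on $\{\rho>0\}$.

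Finally, for item \eqref{pL1rho}, I would use $\rho|p|=|\rho p|=|f(\rho)+\zeta|\leq |f(\rho)|+\zeta$ a.e.\ on $\{\rho>0\}$ (and $\rho|p|=0$ on $\{\rho=0\}$) to reduce local integrability of $p$ in $L^1(\rho)$ to local integrability of $f(\rho)$ and of $\zeta$ separately. For $\zeta$, this follows from Lemma \ref{lem:f_*_integrable} and lower semicontinuity of the $L^1$ norm under weak $L^1_{\loc}$ convergence. For $f(\rho)$, Lemma \ref{rhoepscpt}(\ref{lem_item:entropy_energy_control}) gives $\sup_{t\in[0,T]}\F(\rho(t))<+\infty$, while Assumption \ref{internalas}(\ref{moment_control}) combined with the uniform second-moment bound on $\rho(t)$ controls the negative part of $f(\rho(t))$, and then $\F(\rho(t))<+\infty$ controls the positive part as well. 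There is really no serious obstacle here; the only subtle point is verifying that the declared value $p=-\infty$ on $\{\rho=0\}$ is compatible with the Fenchel-Young equality under the stated conventions, which is precisely where the nonnegativity of $\zeta$ and the identification $\partial e(0)=(-\infty,0]$ come into play.
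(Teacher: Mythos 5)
The proposal is correct and takes a genuinely different, more direct route than the paper. The paper constructs $p$ as a pointwise monotone limit $p = \lim_{\gamma\to 0}\bar p_\gamma$, where $\bar p_\gamma = g_\gamma(\zeta)$ is obtained by applying a family of Lipschitz partial inverses $g_\gamma$ of $f^*$ to $\zeta$, and then verifies the Fenchel--Young equality separately on $\{\zeta>0\}$ (where $\bar p_\gamma$ stabilizes) and on $\{\zeta=0,\ \rho>0\}$ (where it invokes the linearity of $f$ on $e^{-1}(0)$ from Lemma~\ref{eepsproperties}(\ref{f_e_linear})). You instead observe that for $a=\rho>0$ the Fenchel--Young identity $f^*(b)=\rho b-f(\rho)$ for $b\in\partial f(\rho)$ makes $f^*$ strictly increasing with slope $\rho$ on $\partial f(\rho)$, so the preimage of $\zeta$ is pinned down explicitly as $p=(f(\rho)+\zeta)/\rho$; this closed-form definition treats $\{\zeta>0\}$ and $\{\zeta=0,\ \rho>0\}$ uniformly and dispenses with the $\gamma$-limiting argument entirely. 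The one point that must not be glossed over --- which you correctly flag --- is that $\zeta\in\partial e(\rho)$ forces $\rho\in\dom(\partial f)$ a.e.\ on $\{\rho>0\}$, so that Lemma~\ref{eepsproperties}(\ref{subdiffe}) is actually applicable; your justification via $e'_-(a)=af'_-(a)-f(a)$ at a finite right endpoint of $\dom(f)=\dom(e)$, together with the containment \eqref{eq:dom g dom par g} on the interior, is exactly what is needed. Your treatment of $\{\rho=0\}$ (nonnegativity of $\zeta$ plus $\partial e(0)=(-\infty,0]$, then the convention $f^*(-\infty)=0$ and $0\cdot(-\infty)=0$) and your reduction of item (i) to the local integrability of $f(\rho)$ and $\zeta$ both mirror the paper. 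Net effect: same inputs (Proposition~\ref{identifylimitpoint}, Lemma~\ref{eepsproperties}), but a shorter derivation that avoids the paper's case split and truncated-inverse construction.
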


\begin{proof} First we will show that $\zeta\in f^*(\dom(f^*))\cup\{0\}$. 
By the duality relation in  Proposition \ref{identifylimitpoint}, 
\begin{equation}
    \label{eq:zeta in e'rho}
    \zeta(t,x)\in \partial e(\rho(t,x)) \text{ almost everywhere.}
\end{equation}
Thus, we can use   Lemma \ref{eepsproperties}(\ref{subdiffe}), to guarantee that $\zeta\in \{f^*(b): b\in \partial f(\rho)\} \subseteq f^*(\dom(f^*))$ for almost every $(t,x)$ with $\rho(t,x)>0$. As both $\rho$ and $\zeta$ are nonnegative,   Lemma \ref{eepsproperties}(\ref{item:e_at_zero}) implies that
\begin{align}\label{rhozeta} \{ (t,x) : \rho(t,x) = 0\} \subseteq \{ (t,x) : \zeta(t,x) = 0 \}   \text{ almost everywhere}. 
\end{align}
 Thus, we have $\zeta\in f^*(\dom(f^*))\cup\{0\}$ for almost every $(t,x)$.  
 
 Now we   turn to constructing $p$.
We begin with the following small observation: by Lemma  \ref{eepsproperties}(\ref{eepsconvex}), $e^*(0) = - \inf_a e(a) = - e(0) = 0$. Furthermore, by Proposition \ref{identifylimitpoint}, $\rho\zeta=e(\rho)+e^*(\zeta)$ almost everywhere. Thus, 
\begin{align} \label{zetatoerho}
\{ (t,x) : \zeta(t,x) = 0 \} \subseteq \{(t,x) : e(\rho(t,x)) = 0 \} \text{ almost everywhere.}
\end{align}

We begin by disposing of the trivial case where $f^*(\dom(f^*)))=\{0\}$.  In this case, $\zeta=0$ almost everywhere, so equation (\ref{zetatoerho}) ensures $e(\rho)=0$ almost everywhere.  Lemma \ref{eepsproperties}(\ref{f_e_linear})   implies that there exists some $b\in\RR$ such that
$f(a)=ba$ for all $a\in e^{-1}(0)$, i.e. $f(\rho)=b\rho$ almost everywhere. 
Let $a^*=\sup e^{-1}(0)$, so $e^{-1}(0)=[0,a^*]$. Noting that convex functions are differentiable a.e. on the interior of their domains and $f(0)=0$, we can compute
\[
f^*(b)=\sup_{a \in \dom(f)} ab-f(a) = \sup_{a \in \dom(f)}  \int_0^a b-f'(\theta) d \theta = \sup_{a\in \dom(f)} \int_{\min(a,a^*)}^a b-f'(\theta) d\theta \leq 0,
\]
where we use that $f'(\theta) = b$ for $\theta \in e^{-1}(0)=[0,a^*]$ and $f'$ is an increasing function almost everywhere. On the other hand, since $f^* \geq 0$, we must have $f^*(b) = 0$.
Thus, if we choose $p=b$, then $\rho p=f(\rho)=f(\rho)+f^*(p)$ and $f^*(p)=0=\zeta$ almost everywhere.

Now for the rest of the argument, we may assume $f^*(\dom(f^*))\cap (0,\infty)\neq \emptyset$.
Fix  $\gamma>0$ small enough that  $f^*(\dom(f^*))\cap [\gamma,\infty)\neq \emptyset$. Note that if $f^*(b_0)\geq \gamma$, then, since Lemma \ref{convexconjincreasing} ensures $\lim_{b \to -\infty} f^*(b) = 0$, $b_0$ is not a minimizer of $f^*$, so   $0\notin \partial f^*(b_0)$ and $f^*$ is strictly increasing on $\{x \in \dom(f^*):f^*(x) \geq \gamma\}$.  Thus, there exists a unique Lipschitz inverse $\tilde{g}_{\gamma}$ of $f^*$ on $E_{\gamma}=f^*(\dom(f^*))\cap [\gamma,\infty)$.
We then define an extension $g_{\gamma}:f^*(\dom(f^*))\cup\{0\}\to \RR$ by setting 
\begin{align} \label{ggammaeqn}
g_{\gamma}(a)=\begin{cases}
    \tilde{g}_{\gamma}(a) & \textup{if}\; a \geq \gamma,\\
    \tilde{g}_{\gamma}(\gamma) & \textup{if}\; a< \gamma. 
\end{cases}
\end{align}
Note that $g_{\gamma}(b)$ is nondecreasing with respect to both $\gamma$ and  $b\in f^*(\dom(f^*))\cup\{0\}$. 

We now use $g_{\gamma}$ to approximate $(f^*)^{-1}(\zeta)$.
For a.e. $(t,x)$ such that $\zeta\in f^*(\dom(f^*))\cup\{0\}$, define $\bar{p}_{\gamma}(t,x):=g_{\gamma}(\zeta(t,x))$. Then $\bar{p}_{\gamma}$ is  monotone nondecreasing in $\gamma$ and   
\begin{align} \label{fstarpgamma}f^*(\bar{p}_{\gamma})=\max(\zeta, \gamma) \text{ a.e.,}
\end{align} 
so, in particular, $\bar{p}_\gamma \subseteq \dom(f^*)$ almost everywhere.
The definition of $\bar{p}_\gamma$, and the fact that $g_{\gamma}$ is Lipschitz, imply that, for almost every $(t,x)$, we have $0\leq \bar{p}_{\gamma}(t,x)-g_{\gamma}(0)=g_\gamma(\zeta(t,x))-g_\gamma(0)\leq C_\gamma \zeta$ for $C_\gamma>0$. Thus, $\bar{p}_{\gamma}\in L^1_{\loc}([0,+\infty)\times\RR^d).$  Since $\bar{p}_{\gamma}$ is monotone in $\gamma$, there exists a measurable function $p:[0,+\infty)\times \Rd \to [-\infty,+\infty)$ such that 
 \[ p=\lim_{\gamma\to 0} \bar{p}_{\gamma} \text{ pointwise almost everywhere.} \]  Thus, since $\bar{p}_\gamma \subseteq \dom(f^*)$, $\lim_{b \to -\infty} f^*(b) = 0$, and $f^*$ is continuous on the interior of its domain,
\begin{align} \label{fstarpeqn} f^*(p)=\lim_{\gamma\to 0} f^*(p_{\gamma})=\lim_{\gamma\to 0} \max(\zeta, \gamma)=\zeta \text{ a.e.} 
\end{align}
This shows (\ref{secondpartyay}).

Define
 $Q_{\gamma}=\{(t,x)\in [0,+\infty)\times\RR^d: \zeta(t,x)\geq \gamma\}$. 
 Then, (\ref{rhozeta}) implies that $\rho>0$ almost everywhere on $Q_{\gamma}$. Thus, (\ref{eq:zeta in e'rho}) and Lemma \ref{eepsproperties}(\ref{subdiffe}) ensure $\zeta\in f^*(\partial f(\rho))$ almost everywhere on $Q_{\gamma}$
 and, since $\zeta(Q_{\gamma}) \subseteq E_\gamma$ for a.e. $(t,x)$,
 \[
\bar{p}_{\gamma}(t,x)=g_{\gamma}\left(\zeta(t,x)\right) = (f^*)^{-1}\left( \zeta(t,x) \right) \in  \partial f(\rho(t,x)) ,
\]
for almost every $(t,x)\in Q_{\gamma}$.   Therefore, $\bar{p}_{\gamma}\rho=f(\rho)+f^*(\bar{p}_{\gamma})$ holds almost everywhere on $Q_{\gamma}$. Furthermore, note that, almost everywhere on $Q_\gamma$, for any $\gamma'\leq \gamma$, we have $\bar{p}_{\gamma'} = g_{\gamma'}(\zeta) =  g_{\gamma}(\zeta) = \bar{p}_{\gamma} $, so  $p = \lim_{\gamma' \to 0} \bar{p}_{\gamma'} =\bar{p}_{\gamma}$. We can now conclude that $p\rho=f(\rho)+f^*(p)$ a.e. on $\bigcup_{\gamma>0} Q_{\gamma}=\{(t,x): \zeta(t,x)>0\}$. 

It remains to show that $\rho p=f(\rho)+f^*(p)$ still holds almost everywhere on $\{(t,x): \zeta(t,x)=0\}$. By equation (\ref{fstarpeqn}), $f^*(p)=\zeta = 0$ a.e. on this set.  Thus, it suffices to show that $\rho p=f(\rho)$ almost everywhere on $A:=\{(t,x): \zeta(t,x)=0, \rho(t,x)>0\}$. By equation (\ref{zetatoerho}),  $e(\rho)=0$ a.e. on $A$. Without loss of generality, we may assume $A$ has positive measure. Thus, by Lemma \ref{eepsproperties}(\ref{f_e_linear}), there exists $b \in \R$ so that $f(\rho)=b\rho$ a.e. on $A$ and, as argued above, $f^*(b) =0$. Since Lemma \ref{convexconjincreasing} ensures $f^*$ is nondecreasing, the fact that $f^*(g_\gamma(\gamma)) = \gamma>0$ ensures $g_\gamma(\gamma)>b$. If $b\neq \lim_{\gamma\to 0} g_{\gamma}(\gamma)$, then there exists some $b'>b$ such that $f^*(b')=0$. However, this gives us
\[
f^*(b')=0=\sup_{a\geq 0} ab'-f(a)\geq \sup_{a\in e^{-1}(0)} a(b'-b)
\]
implying that $e^{-1}(0)=\{0\}$, which would force $A$ to have measure zero.  
Thus, 
\[ 
p (t,x)= \lim_{\gamma \to 0} \bar{p}_\gamma (t,x)= \lim_{\gamma \to 0} g_\gamma (\zeta(t,x)) = \lim_{\gamma \to 0} g_\gamma(0) = \lim_{\gamma \to 0}  {g}_\gamma(\gamma) = b \text{ a.e. on $A$,} 
\]
which implies $\rho p=f(\rho)$ almost everywhere on $A$.
At last, this allows us to conclude that $\rho p  = f(\rho)+f^*(p)$ almost everywhere, showing (\ref{thirdpartyay}).

Finally, it remains to prove (\ref{pL1rho}).  Recall, from Lemma \ref{rhoepscpt} that $f(\rho) \in  L^1_{\loc}([0,+\infty);L^1(\RR^d))$. Likewise, by Lemma \ref{lem:f_*_integrable}, $\zeta_\ep  = f_\ep^*(q_\ep) \in L^1_\loc([0,+\infty); L^1(\Rd))$, so lower semicontinuity of the $L^1$ norms with respect to weak convergence ensure $\zeta\in L^1_{\loc}([0,+\infty);L^1(\RR^d))$. Using the relation $\rho p=f(\rho)+f^*(p)=f(\rho)+\zeta$ and the fact that $ \zeta\in  L^1_{\loc}([0,+\infty);L^1(\RR^d))$ we obtain that $p\in L^1_{\loc}([0,+\infty);L^1(\rho))$.

\end{proof}

\begin{proof}[Proof of Theorem \ref{mainthm}]

By Lemma \ref{rhoepscpt} and the dominated convergence theorem, there exists $\rho \in C_{\loc}^{1/2}([0,+\infty);\P_1(\Rd))\cap L^{\infty}_{\loc}([0,+\infty);\P_2(\RR^d)\cap L^1(\R^d))$  so that, up to a subsequence, $\rho_\epsilon(t,x) \to \rho(t,x)$ and $\mu_\epsilon(t,x) \to \rho(t,x)$ weakly in $L^1_\loc([0,+\infty);L^1( \Rd))$.  
By Lemma \ref{lem:f_*_integrable}, $\{f^*_\epsilon(q_\epsilon) \}_{\epsilon >0}$ is also weakly precompact in $L^1_\loc([0,+\infty) \times \Rd)$. Let $(\rho_\epsilon, f^*_\epsilon(q_\epsilon))$ denote a weakly convergent subsequence with limit point $(\rho,\zeta)$. By Lemma \ref{existenceofpwow},  there exists  $p\in L^1_{\loc}([0,+\infty);L^1(\rho))$  so that $f^*(p) = \zeta$ and $p \in \partial f(\rho)$ almost everywhere on $[0,+\infty) \times \Rd$. 

Next, we will show that $\zeta \in L^1_\loc([0,+\infty); W^{1,1}(\Rd))$. Since we already have $\zeta\in L^1_{\loc}([0,+\infty);L^1(\RR^d))$, it suffices to consider the derivative of $\zeta$. Note that, for any $\psi \in C^\infty_c([0,+\infty) \times \Rd)$, Proposition \ref{lem:mollifier exchange} and our hypothesis on the rate of decay of $\delta(\epsilon)$, equation (\ref{deltaepshyp}) ensure
\begin{align} \label{puppies}
\lim_{\epsilon \to 0} \int_0^T \int_\Rd  \psi\rho_\epsilon \nabla p_\epsilon   &=\lim_{\epsilon \to 0} \int_0^T \int_\Rd \psi \mu_\epsilon \nabla q_\epsilon  =\lim_{\epsilon \to 0} \int_0^T \int_\Rd \psi \nabla f_\epsilon^*(q_\epsilon) =  -\int_0^T \int_\Rd \zeta \nabla \psi,
\end{align}
where the second equation follows from Lemma \ref{muepqeplem}, which  ensures $ f^*_{\ep}(\qe) \in L^1_\loc([0,+\infty);W^{1,\infty}(\Rd))$ and $\nabla f^*_{\ep}(\qe)=\mue\nabla \qe$ and and the third equation follows from the weak $L^1_\loc([0,+\infty) \times \Rd)$ convergence of $ f_\epsilon^*(q_\epsilon)$ to $\zeta$.
Using the Cauchy-Schwartz inequality and uniform control on the kinetic energy from  Lemma \ref{energydissipationlem}, the left hand side is bounded from above by
\begin{align*}
\sup_{\ep >0}\int_0^T \|\psi\|_{L^{\infty}(\{t\}\times\Rd)} \|\rho_{\epsilon}|\nabla p_\epsilon|^2\|_{L^1(\{t\}\times\R^d)}^{1/2}\norm{\rho_{\ep}}_{L^1(\{t\}\times \RR^d)}^{1/2} dt \leq C_{d,T} \|\psi\|_{L^2([0,T];L^{\infty}( \Rd))} .
\end{align*}
Combining this with (\ref{puppies}), we see that $\zeta \in L^2([0,T]; BV(\Rd))$. 

Furthermore, note that (\ref{puppies}) also shows that $\rho_\epsilon\nabla p_\epsilon  \to \nabla \zeta$ in duality with $C^\infty_c([0,+\infty) \times \Rd)$.  By the equivalence of convergence in distribution and narrow convergence, lower semicontinuity of the Benamou-Brenier functional (see, e.g., \cite[Proposition 5.18]{santambrogio2015optimal}) ensures $\nabla \zeta  \ll \rho $ and
\begin{align*}
  +\infty >   \liminf_{\ep \to 0} \left( \int_0^T \int_\Rd |  \nabla  p_\epsilon|^2\rho_\epsilon \right)^{1/2}= \liminf_{\ep \to 0} \left( \int_0^T \int_\Rd \frac{ |\rho_\epsilon \nabla  p_\epsilon|^2}{\rho_\epsilon} \right)^{1/2} \geq \left( \int_0^T \int_\Rd \frac{ |\nabla \zeta|^2}{\rho} \right)^{1/2}.
\end{align*}
 Let $\xi \in L^1(\rho)$ be the Radon Nikodym derivative of $\nabla \zeta$ with respect to $\rho$, $\nabla \zeta = \xi \rho$.
Then,
\begin{align} \label{kitties} \int_0^T \int_\Rd |\nabla \zeta|  = \int_0^T \int_\Rd   |\xi| d \rho \leq \left( \int_0^T \int_\Rd   |\xi|^2 d \rho\right)^{1/2}  = \left( \int_0^T \int_\Rd \frac{ |\nabla \zeta|^2}{\rho} \right)^{1/2}  < +\infty .
\end{align}
Thus $\nabla \zeta \in L^1_\loc([0,+\infty) ; L^1( \Rd))$. This shows $\zeta \in L^1_\loc([0,+\infty); W^{1,1}(\Rd))$.

Next, we show $\lim_{\epsilon \to 0} \int_0^T \int_\Rd \left|  (v_\epsilon -v) \rho_\epsilon \right|=0$.
Fix $\delta>0$, and let 
\[E_{\delta, \epsilon}(t):=\{x\in \RR^d: |v(t,x)-v_{\ep}(t,x)|>\delta \} . \] 
We then have 
\begin{align*}
 \int_0^T \int_\Rd \left|  (v_\epsilon -v) \rho_\epsilon \right| &\leq  \int_0^T \int_{E_{\delta,\epsilon}(t)^c} \left|  (v_\epsilon -v) \rho_\epsilon \right| +  \int_0^T \int_{E_{\delta,\epsilon}(t)} \left|  (v_\epsilon -v) \rho_\epsilon \right| \\
 &\leq T\delta+\int_0^T \left\|\frac{|v(t)|+|v_{\ep}(t)|}{1+|\cdot|}\right\|_{L^\infty(\Rd)}\int_{ E_{\delta,\epsilon}(t)} \rho_{\epsilon}(t)(1+|x|),
\end{align*}
Since our velocities are well-prepared, see Definition \ref{wellprepareddef},  $v_{\epsilon} \to v$ strongly in $L^1_{\loc}([0,+\infty)\times\RR^d)$. In particular, for almost every $t\geq 0$, $v_\epsilon(t, \cdot) \to v(t, \cdot)$ in $L^1_\loc(\Rd)$. It then follows that, for any $R, \delta>0$, we have $ \lim_{\epsilon\to 0} |E_{\delta,\epsilon}(t)\cap B_R(0)|=0$ for a.e. $t \geq 0$.
Hence, for a.e. $t\geq0$,
\begin{align*}
\limsup_{\epsilon\to 0}\int_{E_{\delta, \epsilon}(t)} \rho_{\ep}(t)(1+|x|) &= \limsup_{\epsilon\to 0}\int_{E_{\delta, \epsilon}(t)\cap B_R(0)^c} \rho_{\ep}(t)(1+|x|) + \int_{E_{\delta, \epsilon}(t)\cap B_R(0)} \rho_{\ep}(t)(1+|x|)  \\
&\leq \sup_{\epsilon>0} \left( \int_{B_R(0)^c} \rho_\ep(t)+ \frac{1}{R}M_2(\rho_{\ep}(t))\right)+\limsup_{\epsilon\to 0}(1+R)\int_{E_{\delta, \epsilon}(t)\cap B_R(0)} \rho_{\ep}(t) \\
&=\sup_{\epsilon>0}  \int_{B_R(0)^c} \rho_\ep(t) + \frac{1}{R}M_2(\rho_{\ep}(t)),
\end{align*}
where the last equality follows from the uniform  integrability of  $\rho_{\ep}(t)$; see Lemma \ref{rhoepscpt}(\ref{rhoepunifint}). Next, we send $R\to \infty$, using the weak $L^1(\Rd)$ compactness, hence tightness, of  $\rho_{\ep}(t)$; see Lemma \ref{rhoepscpt}(\ref{aalimitrho}). It follows that
\[
\limsup_{\epsilon\to 0}\int_{E_{\delta, \epsilon}(t)} \rho_{\ep}(t)(1+|x|)=0 .
\]
for almost every $t\geq 0$ and any $\delta>0$.  Now, we can use the dominated convergence theorem, due to the uniform in time control on $\int_\Rd \rho_\ep(t) \equiv 1$ and $M_2(\rho_\ep(t))$ (see  Lemma \ref{energydissipationlem}) to conclude that
\[
\limsup_{\epsilon\to 0} \int_0^T \int_\Rd \left|  (v_\epsilon -v) \rho_\epsilon \right|\leq T\delta+\limsup_{\epsilon \to 0}\int_0^T \left\| \frac{|v(t)|+|v_{\ep}(t)|}{1+|\cdot|} \right\|_{\infty}\int_{ E_{\delta,\epsilon}(t)} \rho_{\epsilon}(t)(1+|x|)=T\delta .
\]
Since $\delta >0$ was arbitrary, we obtain   $\lim_{\epsilon \to 0} \int_0^T \int_\Rd \left|  (v_\epsilon -v) \rho_\epsilon \right|=0$.

Now, using the fact that $\rho_\epsilon$ is a weak solution of (\ref{epsiloneqn}), equation (\ref{puppies}), and the convergence of the velocities, we obtain that, for all $g \in C^\infty_c([0,T] \times \Rd)$,
\begin{align}
0   &= \lim_{\epsilon \to 0} \int_0^T \int_\Rd \rho_\epsilon\partial_t g  - \rho_\epsilon  (\nabla p_\epsilon +v_\epsilon) \cdot \nabla g \nonumber \\
&=    \int_0^T \int_\Rd \partial_t g \rho + \zeta\Delta g    -v \rho\cdot \nabla g  \nonumber \\
&=    \int_0^T \int_\Rd \partial_t g \rho  - \nabla \zeta \cdot \nabla g    -v \rho\cdot \nabla g 
\label{teamcoconut}
\end{align}
where, in the last equation, we use that $\zeta \in L^1_\loc([0,+\infty); W^{1,1}(\Rd))$. Finally, Lemma \ref{existenceofpwow}  ensures that there exists $p \in \partial f(\rho)$ so that $f^*(p) = \zeta$ almost everywhere. This shows that $\rho$ is a weak solution  of (\ref{PDE}).

Finally, using that $\nabla \zeta = \xi \rho$, we see $\rho \in C([0,T];\P_1(\Rd) )\cap L^{\infty}_{\loc}([0,+\infty);\P_2(\RR^d)\cap L^1(\R^d))$ solves
\[ \partial_t \rho - \nabla \cdot ((\xi  + v) \rho) = 0 \quad \text{ in the duality with } C^\infty_c((0,+\infty) \times \Rd) .\]
Inequality (\ref{kitties}) and our hypothesis on $v$ ensure that, for all $T >0$,
\[ \int_0^T \int_\Rd |\xi +v|^2 \rho   < +\infty . \]
Thus, \cite[Theorem 8.3.1]{ambrosiogiglisavare} ensures $\rho \in AC^2([0,T]; \P_2(\Rd))$.

\end{proof}

\appendix

\section{Appendix}

\subsection{Properties of a continuity equation}
{We begin with the following lemma on well-posedness of a   continuity equation, in which the velocity field $w:\PP_2(\RR^d)\to {C_b([0,+\infty); W^{1,\infty}(\Rd))}$  has uniformly bounded range  and is uniformly Lipschitz with respect to $\PP_2(\Rd)$.}

\begin{lem}[Well-posedness of a  continuity equation]
\label{lem:wp continuity eqn}
   Consider $w:\PP_2(\RR^d)\to {C_b([0,+\infty); W^{1,\infty}(\Rd))}$ for which  there exists $C_E, C_w>0$ so that
   \begin{align} \label{continuityinspace}
 {  \norm{w(\mu)}_{L^\infty([0,+\infty);W^{1,\infty}(\Rd))} }&\leq C_E , \forall \mu \in \PP_2(\Rd) \\
{   \norm{w(\mu_1)-w(\mu_0)}_{L^\infty([0,+\infty) \times\RR^d)}} &\leq C_w W_2(\mu_0,\mu_1)  , \ \forall \mu_1, \mu_0\in \PP_2(\RR^d) .\label{continuityofw}
   \end{align}
Then, for any initial data $\rho^0 \in \PP_2(\RR^d)$, there exists a unique $\rho \in AC^2_\loc([0,+\infty); \P_2(\Rd))$ that solves
\begin{align} \label{wpde}
\begin{cases} \partial_t \rho-\nabla \cdot (\rho w(\rho))=0 , \\ \left. \rho \right|_{t =0} = \rho^0 ,\end{cases}
\end{align}
in the sense of distributions and a unique flow map $X\in W^{1,\infty}_\loc([0,+\infty);W^{1,\infty}_{\loc}(\RR^d))$ satisfying
\begin{equation}
    {\partial_t X( t,x)=-w(\rho(t)) (t,X(t,x)), \quad X(0,x)=x,}
\end{equation}
such that {$\rho(t)=X(t)_{\#}\rho^0$}.  Furthermore, for any two choices of initial data $\rho^{0,1}$, $\rho^{0,2} \in \P_2(\Rd)$, we have
\begin{align} \label{stabilityweqn}
W_2(\rho^{1}(t), \rho^2(t))   \leq W_2(\rho^{0,1}, \rho^{0,2}) \left( 1+ (C_E +C_w)t e^{(C_E+ C_w)t} \right) ,\quad  \ \forall t \geq 0.
\end{align}
\end{lem}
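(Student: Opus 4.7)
The plan is to deploy a standard Banach fixed-point argument on curves in $\PP_2(\Rd)$, followed by a coupling--Gronwall argument for stability. Fix $T>0$ and let $\mathcal{X}_T := \{\sigma \in C([0,T]; \PP_2(\Rd)) : \sigma(0)=\rho^0\}$, equipped with the uniform $W_2$-distance $d_T(\sigma_1,\sigma_2) := \sup_{t \in [0,T]} W_2(\sigma_1(t),\sigma_2(t))$; this is a complete metric space. Given $\sigma \in \mathcal{X}_T$, hypothesis (\ref{continuityinspace}) makes $w(\sigma)$ spatially $C_E$-Lipschitz uniformly in time, so Cauchy--Lipschitz yields a unique Lagrangian flow $Y_\sigma \in W^{1,\infty}_\loc([0,T]; W^{1,\infty}_\loc(\Rd))$ solving $\partial_t Y_\sigma(t,x) = -w(\sigma)(t, Y_\sigma(t,x))$ with $Y_\sigma(0,x)=x$. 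Defining $\mathcal{T}(\sigma)(t) := Y_\sigma(t)_\# \rho^0$, the bound $|Y_\sigma(t,x)-x| \leq C_E t$ guarantees $M_2(\mathcal{T}(\sigma)(t)) < +\infty$ as well as $W_2$-continuity in $t$, so $\mathcal{T}$ maps $\mathcal{X}_T$ into itself.

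For the contraction, comparing the flows of $\sigma_1, \sigma_2 \in \mathcal{X}_T$ via the pointwise estimate
\[
|Y_{\sigma_1}(t,x) - Y_{\sigma_2}(t,x)| \leq C_E \int_0^t |Y_{\sigma_1}(s,x) - Y_{\sigma_2}(s,x)|\, ds + \int_0^t \|w(\sigma_1)(s) - w(\sigma_2)(s)\|_\infty\, ds
\]
and using (\ref{continuityofw}) in the second term, Gronwall gives $\sup_x |Y_{\sigma_1}(t,x) - Y_{\sigma_2}(t,x)| \leq C_w t e^{C_E t} d_t(\sigma_1,\sigma_2)$. Since $W_2^2(\mathcal{T}(\sigma_1)(t), \mathcal{T}(\sigma_2)(t)) \leq \int |Y_{\sigma_1}(t,x) - Y_{\sigma_2}(t,x)|^2 d\rho^0(x)$, this yields $d_T(\mathcal{T}(\sigma_1), \mathcal{T}(\sigma_2)) \leq C_w T e^{C_E T} d_T(\sigma_1,\sigma_2)$, a strict contraction once $T$ is small. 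Banach's theorem then gives a unique fixed point $\rho$; the pushforward relation $\rho(t) = X(t)_\# \rho^0$ for $X := Y_\rho$, together with standard results connecting Lagrangian flows to the continuity equation (see \cite[Lemma 8.1.6 and Proposition 8.1.8]{ambrosiogiglisavare}), shows that $\rho$ solves (\ref{wpde}) distributionally. Iterating on successive short intervals extends $\rho$ to all of $[0,+\infty)$, and the uniform bound $\|w(\rho)\|_\infty \leq C_E$ combined with \cite[Theorem 8.3.1]{ambrosiogiglisavare} yields $\rho \in AC^2_\loc([0,+\infty); \PP_2(\Rd))$.

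Finally, for (\ref{stabilityweqn}), I would compare two solutions $\rho^1, \rho^2$ with initial data $\rho^{0,1}, \rho^{0,2}$ and associated flows $X_1, X_2$. Picking an optimal coupling $\pi \in \Gamma_0(\rho^{0,1}, \rho^{0,2})$, the (suboptimal) plan $(X_1(t), X_2(t))_\# \pi \in \Gamma(\rho^1(t), \rho^2(t))$ gives $W_2(\rho^1(t), \rho^2(t)) \leq A(t)$, where $A(t) := \bigl(\int |X_1(t,x) - X_2(t,y)|^2 d\pi(x,y)\bigr)^{1/2}$. Integrating the ODEs for $X_1, X_2$, taking $L^2(\pi)$-norms via Minkowski, and invoking (\ref{continuityinspace}) together with (\ref{continuityofw}), I obtain
\[
A(t) \leq W_2(\rho^{0,1}, \rho^{0,2}) + (C_E + C_w) \int_0^t A(s)\, ds,
\]
and Gronwall produces (\ref{stabilityweqn}). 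The whole argument is routine; the only mildly delicate point is the contraction estimate, where one must carefully split $|w(\sigma_1) - w(\sigma_2)|$ into a spatial-Lipschitz contribution (governed by $C_E$) and a $W_2$-continuity contribution (governed by $C_w$), applying Gronwall at the right level so that the contraction constant vanishes as $T \to 0$.
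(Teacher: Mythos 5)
Your proof is correct and takes essentially the same approach as the paper: a Picard-type contraction argument for existence and uniqueness of the flow map, followed by a coupling--Gronwall estimate for the stability bound. The one genuine difference is in the stability step. The paper first assumes $\rho^{0,1}\ll dx$ so that an optimal \emph{transport map} $\bt$ exists, runs Gronwall on $\|X^1(t,\cdot)-X^2(t,\cdot)\circ\bt\|_{L^2(\rho^{0,1})}$, and then removes the absolute-continuity hypothesis by a separate triangle-inequality-plus-density argument. You instead fix an optimal \emph{coupling} $\pi\in\Gamma_0(\rho^{0,1},\rho^{0,2})$ from the outset and Gronwall the quantity $A(t)=\|X_1(t,x)-X_2(t,y)\|_{L^2(\pi)}$; this is cleaner because optimal couplings always exist in $\P_2(\Rd)$, so no density argument is needed, and the constant $(C_E+C_w)$ drops out in one pass. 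Your packaging of existence as a Banach fixed point on $\bigl(C([0,T];\P_2(\Rd)),\,d_T\bigr)$ rather than the paper's explicit Picard iteration $X^m$ on flow maps is an equivalent reformulation, not a different idea. A small point worth spelling out when writing this up in full: to invoke Cauchy--Lipschitz for the flow $Y_\sigma$ with $\sigma$ merely continuous (rather than $AC^2$, as in the paper), you still need to check that $t\mapsto w(\sigma(t))(t,\cdot)$ is continuous in time; this follows from $w(\mu)\in C_b([0,+\infty);W^{1,\infty}(\Rd))$ together with (\ref{continuityofw}) and the $W_2$-continuity of $\sigma$, but should be said explicitly.
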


\begin{proof}
First, we show that, for any $\mu \in AC^2_\loc([0,+\infty); \P_2(\Rd))$, the Lagrangian flow equations
\begin{align} \label{genLagrangianflow}
{\partial_t X(t,x) = - w(\mu(t))(t,X(t,x))} , \quad X(0, x) = x, 
\end{align}
are well-posed. Since $w(\mu(t))(t,x)$ is  bounded and Lipschitz in space, uniformly in time, by Cauchy-Lipschitz theory, it suffices to verify that $w(\mu(t))(t,x)$ is continuous in time. By 
our continuity hypothesis on $w$, inequality (\ref{continuityofw}), we have
 \begin{align*}
 &\| w(\mu(t))(t, \cdot) - w(\mu(s))(s, \cdot)\|_{L^\infty(\Rd)} \\
  &\quad \leq  \| w(\mu(t))(t, \cdot) - w(\mu(t))(s, \cdot)+ w(\mu(t))(s, \cdot) -w(\mu(s))(s, \cdot)\|_{L^\infty(\Rd)}\\
 &\quad \leq    \|w(\mu(t))(t,\cdot)\|_{W^{1,\infty}(\Rd)}|t-s| + C_w W_2(\mu(t), \mu(s)) , \end{align*}
which gives the result since $w: \P_2(\Rd) \to C_b([0, +\infty);W^{1,\infty}(\Rd))$ and  $\mu \in AC^2_\loc([0,+\infty); \P_2(\Rd))$.  Thus the Lagrangian flow (\ref{genLagrangianflow}) is well-posed.

Now, we establish existence of solutions to (\ref{wpde}).  Given initial data $\rho^0\in \PP_2(\RR^d)$ we define a sequence of approximate solutions $\rho^m$  and flow maps $X^m$ by setting $X^0(t,x)=x$, $\rho^m(t,\cdot)=X^m(t,\cdot)_{\#}\rho^0$ and iteratively solving the Lagrangian flow equations
\begin{align} \label{flowmapdef}
 \partial_t X^{m+1}(t,x)=- w(\rho^{m}(t))( t, X^{m+1}(t,x)), \quad X^{m+1}(0,x)=x .
\end{align}
For $m = 1$, (\ref{flowmapdef}) is clearly well-posed since $\rho^0$ is constant, hence absolutely continuous in time. Furthermore, if the equation is well-posed for $m$, then  $\rho^{m+1}(t,\cdot) = X^{m+1}(t,\cdot) \# \rho^0$ satisfies
\begin{align*}
 |{\rho^{m+1}}'|(t) &= \lim_{s \to t} \frac{W_2(\rho^{m+1}(s), \rho^{m+1}(t))}{|s-t|} \leq \lim_{s \to t} \frac{   \|X^{m+1}(t,\cdot) - X^{m+1}(s,\cdot) \|_{L^2(\rho^0)}}{|s-t|}  \\
 & =\lim_{s \to t} \frac{   \left\| \int_s^t \partial_r X^{m+1}(r,\cdot) dr \right\|_{L^2(\rho^0)}}{|s-t|} \leq \lim_{s \to t} \frac{ \int_s^t   \left\| w(\rho^m(r))( r,X^{m+1}(r,\cdot)) \right\|_{L^2(\rho^0)}dr}{|s-t|}
\leq C_E ,
\end{align*}
where the first inequality follows from \cite[equation (7.1.6)]{ambrosiogiglisavare}. Thus, $\rho^{m+1} \in AC^2_\loc([0,+\infty); \P_2(\Rd))$, and hence (\ref{flowmapdef}) is well-posed for $m+1$. 

Now we wish to show that the flow maps defined by (\ref{flowmapdef}) form a Cauchy sequence.  We note,
\begin{align*}
&\frac{d}{dt} \|X^{m+1} - X^m\|_{L^2(\rho^0)} \\
&\quad \leq \norm{w(\rho^m)(t, X^{m+1}(t,\cdot)) - w(\rho^m)(t, X^{m}(t,\cdot)) + w(\rho^m)(t, X^{m}(t,\cdot))-w(\rho^{m-1})(t, X^m(t,\cdot))}_{L^{2}(\rho^0)} \\
&\quad \leq \norm{w(\rho^m)( t, X^{m+1}(t,\cdot))-w(\rho^{m})(t, X^m(t,\cdot))}_{L^{2}(\rho^0)} +  \norm{ w(\rho^m)-w(\rho^{m-1})}_{L^{\infty}([0,+\infty)\times \Rd)}\\
&\quad \leq  C_E \norm{  X^{m+1}-   X^m}_{L^{2}(\rho^0)} + C_w W_2(\rho^m,\rho^{m-1})\\
&\quad \leq {(C_E+C_w) \norm{  X^{m+1}-   X^m}_{L^{2}(\rho^0)} .}
\end{align*}
{Integrating in time, we obtain,}
\begin{align*}
\|X^{m+1} - X^m\|_{L^2(\rho^0)} \leq { (C_E+C_w)}  \int_0^t \|X^m-X^{m-1} \|_{L^2(\rho^0)}.
\end{align*}
Hence,
\begin{align*}
\|X^{m+1} - X^m\|_{L^\infty([0,T];L^2(\rho^0))} &\leq {(C_E+C_w)T}   \|X^m-X^{m-1} \|_{L^\infty([0,T];L^2(\rho^0))} \\&\leq \dots \leq {((C_E+C_W) T )^m} \|X^1 - X^0\|_{L^\infty([0,T];L^2(\rho^0))} .
\end{align*}
In particular, for $T>0$ sufficiently small, the flow maps form a Cauchy sequence in $L^\infty([0,T];L^2(\rho^0))$. 

Let $X(t,x)$ denote the limit and let $\rho(t) = X(t) \# \rho^0$. Then,
\begin{align*}
&\|  w(\rho^m(t))( t, X^{m+1}(t,\cdot)) -  w(\rho(t))(t, X(t,\cdot)) \|_{L^2(\rho^0)} \\
&\quad \leq \|  w(\rho^m(t)) (t, X^{m+1}(t,\cdot))- w(\rho^m(t)) (t, X(t,\cdot)) + w(\rho^m(t)) ( t, X(t,\cdot)) -  w(\rho(t)) ( t,X(t,\cdot)) \|_{L^2(\rho^0)} \\
&\quad   \leq C_E \|   X^{m+1}(t,\cdot)- X(t,\cdot) \|_{L^2(\rho^0)} + C_w W_2(\rho^m(t),\rho(t)) \\
&\quad \leq (C_E+ C_w)  \|   X^{m+1}(t,x)- X(t,x) \|_{L^2(\rho^0)} .
\end{align*}
Thus, the left hand side converges to zero, uniformly in $t\in [0,T]$ and we may conclude
\begin{align*} X(t,x) &= \lim_{m \to +\infty} X^{m+1}(t,x) = \lim_{m \to +\infty} - \int_0^t w(\rho^m(r)) ( r,X^{m+1}(r,x)) dr + x  \\&=  - \int_0^t w(\rho(r))(r,X(r,x)) dr + x , \text{ for } \rho^0\text{-a.e. } x \in \Rd \end{align*}
  The argument of \cite[Proposition 8.1.8]{ambrosiogiglisavare} yields that $\rho$ is a solution to the continuity equation on $[0,T]\times\RR^d$ with velocity $w(\rho(t))(t,x)$. Restarting the process at time $T$ and composing flow maps,
we can construct solutions on any time interval.  Thus, existence is complete.

To establish well-posedness, suppose that $\rho^1, \rho^2$ are solutions starting from initial data $\rho^{0,1}$ and $\rho^{0,2}$ respectively, with associated flow maps $X^1$ and $X^2$, $\partial_t X^i(t,x) = - w(\rho^i(t)) (t,X^i(t,x))$, so that $X^1(t) \# \rho^{0,1} = \rho^1(t)$ and $X^2(t) \# \rho^{0,2} = \rho^2(t)$.  We begin by supposing that $\rho^{0,1}$ is absolutely continuous with respect to Lebesgue measure, so there exists an optimal transport map $\bt \# \rho^{0,1} = \rho^{0,2}$.  Then,
\begin{align*}
&\|X^1(t, \cdot) - X^2(t, \cdot) \circ \bt \|_{L^2(\rho^{0,1})} \\
&= \left\| \int_0^t  \left( w(\rho^1(r))(r,X^1(r, \cdot)) - w(\rho^2(r))(r, X^2(r, \cdot)) \circ \bt \right) dr + \id - \bt \right\|_{L^2(\rho^{0,1})}
\\
&\leq \int_0^t  \left\|  w(\rho^1(r))(r,X^1(r, \cdot))  - w(\rho^1(r))(r, X^2(r, \cdot)) \circ \bt + \left(w(\rho^1(r))- w(\rho^2(r)) \right)(r, X^2(r, \cdot)) \circ  \bt \right\|_{L^2(\rho^{0,1})} dr \\
&\qquad + \|\id - \bt \|_{L^2(\rho^{0,1})} \\ 
&\leq  \int_0^t  \left( C_{E}\left\|   X^1(r, \cdot) - X^2(r, \cdot)\circ \bt \right\|_{L^2(\rho^{0,1})} + C_w W_2(\rho^1(r),\rho^2(r)) \right) dr    + W_2(\rho^{0,1}, \rho^{0,2}) \\
&\leq  (C_{E} + C_w)  \int_0^t  \left\|   X^1(r, \cdot) - X^2(r, \cdot)\circ \bt \right\|_{L^2(\rho^{0,1})}  dr +W_2(\rho^{0,1}, \rho^{0,2}) .
\end{align*}
By Gronwall's inequality, this implies
\begin{align*}
 \|X^1(t, \cdot) - X^2(t, \cdot) \circ \bt \|_{L^2(\rho^{0,1})}  \leq W_2(\rho^{0,1}, \rho^{0,2}) \left( 1+ (C_E +C_w)t e^{(C_E+ C_w)t} \right)  .
\end{align*}
Using the fact that the left hand side is an upper bound for the Wasserstein distance between $\rho^1(t)$ and $\rho^2(t)$ gives the stability inequality (\ref{stabilityweqn}).

Finally, we remove the assumption that $\rho^{0,1}$ is absolutely continuous with respect to Lebesgue. In particular, for any $\rho^{0,1}, \rho^{0,2} \in \P_2(\Rd)$ and $\rho^{0,3} \in \P_{2,ac}(\Rd)$, inequality  (\ref{stabilityweqn}) implies
\begin{align*}
W_2(\rho^{1}(t), \rho^2(t)) &\leq W_2(\rho^1(t), \rho^3(t)) + W_2(\rho^3(t), \rho^2(t)) \\
&\leq \left( W_2(\rho^{0,1}, \rho^{0,3}) + W_2(\rho^{0,3}, \rho^{0,2})  \right) \left( 1+ (C_E +C_w)t e^{(C_E+ C_w)t} \right) .
\end{align*}
Thus, using the density of $\P_{2,ac}(\Rd)$ in $\P_2(\Rd)$ with respect to $W_2$, we may choose $\rho^{0,3}$ arbitrarily close to $\rho^{0,2}$ to get the result.
\end{proof}

Next, for the reader's convenience, we recall some basic properties of the flow map $X_\epsilon$.
\begin{lem}[Estimates on the flow map] \label{lem:DXep bds} Fix $\ep>0$,  $T>0$,  and let $X_\ep$ be as in Lemma \ref{wellposedpdeepslem}.  
Then {$X_\ep(t, \cdot)$} is invertible for all $t \geq 0$, 
the equality \begin{equation}
    \label{eq:partial t DX}
    \partial_t DX_{\epsilon}(t,x)=- \left(D^2 p_{\epsilon}(t,X_{\epsilon}(t, x))+D v_{\epsilon}(t,X_{\epsilon}(t, x))\right)DX_{\epsilon}(t,x)
\end{equation}
holds for {a.e.} $t\in [0,T]$, and there exists $\lambda_\epsilon>0$ such that, for  all $t\in [0,T]$ and {a.e.} $x\in \Rd$,
\begin{equation}
\label{eq:DXep bds}
\frac{1}{\lambda_\epsilon} I\prec DX_{\epsilon}(t,x)\prec \lambda_\epsilon I.
\end{equation}
\end{lem}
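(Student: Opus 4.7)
The plan is to exploit the regularity of $\nabla p_\epsilon$ and $v_\epsilon$ in space (coming from the mollification $\varphi_\epsilon \in C^2$ and the $C^\infty_c$ hypothesis on $v_\epsilon$) to apply classical Cauchy--Lipschitz theory with $C^1$ dependence on the initial condition, and then close with Gronwall.

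First, I would establish that the velocity field driving \eqref{eq:Xep} is $C^1$ in $x$ with uniformly bounded $W^{1,\infty}$ norm on $[0,T] \times \Rd$. Writing $p_\epsilon = \varphi_\epsilon * q_\epsilon$ with $\varphi_\epsilon \in C^2(\Rd)$ and $q_\epsilon \in L^\infty(\Rd)$ by Lemma \ref{muepqeplem}(\ref{item:qepbdd}), we get $p_\epsilon(t,\cdot) \in C^2(\Rd)$ with $\|D^2 p_\epsilon\|_{L^\infty}$ uniformly controlled by Lemma \ref{muepqeplem}(\ref{item:pepbdd}). Since $v_\epsilon \in C^\infty_c$, the total velocity $b_\epsilon(t,x):=-\nabla p_\epsilon(t,x)-v_\epsilon(t,x)$ is continuous in $t$ and $C^1$ in $x$, with
\[
M_\epsilon := \|D^2 p_\epsilon\|_{L^\infty([0,T]\times\Rd)} + \|Dv_\epsilon\|_{L^\infty([0,T]\times\Rd)} < +\infty.
\]

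Next, by classical Cauchy--Lipschitz with parameters (e.g.\ \cite[Theorem 2.8]{teschl}), the flow $X_\epsilon$ is $C^1$ in $x$ for each $t \in [0,T]$, and differentiating \eqref{eq:Xep} in $x$ yields equation \eqref{eq:partial t DX}, namely
\[
\partial_t DX_\epsilon(t,x) = -\bigl(D^2 p_\epsilon(t,X_\epsilon(t,x)) + Dv_\epsilon(t, X_\epsilon(t,x))\bigr) DX_\epsilon(t,x),\qquad DX_\epsilon(0,x) = I,
\]
valid for a.e.\ $t\in [0,T]$ since $\partial_t X_\epsilon \in L^\infty_{\loc}([0,+\infty); W^{1,\infty}(\Rd))$. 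Applying Gronwall's inequality in the matrix ODE with operator norm yields
\[
|DX_\epsilon(t,x)| \leq \exp(M_\epsilon T) \qquad \text{for a.e.\ } x \in \Rd,\ t\in[0,T].
\]
For the lower bound, I would introduce $Y_\epsilon(t,x) := (DX_\epsilon(t,x))^{-1}$, which is well-defined since Liouville's formula gives
\[
\det DX_\epsilon(t,x) = \exp\Bigl(-\int_0^t \bigl(\Delta p_\epsilon + \nabla\cdot v_\epsilon\bigr)(s, X_\epsilon(s,x))\, ds\Bigr) > 0,
\]
and satisfies $\partial_t Y_\epsilon = Y_\epsilon (D^2 p_\epsilon + Dv_\epsilon)(t, X_\epsilon)$; a symmetric Gronwall argument gives $|Y_\epsilon(t,x)| \leq \exp(M_\epsilon T)$. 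Setting $\lambda_\epsilon := \exp(M_\epsilon T)$ yields the asserted two-sided spectral bound \eqref{eq:DXep bds}.

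Invertibility of $X_\epsilon(t,\cdot)$ as a self-map of $\Rd$ is then essentially automatic: since $b_\epsilon$ is globally Lipschitz and bounded in space uniformly on $[0,T]$, the ODE admits a backward flow that serves as a two-sided inverse; alternatively, $X_\epsilon(t,\cdot) = \id - \int_0^t b_\epsilon(s, X_\epsilon(s,\cdot))ds$ is a bounded $C^1$-perturbation of the identity with nonvanishing Jacobian, hence a $C^1$-diffeomorphism of $\Rd$. The main (and really only) step that requires care is verifying the $C^2$ spatial regularity of $p_\epsilon$ from the mollifier convolution structure, after which everything reduces to classical ODE theory; there is no genuine analytic obstacle, as the degeneration as $\epsilon \to 0$ is not being tracked here.
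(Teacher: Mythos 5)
Your proposal is correct and follows the same overall structure as the paper's proof: differentiate the flow equation in $x$ to obtain the linear matrix ODE \eqref{eq:partial t DX}, and then use the uniform $L^\infty$ bounds on $D^2 p_\epsilon$ (from Lemma \ref{muepqeplem}(\ref{item:pepbdd})) and $Dv_\epsilon$ to derive the two-sided estimate \eqref{eq:DXep bds}. The one place where your route diverges is the last step: you run Gronwall separately on $DX_\epsilon$ and on $Y_\epsilon=(DX_\epsilon)^{-1}$ (after first invoking Liouville to see that the inverse exists), whereas the paper writes down the closed-form matrix exponential
\[
DX_\epsilon(t,x)=\exp\Bigl(-\int_0^t \bigl(D^2p_\epsilon+Dv_\epsilon\bigr)(s,X_\epsilon(s,x))\,ds\Bigr)
\]
and reads the two-sided bound off from it. As written, that exponential formula implicitly assumes that the matrices $D^2p_\epsilon(s,X_\epsilon(s,\cdot))+Dv_\epsilon(s,X_\epsilon(s,\cdot))$ commute across times $s$, which is not generally the case; your Gronwall argument sidesteps this entirely and is the more robust way to land the same bound $\lambda_\epsilon=\exp(M_\epsilon T)$. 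So your proof is not only correct but slightly tighter than the paper's as stated. Everything else --- the $C^1$ spatial regularity of the velocity field via the $C^2$ mollifier, the a.e.-in-$t$ qualification coming from $\partial_t X_\epsilon\in L^\infty_{\loc}([0,+\infty);W^{1,\infty}(\Rd))$, and invertibility of $X_\epsilon(t,\cdot)$ from the backward flow --- matches the paper.
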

\begin{proof}
Differentiating (\ref{eq:Xep}) in $t$ { and $x$} yields $\text{ for all }t \in [0,+\infty), \text{ and a.e.} \ x \in \Rd$,
\begin{equation}
    \partial_t DX_{\epsilon}(t,x)=- \left(D^2 p_{\epsilon}(t,X_{\epsilon}(t, x))+D v_{\epsilon}(t,X_{\epsilon}(t, x))\right)DX_{\epsilon}(t,x) , .
\end{equation}
Hence, for { a.e. $t$ and $x$},
\[
\frac{d}{dt} \left( \exp\left(\int_0^t D^2p_{\epsilon}(s,X_{\epsilon}(s,x))+Dv_{\epsilon}(s,X_{\epsilon}(s,x))\, ds\right)DX_{\epsilon}(t,x)\right)=0.
\]
{ Integrating in time and using $X_\ep(0,x)=x$, we obtain that, for all $t \in [0,+\infty)$,} and a.e. $x \in \Rd$.
\[
 \exp\left(\int_0^t D^2p_{\epsilon}(s,X_{\epsilon}(s,x))+Dv_{\epsilon}(s,X_{\epsilon}(s,x))\, ds\right)DX_{\epsilon}(t,x)=DX_{\epsilon}(0,x)=Id .
\]
Thus,  for all $t\in [0,+\infty)$ and almost every $x \in \Rd$,
\[
DX_{\epsilon}(t,x)=\exp\left(-\int_0^t \left(D^2p_{\epsilon}(s,X_{\epsilon}(s,x))+Dv_{\epsilon}(s,X_{\epsilon}(s,x)\right)\, ds\right).
\]
The result follows since, due to the definition of $p_\ep$, we have that  $\norm{D^2p_\ep}_{L^{\infty}([0,t];L^{\infty}(\RR^d))}$  and $\norm{Dv_\ep}_{L^\infty([0,t];L^\infty(\Rd))}$  are bounded for any $t\geq 0$.
\end{proof}

\subsection{Well-prepared and particle data}
We now prove that, under appropriate hypotheses, well-prepared sequences of initial data and velocities exist.

\begin{lem}[Well-prepared initial data and velocity field] \label{wellpreparedlemma}
Given an internal energy density $f$,  a velocity $v$, a mollifier $\varphi_\epsilon$, and initial data $\rho^0$ satisfying     Assumptions \ref{internalas}, \ref{velocityas}, \ref{mollifieras}, and \ref{as:id},  there exists a sequence of  initial data $\{\rho_\epsilon^0\}_{\epsilon \in (0,1)} \subseteq \P_2(\Rd)\cap C^{\infty}_c(\RR^d)$ and velocity fields $\{v_{\ep}\}_{\epsilon \in (0,1)} \subseteq   C^{\infty}_c([0,+\infty)\times\RR^d)$ that is \emph{well-prepared} in the sense of Definition \ref{wellprepareddef}.
\end{lem}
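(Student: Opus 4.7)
The plan is to construct both $\rho^0_\ep$ and $v_\ep$ by standard mollification and cutoff procedures, with the key technical calibration being that the mollification scale for $\rho^0$ must not be too small relative to $\delta(\ep)$, so that the strongly convex quadratic term in $f_\ep$ does not cause $\F_\ep(\rho^0_\ep)$ to blow up. Fix a standard mollifier $\eta\in C^\infty_c(\Rd;[0,+\infty))$ with $\int\eta=1$, and choose scales $r(\ep)\to 0$ with $r(\ep)^d\ge \delta(\ep)$, together with $R(\ep)\to+\infty$ with $R(\ep)^d|{}^{\delta(\ep)}f(0)|\to 0$; such choices are possible because $\delta(\ep)\to 0$ and the Moreau envelope satisfies ${}^{\delta(\ep)}f(0)\uparrow f(0)=0$. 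Letting $\chi_{R(\ep)}\in C^\infty_c(\Rd;[0,1])$ be a cutoff equal to $1$ on $B_{R(\ep)}$ and supported in $B_{2R(\ep)}$, set
\[
\rho^0_\ep \;:=\; c_\ep\,\chi_{R(\ep)}\,(\eta_{r(\ep)}*\rho^0),\qquad c_\ep:=\Bigl(\int\chi_{R(\ep)}(\eta_{r(\ep)}*\rho^0)\,dx\Bigr)^{-1}.
\]
Then $\rho^0_\ep\in \P_2(\Rd)\cap C^\infty_c(\Rd)$, $c_\ep\to 1$, $\rho^0_\ep\to\rho^0$ in $W_2$, and $M_2(\rho^0_\ep)$ is uniformly bounded by $M_2(\rho^0)+r(\ep)^2 M_2(\eta)$; the bound $\S(\rho^0_\ep)\le \S(\rho^0)+O(1)$ follows from Jensen's inequality applied to $s\mapsto s\log s$ together with $\chi\log\chi\ge -1/e$.

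The delicate verification is the uniform bound on $\F_\ep(\rho^0_\ep)=\int f_\ep(\mu_\ep)\,dx$, where $\mu_\ep:=\varphi_\ep*\rho^0_\ep$. Since the Moreau envelope satisfies ${}^{\delta(\ep)}f(a)\le f(a)$ for $a\ge 0$, we have
\[
f_\ep(a)\;=\;\tfrac{\delta(\ep)}{2}a^2+{}^{\delta(\ep)}f(a)-{}^{\delta(\ep)}f(0)\;\le\;\tfrac{\delta(\ep)}{2}a^2+f(a)+|{}^{\delta(\ep)}f(0)|\quad\text{for }a\ge 0,
\]
and both sides vanish at $a=0$, so integration yields
\[
\F_\ep(\rho^0_\ep)\;\le\;\tfrac{\delta(\ep)}{2}\|\mu_\ep\|_{L^2(\Rd)}^2+\F(\mu_\ep)+|{}^{\delta(\ep)}f(0)|\,|\supp\mu_\ep|.
\]
Young's convolution inequality gives $\|\mu_\ep\|_{L^2}^2\le \|\rho^0_\ep\|_{L^\infty}\le C r(\ep)^{-d}$, so the first term is $O(\delta(\ep)r(\ep)^{-d})=O(1)$ by construction. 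The second term is handled by applying Jensen's inequality twice (once for $\eta_{r(\ep)}*\rho^0$ and once for $\varphi_\ep*\rho^0_\ep$), combined with the pointwise convexity bound $f(\chi_R u)\le \chi_R f(u)$ and the fact that the negative part of $f(\eta_{r(\ep)}*\rho^0)$ is integrable (by Remark \ref{coercivityremark} and the uniform $M_2$ control), giving $\F(\mu_\ep)\le \F(\rho^0)+O(1)$ uniformly as $c_\ep\to 1$. The third term is $O(R(\ep)^d|{}^{\delta(\ep)}f(0)|)=o(1)$ by the choice of $R(\ep)$ since $|\supp\mu_\ep|\le CR(\ep)^d$.

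For the velocity, extend $v$ by $0$ to $t<0$, let $\zeta_\ep$ be a standard space-time mollifier, and pick $\omega_\ep\in C^\infty_c([0,+\infty)\times\Rd;[0,1])$ with $\omega_\ep\uparrow 1$ on compact sets and $\|\nabla_x\omega_\ep\|_{L^\infty}\to 0$. Set $v_\ep:=\omega_\ep(\zeta_\ep*v)$; then $v_\ep\in C^\infty_c([0,+\infty)\times\Rd)$ and $v_\ep\to v$ in $L^1_\loc$ by standard mollification theory. For the uniform integrability in Definition \ref{wellprepareddef}(i), the elementary inequality $1+|x|^2\ge \tfrac13(1+|x-y|^2)$ for $|y|\le 1$ combined with Jensen's inequality and the identities $\nabla\cdot(\zeta_\ep*v)=\zeta_\ep*(\nabla\cdot v)$ and $(g*\zeta_\ep)_+\le g_+*\zeta_\ep$ yields
\[
\frac{|(\zeta_\ep*v)(t,x)|^2+(\nabla\cdot(\zeta_\ep*v)(t,x))_+}{1+|x|^2}\;\le\;3\left\|\frac{|v(t,\cdot)|^2+(\nabla\cdot v(t,\cdot))_+}{1+|\cdot|^2}\right\|_{L^\infty(\Rd)}
\]
uniformly in $\ep$; the cutoff $\omega_\ep$ contributes only the additional term $(\nabla_x\omega_\ep\cdot v)/(1+|x|^2)$ to $\nabla\cdot v_\ep$, which is bounded by $\|\nabla_x\omega_\ep\|_{L^\infty}$ times the same quantity and hence stays bounded. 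The main obstacle throughout is the $\F_\ep$ bound, because $\F_\ep$ simultaneously depends on the mollifier $\varphi_\ep$ and the regularization parameter $\delta(\ep)$, and the strongly convex quadratic in $f_\ep$ forces $\mu_\ep$ to be controlled in $L^2$; this pins the mollification scale $r(\ep)$ of the initial data to satisfy $r(\ep)^d\gtrsim \delta(\ep)$, after which every remaining estimate reduces to Jensen's inequality and standard mollification bounds.
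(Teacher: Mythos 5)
The overall structure of your approach (mollify, cut off, normalize; then verify the four uniform bounds plus the convergences) parallels the paper's proof, and your observation that the $\delta(\ep)$-quadratic in $f_\ep$ forces the mollification scale to decay slowly is exactly the key calibration. However, there are two concrete gaps in the energy estimate that make the construction of $\rho^0_\ep$ fail as written.

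First, the \emph{multiplicative} normalization $\rho^0_\ep = c_\ep\,\chi_{R(\ep)}(\eta_{r(\ep)}*\rho^0)$ with $c_\ep = (\int \chi_{R(\ep)}(\eta_{r(\ep)}*\rho^0))^{-1} \ge 1$ breaks when $\dom(f)$ is bounded, e.g.\ for the height constraint $f=\iota_{[0,1]}$, which the paper explicitly covers (Lemma~\ref{lem:example}). If $\rho^0 = \mathbf 1_{B_{r_0}}$ with $\omega_d r_0^d=1$, then $\eta_{r(\ep)}*\rho^0 = 1$ on an interior ball, and hence $\rho^0_\ep = c_\ep>1$ there; consequently $\mu_\ep = \varphi_\ep*\rho^0_\ep > 1$ at interior points, so $\F(\mu_\ep)=+\infty$ and your bound $\F(\mu_\ep)\le\F(\rho^0)+O(1)$ is false. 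Convexity with $f(0)=0$ only gives $f(\lambda a)\le\lambda f(a)$ for $\lambda\in[0,1]$; for $\lambda>1$ the inequality reverses, and no uniform bound of $f(\lambda a)$ in terms of $f(a)$ exists near the boundary of $\dom(f)$. The paper avoids this by renormalizing via a \emph{dilation}: it sets $\tilde\rho^0_\alpha(x) = \mathbf 1_{B_{\alpha^{-1}}}(x/R)\,\rho^0(x/R)$ with $R=\|\rho^0\|_{L^1(B_{\alpha^{-1}})}^{-1/d}$, which restores unit mass without ever changing the pointwise \emph{values} of the density. That preserves $\rho^0_\ep\le\|\rho^0\|_\infty$, so the height constraint remains satisfied, and $\F(\rho^0_\ep)$ is controlled by $\F(\rho^0)$ via a simple change of variables.

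Second, your bound $|\supp\mu_\ep|\le C R(\ep)^d$ for the term $|{}^{\delta(\ep)}f(0)|\,|\supp\mu_\ep|$ is not available: Assumption~\ref{mollifieras} requires only the polynomial decay $\varphi(x)\le C_\varphi|x|^{-r}$, so $\varphi_\ep$ need not be compactly supported and $\mu_\ep=\varphi_\ep*\rho^0_\ep$ in general has unbounded support. The paper sidesteps this by first applying Jensen's inequality to pull the outer mollifier \emph{onto} $f_\ep$,
\[
\F_\ep(\rho^0_\ep)=\int f_\ep(\varphi_\ep*\rho^0_\ep)\le\int\varphi_\ep*\bigl(f_\ep(\rho^0_\ep)\bigr)=\int f_\ep(\rho^0_\ep),
\]
so that the constant $-{}^{\delta(\ep)}f(0)$ is only integrated over $\{\rho^0_\ep>0\}$, which \emph{is} compact. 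Your argument applies the pointwise bound to $\mu_\ep$ directly, leaving an infinite-measure integral of a nonzero constant.

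The velocity construction $v_\ep=\omega_\ep(\zeta_\ep*v)$ is a genuinely different and somewhat simpler route than the paper's weighted mollification $v_\ep=(1+|x|^2)\,\psi_\alpha*(\eta_{\alpha^{-1}}\,v/(1+|\cdot|^2))$; both should work, although your pointwise-in-$t$ estimate involves a time-average on the right (the space-time mollifier averages in $s$) rather than a pointwise bound, which is fine after integrating in $t$ but should be stated carefully. The paper's more elaborate construction is what forces it to prove $L^1_\loc$ convergence by hand via the Kolmogorov-Riesz-Fr\'echet theorem, whereas your simpler mollification allows you to cite standard theory.
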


\begin{proof}
Fix $\psi \in C^\infty_c(\R^n)$ vanishing outside $B_1(0)$ with $\psi \geq 0$, $\int \psi = 1$, and $\psi(x) = \psi(-x)$, and for any $\alpha >0$, let $\psi_\alpha(x) = \frac{1}{\alpha^n} \psi \left( \frac{x}{\alpha} \right)$. For $R >0$, let $\eta_R \in C^\infty_c(\R^n)$ be a smooth cutoff function with $\eta_R(z) = 1$ for $|z|\leq R$, $\eta_R(z) = 0$ for $|z| \geq R+1$ and $\| \nabla \eta_R\|_\infty \leq 1$.

First, we show that a sequence of well-prepared initial data $\{\rho^0_\epsilon\}_{\epsilon \in (0,1)}$ exists. Note that Assumption  \ref{as:id} ensures that $\S(\rho^0) < +\infty$ and $M_2(\rho^0)<+\infty$, so $d\rho^0 = \rho^0(x) dx$ and $\rho^0 \log \rho^0 \in L^1(\Rd)$. Likewise Assumption  \ref{as:id} ensures $\F(\rho^0)<+\infty$ which, combined with Remark \ref{coercivityremark}, ensures $f(\rho^0) \in L^1(\Rd)$. In particular, we have $f(\rho^0(x))< +\infty$ for a.e. $x \in \Rd$. Let  the domain of the mollifier $\psi_\alpha$  be $\R^{d}$, let $\alpha = \alpha_\epsilon \in (0,1)$ be a sequence satisfying $\lim_{\epsilon \to 0} \alpha_\epsilon = 0$ and   define
\begin{align*}
\rho_\epsilon^0  = \psi_\alpha*\tilde{\rho}^0_\alpha , \quad \tilde{\rho}^0_\alpha(x) =   1_{B_{\alpha^{-1}}}(x/R) \rho^0(x/R) , \  R  = \|\rho^0\|_{L^1(B_{\alpha^{-1}})}^{-1/d} .
\end{align*}
By definition $\{\rho^0_\epsilon\}_{\epsilon \in (0,1)} \in \P_2(\Rd) \cap C^\infty_c(\Rd) $. Note that $R \xrightarrow{\epsilon \to 0} 1$.

Next, we prove the bounds in equation (\ref{eq:data_uniform_bounds}). We start with the second moment. First, note that   $M_2(\rho^0_\epsilon) \leq 2M_2(\psi_\alpha) + 2M_2(\tilde{\rho}^0_\alpha)$ and $M_2(\psi_\alpha) \leq 1$. Furthermore,
\begin{align} \label{tilderhoalpham2}
M_2(\tilde{\rho}^0_\alpha) = \int_{B_{R/\alpha}} |x|^2 \rho^0(x/R) dx =  R^{d} \int_{B_{\alpha^{-1}}} |Ry|^2 \rho^0(y) dy \xrightarrow{\epsilon \to 0} M_2(\rho^0) .
\end{align}
Thus, by the dominated convergence theorem, $M_2(\rho^0_\epsilon)$ bounded uniformly $\epsilon \in (0,1)$.

For the entropy, note that, by Jensen's inequality,
\begin{align*}
\S(\rho^0_\epsilon) \leq \int \psi_\alpha*(\tilde{\rho}^0_\alpha \log(\tilde{\rho}^0_\alpha)) = \int \tilde{\rho}^0_\alpha \log(\tilde{\rho}^0_\alpha) = \int_{B_{R/\alpha}} \rho^0(\cdot/R) \log(\rho^0(\cdot/R)) = R^d \int_{B_{\alpha^{-1}}} \rho^0 \log(\rho^0),
\end{align*}
which converges to $\S(\rho^0)$ by the dominated convergence theorem for the integrable function $\rho^0 \log(\rho^0)$, hence is bounded uniformly in $\epsilon \in (0,1)$.

By definition of $\F_\epsilon$ and Jensen's inequality for the convex function $f_\epsilon$,
\begin{align*}
\F_\epsilon(\rho^0_\epsilon) & = \int f_\epsilon (\varphi_\ep* \rho^0_\epsilon) \leq \int \varphi_\epsilon* (f_\epsilon(\rho^0_\epsilon)) = \int  f_\epsilon(\rho^0_\epsilon)  = \frac{\delta(\epsilon)}{2} \left\|\rho^0_\epsilon \right\|_{L^2(\Rd)}^2 + \int_{\{\rho^0_\epsilon >0\}} {}^{\delta(\epsilon)} f(\rho^0_\epsilon) - {}^{\delta(\epsilon)} f(0) \\
&\leq \frac{\delta(\epsilon)}{2} \|\varphi_\alpha\|_{L^2(\Rd)}^2 \left\| \tilde{\rho}^0_\alpha\right\|_{L^1(\Rd)}^2 + \int_{\{\rho^0_\epsilon>0\}} f(\rho^0_\epsilon) - |\{\rho^0_\epsilon >0\}| {}^{\delta(\epsilon)}f(0). 
\end{align*}
Since $\left\| \tilde{\rho}^0_\alpha\right\|_{L^1(\Rd)}^2 \equiv 1$, we may choose $\alpha = \alpha_\epsilon$ decaying to zero sufficiently slowly so that the first term is bounded in $\epsilon \in (0,1)$. Likewise, since ${}^{\delta(\epsilon)} f(0) \to f(0) = 0$, we may choose $\alpha = \alpha_\epsilon$ decaying to zero sufficiently slowly so that the third term is bounded. Finally, using Jensen's inequality for the convex function $f$, along with the fact  that  $f(0)=0$, we may bound the middle term by 
\begin{align*}
\int \psi_\alpha*(f(\tilde{\rho}^0_\alpha)) = \int f(\tilde{\rho}^0_\alpha) =  \int_{B_{R/\alpha}} f \left(  \rho^0(\cdot /R) \right)     =  R^d \int_{B_{\alpha^{-1}}} f(\rho^0 )  ,
\end{align*} 
which converges to $\F(\rho^0)$ by the dominated convergence theorem for the integrable function $f(\rho^0)$. This completes the proof of inequality (\ref{eq:data_uniform_bounds}) for the sequence of initial data $\{\rho^0_\epsilon\}_{\epsilon \in (0,1)}$.

Next, we show that $\rho^0_\ep \to \rho^0$ in $W_2$. By the triangle inequality and \cite[Lemma 5.2]{santambrogio2015optimal},
\begin{align*}
W_2(\rho^0_\ep, \rho^0) \leq W_2(\psi_\alpha*\tilde{\rho}^0_\alpha, \psi_\alpha*\rho^0) + W_2(\psi_\alpha*\rho^0, \rho^0) \leq  W_2( \tilde{\rho}^0_\alpha,  \rho^0) + W_2(\psi_\alpha*\rho^0, \rho^0) .
\end{align*}
The second term goes to zero as $\epsilon \to 0$. For the first term, note that for any $g \in C^\infty_c(\Rd)$, 
\begin{align*}
\int g (\tilde{\rho}^0_\alpha - \rho^0) =  \int \left( R^d g(Ry) 1_{B_{\alpha^{-1}}}(y) - g(y) \right) \rho^0(y) dy \xrightarrow{\epsilon \to 0} 0 ,
\end{align*}
by the dominated convergence theorem. Thus $\tilde{\rho}^\alpha_0$ converges narrowly to $\rho^0$. Combining this with the convergence of the second moments (\ref{tilderhoalpham2}), we obtain that $W_2(\tilde{\rho}^0_\alpha, \rho) \xrightarrow{\epsilon \to 0} 0$   \cite[Lemma 5.11]{santambrogio2015optimal}. This completes our proof that a sequence of well-prepared initial data exists.

Now, we show that a sequence of well-prepared velocities $\{v_{\epsilon}\}_{\epsilon \in (0,1)}$ exits.
We consider $v(t,x)$ to be a function on all of $\R \times \Rd$ by setting it equal to zero whenever $t<0$. Let  the domain of the mollifier $\psi_\alpha$ and cutoff function $\eta_R$ be $\R^{d+1}$ and let $\alpha =\alpha_\epsilon \in (0,1)$ satisfy $\lim_{\epsilon \to 0} \alpha_\epsilon = 0$. Define
\begin{align} \label{vepsilondef} v_\epsilon(t,x) = (1+|x|^2)  \iint \psi_{\alpha}( t-s,x-y) \eta_{\alpha^{-1}}(s,y)   \frac{ v(s,y)}{1+|y|^2} dy ds ,
\end{align}
By definition, $\{v_{\epsilon}\}_{\epsilon \in (0,1)} \subseteq   C^{\infty}_c(\R \times\RR^d)$.

Next, we prove the bounds in equation (\ref{eq:data_uniform_bounds}). For any $x \in \Rd$ and $t \geq 0$,
\begin{align*}
 \frac{|v_\epsilon(t,x)|^2}{1+|x|^2}   
 & \leq  \iint \psi_{\alpha}(t-s,x-y)  \left\|   \frac{|v(s,\cdot)|^2}{1+|\cdot|^2}  \right\|_{L^\infty(\Rd)} dy d s  =  \iint \psi_{\alpha}(t-s,y) \left\|   \frac{|v(s,\cdot)|^2}{1+|\cdot|^2}  \right\|_{L^\infty(\Rd)} dy d s .
\end{align*}
Thus, for all $\epsilon \in (0,1)$ and $-\infty< T_0 \leq T_1 <+\infty$, 
\begin{align} \label{firstvepsestimate}
\int_{T_0}^{T_1}   \frac{|v_\epsilon(t,x)|^2}{1+|x|^2}   dt  
&\leq \int_{T_0}^{T_1}  \int_{T_{0}-1}^{T_1+1} \int_\Rd \psi_{\alpha}(y,t-s) \left\|   \frac{|v(s,\cdot)|^2}{1+|\cdot|^2}  \right\|_{L^\infty(\Rd)} dy d s dt \\
& \leq \int_{T_0-1}^{T_1+1} \left\|   \frac{|v(s,\cdot)|^2}{1+|\cdot|^2}  \right\|_{L^\infty(\Rd)}  ds , \nonumber
\end{align}
which gives the bound on $v_\epsilon(t,x)/(1+|x|^2)$ in (\ref{eq:data_uniform_bounds}).

Likewise, for the bound on $(\nabla \cdot v_\epsilon(t,x))_+/(1+|x|^2)$, we note that
\begin{align*}
\partial_{x_i} v_{\epsilon,i}(t,x) &= 2x_i  \iint \psi_{\alpha}(t-s,x-y) \eta_{\alpha^{-1}}(s,y)  \frac{ v_i(s,y)}{1+|y|^2} dy ds  \\
 & \qquad + (1+|x|^2) \iint \psi_{\alpha}(t-s,x-y)   \partial_{y_i}  \eta_{\alpha^{-1}}(s,y)  \frac{ v_i(s,y)}{1+|y|^2} dyds  \\
 &  \qquad  + (1+|x|^2) \iint \psi_{\alpha}(t-s,x-y)    \eta_{\alpha^{-1}}(s,y)  \left( \frac{ \partial_{y_i}  v_i(s,y)}{1+|y|^2} - \frac{2y_i v_i(s,y)}{(1+|y|^2)^2}  \right) dy ds .
\end{align*}
Thus,
\begin{align*}
\nabla \cdot v_\epsilon(t,x) &\leq  2(1+|x|^2) +2(1+|x|^2) \iint \psi_{\alpha}(t-s,x-y)  \frac{ |v(s,y)|^2}{1+|y|^2} dy ds  \\
 &  \quad  + (1+|x|^2) \iint \psi_{\alpha}(t-s,x-y)    \eta_{\alpha^{-1}}(s,y)  \left( \frac{ \nabla \cdot  v(s,y)}{1+|y|^2} + \frac{2 |y| |v(s,y)|}{(1+|y|^2)^2}  \right) dy ds .
\end{align*}
Simplifying and applying Jensen's inequality  for the increasing, convex, subadditive function $s \mapsto (s)_+$,
\begin{align*}
(\nabla \cdot v_{\epsilon}(t,x))_+ &\leq  3(1+|x|^2) + 3(1+|x|^2) \iint \psi_{\alpha}(t-s,x-y)  \frac{ |v(s,y)|^2}{1+|y|^2} dy ds  \\
 &  \quad  + (1+|x|^2)  \iint \psi_{\alpha}(t-s,x-y)    \eta_{\alpha^{-1}}(s,y)   \frac{ ( \nabla \cdot  v(s,y))_+}{1+|y|^2} dy ds . 
\end{align*}
Therefore, for all $T>0$,
\begin{align*}
\int_0^T \left\| \frac{(\nabla \cdot v_{\epsilon}(t,\cdot))_+}{1+|\cdot|^2} \right\|_{L^\infty(\Rd)} dt  \leq   3+3 \int_0^{T+1} \left\|   \frac{|v(s,\cdot)|^2}{1+|\cdot|^2}  \right\|_{L^\infty(\Rd)}  ds +  \int_0^{T+1} \left\|   \frac{(\nabla \cdot v(s,\cdot))_+}{1+|\cdot|^2}  \right\|_{L^\infty(\Rd)}  ds .
\end{align*}
This completes the proof of (\ref{eq:data_uniform_bounds}).

It remains to show that $v_\epsilon \to v$ in $L^1_\loc([0,+\infty) \times \Rd)$.
First, we show that $v_\epsilon(t,x) \to v(t,x)$ for a.e. $(t,x)\in \R^{d+1}$. We begin by estimating,  
 \begin{align}
&|v_\epsilon(t,x) - v(t,x)| \label{vepvpointwise} \\
&\leq \iint \psi_\alpha(t-s,x-y) \left| \eta_{\alpha^{-1}}(s,y) \frac{1+|x|^2}{1+|y|^2} v(s,y) - v(t,x) \right| dy ds \nonumber \\
&\leq \iint \psi_\alpha(t-s,x-y) \left|\eta_{\alpha^{-1}}(s,y) \frac{1+|x|^2}{1+|y|^2} (v(s,y)-v(t,x)) + \left(\eta_{\alpha^{-1}}(s,y) \frac{1+|x|^2}{1+|y|^2}-1 \right)  v(t,x) \right| dy ds \nonumber
\end{align}
By the Lebesgue differentiation theorem, for almost every $(t,x) \in \R^{d+1}$,
\begin{align*}
& \iint \psi_\alpha(t-s,x-y)  \eta_{\alpha^{-1}}(s,y) \frac{1+|x|^2}{1+|y|^2} |v(s,y)-v(t,x)| dyds \\
&\quad \leq (1+|x|^2) \frac{\|\psi\|_\infty}{\alpha^{d+1}} \iint_{B_\alpha(t,x)} |v(s,y)-v(t,x)| dyds \xrightarrow{\epsilon \to 0} 0 .
\end{align*}
On the other hand, by the dominated convergence theorem, for almost every $(t, x)\in \R^{d+1}$,
\begin{align*}
&\iint   \psi_\alpha(t-s,x-y)    \left| \eta_{\alpha^{-1}}(s,y) \frac{1+|x|^2}{1+|y|^2}-1 \right|  |v(t,x)| dy ds \\
&\quad \leq |v(t,x)| \iint \psi(r,z)   \left|  \eta_{\alpha^{-1}}(t-\alpha r,x - \alpha z) \frac{1+|x|^2}{1+|x - \alpha z|^2}-1   \right| dz dr \xrightarrow{\epsilon \to 0} 0.
\end{align*}
Combining both of these estimates with inequality (\ref{vepvpointwise}), we conclude that  $v_\epsilon(t,x) \to v(t,x)$ for a.e. $(t,x) \in \R^{d+1}$.

To upgrade the convergence to $v_\epsilon \to v$ in $L^1_\loc([0,+\infty) \times \Rd)$, we apply the Kolmogorov-Riesz-Fr\'echet theorem \cite[Theorem 4.26]{Brezis}.  Fix $R>0$ and let
\begin{align*}
w_{\epsilon,R}(t,x) = \eta_{R-1}(t,x) v_\epsilon(t,x) , \ \forall (t,x) \in \R \times \Rd .
\end{align*}
First, note that, if $\omega_d$ is the volume of the $d$-dimensional unit ball, inequality (\ref{firstvepsestimate}) ensures
\begin{align} \label{secondvepsestimate}
\|w_{\epsilon,R} \|_{L^1(\R^{d+1})} &\leq \|v_\epsilon\|_{L^1(B_{R})} \leq  \int_{-R}^R \int_{|x|\leq R}  (1+|x|^2) \left|\frac{|v_\epsilon(t,x)|^2}{1+|x|^2} \right| dx dt \\
& \leq (1+R^2) \omega_d R^d \int_{-R}^R  \left\|\frac{|v_\epsilon(t,\cdot)|^2}{1+|\cdot|^2} \right\|_{L^\infty(\Rd)} dt\leq  (1+R^2) \omega_d R^d  \int_{R-1}^{R+1} \left\|   \frac{|v(s,\cdot)|^2}{1+|\cdot|^2}  \right\|_{L^\infty(\Rd)}  ds . \nonumber
\end{align}
Thus, $\{w_{\epsilon,R} \}_{\epsilon \in(0,1)} \subseteq L^1(\R^{d+1})$ is bounded.
It remains to show that, 
\begin{align} \label{KRF}
\lim_{h \to 0}  \sup_{(y,s) \in B_h(0)} \iint | w_{\epsilon,R}(t-s,x-y) -w_{\epsilon,R}(t,x) | dx dt = 0 , \ \text{ uniformly in $\epsilon \in (0,1)$.}
\end{align}
Then the Kolmogorov-Riesz-Fr\'echet theorem will ensure that $\{w_{\epsilon,R}\}_{\epsilon \in(0,1)}$ is  relatively compact in $L^1_\loc(\R^{d+1})$. Thus, for any $K \subseteq \subseteq [0,+\infty) \times \Rd$, choosing $R>0$ sufficiently large so that $w_{\epsilon, R} = v_\epsilon$ on $K$, we see that $\{v_\epsilon\}_{\epsilon \in (0,1}$ is relatively compact in $L^1(K)$. Thus, $\{ v_\epsilon \}_{\epsilon \in(0,1)}$ is relatively compact in $L^1_\loc([0,+\infty) \times \R^{d})$.  By the pointwise a.e. convergence of $v_\epsilon$ to $v$ and uniqueness of limits, this shows that $v_\epsilon \to v$ in $L^1_\loc([0,+\infty) \times \Rd)$.

The remainder of the proof is devoted to showing (\ref{KRF}). For   $h \in (0,1)$ and $(s,y) \in B_h(0)$,
\begin{align*}
&\iint | w_{\epsilon,R}(t-s,x-y) -w_{\epsilon,R}(t,x) | dx dt\\
 &\quad \leq \iint | \eta_{R-1}(t-s,x-y) -\eta_{R-1}(t,x)||v_\epsilon(t-s,x-y) |  + |\eta_{R-1}(t,x)||v_\epsilon(t-s,x-y)-v_\epsilon(t,x) | dx dt \\
  &\quad \leq h  \|\nabla \eta_{R-1} \|_{\infty} \|v_\epsilon\|_{L^1(B_{R+1})}  + \int_{B_{R}} |v_\epsilon(t-s,x-y)-v_\epsilon(t,x) | dx dt.
\end{align*}
By the uniform bound on $\|v_\epsilon\|_{L^1(B_{R+1})}$ shown in  inequality (\ref{secondvepsestimate}), for arbitrary $R>0$,  as $h \to 0$, the first term goes to zero uniformly in $\epsilon \in (0,1)$ and $(y,s) \in B_h(0)$. It remains  to show the same is true of the second term.

By the definition of $v_\epsilon$ in equation (\ref{vepsilondef}), we may rewrite the second term as
\begin{align*}
&\iint_{B_{R}} \left|  \iint \left( (1+|x-y|^2)  \psi_{\alpha}( t-s-r,x-y-z)  - (1+|x|^2)    \psi_{\alpha}( t-r,x-z) \right) \eta_{\alpha^{-1}}(r,z)   \frac{ v(r,z)}{1+|z|^2} dz dr \right| dx dt \\
&\leq \iint_{B_{R}} \iint_{B_{R+2}} \left||y|^2-2xy \right| \psi_{\alpha}( t-s-r,x-y-z)    \frac{ |v(r,z)|}{1+|z|^2}  dzdr \ dx dt  \\
& \quad   + \iint_{B_{R}} (1+|x|^2)  \left|  \iint_{B_{R+2}} \left( \psi_{\alpha}( t-s-r,x-y-z) - \psi_{\alpha}( t-r,x-z) \right) \eta_{\alpha^{-1}}(r,z)   \frac{ v(r,z)}{1+|z|^2} dz dr  \right|   dx dt \\
&\leq (h^2 + 2hR) \omega_d (R+2)^d  \int_{-R-2}^{R+2} \left\|      \frac{ |v(r,\cdot)|}{1+|\cdot|^2} \right\|_\infty dr   \\
&\quad   + (1+R^2) \iint    \left|  \iint_{B_{R+2}}   \psi_{\alpha}( t-r,x-z)  \left( \eta_{\alpha^{-1}}(r+s,z+y)   \frac{ v(r+s,z+y)}{1+|z+y|^2}  -  \eta_{\alpha^{-1}}(r,z)   \frac{ v(r,z)}{1+|z|^2}   \right)dz dr  \right|   dx dt .\\
\end{align*}
Again, the first term goes to zero as $h \to 0$, so it suffices to consider the last term.  We may bound this from above by $(1+R^2)$ times the following quantity:
\begin{align} \label{lastKRF}
&  \iint_{B_{R+2}}      \left| \eta_{\alpha^{-1}}(r+s,z+y)   \frac{ v(r+s,z+y)}{1+|z+y|^2}  -  \eta_{\alpha^{-1}}(r,z)   \frac{ v(r,z)}{1+|z|^2}   \right|dz dr     \\
&\leq    \iint_{\R^{d+1}}      \left| 1_{B_{R+2}}(r+s,z+y)  \eta_{\alpha^{-1}}(r+s,z+y)  \frac{ v(r+s,z+y)}{1+|z+y|^2}  -  1_{B_{R+2}}(r,z) \eta_{\alpha^{-1}}(r,z)   \frac{ v(r,z)}{1+|z|^2}   \right|dz dr     \nonumber
\end{align}
By the dominated convergence theorem, 
\[ 1_{R+2}(r,z) \eta_{\alpha^{-1}}(r,z)  v(r,z)/(1+|z|^2) \xrightarrow{\epsilon \to 0} 1_{R+2}(r,z) v(r,z)/(1+|z|^2) \quad \text{ in } \quad L^1(\R^{d+1}) . \]
 Thus, $\{1_{2R+2}(r,z) \eta_{\alpha^{-1}}(r,z)  v(r,z)/(1+|z|^2)\}_{\alpha  \in (0,1)}$ is relatively compact in $L^1(\Rd)$. Thus, by the converse to Kolmogorov-Riesz-Fr\'echet \cite[Corollary 4.27]{Brezis}, the right hand side of (\ref{lastKRF}) goes to zero uniformly in $\epsilon \in (0,1)$ and $(s,y) \in B_h(0)$ as $h \to 0$. 
 
\end{proof}

We now prove Corollary \ref{particlecorollary}, which shows convergence of solutions of (\ref{epsiloneqn}) with particle initial data to solutions of (\ref{PDE}), provided that the particle initial data approximates the well-prepared initial data sufficiently quickly.

\begin{proof}[Proof of Corollary \ref{particlecorollary}]
Let    $\rho_\epsilon \in AC^2_\loc([0,+\infty), \P_2(\Rd))$ be the solution of (\ref{epsiloneqn}) with initial data $\rho^0_\epsilon$ and velocity $v_\ep$. By Theorem \ref{mainthm}, there there exists $\rho \in AC^2_\loc([0,+\infty); \P_2(\Rd)) $ so that, up to a subsequence, $\rho_\epsilon(t) \to \rho(t)$ in   1-Wasserstein for all $t \geq 0$, and $\rho$ is a solution of (\ref{PDE}) with initial data $\rho^0$. Committing a mild abuse of notation, let $\rho_\epsilon$ denote this convergent subsequence.

By the stability estimate in Lemma \ref{wellposedpdeepslem}, for all $t \in [0,T]$,
\begin{align*}
W_2(\rho_\epsilon(t), \hat{\rho}_{\epsilon}(t)) \leq W_2(\rho_{\epsilon}^0, \hat\rho_{\epsilon}^0) (1+ C_\epsilon T e^{C_\epsilon T}) .
\end{align*}
This implies that $W_1(\rho_\epsilon(t), \hat{\rho}_{\epsilon}(t)) \xrightarrow{\epsilon \to 0} 0$, so   $\hat{\rho}_\epsilon(t) \to \rho(t)$ in $W_1$ for all $t \in [0,T]$.
\end{proof}

\subsection{Energy estimates and compactness}

We continue with a bound, in terms of the second moment, on the negative part of the entropy.

\begin{lem}
\label{lem:integrability via moments}
    For any  $\alpha \in \left( 0,\frac{2}{2+d}\right)$, there exists $C_{\alpha, d}>0$ so that, for all  nonnegative functions $g\in L^1(\R^d)$,
\begin{align}
\label{eq:integrability via moments}
\int_{\RR^d} g^{1-\alpha}\, dx&\leq C_{\alpha,d} \left\| g(x)(1+|x|)^2 \right\|_{L^1(\Rd)}^{1-\alpha} 
\text{ and}\\
\int_{\Rd} g(x)\log(g(x))_-\, dx &\leq C_{\alpha, d} \left( \|g\|_{L^1(\Rd)} + M_2(g)\right)^{(1-\alpha)}. \label{lem:moments_give_integrability}
\end{align}
\end{lem}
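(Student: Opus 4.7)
The plan is to prove the first inequality by a direct application of H\"older's inequality with the weight $(1+|x|)^2$, and then to deduce the second inequality from the first via an elementary comparison between $s \log(1/s)$ and $s^{1-\alpha}$ for $s \in (0,1)$.

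For the first bound, I would write
\[
\int_{\R^d} g^{1-\alpha}\,dx = \int_{\R^d} \bigl(g(x)(1+|x|)^2\bigr)^{1-\alpha}(1+|x|)^{-2(1-\alpha)}\,dx
\]
and apply H\"older's inequality with conjugate exponents $1/(1-\alpha)$ and $1/\alpha$, yielding
\[
\int_{\R^d} g^{1-\alpha}\,dx \leq \bigl\|g(x)(1+|x|)^2\bigr\|_{L^1(\R^d)}^{1-\alpha}\left(\int_{\R^d}(1+|x|)^{-2(1-\alpha)/\alpha}\,dx\right)^{\alpha}.
\]
The weight integral on the right converges precisely when $2(1-\alpha)/\alpha > d$, i.e.\ when $\alpha < 2/(d+2)$, which is exactly the stated hypothesis. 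This produces (\ref{eq:integrability via moments}) with an explicit constant $C_{\alpha,d}$ depending only on $\alpha$ and $d$.

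For the second bound, I would use the elementary calculus fact that, for any $\alpha>0$, $\log(x)\leq x^{\alpha}/\alpha$ for all $x \geq 1$ (both sides agree at $x=1$ up to the $1/\alpha$ cushion, and the derivative of the right minus left is nonnegative on $[1,\infty)$). Applied at $x=1/s$ for $s\in(0,1)$, this gives $s(\log s)_- = s\log(1/s) \leq s^{1-\alpha}/\alpha$, so that
\[
\int_{\R^d} g(\log g)_-\,dx = \int_{\{g<1\}} g\log(1/g)\,dx \leq \frac{1}{\alpha}\int_{\{g<1\}} g^{1-\alpha}\,dx \leq \frac{1}{\alpha}\int_{\R^d} g^{1-\alpha}\,dx.
\]
Combining this with (\ref{eq:integrability via moments}) and the trivial estimate $\|g(1+|\cdot|)^2\|_{L^1(\R^d)} \leq 2\|g\|_{L^1(\R^d)}+2M_2(g)$ yields (\ref{lem:moments_give_integrability}) after adjusting the constant.

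There is no serious obstacle here; the only subtlety is pinning down the allowed range of $\alpha$, which is forced by the integrability of the weight $(1+|x|)^{-2(1-\alpha)/\alpha}$ at infinity. The sharp threshold $\alpha<2/(d+2)$ drops out of this computation immediately, which is why the hypothesis in the statement takes the form it does.
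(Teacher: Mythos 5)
Your proof is correct and follows essentially the same route as the paper: Hölder's inequality against a power weight for the first estimate, followed by the elementary calculus bound $s(\log s)_- \lesssim_\alpha s^{1-\alpha}$ for the second. The only difference is cosmetic — you apply Hölder directly with the weight $(1+|x|)^2$, while the paper introduces an auxiliary parameter $p\in(0,2)$ and then uses $(1+|x|)^p\le(1+|x|)^2$; your version is the cleaner of the two.
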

\begin{proof}
%If $\alpha =0$, the inequality is immediate, since $1 \leq (1+|x|)^p$ for any $p >0$. Thus, we may assume without loss of generality that $\alpha >0$. 
For fixed $r\in [1,2)$, H\"older's inequality  with   $p=\frac{1}{1-\alpha}$ and $q=\frac{1}{\alpha}$ implies
\[
\int_{\RR^d} g(x)^{1-\alpha}=\int_{\RR^d} g(x)^{1-\alpha}(1+|x|)^{r/2}(1+|x|)^{-r/2} \leq 
\norm{g(1+|x|)^{\frac{r}{2(1-\alpha)}}}_{1}^{1-\alpha}\norm{(1+|x|)^{-\frac{r}{2\alpha}}}_{1}^{\alpha}.
\]
 Since $\alpha < \frac{2}{2+d}$, there exists $p \in (0,2)$ so that $\alpha < \frac{p}{p+d}$. Thus, letting $r=2p(1-\alpha)$, we see that  $\frac{r}{2\alpha} = \frac{p(1-\alpha)}{\alpha}>d$. Thus, for  $C_{\alpha,d} = \norm{(1+|x|)^{-\frac{p(1-\alpha)}{\alpha}} }_{1}^{\alpha}$, we have 
 \[
 \int_{\RR^d} g^{1-\alpha}\, dx\leq C_{\alpha,d} \left\| g(x)(1+|x|)^p \right\|_{1}^{1-\alpha}. 
 \]
 Using that $p<2$, $(1+|x|)\geq 1$, and $g$ nonnegative therefore yields
 (\ref{eq:integrability via moments}). 
 
To prove (\ref{lem:moments_give_integrability}), 
note that for any $\alpha>0$, 
$\sup_{a\in (0,1)}-a^{\alpha}\log(a)\leq \frac{1}{\alpha e}$, which implies $-a\log(a)\leq \frac{a^{1-\alpha}}{\alpha e}$ for $a \in (0,1)$. 
So, for $\alpha \in \left( 0,\frac{2}{2+d}\right)$, we find, 
\begin{align*}
     \int_\Rd  g(x) (\log (g(x)))_- \, dx &\leq \frac{1}{\alpha e} \int_{ \RR^d} g(x)^{1-\alpha} \, dx\leq C_{\alpha,d}\left\| g(x)(1+|x|)^2 \right\|_{1}^{1-\alpha},
\end{align*}
where the second inequality follows from (\ref{eq:integrability via moments}). 

\end{proof}

It is a well-known fact that collections of probability measures with uniformly bounded second moment and entropy have densities that are precompact weakly in $L^1(\Rd)$. For lack of a reference, we give a proof, which combines  estimate (\ref{lem:moments_give_integrability}) from Lemma \ref{lem:integrability via moments} and the Dunford-Pettis Theorem  \cite[Theorem 4.30]{BrezisFunctionalAnalysis}.

 \begin{lem}[Variant of Dunford-Pettis]\label{dunfordpettislemma}
 Consider $\{\nu_n\}_{n \in \mathbb{N}} \subseteq \P_2(\Rd)$ with $\sup_n M_2(\nu_n)+ \S(\nu_n)<+\infty$, so that, in particular, we have  $\nu_n \ll dx$ $\forall n \in \mathbb{N}$. Then, letting $\nu_n$   denote the corresponding densities, $d \nu_n(x) = \nu_n(x) d x$, we have that the densities $\{\nu_n\}_{n \in \mathbb{N}}$ are uniformly integrable and precompact weakly in $L^1(\Rd)$.
 \end{lem}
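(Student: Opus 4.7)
The plan is to combine two facts: the finite entropy hypothesis ultimately yields equi-absolute continuity of the densities, while the second moment bound controls escape to infinity, and then to invoke the Dunford-Pettis theorem \cite[Theorem 4.30]{BrezisFunctionalAnalysis}. Since $\S(\nu_n)<+\infty$ and $\S$ is defined via (\ref{entropydef}) as $+\infty$ unless the measure is absolutely continuous, each $\nu_n$ has a density, and $\|\nu_n\|_{L^1(\Rd)}=1$ uniformly in $n$.

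First I will control the positive part $(\nu_n \log \nu_n)_+$ uniformly in $n$. The assumption $\sup_n \S(\nu_n)<+\infty$ gives $\sup_n \int \nu_n\log\nu_n\,dx<+\infty$. Rewriting $\int (\nu_n\log\nu_n)_+\,dx = \S(\nu_n) + \int \nu_n(\log\nu_n)_-\,dx$, the negative part is bounded via Lemma \ref{lem:integrability via moments}, inequality (\ref{lem:moments_give_integrability}), by $C_{\alpha,d}(1+M_2(\nu_n))^{1-\alpha}$ for any $\alpha\in(0,2/(2+d))$. Hence $\sup_n \int (\nu_n\log\nu_n)_+\,dx<+\infty$.

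Equi-absolute continuity then follows from the de la Vallée Poussin criterion applied with the convex function $\Phi(s) := (s\log s)_+$, which satisfies $\Phi(s)/s\to+\infty$ as $s\to+\infty$. Explicitly, for any measurable $E\subseteq\Rd$ and any $M>1$,
\begin{equation*}
\int_E \nu_n\,dx \leq M\,|E| + \int_{\{\nu_n>M\}} \nu_n\,dx \leq M\,|E| + \frac{1}{\log M}\int_\Rd (\nu_n\log\nu_n)_+\,dx,
\end{equation*}
so choosing $M$ large and then $|E|$ small makes the right-hand side arbitrarily small, uniformly in $n$. Tightness is immediate from the second moment bound: Markov's inequality gives $\int_{|x|>R}\nu_n\,dx \leq M_2(\nu_n)/R^2$, which vanishes uniformly as $R\to+\infty$.

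Combining the uniform $L^1$ bound, the equi-absolute continuity, and the tightness, Dunford-Pettis \cite[Theorem 4.30]{BrezisFunctionalAnalysis} yields weak precompactness of $\{\nu_n\}_{n\in\N}$ in $L^1(\Rd)$. There is no real obstacle here beyond identifying the correct auxiliary function for de la Vallée Poussin and recognizing that (\ref{lem:moments_give_integrability}) supplies precisely the missing estimate on the negative part of $\nu_n\log\nu_n$ needed to transfer the entropy bound into a bound on $(\nu_n\log\nu_n)_+$.
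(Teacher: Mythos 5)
Your proof is correct and follows essentially the same route as the paper: use inequality (\ref{lem:moments_give_integrability}) to bound $\int \nu_n(\log\nu_n)_-$ via the second moment, deduce a uniform bound on $\int(\nu_n\log\nu_n)_+$, obtain tightness from $M_2$, and conclude by Dunford-Pettis. The only cosmetic difference is that you prove equi-integrability by the explicit inequality $\int_E\nu_n\leq M|E|+(\log M)^{-1}\int(\nu_n\log\nu_n)_+$ rather than formally invoking de la Vall\'ee Poussin (which the paper applies after first localizing to a compact set); this is a minor simplification, not a different argument.
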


\begin{proof}

%First, we use the bound on the second moments to control the negative part of the entropy.
 %To this end, we use the fact that for any $\alpha>0$, 
%\[
%\sup_{a\in (0,1)}-a^{\alpha}\log(a)\leq \frac{1}{\alpha e} \implies \sup_{a\in (0,1)}-a\log(a)\leq \frac{a^{1-\alpha}}{\alpha e} .
%\]
%Thus, for any $\alpha\in (0,1)$
%\[
 % \int_\Rd  \nu_n(x) (\log (\nu_n(x)))_- \, dx \leq \frac{1}{\alpha e} \int_{ \RR^d} \nu_n(x)^{1-\alpha} \, dx.
%\]
%If  $\alpha  <\frac{2}{d+2}$, then it follows from  Lemma \ref{lem:moments_give_integrability}
%that there exists a constant $C_d$ such that
%\[
 %\int_\Rd  \nu_n(x) (\log (\nu_n(x))_- \, dx \leq C_d \left(M_{2}(\nu_n)+1\right)^{1-\alpha} .
%\]
Estimate (\ref{lem:moments_give_integrability}) from Lemma \ref{lem:integrability via moments} 
applied with, say, $\alpha=1/(2+d)$, implies that there exists a constant $C_d$ such that, for all $n\in \N$,
\[
 \int_\Rd  \nu_n(x) (\log (\nu_n(x))_- \, dx \leq C_d \left(M_{2}(\nu_n)+1\right)^{1-\alpha} .
\]
Therefore, by our hypotheses that $\sup_n M_2(\nu_n) <+\infty$ and $\sup_n \S(\nu_n)<+\infty$, we have
\begin{align} \label{absentropybound} \sup_n \int_\Rd \nu(x) (\log(\nu_n(x)))_+ \, dx <+\infty . 
\end{align}

Now, we prove that $\{\nu_n\}_{n \in \mathbb{N}}$ is weakly precompact in $L^1(\Rd)$. By the Dunford-Pettis Theorem   \cite[Theorem 4.30]{BrezisFunctionalAnalysis}, it suffices to show $\{\nu_n\}_{n \in \mathbb{N}}$ is tight and uniformly integrable.
 Since   $\sup_{n \in \mathbb{N}} M_2(\nu_n)<+\infty$, we immediately obtain $\{\nu_n\}_{n \in \mathbb{N}}$ is tight: that is, for all $\delta >0$, there exists $K_\delta \subseteq   \Rd$ compact so that $\sup_{n \in \mathbb{N}} \nu_n(K_\delta^c) = \sup_{n \in \mathbb{N}} \int_{K_\delta^c}  \nu_n(x) dx< \delta$. % Thus, we conclude that $\{\nu_n\}_{n \in \mathbb{N}} \subseteq L^1(\Rd)$ is tight.

Next, we show $\{\nu_n\}_{n \in \mathbb{N}} \subseteq L^1(\Rd)$ is uniformly integrable. 
 Fix $\delta >0$ and choose $K_{\delta/2}$ as above. Since $\Phi(x) = x (\log(x))_+$ is a nonnegative, nondecreasing function with $\lim_{x \to +\infty} \Phi(x)/x=+\infty$ and inequality (\ref{absentropybound}) ensures 
 \[ \sup_n \int \Phi(|\nu_n(x)|) \, dx <+\infty ,\]
 by the theorem of de la Vall\'ee Poussin \cite[Theorem 4.5.9]{bogachev2007measure}, $\mathbf{1}_{K_{\delta/2}}(x) \nu_n(x)$ is uniformly integrable on $ \Rd$. Thus, there exists $\delta'>0$ so that, for all Borel subsets $A$ of $ \Rd$ with $\mathcal{L}^{d}(A)< \delta'$,
\[ \int_A \nu_n  = \int_{A \cap  K_{\delta/2}^c} \nu_n + \int_{ A \cap    K_{\delta/2}} \nu_n \leq \int_{K_{\delta/2}^c}   \nu_n + \int_A \mathbf{1}_{ K_{\delta/2}}(x) \nu_n(x) dx < \frac{\delta}{2}+\frac{\delta}{2} = \delta .\]
This shows $\{\nu_n\}_{n \in \mathbb{N}} \subseteq L^1( \Rd)$ is uniformly integrable, which completes the proof.

\end{proof}

Next we proof Lemma \ref{energydissipationrelationlem}, which shows the energy dissipation inequality for solutions of (\ref{epsiloneqn}).

\begin{proof}[Proof of Lemma \ref{energydissipationrelationlem}]
We aim to compute the time derivative of $\int_{\R^d}f_\ep(\mu_\ep)$, where $\mue=\phie*\rho_\ep$. To this end, we first show that $s\mapsto\mu_\ep(s,x)$ is differentiable for all $x\in \R^d$, for almost every $s>0$. To see this, we first use the representation $\rho_\ep(s)=X_\ep(s)_\#\rho^0_\ep$ as well as the change of variables formula for the pushforward to find,
\[
\mu_\ep(s,x)=(\varphi_\ep*X_\ep(s)_\#\rho^0_\ep)(x)=\int\phie(x-y)(X_\ep(s)_\#\rho^0_\ep)(y)\, dy = \int \phie(X_\ep(s,y)-x)\rho_\ep^0(y)\, dy.
\]
And, we have, for almost every $s>0$, 
\begin{align*}
\frac{d}{ds}(\phie(X_\ep(s,y)-x)\rho_\ep^0(y))&=\nabla\phie(X_\ep(s,y)-x)\cdot \partial_sX_\ep(s,y)\rho_\ep^0(y))\\
&=\nabla\phie(X_\ep(s,y)-x)\cdot(\nabla p_\ep(X_\ep(s,y))+v_\ep(X_\ep(s,y)))\rho_\ep^0(y)
\end{align*}
where the second equality follows from the definition of $X_\ep$. Now, using the fact that $\nabla\varphi_\ep$, $\nabla p_\ep$, and $v_\ep$ are bounded in $L^\infty$ (see  Lemma \ref{muepqeplem}(\ref{item:pepbdd}) and Assumption \ref{mollifieras}) yields that the expression in the previous line is integrable. Combining the two previous displayed equations thus yields that $s\mapsto\mu_\ep(s,x)$ is differentiable, with
\[
\frac{d}{ds}\mu_\ep(s,x)=\int \nabla\phie(X_\ep(s,y)-x)\cdot(\nabla p_\ep(X_\ep(s,y))+v_\ep(X_\ep(s,y)))\rho_\ep^0(y)\, dy.
\]
for almost every $s>0$. 

Since $f_\ep$ is differentiable on $[0,+\infty)$, this yields that the composition $s\mapsto f_\ep(\mu_\ep(s,x))$ is differentiable, with
\begin{equation}
\label{eq:ddt fep}
\frac{d}{ds}f_\ep(\mu_\ep(s,x))=f'_\ep(\mu_\ep(s,x))\int \nabla\phie(X_\ep(s,y)-x)\cdot(\nabla p_\ep(X_\ep(s,y))+v_\ep(X_\ep(s,y)))\rho_\ep^0(y)\, dy,
\end{equation}
for almost every $s>0$. 
The desired estimate will follow by integrating in $s$ and $x$, and interchanging the order of integration. To do this, we consider,
\[
g(s,x,y)=f'_\ep(\mu_\ep(s,x)) \nabla\phie(X_\ep(s,y)-x)\cdot(\nabla p_\ep(X_\ep(s,y))+v_\ep(X_\ep(s,y)))\rho_\ep^0(y).
\]
We want to establish that $|g|$ is integrable. We have,
\begin{align*}
\int_0^t\iint |g|\, dx\, dy\, ds&=
\int_0^t\int \rho_\ep^0(y) \left |\nabla p_\ep(X_\ep(s,y))+v_\ep(X_\ep(s,y)))\right| \int \left|f'_\ep(\mu_\ep(s,x))\right|\left| \nabla\phie(X_\ep(s,y)-x)\right|\, dx\, dy\, ds\\
&=\int_0^t\int \rho_\ep^0(y) \left |\nabla p_\ep(X_\ep(s,y))+v_\ep(X_\ep(s,y)))\right| \left(\left|f'_\ep(\mu_\ep)\right|*\left| \nabla\phie\right|\right)(X_\ep(s,y))\, dy\, ds\\
&= \int_0^t\int \rho_\ep(s,y) \left |\nabla p_\ep(s,y)+v_\ep(s,y)\right| \left(\left|f'_\ep(\mu_\ep)\right|*\left| \nabla\phie\right|\right)(s,y)\, dy\,ds
\end{align*}
where we've used the representation $\rho_\ep(s)=X_\ep(s)_\#\rho^0_\ep$ as well as the change of variables formula for the pushforward to obtain the third equality.  Our assumption $\nabla\varphi_\ep\in L^1(\rr^d)$, and the estimate of Lemma \ref{muepqeplem}(\ref{item:qepbdd}),  guarantee $\|\left|f'_\ep(\mu_\ep)\right|*\left| \nabla\phie\right|\|_{L^\infty(\R^d)}<+\infty$. Using this, together with Lemma \ref{muepqeplem}(\ref{item:pepbdd}), and our hypotheses on $v_\ep$,  we find
\begin{align}
\label{eq:|g|}
\int_0^t\iint |g|\, dx\, dy\,ds<+\infty.
\end{align}

Now, performing the same calculation but with $g$ instead of $|g|$, we find,
\begin{align*}
    \int_0^t\iint g\, dx\, dy\, ds&=
 \int_0^t\int \rho_\ep(s,y) \left(\nabla p_\ep(s,y)+v_\ep(s,y)\right) \left(f'_\ep(\mu_\ep)* \nabla\phie\right)(s,y)\, dy\,ds.
 \end{align*}
 We now note $f'_\ep(\mu_\ep)* \nabla\phie = \nabla (f'_\ep(\mu_\ep)* \phie)=\nabla p_\ep$. Using this in the previous line yields,
 \begin{align}
 \label{eq:int g}
    \int_0^t\iint g\, dx\, dy\, ds&=
 \int_0^t\int \rho_\ep(s,y) \left(\nabla p_\ep(s,y)+v_\ep(s,y)\right) \nabla p_\ep(s,y)\, dy\, ds.
 \end{align}

Next, using the definition of $\F_\ep$, followed by  (\ref{eq:ddt fep}), as well as Fubini's Theorem, we find,
\begin{align*}
    \F_\ep(\rho_\ep(t))-\F_\ep(\rho_\ep(0))&= \int_0^t \frac{d}{ds}\int f_\ep(\mu_\ep(s,x))\, dx\, ds\\
    &= \int_0^t\iint g\, dx\, dy\, ds\\
    &=
 \int_0^t\int \rho_\ep(s,y) \left(\nabla p_\ep(s,y)+v_\ep(s,y)\right) \nabla p_\ep(s,y)\, dy\, ds,
\end{align*}
where the final equality follows from (\ref{eq:int g}). Rearranging yields the result.
\end{proof}

\subsection{Basic properties of integrals of convex functions}

In the next lemma, we show that to verify that Assumption \ref{internalas}(\ref{moment_control}) holds for an energy density $f$, it suffices to consider $\rho$ that are absolutely continuous with respect to $dx$.
\begin{lem}
\label{lem:F}
Let $f$ be an energy density satisfying  Assumption \ref{internalas} items (\ref{convexlctsas}) and  (\ref{nontrivial_convex}). Suppose  (\ref{seconddersing}) of Assumption \ref{internalas}(\ref{moment_control}) is satisfied for any finite Borel measure $\rho\in \mathcal{M}(\Rd)$ with finite second moment and such that $\rho \ll dx$. Then  Assumption \ref{internalas}(\ref{moment_control}) holds for $f$. %  . 
\end{lem}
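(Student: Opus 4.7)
The plan is to exploit the fact that $\F$ on general (possibly singular) finite Borel measures is defined as the $W_2$-lower semicontinuous envelope of its restriction to $\P_{2,\text{ac}}(\Rd)$, for which the paper invokes \cite[eq.~(9.3.10)]{ambrosiogiglisavare}. That definition guarantees the existence of a \emph{recovery sequence} $\rho_n \ll dx$ approximating any given $\rho$, along which $\F(\rho_n) \to \F(\rho)$. On such a sequence the hypothesis applies, and one transfers the bound to $\rho$ by passing to the limit using the continuity of $H$ and the convergence of second moments.

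\paragraph{Main steps.} First, I would fix an arbitrary finite Borel measure $\rho$ with finite second moment. Using the relaxation characterization of $\F$ (equivalently, the explicit formula $\F(\rho) = \int f(\rho^{\mathrm{ac}})\,dx + f'_\infty \rho^{s}(\Rd)$ from AGS 9.3.10, together with the identification of this expression as the $W_2$-lower semicontinuous envelope, cf.\ \cite[Thm.~9.4.11]{ambrosiogiglisavare}), I would extract a sequence $\{\rho_n\} \subseteq \mathcal{M}(\Rd)$ with $\rho_n \ll dx$, finite second moment, $\rho_n \to \rho$ narrowly with $M_2(\rho_n) \to M_2(\rho)$ (i.e.\ in $W_2$ after normalization), and $\F(\rho_n) \to \F(\rho)$.

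Second, since each $\rho_n$ is absolutely continuous with finite second moment, the hypothesis yields
\[
-H\!\left(\int_\Rd (1+|x|^2)\,d\rho_n(x)\right) \leq \F(\rho_n).
\]
Third, I would pass to the limit $n \to \infty$. Convergence of second moments gives $\int (1+|x|^2)\,d\rho_n \to \int (1+|x|^2)\,d\rho$. Since $H$ is concave on $[0,+\infty)$ with $H(0)=0$, it is in particular continuous on $[0,+\infty)$, so the left-hand side converges to $-H(\int (1+|x|^2)\,d\rho)$. Combined with $\F(\rho_n) \to \F(\rho)$, this establishes
\[
-H\!\left(\int_\Rd (1+|x|^2)\,d\rho(x)\right) \leq \F(\rho),
\]
as required.

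\paragraph{Main obstacle.} The substantive content of the argument is the existence of the recovery sequence with \emph{both} $\F(\rho_n) \to \F(\rho)$ and $M_2(\rho_n) \to M_2(\rho)$. If $\rho \ll dx$, the constant sequence $\rho_n \equiv \rho$ works and the proof is immediate. Otherwise, one must treat the singular part $\rho^s$: if $f'_\infty = +\infty$, then $\F(\rho) = +\infty$ and the bound is vacuous; if $f'_\infty < +\infty$, one constructs $\rho_n$ by replacing $\rho^s$ with narrow tall spikes whose heights diverge so that $f(\rho_n)/\rho_n \to f'_\infty$ on their supports (e.g.\ partitioning $\spt(\rho^s)$ into small cubes and placing on each cube a uniform density of the appropriate total mass concentrated on a shrinking subcube), which is the classical construction behind \cite[Thm.~9.4.11]{ambrosiogiglisavare}. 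Moment convergence can be ensured by localizing each spike close to the original support of $\rho^s$. Since this construction is standard, the proof here amounts to invoking it, and the only delicate bookkeeping is checking simultaneous convergence of $\F$ and $M_2$, which follows from the locality of the construction.
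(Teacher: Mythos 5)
Your argument is correct and rests on the same foundation as the paper's, namely the $W_2$-lsc characterization of $\F$. The difference is in how you close the argument. The paper applies the hypothesis to an \emph{arbitrary} $W_2$-approximating sequence $\rho_n \ll dx$, passes to the $\liminf$ using continuity of $H$ and $W_2$-convergence of second moments, and then observes that since this lower bound holds for every admissible sequence, it passes to the infimum over all such sequences, which is exactly $\F(\rho)$ by definition. In your version you instead fix a \emph{recovery sequence} with $\F(\rho_n)\to\F(\rho)$, which forces you to confront the existence of such a sequence (your ``main obstacle''), and you spend most of your effort sketching a spike construction for the singular part. That construction is not needed: the infimum characterization alone closes the proof, and no recovery sequence has to be exhibited. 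Your route is valid, since recovery sequences do exist in the $W_2$-metrizable relaxation, but it imports work (and a case analysis on $f'_\infty$ and $\rho^s$) that the paper's argument avoids entirely by never demanding $\F(\rho_n)\to\F(\rho)$ along a single sequence.
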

\begin{proof}
    Fix a finite Borel measure $\rho\in \mathcal{M}(\Rd)$ with finite second moment. By the definition of $\F(\rho)$ via $W_2$-lower semicontinuity \cite[equation(9.3.10)]{ambrosiogiglisavare}, we have
    \[
    \F(\rho)=\inf\left\{\liminf_{n\rightarrow\infty}\F(\rho_n) \right\},
    \]
    where the infimum is taken over sequences of finite Borel measures $\{\rho_n\}\subset \mathcal{M}(\R^d)$ with $\rho_n\ll dx$ and such that $W_2(\rho_n, \rho)\rightarrow 0$. By assumption, for any such $\rho_n$, we have
    \[
    -H(1+M_2(\rho_n))\leq \F(\rho_n).
    \]
    Since $H$ is continuous, and $W_2(\rho_n, \rho)\rightarrow 0$ implies $M_2(\rho_n)\rightarrow M_2(\rho)$, we find,
    \[
    -H(1+M_2(\rho))\leq \liminf_{n\rightarrow\infty}\F(\rho_n) \leq F(\rho),
    \]
    where the last equality follows from the definition of $\F(\rho)$.
\end{proof}

We now use Lemmas  \ref{lem:integrability via moments} and \ref{lem:F} to verify Assumption \ref{internalas} in several important cases. 
\begin{lem}[Main examples]
\label{lem:example}
    Assumption \ref{internalas} is satisfied by the internal energy densities corresponding to the heat equation, fast diffusion equations, and height constrained transport, defined, respectively, by
    \[
    f_{\rm{heat}}(s)=s\log s-s,\quad f_{\rm{fast}}(s)=\frac{1}{m-1}s^m,\quad f_{height}(s)=\iota_{[0,1]}
    \]
    for $s\geq 0$ and taking the value $+\infty$ for $s<0$, and with $m\in \left(1-\frac{2}{d+2}, 1\right)$. 
\end{lem}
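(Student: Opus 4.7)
\medskip

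\noindent\textbf{Proof proposal for Lemma \ref{lem:example}.}

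The plan is to verify Assumption \ref{internalas} for each of the three choices separately. Conditions (\ref{convexlctsas}) and (\ref{nontrivial_convex}) are straightforward in all three cases: $f_{\rm heat}$ is smooth and strictly convex on $(0,+\infty)$ with $f(0)=0$ and interior $(0,+\infty)$; $f_{\rm fast}$ is smooth and convex on $[0,+\infty)$ (note $s^m$ is concave for $m\in(0,1)$, hence $s^m/(m-1)$ is convex since $m-1<0$) with $f(0)=0$; and $f_{\rm height}=\iota_{[0,1]}$ has domain $[0,1]$ with interior $(0,1)$.

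The content is condition (\ref{moment_control}). By Lemma \ref{lem:F}, it suffices to exhibit the increasing concave $H$ for densities $\rho\in L^1(\R^d)$ with finite second moment. For $f_{\rm height}$, the function is nonnegative, so $\F(\rho)\ge 0$ and we may take $H\equiv 0$. For $f_{\rm heat}$, the positive part of $\rho\log\rho-\rho$ is nonnegative and only the negative part needs controlling; applying estimate (\ref{lem:moments_give_integrability}) of Lemma \ref{lem:integrability via moments} with any fixed $\alpha\in\bigl(0,\tfrac{2}{d+2}\bigr)$ gives
\[
\int_{\R^d}\bigl(\rho\log\rho-\rho\bigr)\,dx\ \ge\ -C_{\alpha,d}\bigl(1+M_2(\rho)\bigr)^{1-\alpha}-1,
\]
so $H(a):=C_{\alpha,d}(1+a)^{1-\alpha}+1-C_{\alpha,d}$ (suitably adjusted to have $H(0)=0$ by subtracting its value at $0$ and noting the bound still holds with a slightly larger constant) is increasing and concave on $[0,+\infty)$.

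The main case is $f_{\rm fast}$, and this is where the range $m\in\bigl(1-\tfrac{2}{d+2},1\bigr)$ enters precisely. Here $f_{\rm fast}(\rho)=\tfrac{1}{m-1}\rho^m\le 0$, so I want to bound $\int\rho^m\,dx$ from above in terms of the second moment. Setting $\alpha:=1-m$, the hypothesis $m>1-\tfrac{2}{d+2}$ is equivalent to $\alpha<\tfrac{2}{d+2}$, which is exactly the range in which estimate (\ref{eq:integrability via moments}) of Lemma \ref{lem:integrability via moments} applies. That estimate yields
\[
\int_{\R^d}\rho^m\,dx\ \le\ C_{\alpha,d}\Bigl(\int_{\R^d}(1+|x|^2)\rho(x)\,dx\Bigr)^{m},
\]
and therefore
\[
\F_{\rm fast}(\rho)\ =\ \tfrac{1}{m-1}\!\int\!\rho^m\ \ge\ -\tfrac{C_{\alpha,d}}{1-m}\Bigl(\int(1+|x|^2)\rho\Bigr)^{m}.
\]
Defining $H(a):=\tfrac{C_{\alpha,d}}{1-m}\,a^{m}$ gives an increasing, concave function on $[0,+\infty)$ with $H(0)=0$, since $m\in(0,1)$, which completes the verification.

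The only mildly subtle step is the fast diffusion case, where one must recognize that the exponent constraint $m>1-\tfrac{2}{d+2}$ is precisely the translation of the admissibility condition $\alpha<\tfrac{2}{d+2}$ in Lemma \ref{lem:integrability via moments}; everything else is bookkeeping.
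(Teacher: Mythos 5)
Your proposal follows the same route as the paper: check Assumption~\ref{internalas}(\ref{convexlctsas})--(\ref{nontrivial_convex}) directly, dispose of $f_{\rm height}$ by nonnegativity, reduce to absolutely continuous $\rho$ via Lemma~\ref{lem:F}, and invoke the two estimates of Lemma~\ref{lem:integrability via moments} --- (\ref{eq:integrability via moments}) with $\alpha=1-m$ for $f_{\rm fast}$ and (\ref{lem:moments_give_integrability}) with a fixed admissible $\alpha$ for $f_{\rm heat}$ --- noting, as the paper does, that the constraint $m>1-\tfrac{2}{d+2}$ is exactly the admissibility of $\alpha=1-m$ in that lemma.

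One small slip in the heat equation case: Assumption~\ref{internalas}(\ref{moment_control}) must hold for \emph{all} finite Borel measures $\rho$ of finite second moment, not only probability measures, so you cannot replace $\|\rho\|_{L^1}$ by $1$. The bound from (\ref{lem:moments_give_integrability}) is
$\int \rho(\log\rho)_-\le C_{\alpha,d}(\|\rho\|_{L^1}+M_2(\rho))^{1-\alpha}$, and $\int\rho=\|\rho\|_{L^1}\le\int(1+|x|^2)\,d\rho$, so
\[
\F_{\rm heat}(\rho)\ \ge\ -\,C_{\alpha,d}\Big(\textstyle\int(1+|x|^2)\,d\rho\Big)^{1-\alpha}\ -\ \textstyle\int(1+|x|^2)\,d\rho.
\]
Taking $H(u)=C_{\alpha,d}\,u^{1-\alpha}+u$ is then increasing, concave, and satisfies $H(0)=0$ on the nose, making your ``suitably adjusted'' step unnecessary. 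With that correction, your argument matches the paper's.
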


\begin{proof}
    It is clear that these three energy densities satisfy Assumption \ref{internalas} items (\ref{convexlctsas}) and (\ref{nontrivial_convex}); since $f_{\rm{height}}$ is nonnegative, it is clear that it satisfies item (\ref{moment_control}) as well. Thus, it is left only to verify that item (\ref{moment_control}) holds for $f_{\rm{heat}}$ and $f_{\rm{fast}}$. By Lemma \ref{lem:F}, it suffices to consider $\rho\in \mathcal{M}(\Rd)$ with finite second moment that are absolutely continuous with respect to Lebesgue measure: $\rho=\rho(x)\, dx$.

    For  $m\in \left(1-\frac{2}{d+2}, 1\right)$, let $\alpha=1-m\in \left(0, \frac{2}{d+2}\right)$.  By (\ref{eq:integrability via moments}) of Lemma \ref{lem:integrability via moments}, we have 
    \[
    \int_{\Rd}\rho^m=\int_{\Rd}\rho^{1-\alpha}\leq C_{\alpha, d}\left(\int_{\Rd}\rho(x)(1+|x|^2)\right)^{1-\alpha}.
    \]
    Multiplying by $\frac{1}{m-1}$ yields that (\ref{seconddersing}) holds for $f_{\rm{fast}}$ with  $H(u)=u^{1-\alpha}=u^m$.

    Similarly, applying (\ref{lem:moments_give_integrability}) of Lemma \ref{lem:integrability via moments} with $\alpha = \frac{1}{d+2}$, say, we have 
\[
\int_\Rd f_{\rm{heat}}(\rho) =\int_\Rd \rho\log\rho - \int_\Rd \rho \geq -C_{d}\left(\|\rho\|_1 +M_2(\rho)\right)^{(d+1)/(d+2)}- \|\rho\|_1,
\]
which yields that (\ref{seconddersing}) holds for $f_{\rm{heat}}$.
\end{proof}

Next, we recall a classical lower semicontinuity result for varying integral functionals determined by convex internal energy densities that epi-converge. For lack of a reference, we include a proof. For further background on epi-convergence, see Rockafellar and Wets \cite[Definition 7.1]{rockafellar2009variational}.
\begin{lem}[Varying internal energy densities]\label{lem:varying_lower_semicontinuity}
Suppose that $\nu$ is a nonnegative Borel measure on $[0, +\infty)\times\RR^d$ and $E\subseteq [0, +\infty)\times\RR^d$ is a $\nu$ measurable set such that $\nu(E)<+\infty$.
 Consider functions $g, g_n:\R \to \RR\cup\{+\infty\}$ that are proper, convex, and lower semicontinuous such that $g_n$ epi-converges to $g$, ${\rm int}(\dom(g))\neq \emptyset$, and  $\interior(\dom(g^*))\neq\emptyset$.
If  $\{\phi_n\}_{n \in \mathbb{N}} \subseteq L^1(\nu)$ is a sequence of real valued functions converging weakly in $L^1(\nu)$  to a limit $\phi \in L^1(\nu)$, then
\begin{align} \label{generalliminf} \liminf_{n \to +\infty} \int_{E} g_n(\phi_n)d\nu \geq \int_E g(\phi)d\nu.
\end{align}
 \end{lem}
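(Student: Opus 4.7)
The plan is to prove this lower semicontinuity result via convex duality. The key observation is that the biconjugate representation $g(a)=\sup_{b}[ab-g^{*}(b)]$ produces, via the Fenchel--Young inequality, a family of affine-in-$\phi_n$ lower bounds on $g_n(\phi_n)$ that are compatible with weak $L^{1}(\nu)$ convergence. The step that does all the work is transferring epi-convergence from the primal to the dual: a standard theorem (e.g.\ Rockafellar--Wets, Theorem 11.34) states that $g_n$ epi-converges to $g$ if and only if $g_n^{*}$ epi-converges to $g^{*}$. Combined with the hypothesis $\interior(\dom g^{*})\neq\emptyset$ and the fact that lsc convex functions of one variable are continuous on their effective domain, this upgrades to the pointwise convergence $g_n^{*}(b)\to g^{*}(b)$ for every $b\in\interior(\dom g^{*})$.

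Next, I would test the inequality against simple functions. Fix a simple $\psi=\sum_{i=1}^m b_i\chi_{A_i}$ with each $b_i\in\interior(\dom g^{*})$ and $\{A_i\}$ a finite measurable partition of $E$. For $n$ sufficiently large each $b_i\in\dom(g_n^{*})$, and Fenchel--Young gives the pointwise bound $g_n(\phi_n)\geq \psi\phi_n-g_n^{*}(\psi)$. Integrating,
\[
\int_E g_n(\phi_n)\,d\nu \;\geq\; \int_E \phi_n\psi\,d\nu - \sum_{i=1}^m g_n^{*}(b_i)\,\nu(A_i).
\]
Since $\psi\in L^{\infty}(\nu)$, the weak $L^{1}$ convergence $\phi_n\rightharpoonup\phi$ sends the first term to $\int_E \phi\psi\,d\nu$, while the pointwise dual convergence and finiteness of the sum send the second term to $\int_E g^{*}(\psi)\,d\nu$. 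Thus
\[
\liminf_{n\to\infty}\int_E g_n(\phi_n)\,d\nu \;\geq\; \int_E\bigl[\phi\psi-g^{*}(\psi)\bigr]d\nu.
\]

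To finish I would take a supremum over $\psi$ via a measurable selection. Pick a countable dense sequence $\{b_k\}\subset\interior(\dom g^{*})$ and set $h_N(x):=\max_{1\leq k\leq N}[\phi(x)b_k-g^{*}(b_k)]$, realized by the simple function $\psi_N(x):=b_{k^{*}(x,N)}$ where $k^{*}(x,N)$ is the smallest maximizing index. Then $h_N\uparrow \sup_k[\phi(x)b_k-g^{*}(b_k)]$, and the previous step gives $\liminf_n\int_E g_n(\phi_n)\,d\nu\geq \int_E h_N\,d\nu$; since each $h_N\geq \phi b_1-g^{*}(b_1)\in L^{1}(\nu)$, monotone convergence yields the desired conclusion once we identify the pointwise supremum with $g(\phi(x))$. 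This identification is where the main technical obstacle lies: one needs the ``restriction to the interior'' equality
\[
g(a)=\sup_{b\in\dom g^{*}}\bigl[ab-g^{*}(b)\bigr]=\sup_{k\in\N}\bigl[ab_k-g^{*}(b_k)\bigr],
\]
which follows because in one dimension every lsc convex $g^{*}$ is continuous on $\dom(g^{*})$ relative to $\interior(\dom g^{*})$ (the boundary values are limits of interior values), so any supporting hyperplane at a boundary $b_0\in\dom(g^{*})$ is arbitrarily well approximated by ones at $b_k\in\interior(\dom g^{*})$ converging to $b_0$; the nonempty-interior hypothesis on $\dom(g^{*})$ is precisely what makes this density argument possible. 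Combining with Young's inequality in the reverse direction establishes (\ref{generalliminf}).
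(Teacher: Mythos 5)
Your proof is correct and uses the same two structural ingredients as the paper's — the Fenchel--Young lower bound $g_n(\phi_n)\geq b\phi_n-g_n^*(b)$ and the duality of epi-convergence ($g_n\to g$ iff $g_n^*\to g^*$, Rockafellar--Wets 11.34) — but the final supremum step is genuinely different. The paper tests against a general $\psi\in L^\infty(\nu)$ with $g^*(\psi)\in L^1(\nu)$, truncates its range to nested compact sets $A_k\subseteq\interior(\dom g^*)$ via dominated convergence, and then invokes Rockafellar's duality theorem for convex normal integrands to identify $\sup_\psi\int(\psi\phi-g^*(\psi))\,d\nu$ with $\int g(\phi)\,d\nu$. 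You instead work with simple functions whose values lie in a countable dense subset $\{b_k\}\subseteq\interior(\dom g^*)$, build the measurable selector $\psi_N$ explicitly, pass to the limit by monotone convergence (using $h_1=\phi b_1-g^*(b_1)\in L^1(\nu)$ as the integrable lower bound, which is where $\phi\in L^1(\nu)$ and $\nu(E)<+\infty$ are used), and close the argument with the pointwise identity $g(a)=\sup_k[ab_k-g^*(b_k)]$, justified by the one-dimensional fact that a convex lsc $g^*$ is continuous on $\dom(g^*)$ from the interior so that the supremum over $\interior(\dom g^*)$ equals the supremum over $\dom(g^*)$, which by biconjugation equals $g(a)$. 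Your route avoids the normal-integrand duality theorem entirely and replaces it with an elementary measurable-selection-plus-MCT argument; the cost is that you must work slightly harder to verify the pointwise biconjugate identity on a dense sequence, whereas the paper delegates that to Rockafellar's 1968 theorem. Both are sound.
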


\begin{proof}

Let $A\subseteq \interior\left(\dom(g^*)\right)$ be a compact set and let $\eta\in L^{\infty}(\nu)$ be a function such that ${\rm image}(\eta)\subseteq A$.  The fact that $g_n$ epi converges to $g$ is equivalent to the fact that $g_n^*$ epi converges to $g^*$ and implies that   $g_n^*$ converges uniformly to $g^*$ on $A$; see, e.g. \cite[Theorem 7.17]{rockafellar2009variational}. Therefore, since $\nu(E)<+\infty$,
\[
\lim_{n\to\infty} \int_E g_n^*(\eta)d\nu=\int_E g^*(\eta)d\nu.
\]

Using the convexity of $g_n$, we have the lower bound
\[
 \int_{E} g_n(\phi_n)d\nu \geq \int_E \left(\eta\phi_n-g^*_n(\eta)\right)d\nu.
\]
Thus, taking limits, it follows that
\begin{align}\label{firstgliminf}
 \liminf_{n\to\infty} \int_{E} g_n(\phi_n)d\nu \geq \liminf_{n\to\infty}\int_E \left(\eta\phi_n-g^*_n(\eta)\right)d\nu=\int_E \left(\eta\phi-g^*(\eta)\right)d\nu.
\end{align}

Now, fix $\psi \in L^\infty(\nu)$ so that $g^*(\psi) \in L^1(\nu)$. In particular, this implies that ${\rm image}(\psi) \subseteq \dom (g^*)$, up to almost everywhere equivalence. The convexity of $g^*$ implies that $\dom(g^*)$ is an interval.  Furthermore, since  $\interior(\dom(g^*))\neq\emptyset$, we can  construct a sequence of nested compact sets $A_1\subseteq A_2 \subseteq \cdots \subseteq A_k\subseteq \cdots$ such that $A_k\subseteq \interior(\dom(g^*))$ for all $k$ and $\bigcup_{k=1}^{\infty} A_k= \dom(g^*)$. Fix $b \in \dom(g^*)$ and let
\[ \eta_k = \psi \mathbf{1}_{\{z: \psi(z) \subseteq A_k\}} + b\mathbf{1}_{\{z: \psi(z) \not \subseteq A_k\}} . \]
Then $\eta_k \to \psi$ pointwise, $|\eta_k| \leq |\psi| + |b|$, and $|g^*(\eta_k)| \leq |g^*(\psi)| + |b|$.   Thus, by the dominated convergence theorem, we have
\begin{align*}
\lim_{k \to +\infty} \int_E \left( \eta_k \phi - g^*(\eta_k)\right)d\nu = \int_E \left(\psi \phi - g^*(\psi)\right)d\nu .
\end{align*}

Finally, combining this above estimate with inequality (\ref{firstgliminf}), taking the supremum over $\psi \in L^\infty(\nu)$ with $g^*(\psi) \in L^1(\nu)$, and applying the duality theorem for convex normal integrands \cite[Theorem 2]{rockafellar1968integrals}, we obtain
\begin{align*}
 \liminf_{n\to\infty} \int_{E} g_n(\phi_n)d\nu &\geq \sup_{\{\psi : \psi \in L^\infty(\Rd),   \ g^*(\psi) \in L^1(\nu)\}} \int_E \left(\psi \phi-g^*(\psi)\right)d\nu  \\
 &= \sup_{\psi \in L^\infty(\Rd) } \int_E \left(\psi \phi-g^*(\psi)\right)d\nu  
 = \int_E g(\phi)d\nu .
\end{align*}
\end{proof}

\subsection{Interpolation inequalities and compensated compactness}

We now recall certain classical interpolation inequalities for fractional Sobolev spaces. For lack of a reference, we provide a proof.
 
\begin{lem}[Interpolation inequalities]
\label{lem:f_equi}
    Suppose  $h\in L^1(\RR^d)$  and $\nabla h\in \dot{W}^{-\theta,1}(\RR^d)$ for some $\theta\in (0,1)$. Then there exists a constant $C_{\theta}>0$ such that, for any $y\in \RR^d$,
   \[
 \norm{h}_{B_{1,\infty}^{\frac{1-\theta}{2}}(\RR^d)}  \leq C_{\theta} \norm{\nabla h}_{\dot{W}^{-\theta,1}(\RR^d)}^{1/2}\norm{h}_{L^{ 1}(\RR^d)}^{1/2} +  \|h\|_{L^1(\Rd)}  .
    \]
   
    Furthermore, for any $r\in (1,\frac{d}{d-1+\theta})$, there exists a constant $C_{d,\theta}'>0$ such that
    \[
    \norm{h}_{L^r(\RR^d)}\leq C_{d,\theta}'\norm{h}_{L^1(\R^d)}^{1-\frac{d(r-1)}{r(1-\theta)}}\norm{\nabla h}_{\dot{W}^{-\theta,1}(\RR^d)}^{\frac{d(r-1)}{r(1-\theta)}}. 
    \]
\end{lem}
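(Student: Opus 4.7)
The plan is to deduce both inequalities from a single sharp translation bound
\[
\|\tau_y h - h\|_{L^1(\Rd)} \leq C|y|^{1-\theta}\|\nabla h\|_{\dot{W}^{-\theta,1}(\Rd)},
\]
where $\tau_y h(x) := h(x-y)$, which I will establish by mollification and duality. Fix a standard even mollifier $\varphi \in C^\infty_c(\Rd)$ and set $h_\delta := h * \varphi_\delta$. From the scaling identity $\partial_\epsilon \varphi_\epsilon = -\epsilon^{-1}\mathrm{div}(x\varphi_\epsilon)$ one obtains $\partial_\epsilon(h*\varphi_\epsilon) = -\epsilon^{-1}\sum_i (\partial_i h)*(x_i \varphi_\epsilon)$. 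For any $\psi \in L^\infty(\Rd)$, moving the convolution to the test side and dualizing against $\nabla h$ gives
\[
\bigl|\langle (\partial_i h)*(x_i\varphi_\epsilon), \psi\rangle\bigr| \leq \|\nabla h\|_{\dot{W}^{-\theta,1}}\,\|(x_i\varphi_\epsilon)*\psi\|_{\dot{C}^\theta}.
\]
The crucial elementary estimate is the H\"older-seminorm interpolation
\[
\|g*\psi\|_{\dot{C}^\theta} \leq C\|\psi\|_{L^\infty}\|g\|_{L^1}^{1-\theta}\|\nabla g\|_{L^1}^{\theta},
\]
obtained by geometric mean of the two bounds $|g*\psi(x)-g*\psi(y)|\leq 2\|\psi\|_\infty\|g\|_{L^1}$ and $\leq |x-y|\|\psi\|_\infty\|\nabla g\|_{L^1}$. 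Applied with $g = x_i\varphi_\epsilon$, for which $\|g\|_{L^1}\sim \epsilon$ and $\|\nabla g\|_{L^1}\sim 1$, this yields $\|\partial_\epsilon(h*\varphi_\epsilon)\|_{L^1}\leq C\epsilon^{-\theta}\|\nabla h\|_{\dot{W}^{-\theta,1}}$. Integrating from $0$ to $\delta$ gives $\|h-h_\delta\|_{L^1}\leq C\delta^{1-\theta}\|\nabla h\|_{\dot{W}^{-\theta,1}}$. Combining with the smooth-mollifier bound $\|\tau_y h_\delta - h_\delta\|_{L^1}\leq |y|\|\nabla h_\delta\|_{L^1}\leq C|y|\delta^{-\theta}\|\nabla h\|_{\dot{W}^{-\theta,1}}$ (the last inequality by the analogous duality argument with $g=\varphi_\delta$) and optimizing at $\delta = |y|$ produces the translation bound.

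The first inequality follows immediately by taking the geometric mean of the translation bound with the trivial $\|\tau_y h - h\|_{L^1}\leq 2\|h\|_{L^1}$:
\[
\|\tau_y h - h\|_{L^1}\leq C|y|^{(1-\theta)/2}\|\nabla h\|_{\dot{W}^{-\theta,1}}^{1/2}\|h\|_{L^1}^{1/2},
\]
which is precisely the seminorm part of $\|h\|_{B^{(1-\theta)/2}_{1,\infty}}$; adding the $\|h\|_{L^1}$ contribution completes the first estimate.

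For the second inequality, I observe that the translation bound without interpolation shows $h \in \dot{B}^{1-\theta}_{1,\infty}(\Rd)$ with $\|h\|_{\dot{B}^{1-\theta}_{1,\infty}}\leq C\|\nabla h\|_{\dot{W}^{-\theta,1}}$. Invoking the standard Besov-Sobolev embedding $\dot{B}^{1-\theta}_{1,\infty}(\Rd)\hookrightarrow L^{d/(d-1+\theta),\infty}(\Rd)$ then gives $\|h\|_{L^{d/(d-1+\theta),\infty}}\leq C\|\nabla h\|_{\dot{W}^{-\theta,1}}$. Real interpolation between $L^1$ and the weak Lebesgue space $L^{d/(d-1+\theta),\infty}$ yields, for any $r \in (1, d/(d-1+\theta))$,
\[
\|h\|_{L^r}\leq C\|h\|_{L^1}^{1-\eta}\|h\|_{L^{d/(d-1+\theta),\infty}}^{\eta}\leq C\|h\|_{L^1}^{1-\eta}\|\nabla h\|_{\dot{W}^{-\theta,1}}^{\eta},
\]
where the exponent $\eta = d(r-1)/(r(1-\theta))$ is forced by the scaling identity $1/r = 1 - \eta(1-\theta)/d$.

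The main technical obstacle is the sharp duality step at the start. Because $\dot{W}^{-\theta,1}$ is defined as the dual of the H\"older \emph{seminorm} space $\dot{C}^\theta$ (a space modulo constants), one must control only the H\"older seminorm of $(x_i\varphi_\epsilon)*\psi$, not its full norm, purely in terms of $\|\psi\|_{L^\infty}$. The interpolation bound above accomplishes exactly this by exploiting the cancellation in $\|g-\tau_w g\|_{L^1}$, which is simultaneously $\leq 2\|g\|_{L^1}$ and $\leq |w|\|\nabla g\|_{L^1}$. The constants-ambiguity inherent in the homogeneous spaces does not affect any of the arguments since the distributional pairing $\langle \nabla h,\cdot\rangle$ annihilates constants.
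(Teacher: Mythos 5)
Your proof is correct, and it reaches both inequalities by a route that diverges from the paper's in a meaningful way. The paper's proof works entirely through a single parametrix identity: for a test function $\phi$ it writes $\phi = H_t * \phi - \int_0^t \Delta (H_s*\phi)\,ds$ via the heat kernel $H_s$, pairs this against $h$ (or $h - h(\cdot+y)$), distributes the two terms onto $\|h\|_{L^1}$ and $\|\nabla h\|_{\dot W^{-\theta,1}}$ via heat-semigroup estimates, and optimizes over $t$. Crucially, for the second inequality the paper dualizes directly against $L^{r/(r-1)}$ and reads off the $L^r$ bound from Young's inequality for $\|H_s\|_{L^r}$, so the whole argument is self-contained. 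Your proof replaces the heat kernel by a compactly supported mollifier $\varphi_\delta$ and makes the underlying Littlewood--Paley decomposition $h = h_\delta + (h - h_\delta)$ explicit: the duality step $|\langle \partial_i h, (x_i\varphi_\epsilon)*\psi\rangle|\leq\|\nabla h\|_{\dot W^{-\theta,1}}\|(x_i\varphi_\epsilon)*\psi\|_{\dot C^\theta}$ combined with the translation interpolation $\|g*\psi\|_{\dot C^\theta}\leq C\|\psi\|_\infty\|g\|_{L^1}^{1-\theta}\|\nabla g\|_{L^1}^\theta$ plays exactly the role of the heat-semigroup H\"older bounds, and the optimization $\delta=|y|$ corresponds to the paper's choice of $t$. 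The one point where the arguments genuinely part ways is the second inequality: you first extract the cleaner intermediate bound $\|\tau_y h - h\|_{L^1}\leq C|y|^{1-\theta}\|\nabla h\|_{\dot W^{-\theta,1}}$, i.e.\ $h\in\dot B^{1-\theta}_{1,\infty}$, and then outsource the conclusion to the Jawerth-type Besov--Sobolev embedding $\dot B^{1-\theta}_{1,\infty}\hookrightarrow L^{d/(d-1+\theta),\infty}$ and real interpolation between $L^1$ and weak-$L^p$; the paper instead computes the $L^r$ bound directly from the heat-kernel representation without invoking either theorem. Your route buys a slightly stronger and more modular intermediate statement (the sharp $\dot B^{1-\theta}_{1,\infty}$ control, with no $\|h\|_{L^1}$ dependence), at the cost of depending on two nontrivial black-box embeddings; the paper's is longer in computation but elementary in its dependencies, which suits a self-contained appendix. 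Both approaches are sound and yield the same exponents.
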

\begin{proof}

Given a function $\phi\in C^{\infty}_c(\RR^d)$, let $u(t,x)=(H_t*\phi)(x)$, where $H_t$ is the heat kernel at time $t$.  It then follows that $\phi(x)=u(t,x)-\int_0^t \Delta u(s,x)\, ds$.  Fixing some time $t>0$, and using this representation, we have
\begin{align*}
&\int_{\RR^d} \phi(x) (h(x)-h(x+y)) dx\\
\quad =&\int_{\RR^d} \left(u(t,x)-\int_0^t\Delta u(s,x)\, ds \right)(h(x)-h(x+y)) dx\\
\quad=& \int_{\RR^d} u(t,x)(h(x)-h(x+y)) - \left(\int_0^t\nabla u(s,x)\cdot \nabla (h(x)-h(x+y))\, ds\, \right)  dx\\
\quad=& \int_{\RR^d} h(x)(u(t,x)-u(t,x-y))-\nabla h(x) \cdot \int_0^t \nabla (u(s,x)-u(s,x-y))\, ds\,  dx\\
\quad\leq &\norm{h}_{L^1(\RR^d)}\norm{u- u\circ \tau_{y}}_{L^{\infty}(\{t\}\times \RR^d)}+\norm{\nabla h}_{\dot{W}^{-\theta,1}(\RR^d)}\norm{\nabla u-\nabla u\circ \tau_{y}}_{L^1([0,t];\dot{C}^{\theta}(\RR^d))}
\end{align*}
where $\tau_y(x)=x-y$ is the spatial shift operator.

Note that $\frac{1+\theta}{2} - \frac{1-\theta}{2} = \theta$. Thus, given any function $g\in \dot{C}^{\frac{1+\theta}{2}}(\Rd)$, one can compute, for all $y \in \Rd$,
\begin{align*}
\norm{  g-  g\circ \tau_{y}}_{\dot{C}^{\theta}(\RR^d)} &=\sup_{x,z} \frac{ |  g(x) -   g(x+y) -   g(z) +   g(z+y)|}{|x-z|^\theta |y|^{(1-\theta)/2}} |y|^{(1-\theta)/2} \\
& = \begin{cases}
  \sup_{x,z} \frac{ |  g(x) -   g(x+y) -   g(z) +   g(z+y)|}{|x-z|^{(1+\theta)/2}} \frac{|x-z|^{(1-\theta)/2}}{ |y|^{(1-\theta)/2}} |y|^{(1-\theta)/2} &\text{ if } |x-z| < |y| ,\\
    \sup_{x,z} \frac{ |  g(x) -   g(x+y) -   g(z) +   g(z+y)|}{|x-z|^{\theta} |y|^{-\theta} |y|^{(1+\theta)/2}}  |y|^{(1-\theta)/2} &\text{ if } |x-z| \geq |y| ,
  \end{cases} \\
  & \leq 2 \| g\|_{\dot{C}^{\frac{1+\theta}{2}}(\Rd)} |y|^{(1-\theta)/2} .
\end{align*}

Therefore, returning to our previous calculation, we see that
\begin{align} 
&\int_{\RR^d} \phi(x) (h(x)-h(x+y)) dx \nonumber \\
&\quad \leq \left(\norm{h}_{L^1(\RR^d)}\norm{u}_{\dot{C}^{\frac{1-\theta}{2}}(\{t\}\times \RR^d)}+\norm{\nabla h}_{\dot{W}^{-\theta,1}(\RR^d)}\norm{\nabla u}_{L^1([0,t];\dot{C}^{\frac{1+\theta}{2}}(\RR^d))}\right)|y|^{(1-\theta)/2}.\label{namename}
\end{align}

Now we note that for any $\alpha> 0$ and $s\in [0,t]$
\begin{align*}
\norm{u}_{\dot{C}^{\alpha}(\{s\}\times \RR^d)}&=\sup_{x_1, x_0\in \RR^d} \frac{1}{|x_1-x_0|^{\alpha}}\Big|\int_{\RR^d} \phi(z)\left(H_s(z-x_1)-H_s(z-x_0)\right)\, dz\Big| \ , \\
&\leq \norm{\phi}_{L^{\infty}(\RR^d)}\sup_{x_1, x_0\in \RR^d} \int_{\RR^d}\frac{|H_s(z-x_1)-H_s(z-x_0)|}{|x_1-x_0|^{\alpha}}\, dz \ , \\
&=\norm{\phi}_{L^{\infty}(\RR^d)}\sup_{x_1, x_0\in \RR^d} \int_{\RR^d}s^{-d/2}\frac{|H_1((z-x_1)/s^{1/2})-H_1((z-x_0)/s^{1/2})|}{|x_1-x_0|^{\alpha}}\, dz \ , \\
&=s^{-\alpha/2}\norm{\phi}_{L^{\infty}(\RR^d)}\sup_{y_1, y_0\in \RR^d} \int_{\RR^d}\frac{|H_1(z'-y_1)-H_1(z'-y_0)|}{|y_1-y_0|^{\alpha}}\, dz' \ ,
\end{align*}
where the final equality follows from changing variables $z'=z/s^{1/2}$ and $y_i=x_i/s^{1/2}$.
A similar argument shows that
\begin{align}\label{eq:nabla_u_c_theta}
\norm{\nabla u}_{\dot{C}^{\alpha}(\{s\}\times \RR^d)}\leq s^{-(1+\alpha)/2}\norm{\phi}_{L^{\infty}(\RR^d)}\sup_{x_1, x_0\in \RR^d} \int_{\RR^d}\frac{|\nabla H_1(z-x_1)-\nabla H_1(z-x_0)|}{|x_1-x_0|^{\alpha}}\, dz.
\end{align}
Therefore, \[
\norm{u}_{\dot{C}^{\frac{1-\theta}{2}}(\{t\}\times \RR^d)}\lesssim t^{-\frac{1-\theta}{4}}\norm{\phi}_{L^{\infty}(\RR^d)}, \ \  
\norm{\nabla u}_{L^{1}([0,t];\dot{C}^{\frac{1+\theta}{2}}(\RR^d) )}\lesssim \norm{\phi}_{L^{\infty}(\RR^d)}\int_0^t s^{-\frac{3+\theta}{4}}ds=\frac{4}{1-\theta}t^{\frac{1-\theta}{4}}\norm{\phi}_{L^{\infty}(\RR^d)}.
\]

Hence, if we choose $t=\left(\frac{\norm{h}_{L^1(\RR^d)}}{\norm{\nabla h}_{\dot{W}^{-\theta,1}(\RR^d)}}\right)^{\frac{2}{1-\theta}}$ in inequality (\ref{namename}), we obtain
\begin{align*}
\int_{\RR^d} \phi(x) (h(x)-h(x+y)) dx  & \leq \left(\norm{h}_{L^1(\RR^d)}t^{-\frac{1-\theta}{4}} +\norm{\nabla h}_{\dot{W}^{-\theta,1}(\RR^d)}\frac{4}{1-\theta}t^{\frac{1-\theta}{4}}\right)|y|^{(1-\theta)/2} \norm{\phi}_{L^{\infty}(\RR^d)} , \\
&  \leq \frac{8}{1-\theta} \left(\norm{h}_{L^1(\RR^d)}^{1/2}\norm{\nabla h}_{\dot{W}^{-\theta,1}(\RR^d)}^{1/2}\right)|y|^{(1-\theta)/2}\norm{\phi}_{L^{\infty}(\RR^d)}.
\end{align*}
Therefore, since $C^\infty_c(\Rd)$ is weakly-* dense in $L^\infty(\Rd)$,  there exists $C_{\theta}>0$ so that 

\begin{align*}
\int_{\RR^d} |h(x)-h(x+y)|\, dx&= \sup_{\{\phi\in C^{\infty}_c(\RR^d):\norm{\phi}_{L^{\infty}(\RR^d)}\leq 1\}} \int_{\RR^d} \phi(x)(h(x)-h(x+y))\, dx \\
&\leq   C_{ \theta}  \norm{\nabla h}_{\dot{W}^{-\theta,1}(\RR^d)}^{1/2}\norm{h}_{L^1(\RR^d)}^{1/2} |y|^{\frac{1-\theta}{2}} .
\end{align*}
The Besov norm bound follows from dividing both sides by $|y|^{\frac{1-\theta}{2}}$ and taking a supremum over $y$.

For the second result, we will argue in the same way.  We can compute
\begin{align}\label{eq:f_phi}
\int_{\R^d} \phi(x)h(x)\, dx&=\int_{\R^d} h(x) \left(u(t,x)-\int_0^t \Delta u(s,x)\, ds \right)\, dx\\ \nonumber
&\leq \norm{h}_{L^1(\R^d)}\norm{u}_{L^{\infty}(\{t\}\times\R^d)}+\norm{\nabla h}_{\dot{W}^{-\theta,1}(\RR^d)} \norm{\nabla u}_{L^1([0,t];\dot{C}^{\theta}(\R^d))}.
\end{align}
Using the semigroup property of the heat equation, we can write $u(s,x)=(H_{s/2}*u(s/2,\cdot))(x)$. Thus, using the bound in (\ref{eq:nabla_u_c_theta}) with $u(s/2,\cdot)$ playing the role of $\phi$, we have the estimate
\[
\norm{\nabla u}_{\dot{C}^{\theta}(\{s\}\times \RR^d)}\lesssim  (s/2)^{-(1+\theta)/2}\norm{u}_{L^{\infty}(\{s/2\}\times \RR^d)}.
\]
Then, for any $r\in \left(1, \frac{d}{d-1+\theta}\right)$, we can use Young's convolution inequality to compute
\[
\norm{u}_{L^{\infty}(\{s\}\times\RR^d)} \leq \norm{H_s}_{L^r(\RR^d)}\norm{\phi}_{L^{\frac{r}{r-1}}(\R^d)}\lesssim_{d, \theta}  s^{-\frac{d(r-1)}{2r}}\norm{\phi}_{L^{\frac{r}{r-1}}(\R^d)}.
\]
Hence, returning to (\ref{eq:f_phi}), we have
\begin{align*}
\int_{\R^d} \phi(x)h(x)\, dx&\lesssim_{d, \theta} \left(\norm{h}_{L^1(\R^d)}t^{-\frac{d(r-1)}{2r}}+\norm{\nabla h}_{\dot{W}^{-\theta,1}(\RR^d)} \int_0^t s^{-\frac{d(r-1)+r(1+\theta)}{2r}}\, ds\right)\norm{\phi}_{L^{\frac{r}{r-1}}(\R^d)}.
\end{align*}
 The time integral is finite whenever $d(r-1)+r(1+\theta)<2r$ which is equivalent to the condition $r<\frac{d}{d-1+\theta}$. Hence,   
 \begin{align*}
\int_{\R^d} \phi(x)h(x)\, dx&\lesssim_{d, \theta} \left(\norm{h}_{L^1(\R^d)}t^{-\frac{d(r-1)}{2r}}+\norm{\nabla h}_{\dot{W}^{-\theta,1}(\RR^d)} t^{1-\frac{d(r-1)+r(1+\theta)}{2r}}\, \right)\norm{\phi}_{L^{\frac{r}{r-1}}(\R^d)}.
\end{align*}
 Choosing $t=\left(\frac{\norm{h}_{L^1(\R^d)}}{\norm{\nabla h}_{\dot{W}^{-\theta,1}(\RR^d)}}\right)^{\frac{2}{1-\theta}}$
 we get 
 \begin{align*}
\int_{\R^d} \phi(x)h(x)\, dx&\lesssim_{d, \theta} \norm{h}_{L^1(\R^d)}^{1-\frac{d(r-1)}{r(1-\theta)}}\norm{\nabla h}_{\dot{W}^{-\theta,1}(\RR^d)}^{\frac{d(r-1)}{r(1-\theta)}} \, \norm{\phi}_{L^{\frac{r}{r-1}}(\R^d)}
\end{align*}
 and the result now follows.
\end{proof}

We conclude with the following lemma, which is a straightforward adaptation of certain well-known compensated compactness results, in which one separates out smoothness in space and time; see for instance
\cite[Lemma 5.1]{lions_cc}.
\begin{lem}[Compensated compactness]\label{lem:compensated_compactness}
 Let $\{h_k\}_{k\in \mathbb{N} }\subseteq L^1_{\loc}([0,+\infty);L^1( \RR^d))\cap L^{\infty}_{\loc}([0,+\infty);L^{\infty}( \RR^d))$, $\{g_k\}_{k\in \mathbb{N}}\subseteq L^1_{\loc}([0,+\infty);L^1( \RR^d))$ be sequences of nonnegative functions that converge weakly in $L^1_{\loc}([0,+\infty)\times\RR^d)$ to limits $h\in L^1_{\loc}([0,+\infty);L^1( \RR^d))\cap L^{\infty}_{\loc}([0,+\infty);L^{\infty}( \RR^d))$ and $g\in L^1_{\loc}([0,+\infty);L^1( \RR^d))$.  If there exists $\alpha>0$ such that
\begin{align} \label{timespaceregularityas}
\sup_{k \in \mathbb{N}}  \norm{h_k}_{L^{\infty}([0,T]\times\RR^d)}+ \norm{ h_k}_{L^1([0,T];B_{1,\infty}^{\alpha}(\RR^d))}+\norm{g_k}_{H^1([0,T];W^{-1,1}(\RR^d))}<+\infty,
\end{align}
for all $T>0$,
then for any nonnegative function $\psi\in L^{\infty}_{c}([0,+\infty)\times\RR^d)$
\[
{ \lim_{k\to\infty}} \int_{[0,+\infty)\times\RR^d} h_kg_k\psi\leq \int_{[0,+\infty)\times\RR^d} hg\psi.
\]

\end{lem}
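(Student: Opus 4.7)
The plan is to exploit the two complementary regularities -- spatial Besov on $h_k$, temporal Sobolev on $g_k$ -- to compensate for the absence of strong compactness in either factor, establishing $\lim_k \int h_k g_k \psi = \int hg\psi$, which implies the claimed inequality. Fix a compact rectangle $[0,T]\times K$ containing $\supp(\psi)$, let $\rho_\tau$ be a standard spatial mollifier, and set $h_k^\tau := \rho_\tau *_x h_k$. The assumed Besov bound yields the quantitative estimate $\|h_k-h_k^\tau\|_{L^1([0,T]\times\RR^d)} \lesssim \tau^\alpha$ uniformly in $k$, while $\|h_k^\tau\|_\infty \leq \|h_k\|_\infty$ stays uniformly controlled, and $h_k^\tau(t,x) \to h^\tau(t,x)$ pointwise almost everywhere (obtained by testing the weak $L^1$ convergence against the $L^\infty$ function $\rho_\tau(x-\cdot)\mathbf{1}_{B_R}$).

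The decomposition
\[
\int h_k g_k \psi = \int (h_k-h_k^\tau) g_k \psi + \int h_k^\tau g_k \psi
\]
reduces the problem to two estimates. For the first, weak $L^1_{\loc}$ convergence of $g_k$ implies uniform integrability via Dunford--Pettis, so for any $\varepsilon>0$ there exists $M$ with $\sup_k \int_{\{g_k>M\}} g_k \psi\, dt\,dx < \varepsilon$, whence
\[
\Big|\int (h_k - h_k^\tau) g_k \psi\Big| \leq 2\|h_k\|_\infty \|\psi\|_\infty\,\varepsilon + M \|\psi\|_\infty \|h_k - h_k^\tau\|_{L^1([0,T]\times K)} \leq 2\|h_k\|_\infty \|\psi\|_\infty\,\varepsilon + C_M\tau^\alpha,
\]
which tends to $0$ uniformly in $k$ by sending first $\tau\to 0$ and then $\varepsilon\to 0$. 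For the second term, at fixed $\tau$, I invoke the Aubin--Lions--Simon lemma with the compact Rellich-type inclusion $L^1_{\loc}(\RR^d) \hookrightarrow W^{-1,1}_{\loc}(\RR^d)$ (obtained by dualizing the Arzel\`a compact embedding $W^{1,\infty}_0 \hookrightarrow C_0$ on bounded subdomains). The weak $L^1_{\loc}$ bound on $g_k$ together with the assumed $L^2_t W^{-1,1}_{\loc}$ bound on $\partial_t g_k$ yields strong convergence $g_k \to g$ in $L^1([0,T]; W^{-1,1}(K'))$ for a slightly enlarged compact $K' \supset K$. Since $h_k^\tau \psi$ is uniformly bounded in $L^\infty([0,T]; W^{1,\infty}(K'))$ (constant depending on $\tau$, via $\|\nabla(\rho_\tau *_x h_k)\|_\infty \leq \|\nabla\rho_\tau\|_{L^1} \|h_k\|_\infty$), the duality pairing gives $\int(g_k-g) h_k^\tau \psi \to 0$, while $\int g(h_k^\tau - h^\tau)\psi \to 0$ by dominated convergence (pointwise convergence under a uniform $L^\infty$ bound, with $g\psi \in L^1$). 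Finally, sending $\tau \to 0$ in $\int h^\tau g\psi \to \int hg\psi$ is another application of dominated convergence, using $h^\tau \to h$ pointwise a.e.\ under a uniform $L^\infty$ bound.

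The main obstacle I anticipate is justifying the compact embedding $L^1_{\loc} \hookrightarrow W^{-1,1}_{\loc}$ and applying Aubin--Lions--Simon in the nonreflexive Bochner setting $L^1_t L^1_x$: both facts are classical but slightly outside the most commonly cited reflexive versions, and some care is required in the localization step. A secondary technical point is that $\psi$ is merely $L^\infty_c$ rather than smooth; before running the above one replaces $\psi$ by a smooth mollification $\psi^\sigma$ with $\|\psi^\sigma\|_\infty \leq \|\psi\|_\infty$ and $\psi^\sigma \to \psi$ in $L^1$, and absorbs the error $\int h_k g_k(\psi-\psi^\sigma)$ via $\|h_k\|_\infty$ times the uniform integrability of $\{g_k\}$, so that one may harmlessly assume $\psi$ smooth throughout the argument.
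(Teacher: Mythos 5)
Your strategy is structurally parallel to the paper's but with the roles of the two factors swapped. The paper mollifies $g_k$ in space, observes that the $H^1_t W^{-1,1}_x$ bound then upgrades $\eta_\delta*g_k$ to $H^1_t W^{1,\infty}_x\hookrightarrow C^{1/2}_{t,x}$ via Morrey, and gets \emph{uniform} convergence by Arzel\`a--Ascoli; the remaining error $\int h_k(g_k-\eta_\delta*g_k)\psi$ is moved onto $\psi h_k$ (using the evenness of $\eta_\delta$) and controlled by the Besov bound plus a truncation of $g_k$. You instead mollify $h_k$, control the error $\int (h_k-h_k^\tau)g_k\psi$ directly by the Besov bound plus truncation, and handle the main term by appealing to Aubin--Lions--Simon to get strong convergence $g_k\to g$ in $L^1_t W^{-1,1}_{\rm loc}$, then pairing with $h_k^\tau\psi\in L^\infty_t W^{1,\infty}_x$. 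Both routes work; the paper's version sidesteps the nonreflexive Aubin--Lions discussion entirely by extracting genuine $C^{1/2}$ compactness, which is arguably cleaner, while yours outsources the compactness step to a named theorem.

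There is, however, one genuine error. You assert that $h_k^\tau(t,x)\to h^\tau(t,x)$ pointwise almost everywhere, ``obtained by testing the weak $L^1$ convergence against $\rho_\tau(x-\cdot)\mathbf{1}_{B_R}$,'' and then feed this into the dominated convergence theorem for $\int g(h_k^\tau-h^\tau)\psi$. This is false: the $h_k$ carry no temporal regularity whatsoever, and a purely spatial mollification does not create any. Testing weak $L^1$ convergence against the fixed spatial profile $\rho_\tau(x-\cdot)$ tensored with an arbitrary $L^\infty$ function of time gives only weak-$*$ convergence $h_k^\tau(\cdot,x)\rightharpoonup^* h^\tau(\cdot,x)$ in $L^\infty([0,T])$ for each $x$, not pointwise convergence in $t$. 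For a concrete counterexample take $h_k(t,x)=\mathbf{1}_{\{\sin(kt)>0\}}\phi(x)$: this satisfies all your hypotheses, converges weakly in $L^1$ to $\tfrac12\phi$, yet $h_k^\tau(t,x)$ has no pointwise limit in $t$. Fortunately the conclusion you need survives without it: since $\{h_k^\tau\}$ is uniformly bounded in $L^\infty([0,T]\times K)$ and converges weakly in $L^1$ to $h^\tau$, it also converges weak-$*$ in $L^\infty([0,T]\times K)$ to $h^\tau$; pairing this against $g\psi\in L^1([0,T]\times K)$ gives $\int g(h_k^\tau-h^\tau)\psi\to 0$ directly, with no dominated convergence needed. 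With that repair, and the acknowledged care in invoking Simon's $L^1$ variant of Aubin--Lions together with the compact embedding $L^1(\Omega)\hookrightarrow W^{-1,1}(\Omega)$ for bounded $\Omega$, your argument is sound (and in fact yields equality, which is stronger than the stated inequality).
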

\begin{proof}
The weak convergence of $g_k$ to $g$ in $L^1_{\loc}([0,+\infty)\times\RR^d)$ along with the uniform $L^{\infty}$ bound of the $h_k$ implies that the product $h_kg_k$ is uniformly integrable on compact subsets of $ [0,+\infty)\times\RR^d$, { hence convergent weakly in $L^1_\loc([0,+\infty \times \Rd)$}. Therefore, it suffices to prove the result for $\psi\in C^{\infty}_c([0,+\infty)\times\RR^d)$.

Fix some nonnegative $\psi\in C^{\infty}_c([0,+\infty)\times\RR^d)$ and choose $T\geq 0$ such that $\spt(\psi)\subset [0,T]\times\RR^d$.
Let $\eta:\RR^d\to[0,+\infty)$ be a smooth, even, compactly supported \emph{spatial} mollifier with $\supp \eta \subseteq B_1(0)$. Denote $\eta_\delta(x)=\delta^{-d}\eta(x/\delta)$  and fix some $\delta>0$.
  Assumption (\ref{timespaceregularityas}) implies that   $\{\eta_{\delta}*g_k\}_{k \in \mathbb{N}}$ is uniformly bounded in $H^1([0,T]; W^{1,\infty}(\RR^d))$, for fixed $\delta >0$.  By Morrey's inequality, $\{\eta_{\delta}*g_k\}_{k \in \mathbb{N}}$ is uniformly bounded in $C^{1/2}([0,T]\times\RR^d)$ and is thus an equicontinuous and pointwise bounded family. Arzel\'a-Ascoli then ensures that, for any compact set $B\subset [0,T]\times\R^d$, $\{\eta_{\delta}*g_k\}_{k\in\mathbb{N}}$ converges uniformly to a limit in $C(B)$. Now the weak convergence of  $g_{k}$ to $g$ and uniqueness of limits forces the full sequence   $\eta_{\delta}*g_k$ to converge strongly in $C(B)$ to  $\eta_{\delta}*g$.  It then follows that $h_k (\eta_{\delta}*g_k)$ converges weakly in $L^1_{\loc}([0,+\infty)\times\RR^d)$ to $h (\eta_{\delta}*g)$.
Thus,
\[
{ \lim_{k\to\infty}} \int_{[0,+\infty)\times\RR^d} h_kg_k\psi= \int_{[0,+\infty)\times\RR^d} h(\eta_{\delta}*g)\psi+ { \lim_{k\to\infty}} \int_{[0,+\infty)\times\RR^d} \psi h_k(g_k-\eta_{\delta}*g_k).
\]
Using the strong convergence of $\eta_{\delta}*g$  to $g$ in $L^1_{\loc}([0,+\infty)\times\R^d)$, we obtain
\[
{\lim_{k\to\infty}} \int_{[0,+\infty)\times\RR^d} h_kg_k\psi= \int_{[0,+\infty)\times\RR^d} hg\psi+\lim_{\delta\to 0} {\lim_{k\to\infty} }\int_{[0,+\infty)\times\RR^d}  h_k(g_k-\eta_{\delta}*g_k)\psi.
\]

It remains to show that
\[
\lim_{\delta\to 0} {\lim_{k\to\infty}} \int_{[0,+\infty)\times\RR^d} h_k(g_k-\eta_{\delta}*g_k)\psi\leq 0.
\]
Moving the mollifier off of $g_k$, the integral on the previous line is equivalent to 
\[
 \int_{ [0,T] \times\RR^d} g_k\left(\psi h_k-\eta_{\delta}* (\psi h_k)\right).
\]
Let $A\subseteq [0,+\infty)\times\RR^d$ be a compact set such that $\spt(\psi)\subseteq A$. 
Fix some $M>0$ and let 
\[ E_{k,M}=\{(t,x)\in A: g_k(t,x)>M\} . \]
{
Note that the fact that $\sup_k \|g_k\|_{L^1([0,+\infty)\times \Rd)}<+\infty$ implies
\[ \sup_k |E_{k,M}| \leq \sup_k \frac{1}{M} \int_{E_{k,M}} g_k \leq \frac{1}{M} \sup_k \|g_k\|_{L^1([0,+\infty) \times \Rd)} \xrightarrow{M \to +\infty} 0. \]
}

 We can then estimate  
\begin{align*}
 \int_{[0,T]\times\RR^d} g_k\left(\psi h_k-\eta_{\delta}* (\psi h_k)\right)&\leq 2\norm{h_k\psi}_{L^{\infty}([0,T]\times\RR^d)}\norm{g_k}_{L^1(E_{k,M})}+\int_{[0,T]\times\RR^d} M\big|h_k\psi-\eta_{\delta}* (h_k\psi)\big|\\
 & 
 \leq 2\norm{h_k\psi}_{L^{\infty}([0,T]\times\RR^d)}\norm{g_k}_{L^1(E_{k,M})}+MC_\eta\delta^{\alpha}\norm{h_k\psi}_{L^1([0,T];B_{1,\infty}^{\alpha}(\R^d))}.
 \end{align*}
The Besov norm of the product $h_k\psi$ can be controlled via the simple inequality  \[
\norm{h_k\psi}_{L^1([0,T];B_{1,\infty}^{\alpha}(\R^d))}\leq \big(\norm{h_k}_{L^1([0,T];B_{1,\infty}^{\alpha}(\R^d))}\norm{\psi}_{L^{\infty}([0,T]\times\RR^d)}+\norm{h_k}_{L^{\infty}([0,T]\times\RR^d)}\norm{\psi}_{L^1([0,T];B_{1,\infty}^{\alpha}(\R^d))} \big),
\]
so $\norm{h_k\psi}_{L^1([0,T];B_{1,\infty}^{\alpha}(\R^d))}$ is uniformly bounded in $k$.
Therefore, sending $\delta\to 0$, it follows that, for any $M>0$
\begin{align*}
\lim_{\delta\to 0} {\lim_{k\to\infty}} \int_{[0,T]\times\RR^d} h_k(g_k-\eta_{\delta}*g_k) \psi \leq C_\psi \sup_{k\in \mathbb{N}}\norm{h_k}_{L^{\infty}([0,T]\times\RR^d)}\norm{g_k}_{L^1(E_{k,M})}.
\end{align*}
The weak $L^1_{\loc}([0,T]\times\RR^d)$ convergence of the $g_k$ to $g$ implies that {$g_k$ is uniformly integrable. Hence,} \[
\lim_{M\to\infty}\sup_{k\in\mathbb{N}} \norm{g_k}_{L^1(E_{k,M})}=0.
\]
Combining this with the uniform control on $\norm{h_k}_{L^{\infty}([0,T]\times\RR^d)}$ we can conclude that
\[
\lim_{M\to\infty}\sup_{k\in \mathbb{N}}\norm{h_k}_{L^{\infty}([0,T]\times\RR^d)}\norm{g_k}_{L^1(E_{k,M})}=0,
\]
which completes the argument.
\end{proof}

\bibliographystyle{abbrv}
\bibliography{Blob.bib}

\end{document}